\documentclass[12pt]{amsart}
\usepackage{amssymb}
\usepackage{amsmath}
\usepackage{amsthm}
\usepackage{times}
\usepackage{graphicx}
\usepackage{changepage}
\usepackage{caption}
\usepackage{geometry}
\usepackage[curve,all]{xy}
 \xyoption{curve}
 \xyoption{line}
\xyoption{arc}
\xyoption{color}
\usepackage{array}
\usepackage[square, numbers, comma, sort&compress]{natbib}
\usepackage{color}
\usepackage{amsthm}
\usepackage{enumitem}
\newtheorem{theorem}{Theorem}[section]
\newtheorem{lemma}[theorem]{Lemma}
\newtheorem{corollary}[theorem]{Corollary}
\newtheorem{proposition}[theorem]{Proposition}

\newcommand{\thistheoremname}{}
\newtheorem{genericthm}[theorem]{\thistheoremname}

\newtheorem*{genericthm*}{\thistheoremname}
\newenvironment{namedthm*}[1]
  {\renewcommand{\thistheoremname}{#1}%
   \begin{genericthm*}}
  {\end{genericthm*}}

\theoremstyle{definition}
\newtheorem{definition}[theorem]{Definition}

\theoremstyle{remark}
\newtheorem{remark}[theorem]{Remark}
\newtheorem*{remark*}{Remark}

\newtheorem{notation}[theorem]{Notation}
\newtheorem{convention}[theorem]{Convention}

\numberwithin{equation}{section}

\newtheorem*{theorem*}{\bf{Theorem}}
\newtheorem*{corollary*}{\bf{Corollary}}

\newcommand{\calE}{\mathcal{E}}

\newcommand{\bbA}{\mathbb{A}}
\newcommand{\bbF}{\mathbb{F}}

\newcommand{\bbN}{\mathbb{N}}
\newcommand{\bbP}{\mathbb{P}}

\newcommand{\bbQ}{\mathbb{Q}}
\newcommand{\bbR}{\mathbb{R}}

\newcommand{\bbZ}{\mathbb{Z}}

\def\Aut{{\text{Aut}}}

\def\Pic{{\text{Pic}}}

\def\Num{{\rm{Num}}}

\def\deg{{\text{deg}}}

\def\Num{{\text{Num}}}

\def\I{{\text{I}}}
\def\II{{\text{II}}}
\def\III{{\text{III}}}
\def\IV{{\text{IV}}}
\def\V{{\text{V}}}
\def\VI{{\text{VI}}}
\def\VII{{\text{VII}}}
\def\VIII{{\text{VIII}}}
\def\Pic{{\text{Pic}}}
\def\Char{{\text{char}}}

\def\Fix{{\text{Fix}}}

\def\MW{{\text{MW}}}

\begin{document}
\title [Enriques surfaces with finite automorphism group] {Enriques surfaces with finite automorphism group in positive characteristic}

\author{Gebhard Martin}
\address{Technische Universit\"at M\"unchen, 
Zentrum Mathematik-M11, 
Boltzmannstr. 3, 85748 Garching bei M\"unchen, Germany }
\email{martin@ma.tum.de}

\begin{abstract}
We classify Enriques surfaces with smooth K3 cover and finite automorphism group in arbitrary positive characteristic. The classification is the same as over the complex numbers except that some types are missing in small characteristics. Moreover, we give a complete description of the moduli of these surfaces. Finally, we realize all types of Enriques surfaces with finite automorphism group over the prime fields $\bbF_p$ and $\bbQ$ whenever they exist.
\end{abstract}

\maketitle

{\bf Contents}
\begin{itemize}
\item[\S \ref{sec1}]
Introduction \hfill \pageref{sec1}

\item[\S \ref{prelim}]
Preliminaries \hfill \pageref{prelim}

\item[\S \ref{secI}] 
Enriques surfaces of type $\I$ \hfill \pageref{secI}

\item[\S \ref{secII}] 
Enriques surfaces of type $\II$ \hfill \pageref{secII}

\item[\S \ref{secIII}] 
Enriques surfaces of type $\III$ \hfill \pageref{secIII}

\item[\S \ref{secIV}] 
Enriques surfaces of type $\IV$ \hfill \pageref{secIV}

\item[\S \ref{secV}] 
Enriques surfaces of type $\V$ \hfill \pageref{secV}

\item[\S \ref{secVI}] 
Enriques surfaces of type $\VI$ \hfill \pageref{secVI}

\item[\S \ref{secVII}] 
Enriques surfaces of type $\VII$ \hfill \pageref{secVII}

\item[\S \ref{graphs}]
The classification-theorem \hfill \pageref{graphs}

\item[\S \ref{arithmetic}]
Arithmetic of Enriques surfaces with finite automorphism group \hfill \pageref{arithmetic}

\item[\S \ref{semisection}]
Semi-symplectic automorphisms \hfill \pageref{semisection}
\end{itemize}

\section*{Convention}
Unless mentioned otherwise, we will work over an algebraically closed field $k$ of arbitrary characteristic. 
By Enriques surface we will mean Enriques surface with a smooth K3 cover throughout this paper. This means that we will not be dealing with classical and supersingular Enriques surfaces in characteristic $2$. 

\section{Introduction}\label{sec1}
Classically known as the first examples of non-rational surfaces with $q = p_g = 0$, Enriques surfaces are one of the building blocks of minimal, smooth and projective surfaces of Kodaira dimension $0$. Thus, in the Enriques-Kodaira classification of complex surfaces (see for example \cite{BarthHulek}), they appear next to Abelian, bielliptic, and K3 surfaces to which they are closely related. In positive characteristics, the classification of Bombieri and Mumford \cite{Mumford}, \cite{BombieriMumford2} and \cite{BombieriMumford3} shows that the close relation to K3 surfaces persists unless the characteristic of the base field is $2$. However, in characteristic $2$, three distinct types of Enriques surfaces appear. These types are distinguished by the torsion component of the identity of the Picard scheme $\Pic^{\tau}$, which is one of $\{\bbZ/2\bbZ,\mu_2,\alpha_2\}$, and they are called classical, singular and supersingular, respectively. Among these, only the singular Enriques surfaces admit a smooth canonical K3 cover, whereas the canonical cover $\tilde{X}$ of the other types is only "K3-like", in the sense that $\tilde{X}$ is integral Gorenstein and $\omega_{\tilde{X}} \cong \mathcal{O}_{\tilde{X}}$. However, $\tilde{X}$ might even be non-normal.

Using the period map for complex Enriques surfaces \cite{Horikawa1}, \cite{Horikawa2}, one can construct "a" \cite{GritsenkoHulek} moduli space of unpolarized Enriques surfaces over the complex numbers, which is $10$-dimensional, quasi-affine \cite{Borcherds} and rational \cite{KondoRational}. There is a codimension-one subvariety parametrizing Enriques surfaces containing a $(-2)$-curve, i.e. an irreducible curve with self-intersection $(-2)$, and a codimension-one subvariety of the boundary of the period domain parametrizing Coble surfaces, i.e. smooth rational surfaces $X$ with $|-K_X| = \emptyset$ and $|-2K_X| \neq \emptyset$. Both of these codimension-one subvarieties are rational \cite{DolgachevKondo}. Thus, one expects that a $1$-dimensional family of Enriques surfaces degenerates to a Coble surface at some point. We will also see this kind of behaviour for our examples in positive characteristic.

In positive and mixed characteristic, a similar picture has been established by C. Liedtke in \cite{Liedtke} and T. Ekedahl, J. Hyland and N. Shepherd-Barron in \cite{EkedahlHylShep}: The moduli space of Cossec-Verra polarized Enriques surfaces is a quasi-separated Artin stack of finite type over $\rm{Spec}$ $\bbZ$, which is irreducible, unirational, $10$-dimensional, smooth in odd characteristics and consists of two connected components with these properties in characteristic $2$. These two connected components parametrize singular and classical Enriques surfaces, respectively. Their $9$-dimensional intersection parametrizes supersingular Enriques surfaces. The stack of unpolarized Enriques surfaces is very badly behaved (see \cite[Remark 5.3]{Liedtke}), because the automorphism group of a generic Enriques surface $X$ is an infinite and, unless $X$ is supersingular or an exceptional \cite{EkedahlShep} and classical Enriques surface in characteristic $2$, discrete group.
 
The automorphism group of a general complex Enriques surfaces was computed by W. Barth and C. Peters \cite{BarthPeters}, independently also by V. V. Nikulin \cite{Nikulin2}, and is equal to the $2$-congruence subgroup of the group of positive-cone-preserving automorphisms of the $E_{10}$ lattice. However, an Enriques surface may acquire additional $(-2)$-curves under specializations, causing the automorphism group to become smaller. Therefore, it is a natural question whether this group can degenerate to a finite group.

In 1984, I. Dolgachev \cite{Dolgachev} found an example (type $\I$) of an Enriques surface with finite automorphism group and later it was discovered that G. Fano \citep{Fano} had also found an example (type $\VII$) as early as in 1910, although the automorphism group is not $\mathfrak{S}_3$ as Fano claimed, but $\mathfrak{S}_5$ (see \cite[p.191]{Kondo}). The  full classification of Enriques surfaces with finite automorphism group over the complex numbers was then carried out by Nikulin \cite{Nikulin} in terms of their root invariants and by S. Kond\=o \cite{Kondo} using elliptic fibrations. 
There are seven types $\I,\hdots,\VII$ of such Enriques surfaces, distinguished by their dual graphs of $(-2)$-curves, the first two of which form a $1$-dimensional family and the others are unique \cite{Kondo}.

The key observation for Nikulin's approach to the classification is the fact that for a complex Enriques surface $X$ the subgroup $W_X \subseteq \rm{O}(\Num(X))$ generated by reflections along classes of $(-2)$-curves has finite index if and only if $\Aut(X)$ is finite. However, while in any characteristic $W_X$ being of finite index in $\rm{O}(\Num(X))$ implies that the automorphism group $\Aut(X)$ is finite \cite[Main Theorem]{Dolgachev}, the converse uses the Global Torelli Theorem proven by E. Horikawa \cite{Horikawa1}, \cite{Horikawa2}, which is not available in positive characteristic. For this reason, we will not pursue Nikulin's approach. Nevertheless, it will follow from our explicit classification that $\Aut(X)$ being finite implies that $W_X \subseteq \rm{O}(\Num(X))$ has finite index.

Kond\=o's approach is based on the observation -- due to Dolgachev \cite[\S 4]{Dolgachev} -- that the Mordell-Weil group of the Jacobian of every elliptic fibration of an Enriques surface $X$ acts on $X$, hence it has to be finite if we want $X$ to have finite automorphism group. Using this approach, we will obtain the classification of Enriques surfaces with finite automorphism group and smooth K3 cover in positive characteristic. Recall that the K3 cover of an Enriques surface $X$ is smooth if and only if $\Char(k) \neq 2$ or $X$ is a singular Enriques surface, i.e. $\Pic^{\tau}(X) \cong \mu_2$.

\begin{namedthm*}{Main Theorem}[Classification]\label{classification}
Let $X$ be an Enriques surface with smooth K3 cover over an algebraically closed field $k$.
\begin{enumerate}
\item $X$ has finite automorphism group if and only if the dual graph of all $(-2)$-curves on $X$ is one of the seven dual graphs in Table \ref{main}.
\item The automorphism groups, the characteristics in which they exist, and the moduli of Enriques surfaces of each of the seven types are as in Table \ref{main}.
\end{enumerate}


\begin{table}[!htb]
\centering
\begin{tabular}{|>{\centering\arraybackslash}m{1cm}|>{\centering\arraybackslash}m{6cm}|>{\centering\arraybackslash}m{2cm}|>{\centering\arraybackslash}m{1.5cm}|>{\centering\arraybackslash}m{1.5cm}|>{\centering\arraybackslash}m{2cm}|}
\hline
\rm{Type} & \rm{Dual Graph of $(-2)$-curves} & \rm{Aut} & \rm{$\Aut_{nt}$} &\rm{$\Char(k)$} & \rm{Moduli}\\
\hline \hline
\rm{I} & \vspace{1mm} \includegraphics[width=35mm]{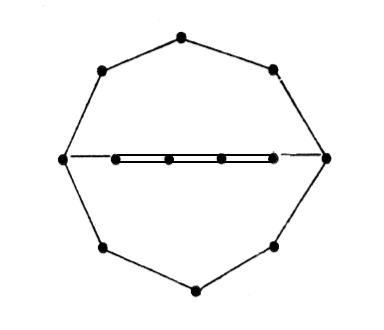} & $D_4 $& $\bbZ/2\bbZ$ & any & $\bbA^1 - \{0,-256\}$ \\ \hline
\rm{II} & \vspace{1mm} \includegraphics[width=35mm]{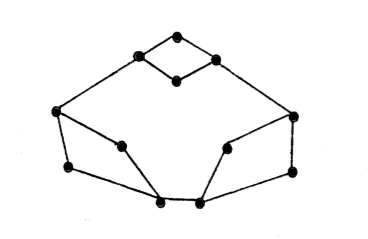} & $\mathfrak{S}_4 $& $\{1\}$ & any & $\bbA^1 - \{0,-64\}$ \\  \hline
\rm{III} & \vspace{1mm} \includegraphics[width=55mm]{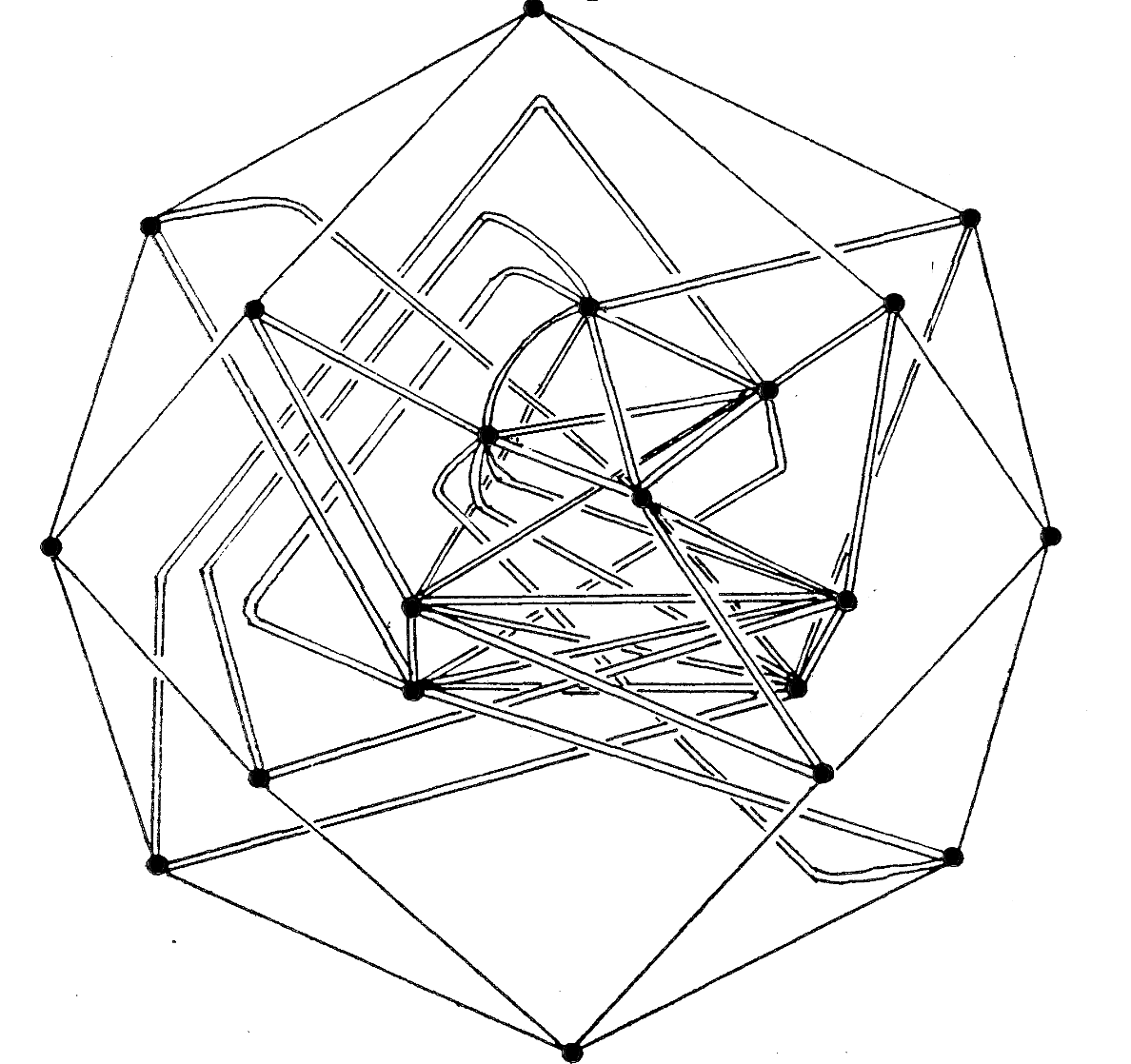} & $ (\bbZ/4\bbZ \times (\bbZ/2\bbZ)^2) \rtimes D_4 $ & $\bbZ/2\bbZ$ & $\neq 2$ & unique \\  \hline
\rm{IV} & \vspace{1mm} \includegraphics[width=55mm]{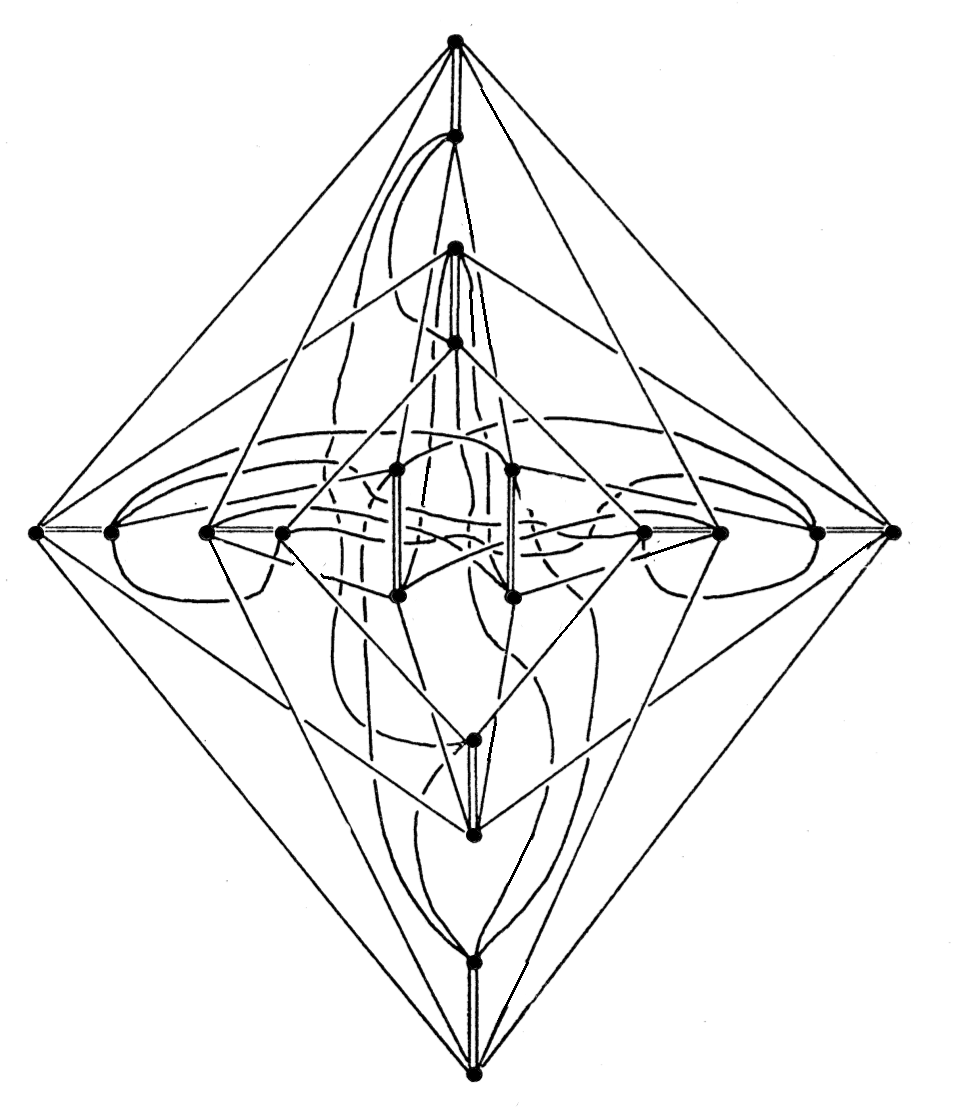} &  $(\bbZ/2\bbZ)^4 \rtimes (\bbZ/5\bbZ \rtimes \bbZ/4\bbZ)$ & $\{1\}$ & $\neq 2$ & unique \\  \hline
\end{tabular}
\end{table}

\begin{table}[!htbp]
\begin{tabular}{|>{\centering\arraybackslash}m{1cm}|>{\centering\arraybackslash}m{6cm}|>{\centering\arraybackslash}m{2cm}|>{\centering\arraybackslash}m{1.5cm}|>{\centering\arraybackslash}m{1.5cm}|>{\centering\arraybackslash}m{2cm}|} \hline
\rm{V} & \vspace{1mm} \includegraphics[width=60mm]{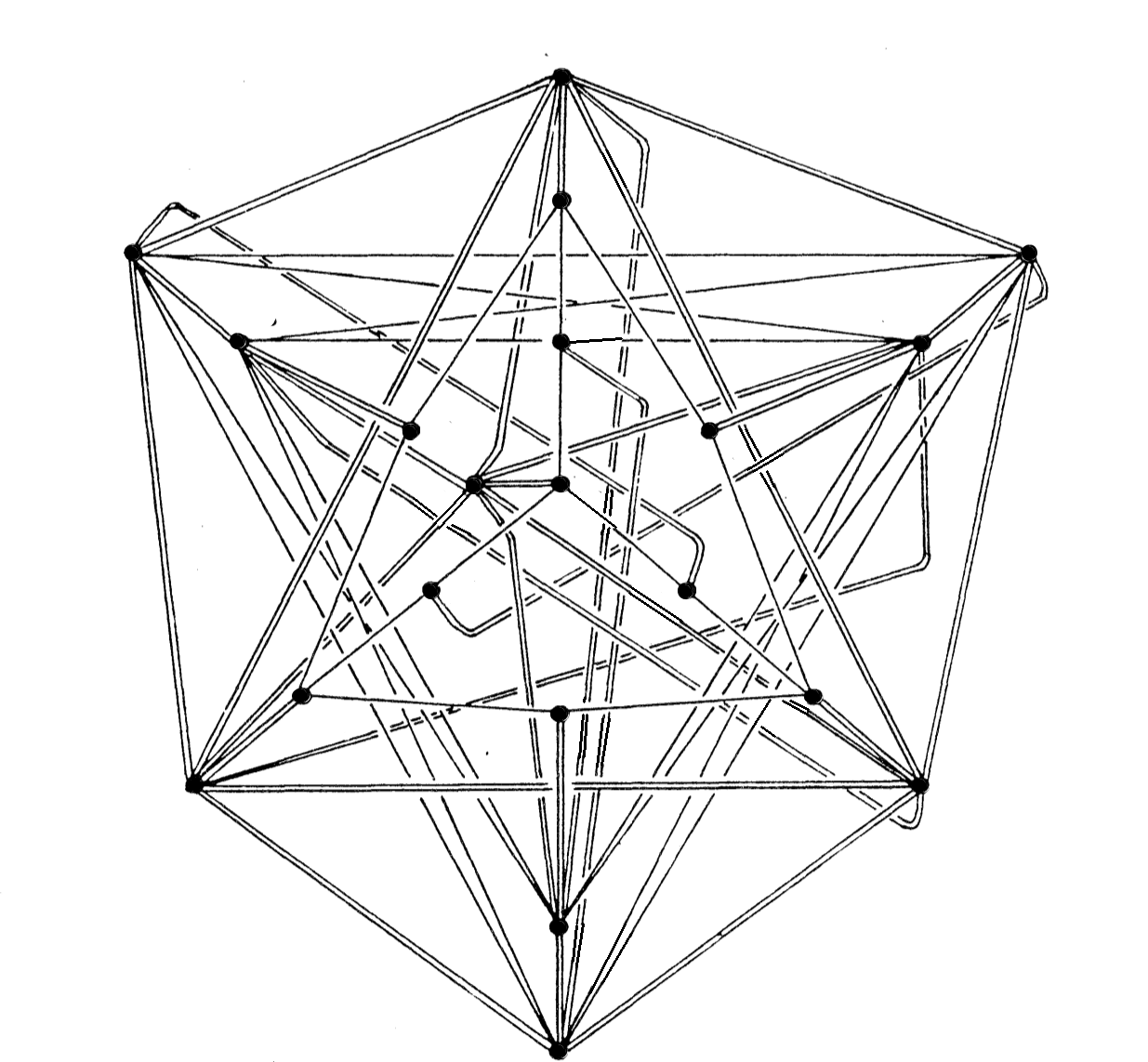} & $\mathfrak{S}_4 \times \bbZ/2\bbZ$ & $\bbZ/2\bbZ$ & $\neq 2,3$ & unique \\  \hline
\rm{VI} & \vspace{1mm} \includegraphics[width=60mm]{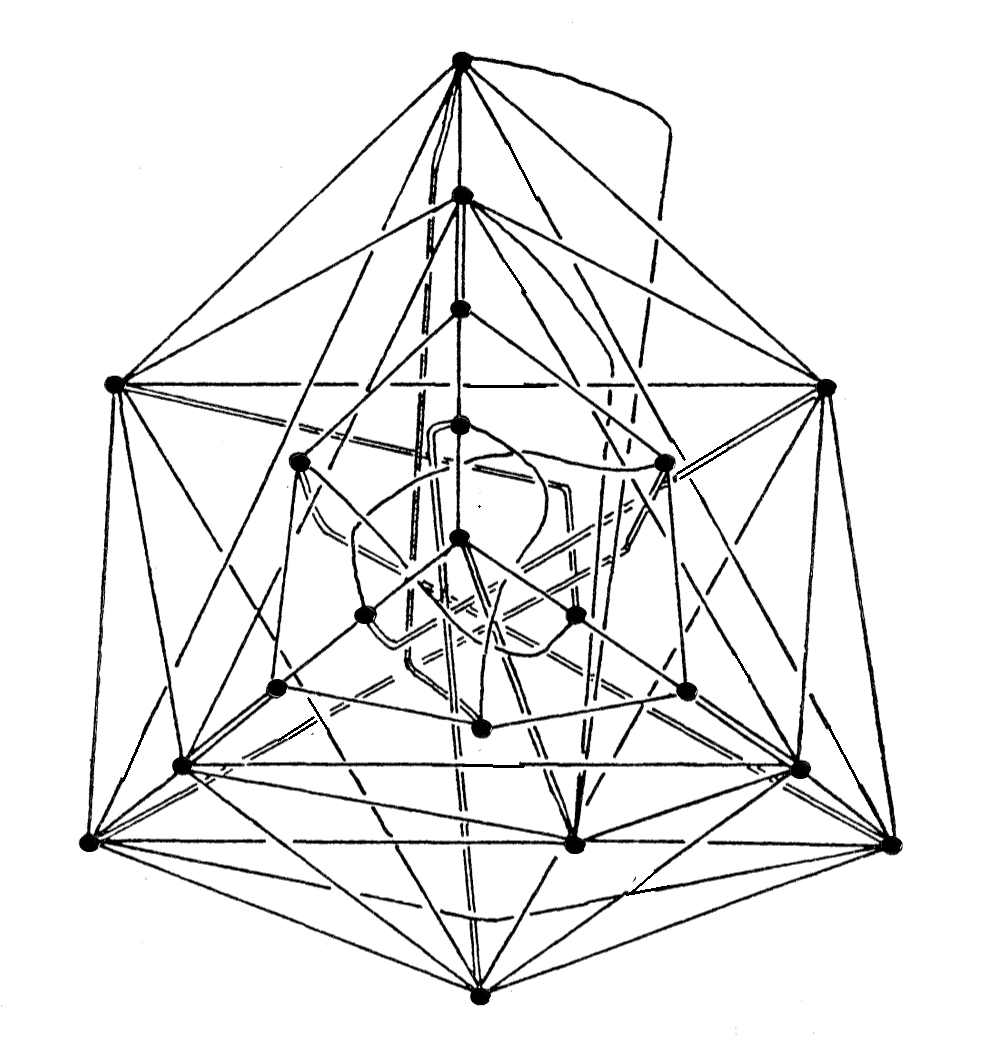} & $\mathfrak{S}_5$ & $\{1\}$ & $\neq 3,5$ & unique \\  \hline
\rm{VII} & \vspace{1mm} \includegraphics[width=60mm]{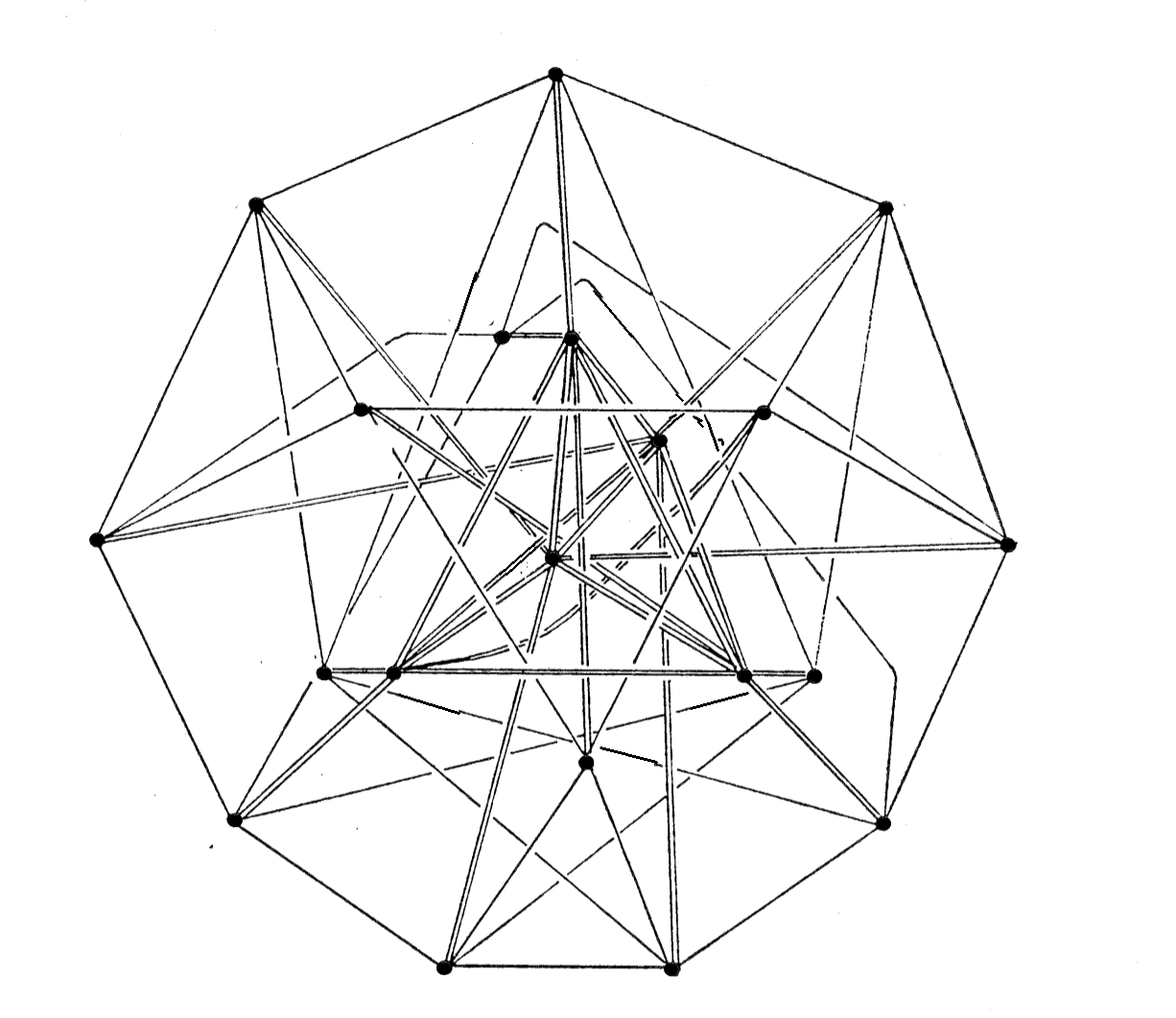} & $\mathfrak{S}_5$ & $\{1\}$ & $\neq 2,5$ & unique \\  \hline
\end{tabular}
\caption{Classification}
\label{main}
\end{table}
\end{namedthm*}

In Table \ref{main}, $\mathfrak{S}_n$ is the symmetric group on $n$ letters, $D_4$ is the dihedral group of order $8$, and for two groups $N$ and $H$, $N \rtimes H$ denotes a semi-direct product of $N$ and $H$.
%

In characteristic $2$, the search for Enriques surfaces with finite automorphism group has been started recently by T. Katsura and S. Kond\=o \cite{KatsuraKondo}. There, the question of existence of the seven types in characteristic $2$ was settled. Our classification shows that the examples of singular Enriques surfaces with finite automorphism group in \cite{KatsuraKondo} are in fact all possible examples of such surfaces. For the classification of classical and supersingular Enriques surfaces with finite automorphism group in characteristic $2$, we refer the reader to \cite{KatsuraKondoMartin}.

\clearpage
\begin{remark*}
As an application of our classification, we determine the semi-symplectic parts of the automorphism groups of Enriques surfaces with finite automorphism group. For the precise statement, we refer the reader to Theorem \ref{semisymplecticthm} and Table \ref{semisymplectic}.
\end{remark*}
Even though our approach to the classification of possible dual graphs is similar to the one of Kond\=o in many aspects, we will encounter several obstacles due to the lack of a Torelli Theorem, the existence of different finite order automorphisms of K3 surfaces \cite{DolgachevKeum1} and a different list of extremal and rational elliptic surfaces in small characteristics \cite{Lang1}, \cite{Lang2}.
We overcome these problems by extending Kond\=o's universal base change construction \cite[Lemma 2.6]{Kondo} of Enriques surfaces with special elliptic fibration $\pi$ (i.e. an elliptic fibration with a $(-2)$-curve as bisection, see Definition \ref{special}) to arbitrary characteristic and by using the explicit description of the covering involution of the canonical cover $\tilde{X}$ of $X$ to obtain a map
\begin{equation*}
jac_2: \MW(J(\pi)) \to \{\text{ special bisections of } \pi \hspace{1mm} \}
\end{equation*}
producing new $(-2)$-curves on $X$ from sections of the Jacobian $J(\pi)$ of $\pi$. Moreover, we exhibit "critical" subgraphs, which are dual graphs of singular fibers of a special elliptic fibration $\pi$ on $X$ together with some special bisection $N$, for each of Kond\=o's seven types and we show that an Enriques surface whose dual graph of all $(-2)$-curves contains such a diagram is one of the seven types. Therefore, we can use the universal base change construction to construct $\pi$ and $N$ and hence the Enriques surface itself. Since the base change construction is universal, we can give an explicit description of the moduli of Enriques surfaces with finite automorphism group. Finally, the equations we give can actually be interpreted as integral models of these Enriques surfaces and some of them were found using the integral models of extremal and rational elliptic surfaces of T. Jarvis, W. Lang and J. Ricks \cite{Lang3}. 


As we have just mentioned, a closer look at our equations reveals that they do in fact define integral models of these surfaces in the following sense.

\begin{namedthm*}{Theorem 11.3}[Integral models]
Let $K \in \{\I,\hdots,\VII\}$ and $P_K$ be as in Table \ref{integralmodelstable}. There is a family $\varphi_K: \mathcal{X} \to \rm{Spec}(\bbZ[\frac{1}{P_K}])$ whose fibers are Enriques surfaces of type $K$ with Picard rank $10$.

\begin{table}[!htb]
\centering
\begin{tabular}{|>{\centering\arraybackslash}m{2.5cm}|>{\centering\arraybackslash}m{6cm}|}
\hline
\rm{Type} & $\rm{P_K}$ \\
\hline \hline
\rm{I} &  $255, 257$ \\ \hline
\rm{II} &  $63, 65$ \\  \hline
\rm{III} & $2$ \\  \hline
\rm{IV} & $2$ \\  \hline
\rm{V} &  $6$ \\  \hline
\rm{VI} &  $15$ \\  \hline
\rm{VII} &  $10$ \\  \hline
\end{tabular}
\caption{Integral models}
\label{integralmodelstable}
\end{table}
\end{namedthm*}

Note that for $K \neq \I,\II$, $P_K$ is exactly the product over the characteristics where type $K$ does not exist. If $K = \I,\II$, we give two integral models to obtain the following corollary, which solves the existence of the seven types over arbitrary fields.
\begin{namedthm*}{Corollary 11.5}
Suppose that there exists an Enriques surface of type $K \in \{\I,\hdots,\VII\}$ in characteristic $p$. Then, there exists an Enriques surface of type $K$ with Picard rank $10$ over $\bbF_p$ (resp. over $\bbQ$ if $p = 0$).
\end{namedthm*}

Moreover, we exhibit special generators of the automorphism groups of Enriques surfaces with finite automorphism group, leading to our third result.

\begin{namedthm*}{Theorem 11.6}
Let $X$ be an Enriques surface of type $K \in \{\I,\hdots,\VII\}$ over a field $k$ such that $\Pic(X) = \Pic(X_{\bar{k}})$. 
\begin{itemize}
\item If $K \neq \III,\IV$, then $\Aut(X)$ is defined over $k$.
\item If $K = \III$, then $\Aut(X)$ is defined over $L \supseteq k$ with $[L:k] \leq 2$.
\item If $K = \IV$, then $\Aut(X)$ is defined over $L \supseteq k$ with $[L:k] \leq 16$. 
\end{itemize}
\end{namedthm*}

Let us explain the structure of the paper.
In \S $2$, we extend Kond\=o's base change construction to positive characteristic after recalling several facts on Enriques surfaces and elliptic fibrations. In \S $3,\hdots,$ \S $9$, we construct Enriques surfaces of types $\I,\hdots,\VII$ and compute their automorphism groups as well as their moduli. After that, in \S $10$, we classify the dual graphs of Enriques surfaces with finite automorphism group, finishing the proof of our Main Theorem. In \S $11$, we explain how to obtain information on the arithmetic of these surfaces and in \S $12$, we give the list of semi-symplectic automorphism groups of Enriques surfaces with finite automorphism group.
 
\medskip
\noindent
{\bf Acknowledgement.}
It is a pleasure for me to thank my Ph.D. advisor C. Liedtke for suggesting this research topic, for his support and helpful discussions and S. Kond\=o for his permission to use his beautiful figures of the dual graphs of Enriques surfaces of type $\I,\hdots,\VII$. Furthermore, I would like to thank I. Dolgachev for many interesting discussions on Enriques surfaces and for explaining the occurrence of Coble surfaces in the context of Enriques surfaces. Research of the author is supported by the DFG Sachbeihilfe LI 1906/3 - 1 "Automorphismen von Enriques Fl\"achen".

\clearpage
\section{Preliminaries}\label{prelim}

\subsection{Generalities on Enriques surfaces, dual graphs and elliptic fibrations}
Here we recall some basic facts about Enriques surfaces, clarify our terminology, and refer the reader to \cite{CossecDolgachev} for proofs and to \cite{Silverman2} for anything related to elliptic curves. In the first ten sections, we will be working over an algebraically closed field $k$.

\begin{definition}

A \emph{K3 surface} is a smooth, projective surface $\tilde{X}$ over $k$ with $\omega_{\tilde{X}} \cong \mathcal{O}_{\tilde{X}}$ and $\mathrm{H}^1(\tilde{X},\mathcal{O}_{\tilde{X}}) = 0$.
An \emph{Enriques surface} $X$ with smooth K3 cover is the quotient of a K3 surface by a fixed point free involution $\sigma$. We call the K3 surface $\tilde{X}$ with $\tilde{X}/\sigma = X$ the \emph{canonical cover} or \emph{K3 cover} of $X$.
\end{definition}

\begin{convention}
From now on, we will drop the "with smooth K3 cover" and we will always assume that the Enriques surfaces we talk about have such a cover.
\end{convention}

\begin{definition}
An \emph{elliptic fibration} (with base curve $\bbP^1$) of a smooth surface $\tilde{X}$ is a surjective morphism $\tilde{\pi}: \tilde{X} \to \bbP^1$ such that almost all fibers are smooth genus $1$ curves, $\tilde{\pi}_* \mathcal{O}_{\tilde{X}} = \mathcal{O}_{\bbP^1}$ and no fiber contains a $(-1)$-curve. We do not require that $\tilde{\pi}$ has a section.
\end{definition}

\begin{proposition}{\rm (Bombieri and Mumford \cite[Theorem 3]{BombieriMumford3})}\label{ellipticpencil}
Every Enriques surface admits an elliptic fibration.
\end{proposition}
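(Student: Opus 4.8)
The plan is to build an elliptic fibration from a nef, primitive, isotropic divisor class; producing such a class is pure lattice theory, and the real content lies in analysing the associated linear system. Recall from \cite{CossecDolgachev} that for every Enriques surface $X$ the lattice $\Num(X)$ is the even unimodular lattice of signature $(1,9)$, so $\Num(X) \cong U \oplus E_8$ with $U$ a hyperbolic plane; in particular it contains a primitive isotropic vector $f$, e.g.\ a generator of the summand $U$. Replacing $f$ by $-f$ if necessary, I may assume $f$ lies in the closure of the component of the positive cone of $\Num(X)$ that contains the ample classes. Since $K_X$ is $2$-torsion, it is numerically trivial, so adjunction forces every irreducible curve $C$ on $X$ to satisfy $C^2 = 2p_a(C) - 2 \geq -2$; in particular $X$ contains no $(-1)$-curves, and the only walls of the chamber decomposition of the positive cone are those orthogonal to $(-2)$-classes. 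Using this decomposition by the Weyl group generated by reflections in $(-2)$-classes, there is an element $w$ of that group with $F := w(f)$ nef. Thus $F$ is nef, primitive, isotropic and nonzero.

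Next I would pass from $F$ to a morphism. Riemann--Roch gives $\chi(\mathcal{O}_X(F)) = \chi(\mathcal{O}_X) + \tfrac{1}{2}(F^2 - F\cdot K_X) = 1$, so by Serre duality $h^0(\mathcal{O}_X(F)) + h^0(\mathcal{O}_X(K_X - F)) \geq 1$; as $F$ is nef and nonzero, $(K_X - F)\cdot H < 0$ for an ample $H$, so $K_X - F$ is not effective and hence $F$ is effective. By adjunction $2p_a(F) - 2 = F^2 + F\cdot K_X = 0$, so every member of $|F|$ has arithmetic genus $1$. The remaining, and substantial, point is that for such an $F$ the linear system $|2F|$ is base-point free of dimension $1$ and the induced morphism $\varphi\colon X \to \bbP^1$ satisfies $\varphi_*\mathcal{O}_X = \mathcal{O}_{\bbP^1}$ and has connected fibers of arithmetic genus $1$, with $F$ itself a half-fiber or a fiber of $\varphi$ according as $\dim|F|$ is $0$ or $1$; minimality of $X$ then forbids $(-1)$-curves in the fibers, so $\varphi$ is the desired fibration. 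This is precisely \cite[Theorem 3]{BombieriMumford3}, and can also be extracted from the general theory of genus-one fibrations on Enriques surfaces in \cite{CossecDolgachev}.

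The main obstacle is exactly this last step: controlling the linear system. One must show that $|F|$ acquires no fixed part once $F$ is nef, that $|2F|$ is a pencil rather than of higher dimension, and that its generic member is integral — this needs the finer numerical geometry of Enriques surfaces (for instance that a nef isotropic $F$ is indivisible as used, and a discussion of possible fixed components in terms of the $(-2)$-curves meeting $F$). A further subtlety, needed to match the definition of \emph{elliptic} fibration adopted here, is ruling out that $\varphi$ is quasi-elliptic; this is automatic in characteristic $\neq 2,3$, and since we assume the K3 cover of $X$ is smooth the remaining cases are handled using the structure of genus-one fibrations on $X$ and its cover. By contrast, the lattice-theoretic production of $F$ in the first two paragraphs is routine and characteristic-free.
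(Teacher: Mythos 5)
The paper offers no proof of this proposition — it is imported verbatim from Bombieri–Mumford — and your sketch reproduces the standard argument from that source and from Cossec–Dolgachev: produce a nef primitive isotropic class in $\Num(X) \cong U \oplus E_8$ via the Weyl group action on the positive cone, deduce effectivity from Riemann--Roch and the numerical triviality of $K_X$, and then analyse $|2F|$. Your outline is correct, and it correctly locates the genuine content in the final linear-system step (base-point freeness of $|2F|$, integrality of the generic member, exclusion of the quasi-elliptic case), which you — reasonably, for a result the paper itself only cites — defer to those same references.
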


The reason why we do not assume that elliptic fibrations have a section is that this is never the case for Enriques surfaces:


\begin{proposition}{\rm(Cossec and Dolgachev \cite[Theorem 5.7.2, Theorem 5.7.5, Theorem 5.7.6]{CossecDolgachev})}\label{typeofdoublefiber}
Let $\pi$ be an elliptic fibration of an Enriques surfaces. Then,
\begin{itemize}
\item if $\Char(k) \neq 2$, $\pi$ has exactly two tame double fibers, both of which are either of multiplicative type or smooth, and
\item if $\Char(k) = 2$, $\pi$ has exactly one wild double fiber, which is either of multiplicative type or a smooth ordinary elliptic curve.
\end{itemize}
\end{proposition}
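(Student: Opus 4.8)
The plan is to read off both the number and the type of the multiple fibres of $\pi$ from the canonical bundle formula together with an analysis of the K3 cover. Write $F$ for the class of a fibre of $\pi$ and let $m_i P_i$ denote its multiple fibres, with $P_i$ the reduced curve and $m_i$ the multiplicity. The canonical bundle formula for a relatively minimal genus one fibration reads
\[
\omega_X \;\cong\; \pi^*\calL \otimes \mathcal{O}_X\Big(\textstyle\sum_i a_i P_i\Big),
\]
where $\calL$ is a line bundle on $\bbP^1$ with $\deg\calL = \chi(\mathcal{O}_X)-2 = -1$ if every $m_iP_i$ is tame, and where $a_i = m_i-1$ for a tame fibre while $a_i > m_i-1$ for a wild one. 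Restricting to a general fibre $F_t$, which is disjoint from all the $P_i$, gives $\omega_X|_{F_t}\cong\mathcal{O}_{F_t}$; this will be used to control how the étale double cover $\tilde X\to X$ lies over $\bbP^1$.

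Assume first $\Char(k)\neq 2$. Then there are no wild fibres, $a_i = m_i-1$, and $\deg\calL = -1$. Since $\omega_X$ is numerically trivial and $P_i\equiv\frac{1}{m_i}F$ in $\Num(X)_{\bbQ}$, comparing numerical classes in the formula yields $\sum_i\big(1-\frac{1}{m_i}\big)=1$. As each summand is at least $\frac12$, there are exactly two multiple fibres and $m_1=m_2=2$, and these are tame because $2$ is prime to $\Char(k)$. This establishes the statement in odd characteristic apart from the claim on fibre types, dealt with below.

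For $\Char(k)=2$ the wild fibre makes the bookkeeping in the formula awkward, so I would instead argue via the K3 cover $\rho\colon\tilde X\to X$, an étale $\bbZ/2\bbZ$-cover with $\tilde X$ a K3 surface. Because the class defining $\rho$ restricts trivially to a general fibre of $\pi$ (an Artin--Schreier class in $H^1(X,\bbZ/2\bbZ)$ concentrated in the torsion of $R^1\pi_*\mathcal{O}_X$, hence vanishing over the generic point of $\bbP^1$), the composite $\tilde X\to X\xrightarrow{\pi}\bbP^1$ has disconnected general fibre, and its Stein factorization has the form $\tilde X\xrightarrow{\tilde\pi_0}\bbP^1\xrightarrow{\nu}\bbP^1$ with $\deg\nu = 2$. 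Now $\tilde\pi_0$ is an elliptic fibration on the K3 surface $\tilde X$; relative duality gives $R^1\tilde\pi_{0*}\mathcal{O}_{\tilde X}\cong\mathcal{O}_{\bbP^1}(-2)$, which is torsion free, so $\tilde\pi_0$ has no wild fibre and, by the tame formula on $\tilde X$, no multiple fibre at all. Comparing $\pi^* t$ with $\tilde\pi_0^*\nu^* t$ through the étale map $\rho$ then shows that $\pi^{-1}(t)$ is multiple precisely when $t$ lies below the branch locus of $\nu$, with multiplicity exactly $2$ in that case. The covering involution of $\rho$ descends to a nontrivial involution $\bar\sigma$ of the intermediate $\bbP^1$ with $\nu$ its quotient; a nontrivial involution of $\bbP^1$ in characteristic $2$ has exactly one fixed point, so $\pi$ has exactly one double fibre, and it must be wild, since otherwise the numerical identity of the previous paragraph would apply and force $1-\frac12 = 1$. (Run in characteristic $\neq 2$, where $\bar\sigma$ has two fixed points, this also reproves the count there.)

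It remains to identify the types of the double fibres. Here I would observe that each double fibre $P$ carries a \emph{connected} étale double cover, namely $\tilde\pi_0^{-1}(\nu^{-1}(t))\to P$, coming from the free action of the covering involution on the fibre of $\tilde\pi_0$ over the branch point. The additive Kodaira fibres ($\II,\III,\IV,\I_n^{*},\II^{*},\III^{*},\IV^{*}$) are simply connected --- their reduced curves are trees of $\bbP^1$'s, or a cuspidal cubic --- so none of them can occur, whence $P$ is of multiplicative type or smooth, with connected double cover $\I_{2n}$ or a smooth curve accordingly. In characteristic $2$ a smooth genus one curve admits a connected étale double cover if and only if it is ordinary, so in that case $P$ is a smooth ordinary elliptic curve, which completes the statement. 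I expect the main obstacle to be precisely this characteristic $2$ analysis: verifying that the canonical cover is étale and that $\tilde\pi_0$ attains connected fibres after a single, necessarily wildly ramified, base change --- equivalently, carrying the wild version of the canonical bundle formula through by hand --- is where the real work lies, while the numerical count in odd characteristic, the fixed-point count for $\bar\sigma$, and the fundamental group argument for the fibre types are then formal.
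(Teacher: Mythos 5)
The paper offers no proof of this proposition --- it is imported wholesale from Cossec--Dolgachev --- so there is nothing in the text to measure you against; judged on its own terms, your reconstruction follows the standard route and is essentially correct. The two pillars are exactly the right ones: the canonical bundle formula together with $K_X\equiv 0$ and $\deg\calL=\chi(\mathcal{O}_X)-2=-1$ gives $\sum_i(1-\tfrac{1}{m_i})=1$ and hence exactly two double fibres in the tame situation; and the Stein factorization $\tilde X\to C\to\bbP^1$ of the composition with the \'etale K3 cover identifies the multiple fibres with the fibres over the branch points of $\nu\colon C\to\bbP^1$, which is the quotient by a nontrivial involution of $\bbP^1$ and so has two branch points for $p\neq 2$ and a single wildly ramified one for $p=2$. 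The fibre-type identification via the connected \'etale double cover of the half-fibre (killing the simply connected additive types, and the supersingular smooth fibre when $p=2$) is clean and correct. I would add that your cohomological argument for the disconnectedness of the general fibre of $\tilde X\to\bbP^1$ in characteristic $2$ (the Artin--Schreier class sits in $\ker(F-1)\subset \mathrm{H}^1(X,\mathcal{O}_X)\cong \mathrm{H}^0(T)$, which dies at the generic point, and $\mathrm{H}^1(X_{\bar\eta},\bbZ/2\bbZ)$ injects into $\mathrm{H}^1(X_{\bar\eta},\mathcal{O})$) is the right move here: the splitting argument the paper gives later in Lemma \ref{universal} relies on this very proposition, so it could not be borrowed without circularity.

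Two points need tightening. First, your canonical bundle formula mixes two normalizations: if $\deg\calL=-1$ only ``when every multiple fibre is tame,'' you have left $\deg\calL$ undefined otherwise, while the inequality $a_i>m_i-1$ for wild fibres belongs to the other convention, in which $\deg\calL=-1$ always and the torsion $T$ of $R^1\pi_*\mathcal{O}_X$ has been pushed into the coefficients $a_i$; in Bombieri--Mumford's normalization one has $\deg\calL=-1+\mathrm{length}(T)$ and $0\le a_i\le m_i-1$. Second, ``there are no wild fibres in characteristic $\neq 2$'' is asserted rather than proved: wildness forces $p\mid m_i$, so for odd $p$ you must still exclude a wild fibre of multiplicity divisible by $p$. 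This is a one-line repair in the Bombieri--Mumford normalization: a wild fibre gives $\mathrm{length}(T)\ge 1$, hence $\sum_i a_i/m_i=1-\mathrm{length}(T)\le 0$, forcing every $a_i=0$ and $\deg\calL=0$, i.e.\ $\omega_X\cong\mathcal{O}_X$, which is impossible for an Enriques surface in characteristic $\neq 2$ since $p_g=0$. With those two adjustments the argument is complete.
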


\begin{remark}
Since being supersingular is an isogeny-invariant, one can check the type of the double fiber on the K3 cover.
\end{remark}
Therefore, the intersection number of any curve with a fiber of an elliptic fibration of an Enriques surface is even. Thus, the best approximation to a section will be a bisection.

\begin{definition}\label{special}
Let $N$ be an irreducible curve on an Enriques surface $X$ and let $\pi$ be an elliptic fibration of $X$.
\begin{itemize}
\item $N$ is a \emph{$(-2)$-curve} if $N^2 = -2$. Equivalently, $N \cong \bbP^1$.
\item $N$ is a \emph{special bisection} of $\pi$ if $N$ is a $(-2)$-curve with $F.N = 2$, where $F$ is a general fiber of $\pi$.
\item If $\pi$ admits a special bisection, we call $\pi$ \emph{special}.
\end{itemize}
\end{definition}

In fact, special elliptic fibrations are much more common than one might think. More precisely, we have the following result of F. Cossec, which was shown by W. Lang also to hold in characteristic $2$.

\begin{proposition}{\rm(Cossec \cite[Theorem 4]{Cossec}, Lang \cite[Theorem A3]{LangE})}\label{nodalisspecial}
An Enriques surface contains a $(-2)$-curve if and only if it admits a special elliptic fibration.
\end{proposition}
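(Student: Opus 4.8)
The implication ``$X$ admits a special elliptic fibration $\Rightarrow$ $X$ contains a $(-2)$-curve'' is immediate from the definitions: a special fibration has a special bisection, which is a $(-2)$-curve. So the content is the converse; assume $X$ contains a $(-2)$-curve $R$. I want to produce a genus-one fibration $\pi$ together with a $(-2)$-curve $N$ with $N\cdot F=2$ for $F$ a general fibre. It is convenient to work with half-fibres: by the standard theory of nef isotropic classes on Enriques surfaces \cite{CossecDolgachev}, a primitive nef class $f\in\Num(X)$ with $f^2=0$ has $|2f|$ an elliptic pencil $\pi_f$ with half-fibre $f$, and every elliptic fibration arises this way. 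Hence it suffices to find such an $f$ and a $(-2)$-curve $N$ with $f\cdot N=1$, since then $N\cdot F=N\cdot 2f=2$.

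First the lattice input. Recall that $\Num(X)\cong E_{10}\cong U\oplus E_8$ in every characteristic. Writing $U=\la e,f\ra$ with $e^2=f^2=0$, $e\cdot f=1$, the vector $e-f$ has square $-2$ and $f\cdot(e-f)=1$. Since $\mathrm{O}(E_{10})$ acts transitively on the set of $(-2)$-classes (all of which are primitive of divisibility $1$; e.g.\ by Eichler's criterion, as $E_{10}$ is unimodular and contains $U$ as a direct summand), there is an isometry carrying $e-f$ to $[R]$, and it carries $f$ to a primitive isotropic class $w$ with $w\cdot[R]=1$. Replacing $w$ by $-w$ if necessary we may assume $w$ lies in the closure of the positive cone containing the ample classes (this turns $w\cdot[R]$ into $\pm 1$). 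Since the closure of the nef cone is a fundamental domain for the action of the Weyl group $W_X$ generated by reflections in the $(-2)$-curves on that closed positive cone, there is $\varphi\in W_X$ with $f:=\varphi(w)$ nef; then $f$ is primitive isotropic, $\pi:=\pi_f$ is a genus-one pencil, and $f\cdot\varphi([R])=\pm 1$.

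Now I pass back from the lattice to honest curves. Put $v:=\pm\varphi([R])$, with the sign chosen so that $f\cdot v=1$; then $v^2=-2$, so $v+f$ is primitive, isotropic, with $f\cdot(v+f)=1$. By Riemann--Roch on $X$ one has $\chi(\calO_X(D))=1$ whenever $D^2=0$, so one of $\pm(v+f)$ is represented by an effective divisor $D$; as $f$ is nef and $f\cdot D=\pm 1$, in fact $f\cdot D=1$ and $D\equiv v+f$. Decomposing $D=\sum m_iC_i$ into irreducible components and using $1=f\cdot D=\sum m_i(f\cdot C_i)$ with all $f\cdot C_i\ge 0$, exactly one component $B$ occurs with multiplicity $1$ and satisfies $f\cdot B=1$, all others being vertical for $\pi$. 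Thus $B$ is a bisection of $\pi$, and $B^2=2p_a(B)-2\ge-2$ by adjunction. If $p_a(B)=0$, then $B$ is a $(-2)$-curve and $N:=B$ finishes the argument. The remaining case $p_a(B)\ge 1$ is the genuine difficulty: one must still extract an \emph{irreducible rational} bisection of some elliptic pencil. This is the heart of Cossec's theorem and is carried out using the finer structure of the nef cone of $X$ and the classification of projective models of Enriques surfaces; conceptually it can be phrased through the Jacobian $J(\pi)$, which is a rational elliptic surface, since a nonzero element of $\MW(J(\pi))$ yields a special bisection of $\pi$ by a base-change construction of the kind developed in this paper (see \S\ref{prelim}), so what one really needs is that some elliptic pencil on $X$ has Jacobian of positive Mordell--Weil rank, which is fed back to the presence of $R$ via Shioda--Tate.

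The main obstacle is therefore precisely this last upgrade --- turning a bisection curve, or a lattice-theoretic ``section class'', into an honest $(-2)$-curve bisection --- and, on top of it, making the whole argument work in characteristic $2$, where the projective-model techniques are not directly available and the double fibre behaves differently (it is wild), so that one follows instead Lang's direct analysis of elliptic pencils on Enriques surfaces in characteristic $2$. Granting the existence of the special bisection $N$ and its pencil $\pi=\pi_f$, the equality $N\cdot F=2$ shows that $\pi$ is special, completing the proof.
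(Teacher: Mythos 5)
The paper does not prove this proposition at all: it is imported as a black box from Cossec \cite[Theorem 4]{Cossec} and, for characteristic $2$, from Lang \cite[Theorem A3]{LangE}. Your write-up is therefore not competing with an argument in the paper, and its first half is a reasonable and correct reduction: the forward direction is trivial, the lattice manipulation producing a nef primitive isotropic $f$ with $f\cdot v=1$ for a $(-2)$-class $v$ is standard, and the extraction of an irreducible bisection $B$ of $|2f|$ from an effective representative of $v+f$ is fine. But you then state explicitly that the case $p_a(B)\ge 1$ --- i.e.\ upgrading ``some bisection'' to an \emph{irreducible rational} bisection, which is the entire content of the theorem --- is ``the heart of Cossec's theorem'' and you do not carry it out. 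So as a proof this has a genuine gap: it is a reduction to the cited result, not a proof of it. That is the honest thing to say, but it should be flagged as such rather than presented as a proof.

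One further point: the ``conceptual'' alternative you sketch at the end is not viable as stated. The base-change construction of \S\ref{prelim} (Lemma \ref{universal} and Corollary \ref{jac2}) takes as \emph{input} a special elliptic fibration together with a chosen special bisection $N$, which splits into the two sections $N^{\pm}$ on the K3 cover; only then does $jac_2$ convert sections of $J(\pi)$ into further special bisections. It cannot manufacture the first special bisection, so invoking it here is circular. Moreover, the claim that ``what one really needs is that some elliptic pencil on $X$ has Jacobian of positive Mordell--Weil rank'' is false: for all seven types classified in this paper every elliptic fibration is extremal (finite Mordell--Weil group), yet these surfaces have special fibrations in abundance --- indeed Theorem \ref{graphtheorem} equates finiteness of $\Aut(X)$ with every (special) fibration being extremal while $X$ still contains $(-2)$-curves. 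So positivity of the Mordell--Weil rank is neither necessary nor the mechanism behind Cossec's theorem.
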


Now, we recall some facts on the Jacobian fibrations of elliptic fibrations of Enriques surfaces.

\begin{proposition}{\rm (Cossec and Dolgachev \cite[Theorem 5.7.1]{CossecDolgachev})}
Let $\pi$ be an elliptic fibration of an Enriques surface. Then, the Jacobian fibration $J(\pi)$ of $\pi$ is an elliptic fibration of a rational surface. 
\end{proposition}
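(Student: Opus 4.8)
The plan is to recognise $J := J(\pi)$ as a relatively minimal elliptic surface over $\mathbb{P}^1$ with a section and with $\chi(\mathcal{O}_J)=1$, and then to conclude by Castelnuovo's rationality criterion. By the general theory of Jacobian fibrations, $\pi_J\colon J\to\mathbb{P}^1$ is relatively minimal, carries the zero section, has no multiple fibres, and its generic fibre is the Jacobian of the generic fibre of $\pi$. In particular the canonical bundle formula (which has no multiple-fibre correction here, since $\pi_J$ has no multiple fibres) gives $\omega_J\cong\pi_J^{*}(\omega_{\mathbb{P}^1}\otimes L)$ with $L=(R^1\pi_{J,*}\mathcal{O}_J)^{\vee}$ and $\deg L=\chi(\mathcal{O}_J)$; in particular $K_J^2=0$.

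Given this, I would first reduce everything to the numerical identity $\chi(\mathcal{O}_J)=1$. If it holds, then $\omega_J\cong\pi_J^{*}\mathcal{O}_{\mathbb{P}^1}(-1)$, so $h^0(J,\omega_J^{\otimes m})=h^0(\mathbb{P}^1,\mathcal{O}(-m))=0$ for all $m\geq 1$; hence $\kappa(J)=-\infty$ and $p_g(J)=0$, and from $\chi(\mathcal{O}_J)=1-q(J)+p_g(J)=1$ we get $q(J)=0$ (alternatively $q(J)=h^0(\mathbb{P}^1,R^1\pi_{J,*}\mathcal{O}_J)=0$ from the Leray spectral sequence, since $\pi_{J,*}\mathcal{O}_J=\mathcal{O}_{\mathbb{P}^1}$ and $\deg R^1\pi_{J,*}\mathcal{O}_J=-1<0$). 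Since $q(J)=P_2(J)=0$, Castelnuovo's rationality criterion --- valid over an arbitrary algebraically closed field --- shows that $J$ is a rational surface.

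It remains to prove $\chi(\mathcal{O}_J)=1$, and here I would use Noether's formula: from $K_J^2=0$ we get $\chi(\mathcal{O}_J)=\tfrac{1}{12}e(J)$, and likewise $e(X)=12\chi(\mathcal{O}_X)=12$ since $\chi(\mathcal{O}_X)=1$ and $K_X^2=0$. Thus it suffices to show $e(J)=e(X)=12$, which I would establish by comparing the singular fibres of $\pi$ and of $\pi_J$ point by point via Ogg's formula for the Euler number of an elliptic surface. Over the complement in $\mathbb{P}^1$ of the (one or two, by Proposition \ref{typeofdoublefiber}) points carrying a multiple fibre of $\pi$, the smooth locus of $\pi$ is a torsor under the smooth locus of $\pi_J$ (the Néron model of the generic Jacobian), so $\pi$ and $\pi_J$ have the same fibre types there --- hence the same local contributions to the respective Euler numbers. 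Over a point $p$ carrying a multiple fibre of $\pi$, Proposition \ref{typeofdoublefiber} tells us this fibre is tame if $\Char(k)\neq2$ and wild (with $\Char(k)=2$) otherwise, with reduced part either smooth or of multiplicative type $I_n$; the theory of Jacobians of multiple fibres then identifies the fibre of $\pi_J$ over $p$ and shows that its contribution to $e(J)$ equals the contribution of the multiple fibre to $e(X)$ --- in the tame case both equal the Euler number $n$ of the reduced fibre, while in the wild case the extra wild (conductor) term on the side of $X$ is matched by a correspondingly worse, additive, fibre of $\pi_J$ over $p$. Summing the local contributions yields $e(J)=e(X)=12$.

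I expect this last step to be the main obstacle: the precise local analysis at the multiple fibres, in particular at the wild double fibre in characteristic $2$, where one must pin down which Kodaira fibre the Jacobian acquires and verify that its Euler-number contribution exactly absorbs the wild term. This is the substance of the theory of Jacobian fibrations of (possibly non-reduced and possibly wild) genus-one fibrations developed in \cite[Ch.~5]{CossecDolgachev}. When $\Char(k)\neq2$ one can sidestep the Euler-number bookkeeping altogether: the tame multiple fibres do not affect $R^1f_*\mathcal{O}$, so the Hodge bundles of $\pi$ and $\pi_J$ coincide and $\chi(\mathcal{O}_J)=\deg L=\deg(R^1\pi_*\mathcal{O}_X)^{\vee}=\chi(\mathcal{O}_X)=1$ directly.
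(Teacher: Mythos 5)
The paper does not prove this proposition; it is quoted from \cite[Theorem 5.7.1]{CossecDolgachev}, so there is no in-paper argument to compare against. Judged on its own terms, your strategy is the standard one and is essentially correct: $J(\pi)$ is a relatively minimal elliptic surface with section, $K_J^2=0$, so Noether reduces everything to $e(J)=e(X)=12$, and Castelnuovo (valid in all characteristics via $b_1=P_2=0$) finishes. The fibre-by-fibre comparison away from the multiple fibres is fine, and your char $\neq 2$ shortcut via the Hodge bundle is also fine. The one genuinely hard input — that at a double fibre $2\I_n$ of $\pi$ the Jacobian acquires a fibre of type $\I_n$ (not $\I_{2n}$ or $\I_{n/2}$), and a smooth fibre where the reduction is smooth — is exactly the content of the theory of torsors you defer to \cite{CossecDolgachev}; that deferral is reasonable but it is where the substance lies. (This identification is implicitly used elsewhere in the paper, e.g.\ in the proof of Lemma \ref{universal}, where the ramified degree-$2$ base change of an $\I_n$ fibre of $J(\pi)$ gives the $\I_{2n}$ fibre of $\tilde\pi$ over a double $\I_n$ fibre of $\pi$.)

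Your description of the characteristic $2$ case, however, is not right as stated. You predict that the wild conductor term on the $X$-side is ``matched by a correspondingly worse, additive, fibre of $\pi_J$.'' By Proposition \ref{typeofdoublefiber} the wild double fibre of a singular Enriques surface has reduction that is either multiplicative or a smooth \emph{ordinary} elliptic curve; correspondingly the generic fibre of $J(\pi)$ has multiplicative or good ordinary reduction at that place, so the local monodromy on $R^1\pi_*\bbQ_\ell$ is at worst unipotent (N\'eron--Ogg--Shafarevich), the Swan conductor vanishes, and no additive fibre appears. The Euler-number bookkeeping at the wild fibre is therefore identical to the tame case; the ``wildness'' is purely a coherent phenomenon, namely torsion in $R^1\pi_*\calO_X$ supported at that fibre. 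That torsion is also why your Hodge-bundle shortcut genuinely fails in characteristic $2$: there $\deg(R^1\pi_*\calO_X/\mathrm{tors})^{\vee}=1+\mathrm{length}(\mathrm{tors})>1=\chi(\calO_J)$, so the Hodge bundles of $\pi$ and $\pi_J$ do \emph{not} coincide. So the correct picture is the opposite of the one you sketch: the $\ell$-adic/Euler-number side is uniform across characteristics, and it is the coherent side that is delicate in characteristic $2$.
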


Since the group of sections of the Jacobian of an elliptic fibration of an Enriques surface acts on the surface, we will mostly be concerned with extremal and rational elliptic fibrations. The group of sections of an elliptic fibration $\pi$ is also called the \emph{Mordell-Weil group of $\pi$} \cite[III \S 9]{Silverman2}.

\begin{definition}
Let $\pi$ be an elliptic fibration of an Enriques surface and let $J(\pi)$ be its Jacobian. We call $J(\pi)$ and $\pi$ \emph{extremal} if the Mordell-Weil group $\MW(J(\pi))$ is finite.
\end{definition}

We will use the Kodaira-symbols $\I_n (n \geq 1),\I_n^* (n \geq 0),\II,\III,\IV,\II^*,\III^*,$ and $\IV^*$ to denote the singular fibers of an elliptic fibration (see for example \cite[p.354]{Silverman2}). The reducible fibers consist of $(-2)$-curves and their intersection behaviour will play an important role throughout this paper. 

\begin{definition}
Let $M$ be a set of $(-2)$-curves on a smooth surface $X$. 
\begin{itemize}
\item The \emph{dual graph of $M$} is the graph whose vertices are elements of $M$ and two vertices $E_i,E_j \in M$ with $i \neq j$ are joined by an $n$-tuple line if $E_i.E_j = n$. 
\item If $M$ is the set of all $(-2)$-curves on $X$, we will call the corresponding graph the \emph{dual graph of all $(-2)$-curves on $X$}.
\item If $M$ is the set of all $(-2)$-curves contained in singular fibers of an elliptic fibration $\pi$ of $X$, we call $M$ the \emph{dual graph of singular fibers of $\pi$}.
\end{itemize}
\end{definition}

The dual graphs of the singular fibers of type $\I_n (n \geq 2),\I_n^* (n \geq 0),\III,\IV,\II^*,\III^*,$ and $\IV^*$ are $\tilde{A}_{n-1},\tilde{D}_{n+4},\tilde{A}_1,\tilde{A}_2$,$\tilde{E}_8$, $\tilde{E}_7$, and $\tilde{E}_6$, respectively (see \cite[I.6]{Miranda}). Conversely, configurations of $(-2)$-curves whose dual graphs are extended Dynkin diagrams of these types give rise to elliptic fibrations.

\begin{proposition}\label{canonicaltype}{\rm(Kodaira \cite{Kodaira}, Mumford \cite{Mumford})}
A connected, reduced divisor $D$ on an Enriques surface $X$ is equal to the support of a fiber of an elliptic fibration if and only if $D$ is an irreducible genus $1$ curve or the irreducible components of $D$ are $(-2)$-curves whose dual graph is an extended Dynkin diagram of type $\tilde{A}$-$\tilde{D}$-$\tilde{E}$.
\end{proposition}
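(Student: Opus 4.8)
The plan is to prove the two implications separately, with essentially all of the content sitting in the ``if'' direction. Throughout I would use that on an Enriques surface $K_X$ is numerically trivial (it is $2$-torsion in $\Pic(X)$), so $K_X\cdot C=0$ for every curve $C$, that $\chi(\calO_X)=1$, and that $X$ is minimal (a $(-1)$-curve $E$ would satisfy $K_X\cdot E=-1$). For the ``only if'' direction I would simply read off the list from Kodaira's classification of the fibers of a relatively minimal genus one fibration, which holds in arbitrary characteristic and without requiring a section, since it is a statement local on the base. Every scheme-theoretic fiber of an elliptic fibration of $X$ is then of one of the Kodaira types $\I_n,\II,\III,\IV,\I_n^*,\II^*,\III^*,\IV^*$, possibly with a multiplicity, and no fiber contains a $(-1)$-curve, in agreement with our definition. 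It remains to read off supports: an irreducible fiber --- types $\I_0,\I_1,\II$, or the reduction of a multiple fiber, the latter controlled by Proposition~\ref{typeofdoublefiber} --- has reduced structure an irreducible curve of arithmetic genus $1$; a reducible fiber has smooth rational components, hence (by adjunction, using $K_X\cdot C=0$) of self-intersection $-2$, with dual graph $\tilde{A}_{n-1}$ for $\I_n$ ($n\ge 2$), $\tilde{A}_1$ for $\III$, $\tilde{A}_2$ for $\IV$, $\tilde{D}_{n+4}$ for $\I_n^*$, and $\tilde{E}_6,\tilde{E}_7,\tilde{E}_8$ for $\IV^*,\III^*,\II^*$. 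This is exactly the asserted list.

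For the ``if'' direction, the first step is to replace $D$ by a nef, isotropic divisor $F$ with the same support. If $D$ is irreducible of arithmetic genus $1$, adjunction on $X$ gives $D^2=2p_a(D)-2-K_X\cdot D=0$, and an irreducible curve with non-negative self-intersection is nef, so I take $F:=D$. If instead the components $D_1,\dots,D_r$ of $D$ are $(-2)$-curves whose dual graph is an extended Dynkin diagram, I let $(m_i)_i$ be the positive integer vector spanning the kernel of the corresponding affine Cartan matrix (the Kodaira multiplicities) and set $F:=\sum_i m_iD_i$. Since the intersection matrix $(D_i\cdot D_j)_{i,j}$ equals the negative of that affine Cartan matrix, $F\cdot D_j=0$ for all $j$, whence $F^2=\sum_j m_j(F\cdot D_j)=0$; and $F\cdot C=\sum_i m_i(D_i\cdot C)\ge 0$ for every irreducible curve $C$ shows that $F$ is nef. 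In both cases $F$ is an effective, connected, nef divisor with $F^2=0$ whose support is exactly $D$.

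The second step is to recognize such an $F$ as the support of a fiber of an elliptic fibration, and this is where the real work lies. Here I would appeal to the theory of isotropic classes and genus one pencils on Enriques surfaces (Cossec--Dolgachev): a connected nef divisor with self-intersection $0$ is, numerically, a positive multiple of a primitive nef isotropic class $f_0$, a half-fiber --- and in our situation the multiple is $1$ or $2$, by the restriction on double fibers in Proposition~\ref{typeofdoublefiber} together with easy lattice bookkeeping; the linear system $|2f_0|$ is base-point free of dimension $1$, and its morphism $\varphi\colon X\to\bbP^1$ has connected fibers (as $h^0(\calO_X)=1$) and no $(-1)$-curve in any fiber (as $X$ is minimal), so $\varphi$ is an elliptic fibration in our sense; finally $F$, being an effective connected divisor in the numerical class $f_0$ or $2f_0$, is the support of a fiber of $\varphi$, and a proper sub-configuration cannot occur because the support of $F$ is all of $D_1\cup\dots\cup D_r$. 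The main obstacle is precisely this last invocation: proving that a nef isotropic class on an Enriques surface genuinely produces a genus one pencil --- via Riemann--Roch together with a vanishing theorem, keeping track of the torsion canonical class and, in characteristic $2$, of the wild double fiber of Proposition~\ref{typeofdoublefiber} --- rather than something degenerate; the numerical bookkeeping bounding the multiple by $2$ is routine by comparison.
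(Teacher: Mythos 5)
The paper offers no proof of this proposition at all --- it is quoted as a classical fact with a pointer to Kodaira and Mumford (the working reference is really Cossec--Dolgachev's treatment of genus one pencils) --- so there is nothing to compare line by line; your argument is the standard one and is essentially correct. The ``only if'' direction is indeed just the fiber classification together with adjunction and the numerical triviality of $K_X$, and in the ``if'' direction the reduction to a connected, nef, effective $F$ with $F^2=0$ via the Kodaira multiplicities, followed by the appeal to the fact that a primitive nef isotropic effective class $f_0$ yields a genus one pencil $|2f_0|$, is exactly the expected route. Two points deserve tightening. First, your closing clause about why ``a proper sub-configuration cannot occur'' is aimed at the wrong worry: the issue is not whether $F$ is supported on all of $D$ (it is, by construction) but whether the fiber $G$ of $\varphi$ containing $D$ could have components \emph{outside} $D$. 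The clean argument is that each component of $F$ has zero intersection with the fiber class and hence lies in a fiber, connectedness puts all of $D$ into a single fiber $G$, and if $\mathrm{Supp}(F)$ were a proper subset of $\mathrm{Supp}(G)$ then $F$ would lie in the negative definite part of the (negative semi-definite) lattice spanned by the components of $G$, forcing $F^2<0$ and contradicting $F^2=0$. Second, in characteristic $2$ one must also rule out that $|2f_0|$ is quasi-elliptic, in which case $\varphi$ would fail to be an elliptic fibration in the paper's sense; this is a separate issue from the wild double fiber you mention and is where the standing convention that $X$ has a smooth K3 cover (so that quasi-elliptic pencils, which occur only on classical and supersingular Enriques surfaces, are excluded) gets used.
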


Note that one cannot always reconstruct the fiber type from the graph. Using this notation, we can give the list of extremal and rational elliptic fibrations in every characteristic due to R. Miranda, U. Persson and W. E. Lang.

\begin{proposition}{\rm (Miranda and Persson \cite{MirandaPersson}, Lang \cite{Lang1}, \cite{Lang2})}
Let $\pi$ be an extremal fibration of a rational surface. Then, the singular fibers of $\pi$ are given in Table \ref{extremalrational}.

The extremal and rational elliptic surfaces with singular fibers $(\I_0^*,\I_0^*)$ in characteristic $\neq 2$ and the ones with singular fiber $(\I_4^*)$ in characteristic $2$ form $1$-dimensional families and all other fibrations are unique.

\begin{table}[!htb]
\centering
\begin{tabular}{|>{\centering\arraybackslash}m{3.5cm}|>{\centering\arraybackslash}m{2.5cm}|>{\centering\arraybackslash}m{2.5cm}|>{\centering\arraybackslash}m{2.5cm}|}
\hline
\rm{$\Char(k) \neq 2,3,5$} & \rm{$\Char(k) = 5$} & \rm{$\Char(k) = 3$} & \rm{$\Char(k) = 2$}\\
\hline \hline
{\rm($\II^*,\II$)} & {\rm($\II^*,\II$)} & {\rm($\II^*$)} & {\rm($\II^*$)} \\
{\rm($\III^*,\III$)} & {\rm($\III^*,\III$)} & {\rm($\III^*,\III$)} & -- \\
{\rm($\IV^*,\IV$)} & {\rm($\IV^*,\IV$)} & -- & {\rm($\IV^*,\IV$)} \\ 
{\rm($\I_0^*,\I_0^*$)} & {\rm($\I_0^*,\I_0^*$)} & {\rm($\I_0^*,\I_0^*$)} & -- \\ 
{\rm($\II^*,\I_1,\I_1$)} & {\rm($\II^*,\I_1,\I_1$)} & {\rm($\II^*,\I_1$)} & {\rm($\II^*,\I_1$)}\\ 
{\rm($\III^*,\I_2,\I_1$)} & {\rm($\III^*,\I_2,\I_1$)} & {\rm($\III^*,\I_2,\I_1$)} & {\rm($\III^*,\I_2$)} \\
{\rm($\IV^*,\I_3,\I_1$)} & {\rm($\IV^*,\I_3,\I_1$)} & {\rm($\IV^*,\I_3$)} & {\rm($\IV^*,\I_3,\I_1$)}\\
{\rm($\I_4^*,\I_1,\I_1$)} & {\rm($\I_4^*,\I_1,\I_1$)} & {\rm($\I_4^*,\I_1,\I_1$)} & {\rm($\I_4^*$)}\\ 
{\rm($\I_2^*,\I_2,\I_2$)} & {\rm($\I_2^*,\I_2,\I_2$)} & {\rm($\I_2^*,\I_2,\I_2$)} & -- \\ 
{\rm($\I_1^*,\I_4,\I_1$)} & {\rm($\I_1^*,\I_4,\I_1$)} & {\rm($\I_1^*,\I_4,\I_1$)} & {\rm($\I_1^*,\I_4$)}\\ 
{\rm($\I_9,\I_1,\I_1,\I_1$)} & {\rm($\I_9,\I_1,\I_1,\I_1$)} & {\rm($\I_9,\II$)} & {\rm($\I_9,\I_1,\I_1,\I_1$)}\\
{\rm($\I_8,\I_2,\I_1,\I_1$)} & {\rm($\I_8,\I_2,\I_1,\I_1$)} & {\rm($\I_8,\I_2,\I_1,\I_1$)} & {\rm($\I_8,\III$)}\\
{\rm($\I_5,\I_5,\I_1,\I_1$)} & {\rm($\I_5,\I_5,\II$)} & {\rm($\I_5,\I_5,\I_1,\I_1$)} & {\rm($\I_5,\I_5,\I_1,\I_1$)}\\ 
{\rm($\I_6,\I_3,\I_2,\I_1$)} & {\rm($\I_6,\I_3,\I_2,\I_1$)} & {\rm($\I_6,\I_3,\III$)} & {\rm($\I_6,\IV,\I_2$)}\\ 
{\rm($\I_4,\I_4,\I_2,\I_2$)} & {\rm($\I_4,\I_4,\I_2,\I_2$)} & {\rm($\I_4,\I_4,\I_2,\I_2$)} & --\\ 
{\rm($\I_3,\I_3,\I_3,\I_3$)} & {\rm($\I_3,\I_3,\I_3,\I_3$)} & -- & {\rm($\I_3,\I_3,\I_3,\I_3$)}  \\ \hline
\end{tabular}
\caption{Extremal and rational elliptic fibrations}
\label{extremalrational}
\end{table}


\end{proposition}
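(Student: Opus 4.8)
The plan is to turn the classification into a finite combinatorial problem by means of the standard numerical invariants of a rational elliptic surface, and then to settle each surviving case by an explicit Weierstrass equation together with Tate's algorithm. Let $\pi\colon S\to\bbP^1$ be the rational elliptic surface carrying $\pi$; it has a section (a Jacobian fibration always does, and in any case a relatively minimal rational elliptic surface does), hence a Weierstrass model, and $\chi(\mathcal O_S)=1$, so $\deg\Delta=\sum_v\ord_v(\Delta)=12$. Since $S$ is rational, $\rho(S)=b_2(S)=10$, so the Shioda--Tate formula $\rho(S)=2+\sum_v(m_v-1)+\rank\MW(\pi)$ shows that $\pi$ is extremal precisely when $\sum_v(m_v-1)=8$, where $m_v$ is the number of components of the fibre $F_v$. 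Feeding this into Ogg's formula $\ord_v(\Delta)=m_v-1+f_v$, where $f_v$ is the conductor exponent ($f_v=1$ for multiplicative reduction, $f_v=2+\delta_v$ for additive reduction with Swan conductor $\delta_v\ge0$, and $\delta_v=0$ unless $\Char(k)\in\{2,3\}$), yields the master equation $\sum_v f_v=4$.

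Writing $s$ for the number of singular fibres and $a$ for the number of additive ones, the master equation reads $s+a+\sum_v\delta_v=4$; in particular $s\le4$ and $a\le2$. When $\Char(k)=0$ or $\Char(k)\ge7$ every $\delta_v$ vanishes, so $s+a=4$ and only the six profiles $(s,a)\in\{(1,1),(2,0),(2,1),(2,2),(3,1),(4,0)\}$ remain; for each of them, imposing in addition $\sum_v(m_v-1)=8$ and $\sum_v\ord_v(\Delta)=12$ determines the additive fibre types and the total length $\sum n_i$ of the multiplicative fibres $\I_{n_i}$, and a short case check reproduces exactly the first column of Table \ref{extremalrational}. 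When $\Char(k)\in\{2,3\}$ one has to allow $\delta_v>0$, but only at additive fibres whose inertia has order divisible by $\Char(k)$; this merges some profiles (e.g.\ $(\II^*,\II)$ collapses to $(\II^*)$ in characteristic $3$, and $(\I_4^*,\I_1,\I_1)$ to $(\I_4^*)$ in characteristic $2$) and deletes others (e.g.\ $(\I_0^*,\I_0^*)$, $(\III^*,\III)$, $(\I_2^*,\I_2,\I_2)$, $(\I_4,\I_4,\I_2,\I_2)$ in characteristic $2$, and $(\IV^*,\IV)$, $(\I_3,\I_3,\I_3,\I_3)$ in characteristic $3$), producing the remaining columns; characteristic $5$ differs from the generic list only by the single collision $(\I_5,\I_5,\I_1,\I_1)\rightsquigarrow(\I_5,\I_5,\II)$.

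It remains to realize each surviving profile and to decide its moduli dimension. For realization one exhibits a Weierstrass equation $y^2+a_1xy+a_3y=x^3+a_2x^2+a_4x+a_6$ with $a_i$ a form of degree $i$ on $\bbP^1$, and runs Tate's algorithm to read off the fibre types (the integral models of Jarvis--Lang--Ricks \cite{Lang3} provide convenient such equations); extremality is then automatic by Shioda--Tate. Conversely, in the exceptional characteristics one shows the missing profiles cannot occur: forcing the prescribed fibre types would either make the Weierstrass model non-minimal so that $\deg\Delta$ drops below $12$, or force $j$ to be constant incompatibly with the fibre list, or introduce a wild fibre that changes a type. The dimension of the family of a given type equals the number of free coefficients in the normal form above, minus the dimension of the group of admissible coordinate changes ($\PGL_2$ on the base, Weierstrass transformations, global rescaling), minus the codimension of the locus realizing the prescribed configuration; this is $0$ for every type except $(\I_0^*,\I_0^*)$ in $\Char(k)\ne2$, where the $j$-invariant of the underlying elliptic curve survives as a modulus, and $(\I_4^*)$ in $\Char(k)=2$, where the location parameter of the single wild fibre survives.

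The genuinely delicate point is characteristics $2$ and $3$. There Tate's algorithm behaves differently, the naive discriminant bookkeeping must be replaced by Ogg's formula with a nonzero Swan term, and one must pin down exactly when wild ramification forces $\delta_v\ge1$ — which is what deletes and merges entries relative to the characteristic-$0$ list and produces the one-parameter family $(\I_4^*)$. This analysis is carried out by Lang in \cite{Lang1} and \cite{Lang2}; in the present sketch we content ourselves with the characteristic-$0$ enumeration and a few representative Weierstrass models, and take the characteristic $2$ and $3$ statements from those papers.
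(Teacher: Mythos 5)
The paper offers no proof of this proposition --- it is imported wholesale from Miranda--Persson and Lang --- and your sketch is precisely the argument of those references: $\deg\Delta=12\chi(\mathcal{O}_S)=12$, Shioda--Tate forcing $\sum_v(m_v-1)=8$ for extremality, Ogg's formula yielding $\sum_v f_v=4$, then explicit Weierstrass models via Tate's algorithm, with the wild-ramification bookkeeping in characteristics $2$ and $3$ deferred to Lang exactly as the paper itself does. One internal inconsistency worth fixing: once you have $s+a=4$ in the tame case (together with $a\le s$), the only admissible profiles are $(s,a)\in\{(2,2),(3,1),(4,0)\}$, so three of the six pairs you list contradict the equation they are claimed to satisfy --- harmless, since the spurious ones are killed by $\deg\Delta=12$ anyway, but the list should be corrected; likewise the parenthetical claim that every relatively minimal rational elliptic surface has a section is false (Halphen pencils of index $m>1$), though irrelevant here because the paper only ever applies the proposition to Jacobian fibrations.
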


\begin{remark}
From Table \ref{extremalrational} we see that the fibrations in small characterstics differ from the characteristic $0$ cases only if either a $\II^*$ fiber is involved or if the characteristic divides the number of simple components of some fiber of the fibration.
\end{remark}

In fact, the Shioda-Tate formula implies that the dual graph of $(-2)$-curves contained in singular fibers of an elliptic fibration $\pi$ determines whether $\pi$ is extremal or not.

\begin{lemma}{\rm(Shioda, \cite[Corollary 1.5]{Shioda3})}\label{shiodatate}
Let $\pi$ be an elliptic fibration of a rational surface or of an Enriques surface. Then, $\pi$ is extremal if and only if the lattice spanned by the fiber components of $\pi$ has rank $9$.
\end{lemma}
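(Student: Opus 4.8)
The plan is to deduce the statement from the Shioda--Tate formula relating the Picard rank of the total space to the Mordell--Weil rank and the contributions of reducible fibers. First I would recall the precise shape of this formula: for an elliptic surface $f \colon Y \to \bbP^1$ with a section and total space $Y$, one has
\begin{equation*}
\rho(Y) = 2 + \rank\,\MW(f) + \sum_{v}(m_v - 1),
\end{equation*}
where $m_v$ is the number of irreducible components of the fiber over $v$, so that $\sum_v(m_v-1)$ is exactly the rank of the lattice spanned by the fiber components that do not meet the zero section, and the full lattice spanned by \emph{all} fiber components has rank $\sum_v(m_v - 1) + 1$ (the extra $1$ coming from a general fiber class). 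For a rational elliptic surface $\rho(Y) = 10$, so the formula reads $10 = 2 + \rank\,\MW(f) + \sum_v(m_v-1)$; hence $\MW(f)$ is finite, i.e. $\pi$ is extremal, if and only if $\sum_v(m_v-1) = 8$, which is equivalent to the fiber-component lattice having rank $9$.

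For the Enriques case one works on the Jacobian $J(\pi)$, which by the proposition of Cossec--Dolgachev quoted above is an elliptic fibration of a rational surface, and by definition $\pi$ is extremal precisely when $\MW(J(\pi))$ is finite. The key point is that $\pi$ and $J(\pi)$ have the same configuration of reducible fibers up to the combinatorial type of the fiber (they differ only by the multiple fibers, which for an Enriques surface are the two tame double fibers of multiplicative or smooth type, and in either case the underlying reduced fiber and hence the number of components $m_v$ is unchanged relative to $J(\pi)$); so the lattice spanned by the fiber components of $\pi$ has the same rank as that of $J(\pi)$. Applying the rational-surface case to $J(\pi)$ then gives the claim for $\pi$ as well.

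The step I expect to be the main obstacle is verifying that the fiber-component lattice of $\pi$ on the Enriques surface really does have the same rank as that of $J(\pi)$ — i.e. controlling the behaviour at the two (or, in characteristic $2$, one) multiple fibers and checking that passing to the Jacobian does not change $m_v$ at any place. Once that bookkeeping is in place, the rest is a direct substitution into Shioda--Tate, so I would present the multiple-fiber comparison carefully (citing Proposition \ref{typeofdoublefiber} for the possible types of the double fibers) and treat the numerical deduction as routine.
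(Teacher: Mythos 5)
Your derivation is correct, but note that the paper does not prove this lemma at all: it is quoted directly from Shioda (\cite[Corollary 1.5]{Shioda3}), so there is no in-paper argument to compare against. Your route — Shioda--Tate on a rational elliptic surface ($\rho = 10 = 2 + \rank\MW + \sum_v(m_v-1)$, so $\MW$ finite iff the fiber-component lattice has rank $1+\sum_v(m_v-1) = 9$), followed by transferring to the Enriques case via the Jacobian, whose fibers have the same number of components since passing between $\pi$ and $J(\pi)$ only changes multiplicities of the double fibers and not their reduced structure — is exactly the standard argument, and your identification of the multiple-fiber bookkeeping as the only point needing care is accurate.
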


Extremal elliptic fibrations of Enriques surfaces over the complex numbers were studied by the author in \cite{Master}, where he classified those extremal fibrations with at least one reducible double fiber.
\subsection{Base Change Construction}\label{basechange}

\begin{notation}
Let $\pi: X \to \bbP^1$ be an elliptic fibration with section of a rational surface or of a K3 surface. We denote the composition in $\MW(\pi)$ with respect to some fixed zero section by $\oplus$, the inverse of a section $P$ is denoted by $\ominus P$ and the translation by a section $P$ is denoted by $t_P$. By abuse of notation, we will also use $t_P$ for the induced automorphism of $X$.
\end{notation}

Over the complex numbers, the following is due to S. Kond\=o \cite[p.199]{Kondo}. There are generalizations of this result in \cite{HulekSchütt} and \cite{Schütt2}. Since we need this construction for our classification, we will extend it to arbitrary characteristic.

\begin{lemma}\label{universal}
Let $f: \tilde{X} \to X$ be the canonical cover of an Enriques surface $X$ and let $\sigma$ be the covering involution. Let $\pi: X \to \mathbb{P}^1$ be a special elliptic fibration of $X$ with a special bisection $N$, let $F$ be a double fiber of $\pi$ and let $J(\pi): J(X) \to \mathbb{P}^1$ be the Jacobian fibration associated to $\pi$. Let $\tilde{\pi}$ be the fibration of $\tilde{X}$ induced by $|f^{-1}F|$ and denote by $\varphi:|f^{-1}F| = \bbP^1 \to \bbP^1 = |2F|$ the induced morphism on the base curve. 

Then,
\begin{enumerate}
\item $N$ splits into two sections $N^+$ and $N^-$ of $\tilde{\pi}$. In particular, the minimal proper smooth models of the base changes of $J(\pi)$ and $\pi$ along $\varphi$ are isomorphic.
\item Choose $N^+$ as the zero section of $\tilde{\pi}$. Then,
$J(\sigma) = t_{\ominus N^-} \circ \sigma$ is an involution whose quotient, after minimalizing the obtained fibration, is $J(\pi)$.
\item $N^-$ satisfies $N^-.N^+ = 0$, $J(\sigma)(N^-) = \ominus N^-$ and it does not meet the preimage of a singular double fiber of $\pi$ in the identity component.
\end{enumerate}
\end{lemma}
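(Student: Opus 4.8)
The plan is to work on the K3 cover $\tilde X$, where the covering involution $\sigma$ is fixed-point free, and to analyze the elliptic fibration $\tilde\pi$ induced by $|f^{-1}F|$ together with the images of $N$. First I would establish part (1): the fiber $f^{-1}F$ of $\tilde\pi$ is $2F'$-like, more precisely $f^*F$ restricted to $\tilde X$ pulls back the double fiber to a fiber of the same type (using that $f$ is \'etale so $f^{-1}F$ is either two disjoint copies of the reduction, in the multiplicative case, or a smooth genus $1$ curve — a connected double cover of the elliptic curve $F_{\mathrm{red}}$), and the degree-$2$ map $\varphi$ on the base is ramified exactly over the two double-fiber points (char $\neq 2$) or the one wild double fiber (char $2$), by Proposition \ref{typeofdoublefiber}. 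Since $N$ is a special bisection, $N.F = 2$, so $f^{-1}N \to N$ is an \'etale double cover of $\bbP^1$, hence disconnected: $f^{-1}N = N^+ \sqcup N^-$ with each $N^\pm$ a section of $\tilde\pi$. This immediately gives $\tilde\pi$ a section, so $\tilde\pi$ is (after minimalization) the base change of its own Jacobian $J(\tilde\pi)$ along $\varphi$; and $J(\tilde\pi)$ is the base change of $J(\pi)$ along $\varphi$ because forming the Jacobian commutes with base change of the base curve. That proves the "in particular" clause.

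For part (2), once $N^+$ is chosen as zero section, $\tilde X$ is identified (away from singular fibers) with the relative Jacobian, and $\sigma$ acts on $\tilde\pi$ covering the involution of $\bbP^1$ that swaps the two sheets of $\varphi$. The key point is that $\sigma$ is an automorphism of the elliptic fibration $\tilde\pi$ (it preserves $|f^{-1}F|$), so on a general fiber it is a composition of a group homomorphism of the generic fiber and a translation; since $\sigma$ is fixed-point free and of order $2$, on the generic fiber it must be of the form $x \mapsto -x \oplus t$ for the appropriate section $t$ (an honest translation would have no reason to be fixed-point free globally, and $x\mapsto -x$ composed with translation by a $2$-torsion-related section is what produces the free action — here $t = \ominus N^-$ because $\sigma$ swaps $N^+$ and $N^-$, i.e. sends $0$ to $\ominus N^-$). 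Thus $J(\sigma) := t_{\ominus N^-}\circ\sigma$ (equivalently $t_{N^-}\circ\sigma$ up to sign conventions) acts on each fiber as $x \mapsto -x$, hence descends fiberwise to the hyperelliptic-type quotient; doing this in families and minimalizing the resulting fibration over $\bbP^1 = |2F|$ produces precisely $J(\pi)$, since $J(\pi)$ is by definition the minimal elliptic surface with section whose generic fiber is the Jacobian of the generic fiber of $\pi$, and the generic fiber of $\pi$ is the quotient of that of $\tilde\pi$ by $\sigma$, whose Jacobian is the quotient of $\mathrm{Jac}(\tilde\pi_\eta)$ by inversion pulled back — one checks the $j$-invariants and the fiber types match via Table \ref{extremalrational}-type reasoning or directly.

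For part (3): $N^+.N^- = 0$ because $f^{-1}N = N^+ \sqcup N^-$ is a disjoint union (the cover is \'etale and $N$ is irreducible, so the two preimages don't meet). The relation $J(\sigma)(N^-) = \ominus N^-$ follows by evaluating the formula $J(\sigma) = t_{\ominus N^-}\circ\sigma$ at $N^-$: $\sigma(N^-) = N^+ = 0$ (zero section), so $J(\sigma)(N^-) = t_{\ominus N^-}(N^+) = \ominus N^-$ — more carefully, $\sigma$ swaps $N^+$ and $N^-$ so $\sigma(N^-)=N^+$, and then $t_{\ominus N^-}$ sends $N^+$ to $\ominus N^-$. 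Finally, $N^-$ cannot meet the preimage of a singular double fiber in the identity component: if it did, then because $\sigma$ preserves $\tilde\pi$ and maps this preimage to itself, and maps the identity component to the identity component (a group automorphism fixes the neutral component), $N^+ = \sigma(N^-)$ would also meet the identity component there; but $N^+$ is the zero section, which always meets the identity component — and then $N^+$ and $N^-$ would both pass through that component; tracking this down through the \'etale quotient $f$, the two branches $N^\pm$ mapping into the same (non-reduced) fiber of $\pi$ over a point where $F$ is singular forces an intersection $N^+ . N^- > 0$ on $\tilde X$ (the component is a $\bbP^1$, both sections hit it, and the involution identifies the configuration), contradicting $N^+.N^- = 0$. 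The cleanest version of this last argument uses that over a ramification point of $\varphi$ the fiber of $\tilde\pi$ maps $2:1$ onto the fiber of $\pi$ and $N^\pm$ are interchanged, so if both landed in the identity component they would be forced to coincide or cross there.

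I expect the main obstacle to be part (2) and the precise identification of $J(\sigma)$: one must pin down that the fiberwise action of $\sigma$ on the relative Jacobian $\tilde\pi$ is inversion-plus-translation (not, say, a more exotic automorphism), which in positive characteristic requires care — the automorphism group of the generic fiber can be larger in characteristics $2,3$ — and then that the minimalization of the quotient is genuinely $J(\pi)$ and not some other model. The standard complex-analytic argument of Kond\=o uses the period/Torelli picture implicitly; here the replacement is a direct argument on the generic fiber: $\sigma$ has order $2$, is fixed-point free on $\tilde X$ hence has no fixed points on the smooth generic fiber, and translations on an elliptic curve by a non-torsion section have no fixed points but order $2$ forces the section to be $2$-torsion, in which case the translation is fixed-point free but its quotient is again an elliptic curve — ruling this out requires using that the quotient $X$ is an Enriques surface (so $\omega_{\tilde X} = \calO$ descends to a nontrivial $2$-torsion canonical bundle, which a translation quotient would not produce). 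So the automorphism must act as $-1$ on the generic fiber up to translation, and comparing with $N^\pm$ fixes the translation part as $\ominus N^-$. Making the descent and minimalization rigorous over the non-algebraically-closed base $\bbP^1 = |2F|$, and checking it behaves well over the two (or one) branch points of $\varphi$, is where the technical weight lies.
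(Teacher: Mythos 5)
Your part (1) is essentially the paper's argument and is fine, but parts (2) and (3) contain genuine errors. In (2) you assert that $J(\sigma)=t_{\ominus N^-}\circ\sigma$ "acts on each fiber as $x\mapsto -x$" and that the quotient is "the hyperelliptic-type quotient". This misidentifies the geometry: $J(\sigma)$ covers the involution of the base, so it does not act on a general fibre at all, and on the identity component $F_0$ of a fibre over a branch point it acts as the \emph{identity} when that fibre is smooth ($\sigma$ is free there, hence a translation, so $J(\sigma)|_{F_0}$ is a translation fixing $N^+\cap F_0$). A fibrewise quotient by inversion would be a $\bbP^1$-fibration, not the rational elliptic surface $J(\pi)$; the quotient by $J(\sigma)$ is instead the descent of $\tilde{\pi}$ along $\varphi$ determined by the semilinear datum $J(\sigma)$, which is $J(\pi)$ because $J(\sigma)$ fixes the section $N^+$ and differs from $\sigma$ only by a translation. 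Relatedly, the one genuinely delicate point — that $J(\sigma)$ is an involution — is flagged by you but not resolved: your dichotomy "translation versus inversion-plus-translation" does not typecheck for a map covering a nontrivial base involution, and the Enriques-quotient argument does not exclude an exotic automorphism of the generic fibre in characteristics $2,3$, where $\Aut(E,O)$ can have order $12$ or $24$. The paper's substitute is Lemma \ref{specialauts}: $J(\sigma)^2$ is a $k(s)$-linear automorphism of the generic fibre fixing the origin whose restriction to $F_0$ is trivial, and $F_0$ is multiplicative or smooth by Proposition \ref{typeofdoublefiber}, i.e. non-additive, so the reduction map is injective and $J(\sigma)^2=\mathrm{id}$. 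You have no replacement for this step.

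In (3), your argument that $N^-$ avoids the identity component rests on two false claims: that $\sigma$ "maps the identity component to the identity component" (it is $J(\sigma)$, not $\sigma$, that fixes the zero section; if $\sigma$ preserved a component of the fibre it would be an involution of that $\bbP^1$ and would have fixed points, contradicting freeness), and that two disjoint sections meeting the same fibre component must intersect (they need not — they can meet that $\bbP^1$ in different points, as the zero section and a torsion section routinely do). The correct argument uses exactly the ingredients you already have, assembled differently: the preimage of a double $\I_n$ fibre is a single connected $\I_{2n}$ fibre — not "two disjoint copies of the reduction" as you write in part (1), which would mean $\varphi$ is unramified there — and $\sigma$ acts freely on it, hence as the antipodal rotation of the $2n$-cycle of components; therefore $N^-=\sigma(N^+)$ meets the component opposite to the identity component met by $N^+$.
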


The main tool to establish this result in arbitrary characteristic is the following lemma, which is a close study of how automorphisms of the generic fiber of an elliptic fibration with section extend to special fibers. For lack of a reference, we will give a proof.

\begin{lemma}\label{specialauts}
Let $R$ be a discrete valuation ring and let $K = Quot(R)$. Let $(E,O)$ be an elliptic curve over $K$
and let $\calE$ be the N\'eron model of $E$ over $R$. Let $E_0$ be the identity component of the special fiber of $\calE$. Let $\rho: \Aut(E,O) \to \Aut(E_0,\overline{O}|_{E_0})$ be the natural map obtained from the N\'eron mapping property and restriction. Then,
$\rho$ is injective if and only if one of the following holds:
\begin{itemize}
\item $\Char(k) \not \in \{2,3\}$
\item $\Char(k) \in \{2,3\}$ and $E_0$ is not additive.
\end{itemize}
If $E_0$ is additive, then $\ker(\rho)$ consists of all elements of order $p^n$, where $p = \Char(k)$.
\end{lemma}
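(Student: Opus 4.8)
The plan is to route everything through the action on the Néron differential. For $\phi \in \Aut(E,O)$ let $\tilde\phi$ be its unique extension to a group automorphism of $\calE$ (Néron mapping property), let $\bar\phi = \rho(\phi) = \tilde\phi|_{E_0}$, and let $\chi(\phi) \in R^\times$ be the unit by which $\tilde\phi$ scales $\omega$; since $\phi$ has finite order, $\chi(\phi)$ is a root of unity of order dividing $\ord(\phi)$, and its reduction $\overline{\chi(\phi)} \in k^\times$ is the scalar by which $\bar\phi$ acts on the cotangent space of $E_0$ at the origin. The one genuinely non-formal input I would isolate first is: \emph{if $\phi \neq \mathrm{id}$ acts trivially on $\omega$, then $\ord(\phi)$ is a power of $\Char(K)$} (so there is no such $\phi$ when $\Char(K) = 0$). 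Indeed, such a $\phi$ acts on the formal group $\widehat{E}_O \cong K[[z]]$ by $z \mapsto z + cz^m + O(z^{m+1})$ with $m \geq 2$ and $c \neq 0$; then $\phi^n$ acts by $z \mapsto z + ncz^m + O(z^{m+1})$, so $\phi^n = \mathrm{id}$ forces $nc = 0$ in $K$, i.e.\ $\Char(K) \mid n$. Stripping off the $\Char(K)$-part of $\ord(\phi)$ and iterating gives the claim.

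Next I would dispose of the case in which $E$ does not have additive reduction, where $\rho$ should be injective outright. If $E$ has good reduction, then $\calE$ is an elliptic curve over $R$ with $\Aut(E,O) = \Aut(\calE/R)$, and injectivity of $\rho$ is injectivity of reduction of endomorphisms: for $0 \neq \alpha \in \mathrm{End}(\calE/R)$, writing $\hat\alpha$ for the (also extending) dual isogeny, one has $\bar\alpha \circ \overline{\hat\alpha} = \overline{[\deg\alpha]} = [\deg\alpha]_{E_0} \neq 0$, so $\bar\alpha \neq 0$; applying this with $\alpha = \phi - \mathrm{id}$ gives $\ker\rho = \{\mathrm{id}\}$. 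If $E$ has multiplicative reduction, then $v_R(j(E)) < 0$, so $j(E) \notin \{0,1728\}$ and $\Aut(E,O) = \{[1],[-1]\}$; since $E_0$ is a torus and $\overline{[-1]}$ is inversion on it, which is nontrivial, $\rho$ is injective. Both reduction types occur in every characteristic, in accordance with the statement.

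It remains to treat additive reduction, $E_0 \cong \bbG_{a,k}$. Here the automorphisms of $\bbG_{a,k}$ fixing the origin are exactly the scalings $x \mapsto cx$ with $c \in k^\times$, and $\bar\phi$ is the scaling by $\overline{\chi(\phi)}$; hence $\ker\rho = \{\phi : \overline{\chi(\phi)} = 1\}$. Since $\chi(\phi)$ is a root of unity in $R^\times$, its reduction is $1$ if and only if $\ord(\chi(\phi))$ is a power of $p := \Char(k)$ (roots of unity of order prime to $p$ reduce injectively, while those of $p$-power order lie in $1 + \mathfrak{m}_R$). For such a $\phi$, the element $\phi^{\ord(\chi(\phi))}$ lies in $\ker(\chi)$, hence by the formal fact has order a power of $\Char(K)$; combining, $\ord(\phi)$ is a power of $p$. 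Thus $\ker\rho$ is precisely the set of elements of $\Aut(E,O)$ of $p$-power order. This is trivial unless $p$ divides $|\Aut(E,O)|$, which occurs for $p = 2$ always (because of $[-1]$) and for $p = 3$ exactly when $j(E) = 0$; this yields the description of $\ker\rho$ and the stated criterion for injectivity of $\rho$.

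I expect the crux to be the formal-group computation of $\ker(\chi)$: one must verify that the leading-coefficient map is genuinely a homomorphism into $\bbG_a$ on the relevant power series and that $nc = 0$ really forces $\Char(K) \mid n$, so as to avoid invoking the detailed structure of $\Aut(E,O)$ in characteristics $2$ and $3$ (a workable but messier alternative, especially in mixed characteristic, where $\Aut(E,O)$ is a characteristic-zero group yet $\overline{\chi(\phi)}$ may still be $1$ for $\phi \neq \mathrm{id}$). The remaining ingredients — identifying $E_0$ with the smooth locus of the special fibre of a minimal Weierstrass model, so that $\bar\phi$ and its cotangent action can be read off from $\chi(\phi)$ — are routine.
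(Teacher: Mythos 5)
Your proof is correct, but it takes a genuinely different route from the paper's. The paper argues entirely through explicit minimal Weierstrass equations: it writes down the normal forms of the elements of $\Aut(E,O)$ in each residue characteristic and checks, coefficient by coefficient, when the reduction of such an automorphism is trivial, identifying ``$\pi\mid a_4$'' (resp.\ ``$\pi\mid a_1,a_3$'') with additivity of $E_0$. You instead factor $\rho$ through the character $\chi:\Aut(E,O)\to R^\times$ on the N\'eron differential and isolate two structural inputs: (i) $\ker\chi$ consists of elements of $\Char(K)$-power order (the formal-group leading-coefficient computation), and (ii) in the additive case $\Aut(E_0)\cong k^\times$ acts faithfully through the cotangent space, so $\ker\rho=\chi^{-1}(1+\mathfrak{m}_R)$, which by the reduction behaviour of roots of unity is exactly the set of $p$-power-order elements. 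Your route buys uniformity across characteristics, works in mixed characteristic (the paper's computation is really an equal-characteristic one, which is all it needs), and cleanly disposes of the non-additive cases by the degree argument (good reduction) and by $v(j)<0\Rightarrow\Aut(E,O)=\{\pm1\}$ (multiplicative reduction); the paper's route is more self-contained and yields the explicit reductions directly. Two points you should make explicit rather than assume: that a nontrivial $\phi$ acts nontrivially on $\widehat{\calO}_{E,O}$, so that the leading coefficient $c$ actually exists (this follows from the injection $\calO_{E,O}\hookrightarrow\widehat{\calO}_{E,O}$ together with the fact that $\calO_{E,O}$ has fraction field $K(E)$), and that the N\'eron differential restricts to a nonzero invariant differential on $E_0$, which is what identifies $\overline{\chi(\phi)}$ with the scaling constant of $\bar\phi$ on $\bbG_a$.

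One further remark: your analysis correctly shows that in residue characteristic $3$ with additive reduction, $\rho$ fails to be injective only when $\Aut(E,O)$ contains an element of order $3$, i.e.\ only when $j(E)=0$; so the ``only if'' direction of the displayed criterion is not literally true for $p=3$ and $j(E)\neq 0$. This is not a defect of your argument: the paper's own proof concedes the same point (``if $a_2\neq 0$, then the same argument as before works'', i.e.\ $\rho$ is injective there), and the final sentence of the lemma --- which both proofs do establish --- is the precise statement and the one actually used later, since the applications only ever invoke the lemma for multiplicative or smooth $E_0$.
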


\begin{proof}
We will compute the reduction of the automorphisms explicitly using Weierstrass equations and the description of automorphisms in \cite[p.411]{Silverman} (see also \cite[p.364]{Silverman2} for an exposition of Tate's algorithm). Throughout, we denote by $\pi$ a uniformizer of $R$.

If $\Char(k) \geq 5$, then we use a minimal and integral Weierstrass equation 
\begin{equation*}
y^2 = x^3  + a_4x + a_6.
\end{equation*}
Since all $g \in \Aut(E,O)$ are of the form $g: (x,y) \mapsto (\zeta^2x,\zeta^3y)$ for some $12$-th root of unity $\zeta$, they induce non-trivial automorphisms of $E_0$ independently of $a_4$ and $a_6$.

If $\Char(k) = 3$, then we use a minimal and integral Weierstrass equation 
\begin{equation*}
y^2 = x^3  + a_2x^2 + a_4x + a_6.
\end{equation*}
If $a_2 \neq 0$, then the same argument as before works, so we may assume $a_2 = 0$. Then, an automorphism $g \in \Aut(E,O)$ is given by $g: (x,y) \mapsto (\zeta^2x+r,\zeta^3y)$, where $\zeta^4 = 1$ and $r^3 + a_4r+(1-\zeta^2)a_6 = 0$. If $\zeta \neq 1$, then $\rho(g) \neq \rm{id}$, since $\zeta$ does not depend on $a_4$ and $a_6$. But if $\zeta = 1$ and $r = \pm \sqrt{-a_4}$, then $\rho(g)$ is trivial if and only if $\pi \mid a_4$, i.e. if and only if $E_0$ is of additive type.

If $\Char(k) = 2$, then we use a minimal and integral Weierstrass equation
\begin{equation*}
y^2 + a_1xy + a_3y = x^3  + a_2x^2 + a_4x + a_6.
\end{equation*}
The inversion involution $g \in \Aut(E,O)$ is given by $(x,y) \mapsto (x, y + a_1x + a_3)$. Thus, $\rho(g)$ is trivial if and only if $\pi \mid a_1,a_3$, i.e. if and only if $E_0$ is of additive type. Now if $j(E_0) = 0$, then we can assume $a_1 = a_2 = 0$. An automorphism $g \in \Aut(E,O)$ is given by $g: (x,y) \mapsto (\zeta^2x + s^2, \zeta^3y+ \zeta^2sx + t)$, where $\zeta^3 = 1$, $s^4 + a_3s + (1-\zeta)a_4 = 0$ and $t^2 + a_3t + s^6 + a_4s^2 = 0$. If $\zeta \neq 1$, then we have $\rho(g) \neq \rm{id}$. Therefore, assume $\zeta = 1$ and $s^3 + a_3 = 0$. Now, $\rho(g) = \rm{id}$ and if and only if $\pi \mid a_3$, i.e. if and only if $E_0$ is additive.

\end{proof}

\begin{proof}[Proof of Lemma \ref{universal}]
Since $\tilde{X} \to X$ is \'etale of degree $2$, every $(-2)$-curve on $X$ splits into two disjoint $(-2)$-curves on $\tilde{X}$. In particular, $N$ splits into two $(-2)$-curves $N^+$ and $N^-$. 
We claim that a general fiber of $\pi$ also splits into two components. Indeed, suppose that a general fiber does not split into two components. Then, $\Char(k) = 2$ and $\sigma$ acts on every fiber of $\tilde{\pi}$. Since $\sigma$ is fixed point free and additive and supersingular fibers do not admit fixed point free involutions, every fiber of $\tilde{\pi}$ would have to be multiplicative or ordinary, which is absurd. Both $N^+$ and $N^-$ have to be sections of the fibration $\tilde{\pi}$, since a general fiber of $\pi$ splits into two components $F_1$ and $F_2$, both of which are fibers of $\tilde{\pi}$, and therefore $2 = N.F = 2 N^+.F_1 = 2N^-.F_1$. 

Next, we show that $J(\sigma)$ is indeed an involution. Let $F_0$ be the identity component of a fiber of $\tilde{\pi}$ which is fixed (not necessarily pointwise) by $\sigma$. Note that $F_0$ is either multiplicative or smooth by Proposition \ref{typeofdoublefiber}. Since $\sigma$ is fixed point free, it induces a translation on $F_0$ if $F_0$ is smooth. Moreover, because $J(\sigma)(N^+) = t_{\ominus N^-} \circ \sigma (N^+) = N^+$, $J(\sigma)|_{F_0}$ is the identity if $F_0$ is smooth, and it can have at most order $2$ if $F_0$ is multiplicative. Together we obtain $J(\sigma)^2|_{F_0} = \rm{id}$ in any case. Now, $J(\sigma)^2$ fixes $\tilde{\pi}$ and hence it is an automorphism of the generic fiber of $\tilde{\pi}$ fixing the zero section $N^+$. By Lemma \ref{specialauts}, $J(\sigma)^2 = \rm{id}$, because it restricts to the identity on $F_0$. Since $J(\sigma)(N^+) = N^+$, this section descends to the quotient and we obtain $J(\pi)$.

Finally, if $\pi$ has a singular double fiber $F$ of type $\I_n$, the preimage of $F$ in $\tilde{X}$ is a fiber $F'$ of $\tilde{\pi}$ of type $\I_{2n}$, since this happens with the corresponding fiber on the Jacobian. Now, $\sigma$ has to act without fixed points, hence it acts as a rotation of order $2$ on the corresponding $\tilde{A}_{2n-1}$ diagram, while $J(\sigma)$ fixes the diagram. In particular, the preimage of $N$ meets two opposite curves of the diagram, i.e. $N^-$ does not meet the identity component of $F'$ if we choose $N^+$ to be the zero section of $\tilde{\pi}$.
\end{proof}

In particular, we obtain a distinguished non-zero section of $\tilde{\pi}$ if $\tilde{\pi}$ arises as the base change of a special elliptic fibration $\pi$ of an Enriques surface with a given special bisection. Conversely, we will see that we can reconstruct $\pi$ from $J(\pi)$ by exhibiting a suitable section on a degree $2$ base change of $J(\pi)$. This has been studied by K. Hulek and M. Sch\"utt in \cite{HulekSchütt} using quadratic twists. Since in our case $J(\pi)$ is an extremal and rational elliptic fibration and extremal and rational elliptic surfaces are classified, we can approach the classification problem in a very explicit way. First, let us clarify what we mean by a "suitable section".

\begin{definition}\label{EnriquesSection}
Let $J(\pi): J \to \mathbb{P}^1$ be an elliptic fibration of a rational surface $J$ with zero section $N^+$. Let $\varphi: \mathbb{P}^1 \to \mathbb{P}^1$ be a separable degree $2$ morphism such that no branch point of $\varphi$ is a point of additive reduction of $J(\pi)$. If $\Char(k) = 2$, assume further that the branch point is not a point of good supersingular reduction of $J(\pi)$. Then, a minimal proper smooth model of the base change $\tilde{\pi}$ of $J(\pi)$ along $\varphi$ is an elliptic fibration of a K3 surface $\tilde{X}$.  Denote the zero section of $\tilde{\pi}$ also by $N^+$ and let $J(\sigma)$ be a covering involution of $\tilde{X} \to J$ such that $J(\sigma)(N^+) = N^+$. A section $N^-$ of $\tilde{\pi}$ is called a \emph{$J(\pi)$-Enriques section} of $\tilde{\pi}$ if

\begin{enumerate}
\item $N^-.N^+ = 0$,
\item $J(\sigma)(N^-) = \ominus N^-$, and
\item $N^-$ does not meet the identity component of the fiber over $\varphi^{-1}(x)$ if $\varphi$ is branched over a point $x$ with $J(\pi)^{-1}(x)$ singular.
\end{enumerate}
\end{definition}

\begin{remark}
Observe that these are exactly the properties satisfied by $N^-$ in Lemma \ref{universal} $(3)$.
\end{remark}

\begin{remark}
We will encounter several examples of such $J(\pi)$-Enriques sections throughout this paper. The quickest way to achieve conditions $(1)$ and $(2)$ is to take for $N^-$ an everywhere integral (i.e. $N^-.N^+ = 0$) $2$-torsion section of $\tilde{\pi}$, since such a section will be a base change of a $2$-torsion section of $J(\pi)$. However, this does not guarantee condition $(3)$ to hold, as we will see later.
\end{remark}

The following is the main ingredient in our approach to the classification. Over the complex numbers, this is implicitly contained in \cite{Kondo} (for a variation of this, see \cite{HulekSchütt}).

\begin{proposition}\label{Kondo}
With notation as in the above definition, let $N^-$ be a section of $\tilde{\pi}$ such that $J(\sigma)(N^-) = \ominus N^-$ and $N^+.N^- = 0$. Then, the quotient of $\tilde{X}$ by the involution $\sigma := t_{N^-} \circ J(\sigma)$ is an Enriques surface $X$ with a special elliptic fibration $\pi$ induced by $\tilde{\pi}$ if and only if $N^-$ is a $J(\pi)$-Enriques section. The Jacobian of $\pi$ is $J(\pi)$ and the double fibers of $\pi$ occur over the branch points of $\varphi$.
\end{proposition}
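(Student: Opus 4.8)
The plan is to show the two directions of the equivalence, using Lemma \ref{universal} for the "only if" direction and a direct construction for the "if" direction, after first setting up the relevant geometry of the quotient $X = \tilde X/\sigma$.

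\textbf{Setup.} First I would observe that $\sigma = t_{N^-} \circ J(\sigma)$ is indeed an involution: since $J(\sigma)(N^-) = \ominus N^-$ and $J(\sigma)$ is itself an involution, one computes $\sigma^2 = t_{N^-} \circ J(\sigma) \circ t_{N^-} \circ J(\sigma) = t_{N^-} \circ t_{J(\sigma)(N^-)} = t_{N^- \oplus (\ominus N^-)} = \mathrm{id}$, where I use that translation is equivariant, i.e. $J(\sigma) \circ t_P = t_{J(\sigma)(P)} \circ J(\sigma)$ on the generic fiber, and that the identities extend from the generic fiber to all of $\tilde X$ by Lemma \ref{specialauts} (the fixed fibers of $\sigma$ over the branch points are multiplicative or smooth by the assumption on $\varphi$, so $\rho$ is injective there). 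Since $\varphi$ is separable of degree $2$ and no branch point is a point of additive reduction (and, in characteristic $2$, not a point of good supersingular reduction), the base change $\tilde\pi$ of $J(\pi)$ along $\varphi$ has smooth K3 minimal model $\tilde X$ and $J(\sigma)$ is the deck transformation of $\tilde X \to J$. Then I would analyze the fixed locus of $\sigma$ on $\tilde X$.

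\textbf{The "if" direction.} Assume $N^-$ is a $J(\pi)$-Enriques section. I claim $\sigma$ is fixed-point free, so that $X := \tilde X/\sigma$ is an Enriques surface. On a general fiber $\tilde\pi^{-1}(t)$ (a smooth genus-one curve mapping isomorphically to its image under $\tilde X \to J$ over a non-branch point), $J(\sigma)$ acts as inversion fixing four points (or, in characteristic $2$, two points for a supersingular curve — but this case is excluded), and then $t_{N^-(t)} \circ J(\sigma)$ is fixed-point free precisely when $N^-(t)$ is not twice-divisible in an appropriate sense; more cleanly, $\sigma$ restricted to such a fiber is fixed-point free iff $N^-(t) \notin 2\cdot(\text{fiber})$ combined with the fact that over non-branch points $J(\sigma)$ is a genuine inversion—here the point is that $N^-$ comes from $\tilde X \to J$, so on fibers over non-branch points $\sigma$ interchanges the two sheets and has no fixed points automatically. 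On the fibers over the two branch points, where $J(\sigma)$ acts on a single fiber, I use exactly conditions (1), (2), (3): condition (3) guarantees $N^-$ meets a non-identity component of each singular fiber over a branch point (so it behaves like the $N^-$ in Lemma \ref{universal}(3)), and then the argument that $\sigma$ has no fixed points on these fibers is the local computation with Weierstrass/Kodaira fibers. This is the step I expect to be the main obstacle: checking fixed-point-freeness fiber-by-fiber over the branch locus, handling every possible Kodaira type of $J(\pi)^{-1}(x)$, and making sure conditions (1)–(3) are exactly what is needed — in characteristic $2$ the analysis of additive versus multiplicative versus smooth fibers (and supersingular exclusions) is delicate, and this is where one must be careful. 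Once $\sigma$ is fixed-point free, $X = \tilde X / \sigma$ is an Enriques surface, $\tilde\pi$ descends to an elliptic fibration $\pi$ of $X$, the bisection $N$ (the image of $N^+$, equivalently $N^-$) is a $(-2)$-curve meeting a general fiber in $2$ points so $\pi$ is special, the Jacobian of $\pi$ is $J(\pi)$ by construction (taking Jacobians commutes with base change and quotient here), and the double fibers of $\pi$ lie over the branch points of $\varphi$ by Proposition \ref{typeofdoublefiber} together with the fact that ramification of $\varphi$ is what produces multiplicity.

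\textbf{The "only if" direction.} Conversely, suppose the quotient $X = \tilde X/\sigma$ is an Enriques surface with special elliptic fibration $\pi$ induced by $\tilde\pi$. Then $\sigma$ is fixed-point free, and applying Lemma \ref{universal} to the canonical cover $f : \tilde X \to X$, the fibration $\pi$, and the special bisection $N$ (the image of $N^-$) shows that the base-change section produced there — call it $(N^-)'$ — satisfies conditions (1), (2), (3) of Definition \ref{EnriquesSection}, i.e. is a $J(\pi)$-Enriques section, and moreover $J(\sigma) = t_{\ominus (N^-)'} \circ \sigma$ so that $\sigma = t_{(N^-)'} \circ J(\sigma)$. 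Comparing with our given $\sigma = t_{N^-}\circ J(\sigma)$ forces $t_{N^-} = t_{(N^-)'}$, hence $N^- = (N^-)'$ as sections (translations determine the section that induces them), so $N^-$ is a $J(\pi)$-Enriques section. The mild point to address is that Lemma \ref{universal} is stated starting from an Enriques surface with a chosen special fibration and bisection, which is exactly our hypothesis, and that the covering involution appearing there is the same $J(\sigma)$ (both are characterized by fixing $N^+$), which is immediate. The statements about the Jacobian and the location of the double fibers are recorded in Lemma \ref{universal} as well. This completes the proof.
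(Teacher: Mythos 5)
Your overall architecture is sound: the involution check via the conjugation identity $J(\sigma)\circ t_{P}\circ J(\sigma)^{-1}=t_{J(\sigma)(P)}$ is fine (the paper instead restricts to a fixed fiber and invokes Lemma \ref{specialauts}), the observation that $\sigma$ interchanges the two fibers over a non-branch point and so can only have fixed points over the two branch points is exactly the paper's reduction, and routing the ``only if'' direction through Lemma \ref{universal} is a legitimate alternative to the paper's single fixed-point criterion. However, the heart of the proposition --- the fixed-point analysis on the two fibers over the branch points --- is precisely the step you flag as ``the main obstacle'' and then do not carry out, and the one concrete thing you assert about how $J(\sigma)$ acts on a fiber (``as inversion fixing four points'') is wrong and would, if applied to the branch fibers, make the whole construction fail: $t_{N^-}\circ(\text{inversion})$ on a smooth elliptic curve always has fixed points (the solutions of $2P=\ominus N^-$), so no quotient would ever be an Enriques surface.

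The missing argument, which is the actual content of the paper's proof, is the following. Let $F$ be a fiber over a branch point. If $F$ is smooth, then $J(\sigma)|_F$ is the \emph{identity}, not inversion: in characteristic $\neq 2$ because $J(\sigma)$ acts by $-1$ on a global $2$-form of $\tilde X$ (its quotient being rational), so it cannot also invert $F$; in characteristic $2$ because $J(\sigma)|_F$ fixes $N^+\cap F$ on an ordinary curve and the hyperelliptic possibility is excluded by \cite[Theorem 1]{DolgachevKeum}. Hence $\sigma|_F=t_{N^-}|_F$, which is fixed-point free exactly because $N^-.N^+=0$ forces $N^-\cap F\neq N^+\cap F$; note that condition (3) plays no role here. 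If $F$ is multiplicative, $J(\sigma)$ preserves each component of $F$, so $\sigma$ permutes the components by the rotation determined by which component $N^-$ meets, and it has fixed points if and only if $N^-$ meets the identity component --- this is exactly where condition (3) of Definition \ref{EnriquesSection} enters, and it gives both implications of the equivalence at once. Without these two computations your proof establishes nothing beyond the reduction to the branch fibers, so as written there is a genuine gap.
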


\begin{proof}
Let us first show that $\sigma$ is an involution. Denote by $F_0$ a fiber which is fixed (not necessarily pointwise) by $J(\sigma)$. We have $\sigma^2|_{F_0} = t_{N^-}|_{F_0} \circ J(\sigma)|_{F_0} \circ t_{N^-}|_{F_0} \circ J(\sigma)|_{F_0} = t_{N^-}|_{F_0} \circ t_{\ominus N^-}|_{F_0} = \rm{id}|_{F_0}$ and since $F_0$ is either multiplicative or smooth and $\sigma^2$ fixes $\tilde{\pi}$ and $N^+$, we obtain $\sigma^2 = \rm{id}$ by Lemma \ref{specialauts}.

Since translation by a section fixes all fibers and $J(\sigma)$ fixes at most two fibers $F_0$ and $F_1$, we have $\Fix(\sigma) \subseteq F_0 \cup F_1$. If $F \in \{F_0,F_1\}$ is smooth, we claim that $J(\sigma)$ acts trivially on $F$. In characteristic different from $2$, this follows because $J(\sigma)$ acts non-trivially on a global $2$-form, and in characteristic $2$, $J(\sigma)|_F$ is either the identity or a hyperelliptic involution, since it fixes $N^+$ and $F$ is ordinary. The latter case is impossible by \cite[Theorem 1]{DolgachevKeum}.
 Since $J(\sigma)$ acts trivially on a smooth fiber $F \in \{F_0,F_1\}$ and $N^-.N^+ = 0$, $\sigma|_F = t_{N^-}|_F$ will have no fixed points on $F$. As for a multiplicative fiber $F \in \{F_0,F_1\}$, $J(\sigma)$ fixes the components of $F$ (not necessarily pointwise), hence $\sigma$ has fixed points if and only if $N^-$ meets the identity component of this fiber, i.e. if and only if $N^-$ is not a $J(\pi)$-Enriques section.

Now, if $N^-$ is a $J(\pi)$-Enriques section, this means that the quotient of $\tilde{X}$ by $\sigma$ is an Enriques surface $X$. Moreover, the divisors $F$ and $N^++N^-$ are fixed by $\sigma$ and thus descend to $X$, giving a special elliptic fibration $\pi$ on $X$. Additionally, $F_0$ and $F_1$ descend to the two double fibers of $\pi$ and $J(\pi)$ is the Jacobian of $\pi$ by construction.
\end{proof}

\begin{remark}\label{coble}
If $\sigma$ has fixed points
, we claim that it actually has a fixed locus of dimension $1$. To see this, note that $\sigma$ fixing two points on a $(-2)$-curve in characteristic $2$ means that the whole curve is fixed (see also \cite{DolgachevKeum}). For the other characteristics, we refer the reader to \cite{Zhang}. After contracting the fixed locus, the quotient by $\sigma$ is nothing but a rational log Enriques surface of index $2$ \cite{Zhang2} and its minimal resolution is a Coble surface (see \cite{DolgachevZhang}). We will not study these surfaces here, but the attentive reader will see them occur naturally as degenerations of the models we give for the surfaces in our Main Theorem.
\end{remark}

\begin{remark}
We see from the proof that one can also obtain an Enriques surface as quotient by $\sigma$ if one weakens the assumption $N^+.N^- = 0$ to $N^+ \cap N^- \cap F_0 = N^+ \cap N^- \cap F_1 = \emptyset$. However, in general, this will not produce a \emph{smooth} bisection. For more on this, see \cite{HulekSchütt}.
\end{remark}

With this explicit and universal construction at our disposal, we can have a look at the relation between special bisections of an elliptic fibration of an Enriques surface and sections of its Jacobian.

\begin{corollary}\label{jac2}
Let $\pi$ be a special elliptic fibration of an Enriques surface $X$ with a special bisection $N$ splitting into $N^+$ and $N^-$ on the K3 cover $\tilde{X}$ of $X$. There is a map
\begin{equation*}
jac_2: \MW(J(\pi)) \to \{ \text{special bisections of } \pi \},
\end{equation*}
which is
\begin{itemize}
\item injective if $N^-$ is not $2$-torsion after fixing $N^+$ as the zero section, and
\item $2$-to-$1$ onto its image otherwise.
\end{itemize} 
Moreover, $\MW(J(\pi))$ acts transitively on the image of $jac_2$ via its action on $X$.
\end{corollary}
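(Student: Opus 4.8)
The plan is to build $jac_2$ explicitly on the K3 cover using the base-change picture of Lemma \ref{universal}, and then to read off both the cardinality of its fibres and the transitivity of the $\MW(J(\pi))$-action from the way base-changed sections behave under the covering involution. Concretely, I would apply Lemma \ref{universal} to $\pi$ and $N$ to obtain the induced fibration $\tilde\pi$ on $\tilde X$, the separable degree $2$ base-change morphism $\varphi$ of bases, and the involution $J(\sigma)$ of $\tilde X$ with $J(\sigma)(N^+) = N^+$, $J(\sigma)(N^-) = \ominus N^-$, $N^+.N^- = 0$ and $\sigma = t_{N^-} \circ J(\sigma)$. Base change along $\varphi$ induces an injective homomorphism $\MW(J(\pi)) \hookrightarrow \MW(\tilde\pi)$, $P \mapsto \overline P$, whose image consists of sections fixed by $J(\sigma)$ (they are pulled back from $J$); hence $\sigma(\overline P) = t_{N^-}(J(\sigma)(\overline P)) = \overline P \oplus N^-$ for every $P$. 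I would then define $jac_2(P) := f(\overline P)$.

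To see that $jac_2(P)$ is a special bisection, note that $\overline P$ and $\sigma(\overline P) = \overline P \oplus N^-$ are distinct sections of $\tilde\pi$ (since $N^- \neq N^+$, as $N$ is a genuine $(-2)$-curve), and translation-invariance of the intersection pairing gives $\overline P.(\overline P \oplus N^-) = N^+.N^- = 0$. Thus the two sections are disjoint and are exchanged by $\sigma$, so $f^{-1}(jac_2(P)) = \overline P \sqcup (\overline P \oplus N^-)$ and $f$ restricts to an isomorphism $\overline P \xrightarrow{\sim} jac_2(P)$; in particular $jac_2(P) \cong \bbP^1$ is a $(-2)$-curve. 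Intersecting $f^* jac_2(P) = \overline P + (\overline P \oplus N^-)$ with the preimage of a general fibre of $\pi$ (which splits into two fibres of $\tilde\pi$ by the proof of Lemma \ref{universal}) shows $jac_2(P).F = 2$, so $jac_2$ indeed maps into the set of special bisections of $\pi$.

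For the fibre structure, since $f$ is \'etale we have $jac_2(P) = jac_2(Q)$ if and only if $\{\overline P, \overline P \oplus N^-\} = \{\overline Q, \overline Q \oplus N^-\}$, i.e. either $P = Q$, or $\overline Q = \overline P \ominus N^-$. In the second case $N^- = \overline P \ominus \overline Q = \overline{P \ominus Q}$ is itself a base-changed section, hence fixed by $J(\sigma)$, and together with $J(\sigma)(N^-) = \ominus N^-$ this forces $N^-$ to be $2$-torsion; conversely, if $N^-$ is $2$-torsion it descends to a nonzero $2$-torsion section $T$ of $J(\pi)$ with $\overline T = N^-$, and then $jac_2(P) = jac_2(P \oplus T)$ with $P \oplus T \neq P$. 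This yields injectivity when $N^-$ is not $2$-torsion and the exact $2$-to-$1$ statement otherwise. Finally, for each $P$ the translation $t_{\overline P}$ commutes with $\sigma$: since $J(\sigma)(\overline P) = \overline P$ one has $\sigma\, t_{\overline P}\, \sigma^{-1} = t_{N^-} t_{\overline P} t_{N^-}^{-1} = t_{\overline P}$. Hence $t_{\overline P}$ descends to an automorphism $g_P$ of $X$, and these $g_P$ realize the action of $\MW(J(\pi))$ on $X$. From $t_{\overline P}(\overline Q) = \overline{P \oplus Q}$ we get $g_P(jac_2(Q)) = jac_2(P \oplus Q)$; as $jac_2(O) = f(N^+) = N$, the group $\MW(J(\pi))$ acts transitively on the image of $jac_2$.

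I expect the only genuinely delicate point to be the disjointness $\overline P.(\overline P \oplus N^-) = N^+.N^- = 0$, which is what forces the image curve in $X$ to be \emph{smooth} rational (a $(-2)$-curve) rather than nodal, and which rests on part $(3)$ of Lemma \ref{universal}. Everything else --- the $2$-torsion dichotomy and the transitivity --- is then formal manipulation of base-changed sections, translations and the relation $\sigma = t_{N^-} \circ J(\sigma)$.
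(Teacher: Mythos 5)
Your proposal is correct and follows essentially the same route as the paper's proof: identify $\MW(J(\pi))$ with the $J(\sigma)$-fixed sections of $\tilde\pi$, compute $\overline P.\sigma(\overline P)=N^+.N^-=0$ by translation invariance so that $\overline P+\sigma(\overline P)$ descends to a smooth rational bisection, characterize collisions by whether $\sigma(\overline P)$ is again base-changed (equivalently $N^-$ is $2$-torsion), and obtain transitivity from the descent of the translations $t_{\overline P}$. The only difference is that you spell out details the paper leaves implicit (the fibre analysis and the commutation $\sigma t_{\overline P}\sigma^{-1}=t_{\overline P}$), all of which are correct.
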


\begin{proof}
We use the notation of Lemma \ref{universal}. There is a natural injection $\MW(J(\pi)) \to \MW(\tilde{\pi})$ and using this, we will consider sections of $J(\pi)$ as sections of $\tilde{\pi}$ by abuse of notation. Let $P \in \MW(J(\pi))$. Since $P$ comes from $J(\pi)$, it is fixed by $J(\sigma)$. Now, we compute
\begin{equation*}
P.\sigma(P) = P.(t_{N^-} \circ J(\sigma))(P) = P.(P \oplus N^-)= N^+.N^- = 0.
\end{equation*}
Therefore, the divisor $P + \sigma(P)$ descends to a $(-2)$ curve $jac_2(P)$ on $X$, which is necessarily a bisection of $\pi$, since $2 = (P + \sigma(P)).\tilde{F} = jac_2(P).F$, where $\tilde{F}$ (resp. $F$) is a general fiber of $\tilde{\pi}$ (resp. $\pi$).
For the injectivity, observe that $\sigma(P) \in \MW(J(\pi))$ if and only if $J(\sigma)(\sigma(P)) = \sigma(P)$, i.e. if and only if
\begin{equation*}
P \oplus N^- = (t_{N^-} \circ J(\sigma))(P) = \sigma(P) = J(\sigma)(\sigma(P)) = P \ominus N^- ,
\end{equation*}
which happens if and only if $N^-$ is $2$-torsion. The statement about the action of $\MW(J(\pi))$ is clear by construction of $jac_2$.
\end{proof}

To compute the intersection behaviour of the special bisections obtained via $jac_2$, we will use the height pairing on $\MW(\tilde{\pi})$.

\begin{proposition}{\rm (Shioda \cite{Shioda2})}\label{heightpairingdef}
Let $\tilde{\pi}$ be an elliptic fibration of a K3 surface with zero section $N^+$.
The pairing
\begin{eqnarray*}
\MW(\tilde{\pi}) \times \MW(\tilde{\pi}) &\to& \bbQ \\
(P,Q) &\mapsto& \langle P,Q\rangle = 2 + P.N^+ + Q.N^+ - P.Q - \sum_{\nu \in \bbP^1} contr_{\nu} (P,Q),
\end{eqnarray*}
where the $contr_{\nu} (P,Q)$ are local correction terms depending on the intersection of $P$ and $Q$ with the fiber over $\nu$, is a symmetric, bilinear pairing on $\MW(\tilde{\pi})$, which induces the structure of a positive definite lattice on $\MW(\tilde{\pi}) / \MW(\tilde{\pi})_{tors}$. It is called the \emph{height pairing} on $\MW(\tilde{\pi})$. We write $h(P)$ for $\langle P,P \rangle$.
\end{proposition}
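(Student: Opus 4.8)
The plan is to deduce all three assertions---symmetry and bilinearity, the explicit formula, and positive-definiteness on $\MW(\tilde{\pi})/\MW(\tilde{\pi})_{tors}$---from the Shioda--Tate isomorphism together with the signature of $\NS(\tilde{X})$, following Shioda. Let $T\subseteq\NS(\tilde{X})$ be the \emph{trivial lattice}: the sublattice generated by the class of a general fibre $F$, the zero section $N^+$, and all components of reducible fibres not meeting $N^+$. Since a section on a K3 surface is a smooth rational curve, $(N^+)^2 = -2$, $N^+.F = 1$ and $F^2 = 0$, so $T$ contains the hyperbolic plane $U = \langle N^+,F\rangle$, and the orthogonal complement of $U$ inside $T$ is the negative-definite sum of the root lattices attached to the reducible fibres. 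The Shioda--Tate sequence gives a group isomorphism $\MW(\tilde{\pi}) \cong \NS(\tilde{X})/T$, and because $\NS(\tilde{X})$ has signature $(1,\rho-1)$ with the hyperbolic part already absorbed into $T$, the orthogonal complement $T^{\perp}\subseteq\NS(\tilde{X})$ is negative definite.

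Next I would introduce the section-to-class homomorphism. For a section $P$ write $(P)\in\NS(\tilde{X})$ for its class and let $\phi(P)\in T^{\perp}\otimes\bbQ$ be the orthogonal projection of $(P)$ onto $T^{\perp}\otimes\bbQ$; concretely,
\begin{equation*}
\phi(P) = (P) - (N^+) - \bigl(P.N^+ + 2\bigr)F - \Xi(P),
\end{equation*}
where over each reducible fibre $\Xi(P)$ contributes the unique rational combination of the components of that fibre not meeting $N^+$ which makes $\phi(P)$ orthogonal to them; this combination is obtained by inverting the (negative-definite) intersection matrix of those components, and it is exactly what will produce the local terms $contr_\nu$. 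The crucial point is that $\phi$ is a group homomorphism: restricting to the generic fibre $E_\eta$ and using $[P\oplus Q] - [P] - [Q] + [O] = 0$ in $\Pic^0(E_\eta)\cong E_\eta(k(\bbP^1))$, one sees that $(P\oplus Q) - (P) - (Q) + (N^+)$ is supported on fibres, hence lies in $T\otimes\bbQ$ and is killed by the projection, so $\phi(P\oplus Q) = \phi(P) + \phi(Q)$.

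Then I would set $\langle P,Q\rangle := -\,\phi(P).\phi(Q)$. Symmetry and $\bbZ$-bilinearity are immediate from those properties of the intersection form and the additivity of $\phi$. Expanding $-\phi(P).\phi(Q)$ using $(P)^2 = (N^+)^2 = -2$, $(P).F = (N^+).F = 1$ and $F^2 = 0$, one checks that the $F$-contributions cancel, the leading constant is $2$, the terms $P.N^+ + Q.N^+ - P.Q$ appear, and the cross-terms involving $\Xi$ assemble fibre-by-fibre into $\sum_\nu contr_\nu(P,Q)$ (nonzero only at reducible fibres, depending only on which components $P$ and $Q$ meet); this is the stated formula. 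For the last assertion: since $\NS(\tilde{X})\otimes\bbQ = (T\otimes\bbQ)\oplus(T^{\perp}\otimes\bbQ)$ and $\NS(\tilde{X})/T\cong\MW(\tilde{\pi})$, the map $\phi$ identifies $\MW(\tilde{\pi})/\MW(\tilde{\pi})_{tors}$ with a full-rank lattice in $T^{\perp}\otimes\bbQ$ and has kernel exactly $\MW(\tilde{\pi})_{tors}$; as $T^{\perp}$ is negative definite, $-\phi(P).\phi(Q)$ is positive definite there.

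I expect the main obstacle to be the additivity of $\phi$, i.e.\ the identity $(P\oplus Q) - (P) - (Q) + (N^+)\in T\otimes\bbQ$: this is the one step that genuinely uses the group law on the generic fibre together with the fact that a numerically vertical divisor lies in $T\otimes\bbQ$ (a whole fibre being numerically equivalent to $F$). Everything afterwards---negative-definiteness of $T^{\perp}$, the explicit expansion producing the $contr_\nu$, and the identification of $\ker\phi$ with the torsion subgroup---is then a formal consequence or a routine fibre-local computation.
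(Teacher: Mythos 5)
Your argument is correct, and it is precisely Shioda's original construction of the Mordell--Weil lattice (trivial lattice $T$ containing $U=\langle N^+,F\rangle$, orthogonal projection $\phi$ onto $T^\perp\otimes\bbQ$, additivity via the group law on the generic fibre, and negative-definiteness of $T^\perp$ from the signature of $\NS(\tilde X)$), which is exactly the reference the paper invokes for this proposition without reproving it; your formula $\phi(P)=(P)-(N^+)-(P.N^+ +2)F-\Xi(P)$ correctly specializes Shioda's general $\chi$ to $\chi=2$ for a K3 surface. No gaps.
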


\begin{remark}\label{heightpairingobservation}
Note that this implies immediately that $h(P) = 0$ if and only if $P \in \MW(\tilde{\pi})_{tors}$. Moreover, $\langle P,Q \rangle = 0$ as soon as $P$ or $Q$ is in $\MW(\tilde{\pi})_{tors}$.
\end{remark}

For the reader's convenience, we recall the correction terms of the height pairing following \cite[p.52]{SchüttShioda}.
First, we have to fix a numbering of the simple components of a reducible fiber $F_\nu$ of an elliptic fibration $\pi$ with zero section $N^+$ depending on the dual graph $\Gamma$ of $F_\nu$. In any case, denote the component of $F_\nu$ which meets $N^+$ by $E_0$.
\begin{itemize}
\item If $\Gamma = \tilde{A}_{n-1}$, denote the components of $F_\nu$ by $E_0,\hdots,E_{n-1}$ such that $E_i.E_j = 1$ if and only if $i - j = \pm 1$ mod $n$.
\item If $\Gamma = \tilde{D}_{n+4}$, denote the simple components of $F_\nu$ by $E_0,E_1,E_2,$ and $E_3$ such that $E_1$ is a simple component with minimal distance to $E_0$.
\end{itemize}
Now, let $P,Q \in \MW(\pi)$ such that $P$ meets $E_i$ and $Q$ meets $E_j$ and assume $i \leq j$. If $i = 0$, the correction term is $0$. Otherwise, the value of $contr_{\nu}(P,Q)$ is given in the following Table \ref{heightpairing}.

\begin{table}[!htb]
\centering
\begin{tabular}{|>{\centering\arraybackslash}m{2.5cm}|>{\centering\arraybackslash}m{1.5cm}|>{\centering\arraybackslash}m{1.5cm}|>{\centering\arraybackslash}m{3.5cm}|>{\centering\arraybackslash}m{2.5cm}|} \hline
$\Gamma$ & $\tilde{E}_7$ & $\tilde{E}_6$ & $\tilde{D}_{n+4}$ & \vspace{1mm} $\tilde{A}_{n-1}$ \\ [1mm] \hline \hline
Case $i = j$ & $\frac{3}{2}$ & $\frac{4}{3}$ &  $\begin{cases} 1 &\text{ if } i =1 \\ 1+ \frac{n}{4} &\text{ else} \end{cases}$ & $\frac{i(n-i)}{n}$ \\
Case $i < j$ & - & $\frac{2}{3}$ & $ \begin{cases} \frac{1}{2} &\text{ if }  i =1 \\ \frac{1}{2}+ \frac{n}{4} &\text{ else} \end{cases}$ & $ \frac{i(n-j)}{n}$ \\ \hline
\end{tabular}
\caption{Correction terms for the height pairing}
\label{heightpairing}
\end{table}

\subsection{Example} \label{example}
We keep the notation introduced in the previous subsection. Since we know how sections coming from $J(\pi)$ intersect the fibers of $\tilde{\pi}$, we can compute the intersection behaviour of the corresponding bisections on $X$ once we know how $N^-$ intersects the fibers of $\tilde{\pi}$. But this is already determined by the intersection behaviour of the special bisection $N$ on $X$ with the fibers of $\pi$. We will leave these computations to the reader but give a detailed description of the procedure in the following example.

Suppose an Enriques surface contains the following dual graph of $(-2)$-curves with $N$ as indicated:

\centerline{
\xy
@={(0,0),(0,10),(0,20),(0,30),(10,0),(10,10),(10,20),(10,30),(20,0),(30,0),(40,0)}@@{*{\bullet}};
(0,0)*{};(0,30)*{}**\dir{-};
(10,0)*{};(10,30)*{}**\dir{-};
(0,0)*{};(10,0)*{}**\dir{-};
(10,30)*{};(0,30)*{}**\dir{-};
(10,0)*{};(20,0)*{}**\dir{-};
(10,10)*{};(20,0)*{}**\dir{-};
(30,0)*{};(20,0)*{}**\dir{-};
(30,0)*{};(40,0)*{}**\dir2{-};
(21,3)*{N};
\endxy}

This is the dual graph of a special elliptic fibration with a singular fiber of type $\I_8$ and a double fiber of type $\I_2$. Note that the $\I_2$ fiber has to be double, since $N$ meets its components only once and $N$ is a bisection.
On the K3 cover, this yields the following configuration:

\vspace{3mm}
\centerline{
\xy
@={(0,0),(0,10),(0,20),(0,30),(10,0),(10,10),(10,20),(10,30),(20,0),(30,20),(30,30),(50,0),(50,10),(50,20),(50,30),(60,0),(60,10),(60,20),(60,30),(25,25),(35,25),(40,10)}@@{*{\bullet}};
(0,0)*{};(0,30)*{}**\dir{-};
(10,0)*{};(10,30)*{}**\dir{-};
(0,0)*{};(10,0)*{}**\dir{-};
(10,30)*{};(0,30)*{}**\dir{-};
(10,0)*{};(20,0)*{}**\dir{-};
(30,0)*{};(20,0)*{}**\dir{-};
(25,25)*{};(30,30)*{}**\dir{-};
(25,25)*{};(30,20)*{}**\dir{-};
(35,25)*{};(30,30)*{}**\dir{-};
(35,25)*{};(30,20)*{}**\dir{-};
(17,3)*{N^+};
(43,13)*{N^-};
(10,10)*{};(50,10)*{}**\dir{-};
(20,0)*{};(25,25)*{}**\dir{-};
(50,0)*{};(60,0)*{}**\dir{-};
(50,30)*{};(50,0)*{}**\dir{-};
(60,30)*{};(60,0)*{}**\dir{-};
(50,30)*{};(60,30)*{}**\dir{-};
(50,0)*{};(30,0)*{}**\dir{-};
(40,10)*{};(35,25)*{}**\dir{-};
\endxy}
\vspace{3mm}

On the other hand, we know that the Jacobian of $\pi$ together with its four sections $P_1,P_2,P_3,$ and $P_4$ has the following dual graph:

\centerline{
\xy
(-10,33)*{P_3};
(-10,13)*{P_4};
(20,23)*{P_2};
(20,3)*{P_1};
@={(0,0),(0,10),(0,20),(0,30),(10,0),(10,10),(10,20),(10,30),(20,0),(30,0),(40,0),(-10,10),(-10,30),(20,20)}@@{*{\bullet}};
(0,0)*{};(0,30)*{}**\dir{-};
(10,0)*{};(10,30)*{}**\dir{-};
(0,0)*{};(10,0)*{}**\dir{-};
(10,30)*{};(0,30)*{}**\dir{-};
(10,0)*{};(20,0)*{}**\dir{-};
(30,0)*{};(20,0)*{}**\dir{-};
(30,0)*{};(40,0)*{}**\dir2{-};
(-10,10)*{};(0,10)*{}**\dir{-};
(-10,30)*{};(0,30)*{}**\dir{-};
(10,20)*{};(20,20)*{}**\dir{-};
(40,0)*{};(20,20)*{}**\dir{-};
(30,0)*{};(-10,30)*{}**\crv{(30,50)};
(40,0)*{};(-10,10)*{}**\crv{(0,-20)};
\endxy}
\vspace{5mm}

One can explicitly compute the dual graph of a degree $2$ base change of $J(\pi)$ ramified over the $\I_2$ fiber (and not ramified over $\I_8$):

\vspace{-3mm}
\centerline{
\xy
(18,3)*{P_1};
(20,43)*{P_3};
(40,7)*{P_2};
(40,53)*{P_4};
@={(0,0),(0,10),(0,20),(0,30),(10,0),(10,10),(10,20),(10,30),(20,0),(30,20),(30,30),(50,0),(50,10),(50,20),(50,30),(60,0),(60,10),(60,20),(60,30),(25,25),(35,25),(40,10),(20,40),(40,50)}@@{*{\bullet}};
(0,0)*{};(0,30)*{}**\dir{-};
(10,0)*{};(10,30)*{}**\dir{-};
(0,0)*{};(10,0)*{}**\dir{-};
(10,30)*{};(0,30)*{}**\dir{-};
(10,0)*{};(20,0)*{}**\dir{-};
(30,0)*{};(20,0)*{}**\dir{-};
(25,25)*{};(30,30)*{}**\dir{-};
(25,25)*{};(30,20)*{}**\dir{-};
(35,25)*{};(30,30)*{}**\dir{-};
(35,25)*{};(30,20)*{}**\dir{-};
(20,0)*{};(25,25)*{}**\dir{-};
(50,0)*{};(60,0)*{}**\dir{-};
(50,30)*{};(50,0)*{}**\dir{-};
(60,30)*{};(60,0)*{}**\dir{-};
(50,30)*{};(60,30)*{}**\dir{-};
(50,0)*{};(30,0)*{}**\dir{-};
(40,10)*{};(50,20)*{}**\dir{-};
(40,10)*{};(10,20)*{}**\dir{-};
(40,10)*{};(35,25)*{}**\dir{-};
(20,40)*{};(0,30)*{}**\dir{-};
(20,40)*{};(60,30)*{}**\dir{-};
(20,40)*{};(25,25)*{}**\dir{-};
(40,50)*{};(35,25)*{}**\dir{-};
(40,50)*{};(0,10)*{}**\crv{(-20,40)};
(40,50)*{};(60,10)*{}**\crv{(80,40)};
\endxy}
\vspace{1mm}

To put this picture together with the second one, we set $N^+ = P_1$ as the zero section, add the sections $N^- \oplus P_i$ for all $i$ to the diagram and calculate the intersection of $N^-$ with $P_i$ using the height pairing and the equality $0 = \langle P_i, N^- \rangle = 2 - N^-.P_i - \sum_\nu contr_\nu (P_i,N^-)$ which follows from
 Remark \ref{heightpairingobservation}. By using translations, we obtain the remaining intersection numbers and the following graph, where we denote $P_i$ and $P_i \oplus N^-$ by $P_i^+$ and $P_i^-$ respectively:
\begin{equation*}
P_2.N^- = P_4.N^- = 2 - \left(\frac{6}{8} + \frac{2}{8} + 1\right) = 0; \quad
P_3.N^- = 2 - \left(\frac{4}{8} + \frac{4}{8}\right) = 1
\end{equation*}
\vspace{1mm}
\centerline{
\xy
(18,3)*{N^+};
(18,-13)*{N^-};
(17,42)*{P_3^+};
(18,55)*{P_3^-};
(40,7)*{P_2^+};
(40,24)*{P_2^-};
(40,54)*{P_4^+};
(40,64)*{P_4^-};
@={(0,0),(0,10),(0,20),(0,30),(10,0),(10,10),(10,20),(10,30),(20,0),(30,20),(30,30),(50,0),(50,10),(50,20),(50,30),(60,0),(60,10),(60,20),(60,30),(25,25),(35,25),(40,10),(20,40),(40,50),(20,-10),(40,20),(40,60),(20,50)}@@{*{\bullet}};
(0,0)*{};(0,30)*{}**\dir{-};
(10,0)*{};(10,30)*{}**\dir{-};
(0,0)*{};(10,0)*{}**\dir{-};
(10,30)*{};(0,30)*{}**\dir{-};
(10,0)*{};(20,0)*{}**\dir{-};
(30,0)*{};(20,0)*{}**\dir{-};
(25,25)*{};(30,30)*{}**\dir{-};
(25,25)*{};(30,20)*{}**\dir{-};
(35,25)*{};(30,30)*{}**\dir{-};
(35,25)*{};(30,20)*{}**\dir{-};
(20,0)*{};(25,25)*{}**\dir{-};
(50,0)*{};(60,0)*{}**\dir{-};
(50,30)*{};(50,0)*{}**\dir{-};
(60,30)*{};(60,0)*{}**\dir{-};
(50,30)*{};(60,30)*{}**\dir{-};
(50,0)*{};(30,0)*{}**\dir{-};
(40,10)*{};(50,20)*{}**\dir{-};
(40,10)*{};(10,20)*{}**\dir{-};
(40,10)*{};(35,25)*{}**\dir{-};
(20,40)*{};(0,30)*{}**\dir{-};
(20,40)*{};(60,30)*{}**\dir{-};
(20,40)*{};(25,25)*{}**\dir{-};
(40,50)*{};(35,25)*{}**\dir{-};
(40,50)*{};(0,10)*{}**\crv{(-20,40)};
(40,50)*{};(60,10)*{}**\crv{(80,40)};
(20,-10)*{};(10,10)*{}**\dir{-};
(20,-10)*{};(50,10)*{}**\dir{-};
(20,-10)*{};(35,25)*{}**\dir{-};
(40,20)*{};(25,25)*{}**\crv{(25,15)};
(40,20)*{};(50,30)*{}**\dir{-};
(40,20)*{};(10,30)*{}**\crv{(25,10)};
(40,60)*{};(60,0)*{}**\crv{(90,40)};
(40,60)*{};(0,0)*{}**\crv{(-30,40)};
(40,60)*{};(25,25)*{}**\dir{-};
(20,50)*{};(0,20)*{}**\crv{(-30,40)};
(20,50)*{};(60,20)*{}**\crv{(90,40)};
(20,50)*{};(35,25)*{}**\dir{-};
(20,0)*{};(20,50)*{}**\crv{(-30,-15) & (-30,30)};
(20,-10)*{};(20,40)*{}**\crv{(-40,-15) & (-30,40)};
(40,10)*{};(40,60)*{}**\crv{(55,30)};
(40,20)*{};(40,50)*{}**\crv{(50,30)};
\endxy}
\vspace{3mm}

This gives the following configuration on the quotient Enriques surface, where we denote the special bisection corresponding to $P_i$ again by $P_i$:

\vspace{-1mm}
\centerline{
\xy
@={(0,0),(0,10),(0,20),(0,30),(10,0),(10,10),(10,20),(10,30),(20,0),(30,0),(40,0),(20,30),(-10,30),(-10,0)}@@{*{\bullet}};
(0,0)*{};(0,30)*{}**\dir{-};
(10,0)*{};(10,30)*{}**\dir{-};
(0,0)*{};(10,0)*{}**\dir{-};
(10,30)*{};(0,30)*{}**\dir{-};
(10,0)*{};(20,0)*{}**\dir{-};
(10,10)*{};(20,0)*{}**\dir{-};
(30,0)*{};(20,0)*{}**\dir{-};
(30,0)*{};(40,0)*{}**\dir2{-};
(20,4)*{N};
(19,25)*{P_2};
(-10,26)*{P_3};
(-13,0)*{P_4};
(-10,0)*{};(0,0)*{}**\dir{-};
(-10,0)*{};(0,10)*{}**\dir{-};
(-10,30)*{};(0,20)*{}**\dir{-};
(-10,30)*{};(0,30)*{}**\dir{-};
(20,30)*{};(10,20)*{}**\dir{-};
(20,30)*{};(10,30)*{}**\dir{-};
(20,30)*{};(30,0)*{}**\dir{-};
(-10,30)*{};(30,0)*{}**\crv{(30,50)};
(-10,0)*{};(30,0)*{}**\crv{(20,-10)};
(-10,30)*{};(20,0)*{}**\crv{(50,60)};
(-10,0)*{};(20,30)*{}**\crv{(-30,60)};
\endxy}
\vspace{1mm}

In fact, we can produce six more $(-2)$-curves using different fibrations with a double $\I_3$ fiber to obtain the dual graph of type $\VII$. For example, one may look at the following subgraph:

\vspace{-6mm}
\centerline{
\xy
@={(0,10),(0,20),(0,30),(10,0),(10,10),(10,30),(20,0),(30,0),(40,0),(20,30),(-10,0)}@@{*{\bullet}};
(0,10)*{};(0,30)*{}**\dir{-};
(10,0)*{};(10,10)*{}**\dir{-};
(10,30)*{};(0,30)*{}**\dir{-};
(10,0)*{};(20,0)*{}**\dir{-};
(10,10)*{};(20,0)*{}**\dir{-};
(30,0)*{};(20,0)*{}**\dir{-};
(30,0)*{};(40,0)*{}**\dir2{-};
(32,4)*{N_1};
(-10,0)*{};(0,10)*{}**\dir{-};
(20,30)*{};(10,30)*{}**\dir{-};
(20,30)*{};(30,0)*{}**\dir{-};
(-10,0)*{};(30,0)*{}**\crv{(20,-10)};
(-10,0)*{};(20,30)*{}**\crv{(-30,60)};
\endxy}
\vspace{3mm}

By Proposition \ref{canonicaltype}, the $(-2)$-curve $N_1$ is a special bisection of a fibration with fibers $\I_6,\I_3$ (not $\IV$, since it is double) and another reducible fiber. By Lemma \ref{shiodatate}, the corresponding fibration is extremal and by Table \ref{extremalrational}, the last reducible fiber is of type $\I_2$ (resp. $\III$ in characteristic $3$) and it is simple, since $N_1$ meets its reduced components twice. Hence, we can add the missing component of the $\I_2$ (resp. $\III$) fiber to the graph. Similarly, one finds five more $(-2)$-curves and finally obtains the dual graph of type $\VII$.
The configuration we started with is what we will later call the "critical subgraph of type $\VII$", since we have shown that any Enriques surface containing this graph is of type $\VII$.

\begin{remark}
Note that the crucial point in all examples is the computation of the intersection numbers of the bisections using the height pairing. The intersection of the bisections obtained via $jac_2$ with the fibers is just a "translation" of the intersection of $N$ with the fibers. In particular, the process is much easier if $N^-$ is a $2$-torsion section, since the bisections arising via $jac_2$ are disjoint.
\end{remark}

\subsection{Vinberg's criterion and numerically trivial automorphisms}
In order to check that the $(-2)$-curves in the graphs for types $\I,\hdots,\VII$ are all $(-2)$-curves on the Enriques surface, one uses Vinberg's criterion.

\begin{proposition}{\rm(Vinberg \cite[Theorem 2.6]{Vinberg})}\label{vinberg}
Let $\Gamma$ be a dual graph of finitely many $(-2)$-curves on an Enriques surface $X$. Suppose that $\Gamma$ contains no $m$-tuple lines with $m \geq 3$ and suppose that the cone $K = \{ C \in \Num(X)_{\bbR} | C.E \geq 0 \text{ for all } E \in \Gamma\}$ is strictly convex. Then, the group $W_\Gamma$ generated by reflections along $(-2)$-curves in $\Gamma$ has finite index in $O(\Num(X))$ if and only if the fibration $\pi$ induced by every subgraph $F$ of $\Gamma$ of type $\tilde{A}$-$\tilde{D}$-$\tilde{E}$ is extremal and $\Gamma$ contains the dual graph of singular fibers of $\pi$. In this case, $\Gamma$ is the dual graph of all $(-2)$-curves on $X$.
\end{proposition}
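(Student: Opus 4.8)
The plan is to recast the statement in the language of discrete reflection groups acting on hyperbolic space and then invoke Vinberg's finite-covolume criterion, translating its combinatorial hypotheses via the elliptic-fibration dictionary recalled in \S\ref{prelim}. First I would set up the geometry. Since $X$ has a smooth K3 cover, $\Num(X)$ is an even unimodular lattice of signature $(1,9)$ (isomorphic to $U\oplus E_8$), so projectivising the component of the positive cone $\{v\in\Num(X)\otimes\bbR : v^2>0\}$ that contains the ample classes realises a copy of hyperbolic $9$-space $\mathbb{H}^9$, on which $O(\Num(X))$ acts as an arithmetic lattice, hence with finite covolume. Each $(-2)$-curve $E$ yields a reflection $s_E\in O(\Num(X))$ with mirror $E^\perp$ and positive half-space $\{v:v\cdot E\ge 0\}$; the cone $K$ is the intersection over $E\in\Gamma$ of these half-spaces, so the hypothesis that $K$ is strictly convex says that its image $P\subseteq\mathbb{H}^9$ is a convex polyhedron with finitely many facets containing no geodesic line, and the hypothesis that $\Gamma$ has no $m$-tuple line with $m\ge 3$ says that any two facets of $P$ either are disjoint, meet at a dihedral angle $\pi/2$ (if $E_i\cdot E_j=0$) or $\pi/3$ (if $E_i\cdot E_j=1$), or are asymptotically parallel (if $E_i\cdot E_j=2$); thus $P$ is a Coxeter polyhedron. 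By the Poincar\'e polyhedron theorem, $W_\Gamma$ is then discrete with fundamental domain $P$, and since $O(\Num(X))$ has finite covolume, $W_\Gamma$ has finite index in $O(\Num(X))$ if and only if $P$ has finite volume.

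Next I would read off finiteness of $\mathrm{vol}(P)$ directly from $\Gamma$. By Vinberg's criterion \cite[Theorem 2.6]{Vinberg}, a Coxeter polyhedron in $\mathbb{H}^9$ with finitely many facets has finite volume if and only if every connected parabolic subdiagram of its Coxeter diagram is contained in a parabolic subdiagram of rank $8$ all of whose nodes belong to the diagram (geometrically: each ``face at infinity'' closes up to a genuine cusp surrounded by walls of $P$). Now one translates. A connected parabolic subdiagram of $\Gamma$ is exactly a connected subgraph of type $\tilde A$--$\tilde D$--$\tilde E$, which by Proposition \ref{canonicaltype} is the support of a fiber of an elliptic fibration $\pi$ of $X$; the associated cusp of $P$ is the isotropic ray spanned by the class $F$ of a general fiber of $\pi$, and the facets of $P$ through that cusp are precisely the $(-2)$-curves of $\Gamma$ orthogonal to $F$, i.e.\ the components of the reducible fibers of $\pi$ lying in $\Gamma$. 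This collection is a parabolic subdiagram of rank $8$ with all nodes in $\Gamma$ exactly when (i) $\Gamma$ contains the full extended Dynkin diagram of every reducible fiber of $\pi$, that is, $\Gamma$ contains the dual graph of singular fibers of $\pi$, and (ii) the ranks of these diagrams sum to $8$; by the Shioda--Tate formula, (ii) amounts to the fiber components of $\pi$ spanning a rank-$9$ sublattice of $\Num(X)$, i.e.\ to $\pi$ being extremal (Lemma \ref{shiodatate}). Combining, $W_\Gamma$ has finite index in $O(\Num(X))$ if and only if every subgraph of $\Gamma$ of type $\tilde A$--$\tilde D$--$\tilde E$ induces an extremal fibration whose dual graph of singular fibers is contained in $\Gamma$, which is the asserted equivalence.

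For the final clause --- that $\Gamma$ is then the dual graph of \emph{all} $(-2)$-curves on $X$ --- I would argue that if $W_\Gamma$ has finite index then so does the larger group $W_X$ generated by reflections in all $(-2)$-curves, whose fundamental chamber is the nef cone $\mathrm{Nef}(X)$; hence $\mathrm{Nef}(X)$ is itself a finite-volume Coxeter polyhedron whose facets are exactly the mirrors of the $(-2)$-curves, and in particular $X$ has only finitely many $(-2)$-curves. Since $\Gamma$ lies in the set of all $(-2)$-curves we have $\mathrm{Nef}(X)\subseteq K$, with $K$ a finite-volume fundamental chamber for $W_\Gamma\subseteq W_X$ whose facets are the elements of $\Gamma$; if some $(-2)$-curve $E'$ were not among them, its mirror $E'^\perp$, a facet of $\mathrm{Nef}(X)\subseteq K$, would meet the interior of $K$, but the combinatorial condition just established forces $\Gamma$ to contain every fiber component of every fibration it sees, and (using finiteness of the index of $W_X$) $E'$ is such a component, so $E'\in\Gamma$ --- a contradiction; hence $K=\mathrm{Nef}(X)$. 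The only genuinely non-formal ingredient is Vinberg's geometric criterion \cite{Vinberg}; everything else is translation. The step I expect to be most delicate is precisely this last clause: one must use that the elements of $\Gamma$ are honest irreducible $(-2)$-curves (not arbitrary $(-2)$-classes) and that on such an $X$ every $(-2)$-curve really does occur as a fiber component of a fibration already visible in $\Gamma$, and one must make the identification ``rank $8$ $\Leftrightarrow$ extremal'' uniform --- which via Shioda--Tate needs a small extra argument for reducible double fibers, handled by passing to the Jacobian fibration.
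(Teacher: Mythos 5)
The paper does not actually prove this proposition: it is quoted from Vinberg in the reformulation given by Kond\=o, with the final clause attributed to Namikawa, so there is no in-paper argument to compare against. Your reconstruction of the main equivalence follows exactly the route those sources take: realize the positive cone of $\Num(X)_{\bbR}\cong (U\oplus E_8)\otimes\bbR$ as a model of $\mathbb{H}^9$, check that the two hypotheses make $K$ a finite-faceted Coxeter polyhedron (your dihedral-angle computation is correct), reduce finiteness of the index to finiteness of the hyperbolic volume of $K$ using that $O(\Num(X))$ is an arithmetic, hence finite-covolume, lattice, and then translate Vinberg's cusp condition through the dictionary ``connected parabolic subdiagram $=$ support of a fiber'' (Proposition \ref{canonicaltype}) and ``ranks summing to $8$ $=$ extremal'' (Lemma \ref{shiodatate}). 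That part of your argument is sound.

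The gap is in your proof of the final clause. You claim that a $(-2)$-curve $E'\notin\Gamma$ would have to be ``a fiber component of a fibration already visible in $\Gamma$,'' justified only by the parenthetical ``using finiteness of the index of $W_X$.'' This does not follow: nothing forces $E'$ to be orthogonal to one of the finitely many isotropic classes produced by the $\tilde{A}$--$\tilde{D}$--$\tilde{E}$ subgraphs of $\Gamma$ (it could meet every such half-fiber positively), and finite index of both $W_\Gamma$ and $W_X$ in $O(\Num(X))$ does not by itself give $W_\Gamma=W_X$, which is what the conclusion $K=\mathrm{Nef}(X)$ amounts to. Namikawa's argument is shorter and avoids this. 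Once $K$ has finite hyperbolic volume, $K$ is (in the Klein model) the convex hull of finitely many ordinary and ideal vertices, i.e.\ the cone $K$ is generated by finitely many vectors of non-negative square; since the closure of the positive cone is convex, $K$ is contained in it, so every $v\in K$ satisfies $v^2\ge 0$. But if $E'$ is an irreducible $(-2)$-curve not belonging to $\Gamma$, then $E'\cdot E\ge 0$ for every $E\in\Gamma$ because distinct irreducible curves meet non-negatively, so the class of $E'$ lies in $K$ while $(E')^2=-2<0$ --- a contradiction. You should replace your last paragraph with this argument; note that it is exactly here that one uses that the elements of $\Gamma$ and the class $E'$ are honest irreducible curves and not arbitrary $(-2)$-classes.
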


\begin{remark}
This is a reformulation of the version of Vinberg's criterion presented by Kond\=o \cite[Theorem 1.9]{Kondo}. The last statement is due to Namikawa \cite[(6.9)]{Namikawa}. The strict convexity of $K$ can be achieved, for example, if $\Gamma$ contains the dual graph of singular fibers of an elliptic fibration $\pi$ and also contains another $(-2)$-curve which is not contained in a fiber of $\pi$.
\end{remark}

The following corollary is a straightforward application of Vinberg's criterion.

\begin{corollary}\label{allcurves}
Let $X$ be an Enriques surface whose dual graph of all $(-2)$-curves contains a graph $\Gamma$ which is one of the seven dual graphs in the Main Theorem. Then, the $(-2)$-curves in $\Gamma$ are all $(-2)$-curves on $X$.
\end{corollary}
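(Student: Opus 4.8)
The plan is to apply Vinberg's criterion (Proposition \ref{vinberg}) directly to each of the seven graphs $\Gamma$ in Table \ref{main}, viewed as a subgraph of the dual graph of all $(-2)$-curves on $X$. The final assertion of Proposition \ref{vinberg} is precisely that, once its hypotheses are met, $\Gamma$ \emph{is} the dual graph of all $(-2)$-curves on $X$, which is exactly what the corollary claims. So the whole task reduces to checking, for each of the seven explicit graphs, the three hypotheses of Proposition \ref{vinberg}: (a) $\Gamma$ contains no $m$-tuple line with $m \geq 3$; (b) the cone $K$ is strictly convex; (c) for every subgraph $F$ of $\Gamma$ of type $\tilde{A}$--$\tilde{D}$--$\tilde{E}$, the fibration $\pi_F$ induced by $F$ is extremal and $\Gamma$ contains the dual graph of singular fibers of $\pi_F$.

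Hypothesis (a) is immediate from inspection of the seven pictures in Table \ref{main}: every edge is simple, reflecting the fact that the relevant special bisections meet the components of the $\tilde{A}_1$-fibers transversally in two distinct points. For hypothesis (b), one uses the Remark following Proposition \ref{vinberg}: each of the seven graphs was built in Sections \ref{secI}--\ref{secVII} so as to contain the dual graph of the singular fibers of some special elliptic fibration $\pi$ together with a special bisection $N$ of $\pi$, and $N$ is a $(-2)$-curve not contained in any fiber of $\pi$; hence $K$ is strictly convex.

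The substantive step is (c). Here I would run through the finitely many connected $\tilde{A}$--$\tilde{D}$--$\tilde{E}$ subdiagrams of each of the seven explicit graphs; each such $F$ is the support of a fiber of an elliptic fibration $\pi_F$ by Proposition \ref{canonicaltype}. For each $F$ one reads off from $\Gamma$ the further $(-2)$-curves that complete $F$ to a configuration of fibers, compares with the list of extremal rational elliptic fibrations in Table \ref{extremalrational}, and checks via the Shioda--Tate formula (Lemma \ref{shiodatate}) that the fiber components of $\pi_F$ already present in $\Gamma$ span a rank-$9$ sublattice of $\Num(X)$; this certifies that $\pi_F$ is extremal and, together with Table \ref{extremalrational}, pins down the remaining reducible fibers, so that one verifies directly that $\Gamma$ contains all of their components. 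Most of this bookkeeping coincides with the computations already carried out when the graphs were produced in Sections \ref{secI}--\ref{secVII}, where the elliptic fibrations of each type, their singular fibers and their Mordell--Weil groups were determined.

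The only real obstacle I anticipate is ensuring that the enumeration of $\tilde{A}$--$\tilde{D}$--$\tilde{E}$ subgraphs is exhaustive and that no fiber component of any $\pi_F$ is overlooked. But each $\Gamma$ is an explicit finite graph with only about a dozen to two dozen vertices, so this is a mechanical, if tedious, finite check; for the rigid types $\III$--$\VII$ it suffices to exhibit, for each $\tilde{A}$--$\tilde{D}$--$\tilde{E}$ subdiagram, one extremal fibration from Table \ref{extremalrational} realizing it, since all such fibrations are unique.
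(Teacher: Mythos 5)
Your proposal matches the paper's (unwritten) proof: this corollary is exactly the ``straightforward application of Vinberg's criterion'' the paper alludes to, and your three-step verification of the hypotheses of Proposition \ref{vinberg} — no triple lines, strict convexity via the Remark, and the extremality/completeness check on every $\tilde{A}$--$\tilde{D}$--$\tilde{E}$ subdiagram using Lemma \ref{shiodatate} and Table \ref{extremalrational} — is the intended argument. One small correction: the seven dual graphs are \emph{not} simple graphs (they contain double edges, e.g.\ a special bisection meeting both components of a double $\I_2$ fiber, or the two components of such a fiber meeting each other twice), but hypothesis (a) of Proposition \ref{vinberg} only forbids $m$-tuple lines with $m\geq 3$, so the check still goes through.
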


Therefore, we can check the action of $\Aut(X)$ on $\Num(X)$ directly on the dual graph of $(-2)$-curves on $X$.

\begin{definition}
An automorphism of an Enriques surface $X$ is called \emph{numerically trivial} if it acts trivially on $\Num(X)$.
It is called \emph{cohomologically trivial} if it acts trivially on $\Pic(X)$.
We denote the respective groups by $\Aut_{nt}(X)$ and $\Aut_{ct}(X)$.
\end{definition}

Recall that $\Num(X)$ is a quotient of $\Pic(X)$, hence $\Aut_{ct}(X)$ is a normal subgroup of $\Aut_{nt}(X)$. Over the complex numbers a complete classification of such automorphisms is available (see \cite{MukaiNamikawa} and \cite{Mukai}). There are three types of Enriques surfaces $X$ with numerically trivial automorphisms and they satisfy $\Aut_{nt}(X) \in \{\bbZ/2\bbZ,\bbZ/4\bbZ\}$. In positive characteristics, however, we only have bounds on the size of these groups.

\begin{proposition}{\rm(Dolgachev \cite{DolgachevNum})} \label{numtriv}
Let $X$ be an Enriques surface. Then, 
\begin{equation*}
|\Aut_{ct}(X)| \leq 2 \quad \text{  and   } \quad |\Aut_{nt}(X)/\Aut_{ct}(X)| \leq 2.
\end{equation*}
\end{proposition}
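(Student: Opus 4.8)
\textbf{Proof proposal for Proposition \ref{numtriv}.}

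The plan is to bound the two groups separately, using that a numerically (resp.\ cohomologically) trivial automorphism must act very restrictively on the geometry of $X$. First I would handle $\Aut_{ct}(X)$. Let $g \in \Aut_{ct}(X)$ be nontrivial. Since $g$ acts trivially on $\Pic(X)$, it fixes every elliptic pencil $|2F|$ (Proposition \ref{ellipticpencil} guarantees such pencils exist), hence induces an automorphism of each genus-$1$ fibration $\pi: X \to \bbP^1$ that is the identity on the base $\bbP^1$ (the base is $|F|^\vee$, determined by $\Pic(X)$). Thus $g$ restricts to an automorphism of the generic fiber fixing its class in $\Pic$; applying this to two fibrations whose general fibers meet, one sees that $g$ acts on the generic fiber of $\pi$ as translation by a torsion section of the Jacobian, or as inversion composed with such a translation. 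The key is that $g$ must preserve the (at most two) multiple fibers and cannot have the "wrong" behavior there. Combining the constraints coming from two (or three) sufficiently general elliptic pencils — whose general fibers generate $\Num(X)$ — forces the order of $g$ to divide a small number, and a short case analysis on the possible translation subgroups/automorphism groups of the generic fibers (using Lemma \ref{specialauts} to control how these automorphisms reduce on singular fibers, and the fixed-point-free nature of $\sigma$ on the K3 cover) rules out everything except $g^2 = \mathrm{id}$. This gives $|\Aut_{ct}(X)| \le 2$.

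For the quotient $\Aut_{nt}(X)/\Aut_{ct}(X)$, the point is that a numerically trivial automorphism acts on $\Pic(X)$ through its torsion: since $\Num(X) = \Pic(X)/(\text{torsion})$ and the torsion of $\Pic(X)$ is $\Pic^\tau(X) \cong \bbZ/2\bbZ$ (we are in the smooth-K3-cover case), an automorphism $g$ acting trivially on $\Num(X)$ sends each line bundle $L$ to $L$ or to $L \otimes \omega_X$. This defines a homomorphism $\Aut_{nt}(X) \to \mathrm{Hom}(\Pic(X), \bbZ/2\bbZ)$ whose kernel is exactly $\Aut_{ct}(X)$; one checks the image is forced to be at most $\bbZ/2\bbZ$ (the twist has to be compatible with the intersection pairing and with $\omega_X^{\otimes 2} = \mathcal{O}_X$, so there is essentially one admissible nontrivial "character"), giving $|\Aut_{nt}(X)/\Aut_{ct}(X)| \le 2$.

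The main obstacle I expect is the first bound, $|\Aut_{ct}(X)| \le 2$: controlling a cohomologically trivial automorphism of \emph{infinite} order is conceivable a priori, and ruling this out requires genuinely using the structure of Enriques surfaces — that $\Num(X)$ is generated by fiber classes of elliptic pencils, that each pencil has precisely the double-fiber structure of Proposition \ref{typeofdoublefiber}, and that the induced automorphism of each generic fiber is translation-by-torsion (possibly twisted by inversion), which is where Lemma \ref{specialauts} enters to prevent pathologies on additive fibers. Organizing the interaction of several pencils so that the simultaneous fixing of all fiber classes pins down $g$ up to an involution is the crux; once that is in place, the quotient bound is essentially lattice-theoretic bookkeeping with $\Pic^\tau(X) \cong \bbZ/2\bbZ$.
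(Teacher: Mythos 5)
First, a point of reference: the paper does not prove this proposition at all — it is quoted from Dolgachev \cite{DolgachevNum}, and the author explicitly remarks that the result will not be used, so there is no in-paper argument to compare yours against. Judged on its own, your outline picks the right two mechanisms (elliptic pencils for $\Aut_{ct}$, the character $L \mapsto g^*L \otimes L^{-1}$ with values in $\{\calO_X,\omega_X\}$ for the quotient), but both halves have genuine gaps. For $\Aut_{ct}$: from $g^*=\mathrm{id}$ on $\Pic(X)$ you only get that $g$ preserves each pencil $|2F|$ and fixes the two points of the base lying under the half-fibers (these are distinguished because $F'-F''=K_X\neq 0$ in $\Pic(X)$). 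An automorphism of $\bbP^1$ fixing two points need not be the identity, so your assertion that $g$ acts as the identity on the base is unjustified — on an unnodal Enriques surface there are no reducible fibers to supply a third fixed point, and securing triviality on the base is precisely where work is needed. More seriously, the concluding ``short case analysis \ldots rules out everything except $g^2=\mathrm{id}$'' is the entire content of the bound and is not supplied: once $g$ restricts to a translation by a torsion section of the Jacobian on the generic fiber, nothing you have said excludes order $3$, $4$, or $5$; eliminating these requires a genuine argument (fixed-point/Lefschetz computations and the interaction of several pencils), which is where Dolgachev's proof does its work.

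For the quotient bound, the homomorphism $\Aut_{nt}(X) \to \Hom(\Num(X),\bbZ/2\bbZ)$, $g \mapsto (D \mapsto g^*\calO(D)\otimes\calO(-D))$, does have kernel $\Aut_{ct}(X)$, but its target is $(\bbZ/2\bbZ)^{10}$, and \emph{every} such character is automatically compatible with the intersection pairing — twisting by the numerically trivial class $\omega_X$ is always an isometry of $\Pic(X)$ — so ``compatibility with the intersection form and $\omega_X^{\otimes 2}=\calO_X$'' cuts nothing down and does not single out one admissible character. The real constraint is geometric: the character must vanish on the class of every rigid irreducible curve (e.g.\ every $(-2)$-curve, which is the unique effective divisor in its numerical class), while it may be nonzero on half-fiber classes, and one must then prove that the subgroup of characters of $E_{10}$ satisfying these vanishing conditions has order at most $2$. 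That lattice-theoretic step is the theorem; your sketch asserts it rather than proves it.
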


However, we will not use this result, since we are interested in the precise shape of the automorphism group. Therefore, we give explicit arguments in every case. 

\newpage
\section{Enriques surfaces of type $\I$}\label{secI}

\subsection{Main theorem for type $\I$}

\begin{theorem}
Let $X$ be an Enriques surface. The following are equivalent:

\begin{enumerate}
\item $X$ is of type $\I$.
\item The dual graph of all $(-2)$-curves on $X$ contains the graph in Figure \ref{critI}.

\begin{figure}[h!]
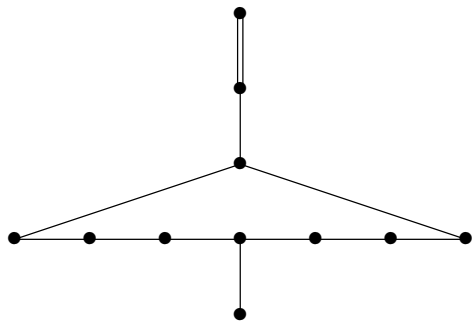

\centerline{
\xy
@={(0,10),(10,10),(20,10),(30,10),(40,10),(50,10),(60,10),(30,0),(30,20),(30,30),(30,40)}@@{*{\bullet}};
(0,10)*{};(60,10)*{}**\dir{-};
(30,10)*{};(30,0)*{}**\dir{-};
(0,10)*{};(30,20)*{}**\dir{-};
(60,10)*{};(30,20)*{}**\dir{-};
(30,20)*{};(30,30)*{}**\dir{-};
(30,40)*{};(30,30)*{}**\dir2{-};
\endxy
}
\caption{Critical subgraph for type $\I$}
\label{critI}
\end{figure}
\item The canonical cover $\tilde{X}$ of $X$ admits an elliptic fibration with a Weierstrass equation of the form
\begin{equation*}
y^2 + \beta (s^2+s) xy = x^3 + \beta^3 (s^2+s)^3 x
\end{equation*}
such that the covering morphism $\rho: \tilde{X} \to X$ is given as quotient by the involution
$
\sigma = t_{N^-} \circ J(\sigma),
$
where $J(\sigma): s \mapsto -s-1$ and $t_{N^-}$ is translation by $N^- = (0,0)$.
\end{enumerate}
\end{theorem}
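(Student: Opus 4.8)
The plan is to prove the three equivalences cyclically: (1) $\Rightarrow$ (2), then (2) $\Rightarrow$ (3), then (3) $\Rightarrow$ (1). The implication (1) $\Rightarrow$ (2) should be immediate from the definition of type $\I$: the dual graph of type $\I$ displayed in Table \ref{main} visibly contains the subgraph in Figure \ref{critI} (seven curves in a ``fan'' with an attached double-fiber tail), so nothing is required beyond identifying the subgraph. The substantive content is in the other two implications.

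For (2) $\Rightarrow$ (3), I would read the critical subgraph as the dual graph of singular fibers of a special elliptic fibration $\pi$ on $X$ together with a special bisection $N$. Concretely, the seven collinear vertices together with the one vertex hanging below the middle form an extended Dynkin diagram — of type $\tilde D_8$, i.e. a fiber of type $\I_4^*$ — and the two-vertex tail attached above (a double edge) is the half-fiber $F$ of type $\I_2$ (it must be the double fiber since $N$, being a bisection, meets the $\tilde A_1$ only with total multiplicity $2$). By Proposition \ref{canonicaltype} this configuration induces an elliptic fibration $\pi$; by Lemma \ref{shiodatate} it is extremal since the fiber components span a rank-$9$ lattice; and the curve $N$ joining the two ends of the $\I_4^*$ fiber (the picture shows $N$ touching the two extreme simple components) is a special bisection. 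Now I invoke the base change construction: the Jacobian $J(\pi)$ is an extremal rational elliptic surface whose reducible-fiber configuration must be compatible with having an $\I_4^*$, so by Table \ref{extremalrational} it is the fibration with singular fibers $(\I_4^*, \I_1, \I_1)$ (in any characteristic). One then writes down the standard Weierstrass model for this rational elliptic surface — after normalizing the base coordinate $s$ so that the $\I_4^*$ fiber sits at $s = \infty$ and the base change $\varphi$ is branched over the two points giving the $\I_2$ half-fiber (which, up to the $\PGL_2$-action, can be taken to be $s^2 + s$, i.e. $s = 0$ and $s = -1$) — and checks, using Lemma \ref{universal} and Proposition \ref{Kondo}, that $N^- = (0,0)$ is a $J(\pi)$-Enriques section, with $J(\sigma)$ the deck transformation $s \mapsto -s-1$ of $\varphi$. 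The one-parameter $\beta$ records the remaining modulus (an affine coordinate on the rational elliptic surface's parameter, matching the $\bbA^1$ in Table \ref{main}); after the substitution the equation becomes $y^2 + \beta(s^2+s)xy = x^3 + \beta^3(s^2+s)^3 x$, and $\sigma = t_{N^-}\circ J(\sigma)$ produces $X$ as required.

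For (3) $\Rightarrow$ (1), I would run the construction of Proposition \ref{Kondo} in reverse: given the Weierstrass equation and the involution $\sigma = t_{N^-}\circ J(\sigma)$ with $N^- = (0,0)$ everywhere integral and $J(\sigma)$-anti-invariant, verify that $N^-$ satisfies conditions (1)--(3) of Definition \ref{EnriquesSection} — in particular that over the branch points $s = 0, -1$ the fiber $\I_2$ pulls back so that $N^-$ avoids the identity component — so that the quotient $X$ is genuinely an Enriques surface with a special elliptic fibration $\pi$. Then the images under $jac_2$ of the Mordell–Weil sections of $J(\pi)$, together with the components of the reducible fibers, produce enough $(-2)$-curves on $X$ to see the full type-$\I$ dual graph; the height-pairing computation of Proposition \ref{heightpairingdef}, carried out as in the worked Example in \S\ref{example}, pins down all intersection numbers. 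Finally Corollary \ref{allcurves} (Vinberg's criterion) certifies that these are \emph{all} $(-2)$-curves, so $X$ is of type $\I$.

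The main obstacle is the middle implication (2) $\Rightarrow$ (3): one must correctly identify which extremal rational elliptic surface arises as $J(\pi)$ in \emph{every} characteristic (the $\I_4^*$ family behaves differently in characteristic $2$, per Table \ref{extremalrational} and the remark following it), normalize its Weierstrass form and the degree-two base change simultaneously, and confirm that the resulting $\sigma$ is fixed-point-free — i.e. that no Coble degeneration (Remark \ref{coble}) occurs at the generic member of the family. Determining precisely which values of $\beta$ must be excluded (the endpoints where $X$ degenerates, matching $\bbA^1 - \{0, -256\}$) is part of this step and is where the characteristic-independence of the classification has to be checked by hand.
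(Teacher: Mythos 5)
Your overall strategy (read the critical subgraph as fiber\,+\,bisection data, identify the Jacobian from Table \ref{extremalrational}, perform the quadratic base change, and recover the full type-$\I$ graph via $jac_2$ and Vinberg's criterion) is the paper's, but the key identification in (2) $\Rightarrow$ (3) is wrong. The seven collinear vertices with one extra vertex attached to the \emph{middle} of the chain form the extended Dynkin diagram $\tilde E_7$ (eight vertices), not $\tilde D_8$ (which has nine vertices and carries its forks at the two ends of the chain); so the large fiber of $\pi$ is of type $\III^*$, not $\I_4^*$. Consequently $J(\pi)$ is the extremal rational fibration with singular fibers $(\III^*,\I_2,\I_1)$ (resp.\ $(\III^*,\I_2)$ in characteristic $2$), with Weierstrass model $y^2+txy=x^3+t^3x$, and it is the base change of \emph{this} surface along $t\mapsto\beta(s^2+s)$ that yields the displayed equation. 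Your choice $(\I_4^*,\I_1,\I_1)$ cannot work even in principle: that fibration has no $\I_2$ fiber, whereas the double $\I_2$ fiber of $\pi$ forces $J(\pi)$ to have an $\I_2$ fiber over a branch point of $\varphi$ (Lemma \ref{universal}(3)), and its base change would never produce the stated Weierstrass equation.

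A second, related confusion concerns the branch locus: the branch points of $t\mapsto\beta(s^2+s)$ are $t=\infty$ (over which the $\I_2$ fiber sits and becomes the double $\I_2$ fiber of $\pi$) and $t=-\beta/4$ (generically a smooth fiber, the second double fiber), with ramification at $s=\infty$ and $s=-\tfrac12$; the points $s=0,-1$ are the two \emph{preimages} of $t=0$, where the additive $\III^*$ fiber lies and over which $\varphi$ must be unramified by Definition \ref{EnriquesSection}. Finally, in your step (3) $\Rightarrow$ (1), $jac_2$ applied to this fibration alone produces nothing new: here $\MW(J(\pi))\cong\bbZ/2\bbZ$ and $N^-$ is $2$-torsion, so $jac_2$ is $2$-to-$1$ onto the single bisection $N$. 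The twelfth $(-2)$-curve must come from a second fibration --- the paper uses the one with double $\I_8$ fiber induced by the $\tilde A_7$ cycle inside the critical subgraph, whose remaining reducible fiber is an $\I_2$ (resp.\ $\III$ in characteristic $2$) by Table \ref{extremalrational} --- and only then does Corollary \ref{allcurves} apply.
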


\begin{proof}
First, observe that the dual graph of type $\I$ (see Table \ref{main}) contains the graph in Figure \ref{critI}.

This subgraph can be interpreted as the dual graph of a special elliptic fibration $\pi$ with singular fibers $\III^*$ and $\I_2$ (not $\III$, since this fiber is a double fiber) and special bisection $N$ as follows, where the dotted rectangles mark the fibers:

\vspace{5mm}
\centerline{
\xy
@={(0,10),(10,10),(20,10),(30,10),(40,10),(50,10),(60,10),(30,0),(30,20),(30,30),(30,40)}@@{*{\bullet}};
(0,10)*{};(60,10)*{}**\dir{-};
(30,10)*{};(30,0)*{}**\dir{-};
(0,10)*{};(30,20)*{}**\dir{-};
(60,10)*{};(30,20)*{}**\dir{-};
(30,20)*{};(30,30)*{}**\dir{-};
(30,40)*{};(30,30)*{}**\dir2{-};
(-3,13)*{};(63,13)*{}**\dir{--};
(-3,-3)*{};(63,-3)*{}**\dir{--};
(63,-3)*{};(63,13)*{}**\dir{--};
(-3,-3)*{};(-3,13)*{}**\dir{--};
(27,43)*{};(33,43)*{}**\dir{--};
(33,27)*{};(33,43)*{}**\dir{--};
(33,27)*{};(27,27)*{}**\dir{--};
(27,27)*{};(27,43)*{}**\dir{--};
(33,23)*{N};
\endxy
}
\vspace{5mm}

As explained in Lemma \ref{universal}, $N$ splits into two sections $N^+$ and $N^-$ of the elliptic fibration $\tilde{\pi}$ induced by $\pi$ on the K3 cover $\tilde{X}$. Fixing $N^+$ as the zero section, we can compute $h(N^-) = 0$ and we see that $N^-$ is a $2$-torsion section of $\tilde{\pi}$. 
Starting from the subgraph in Figure \ref{critI}, we get the last missing $(-2)$-curve from the elliptic fibration with a double fiber of type $\I_8$, which is induced by the $\tilde{A}_7$ diagram, as follows: The fibration is extremal by Lemma \ref{shiodatate}, the second reducible fiber is of type $\I_2$ (resp. $\III$ in characteristic $2$) by Table \ref{extremalrational} and the intersection behaviour can be determined from the dual graph. These are all $(-2)$-curves on $X$ by Corollary \ref{allcurves}.

Now, we pursue the converse process dictated by Proposition \ref{Kondo} and calculate all elliptic fibrations of K3 surfaces obtained as separable quadratic base changes of $J(\pi)$ together with a section having the same intersection behaviour as $N^-$ with curves obtained from $(-2)$-curves on $X$.

By \cite{Lang3} we have the following equation for the unique rational elliptic surface with singular fibers of type $\III^*$ and $\I_2$
\begin{equation*}
y^2 + txy = x^3 + t^3x,
\end{equation*}
where $t$ is a coordinate on $\bbP^1$. The $\I_2$ fiber is at $t = \infty$, while the $\III^*$ fiber is at $t = 0$. Moreover, if $\Char(k) \neq 2$, there is an $\I_1$ fiber at $t = 64$ and all other fibers are smooth. The non-trivial $2$-torsion section is $s = (0,0)$.

In every characteristic, we can write a degree $2$ morphism $\bbP^1 \to \bbP^1$ with $t = \infty$ as branch point and which is not branched over $t = 0$ in the form
\begin{equation*}
t \mapsto \beta(s^2 +s),
\end{equation*}
where $s$ is the new parameter on $\bbP^1$ and $\beta \in k - \{0\}$. We are allowed to assume that $t = 0$ is not a branch point, since the $\III^*$ fiber is not multiple. The covering involution is given by $J(\sigma): s \mapsto -s - 1$. The second branch point of this degree $2$ cover in characteristic different from $2$ is at $t = -\frac{\beta}{4}$, which corresponds to $s = -\frac{1}{2}$. Now, we get the equation
\begin{equation*}
y^2 + \beta (s^2+s) xy = x^3 + \beta^3 (s^2+s)^3 x
\end{equation*}
together with the $2$-torsion section $s' = (0,0)$ obtained by pulling back $s$. This equation defines an elliptic fibration $\tilde{\pi}$ on a K3 surface. As explained in Section \ref{basechange}, if $\tilde{\pi}$ is obtained as base change of a fibration of an Enriques surface, then $s' = N^-$ and $\sigma$ is the covering involution.
\end{proof}

\begin{remark}
Note that we have not yet claimed existence of Enriques surfaces of type $\I$. However, we have reduced this problem to the question whether $N^-$ is a $J(\pi)$-Enriques section or not. We answer this question in the subsection on degenerations and moduli.
\end{remark}

\newpage
\subsection{Automorphisms}

\begin{proposition}\label{Aut1}
Let $X$ be an Enriques surface of type $\I$. Then, $Aut(X) \cong D_4$ and this group is generated by automorphisms induced by $2$-torsion sections of the Jacobian fibrations of elliptic fibrations of $X$. Moreover, $\Aut_{nt}(X) \cong \bbZ/2\bbZ$ and $\Aut(X)/\Aut_{nt}(X) = (\bbZ/2\bbZ)^2$.
\end{proposition}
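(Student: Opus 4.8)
The plan is to use the explicit Weierstrass model from Theorem 3.1(3) together with the base-change machinery to pin down $\Aut(X)$ from below and from above. First I would produce automorphisms: the Jacobian fibration $J(\pi)$ with fibers $\III^*, \I_2$ carries the nonzero $2$-torsion section $s=(0,0)$, and via $jac_2$ (Corollary 2.18) this produces special bisections on $X$, hence translations on $X$ by $\MW(J(\pi)) \cong \bbZ/2\bbZ$; this gives one involution $g_1$. The dual graph of type $\I$ (Table 1) visibly has a symmetry: using Corollary 2.25 we know the $11$ curves in Figure 1 (plus the one extra curve coming from the $\tilde A_7$-fibration) are \emph{all} $(-2)$-curves, so any automorphism acts on this finite graph, and conversely one reads off graph automorphisms and checks they are realized. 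I would exhibit a second fibration (the $\I_8$ one, or the $\I_4^*$ one appearing in the $\tilde D_8$ subdiagram) whose Jacobian again has nontrivial $2$-torsion, giving a second involution $g_2$; the two translations $g_1, g_2$ generate a group, and computing their product's order on $\Num(X)$ (equivalently on the dual graph) shows $\langle g_1, g_2\rangle \cong D_4$. This simultaneously proves the "generated by $2$-torsion sections" claim.

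For the upper bound $|\Aut(X)| \le 8$, I would use the action on $\Num(X)$ via the dual graph. Let $A = \mathrm{Aut}(\Gamma)$ be the automorphism group of the dual graph of all $(-2)$-curves; a direct inspection of the type $\I$ graph gives $A \cong D_4$ (or whatever finite group it is — this is a finite combinatorial check). Then there is a homomorphism $\Aut(X) \to A$, and I must control its kernel $\Aut_{nt}(X)$. Here I invoke Proposition 2.28 (Dolgachev): $|\Aut_{ct}(X)| \le 2$ and $|\Aut_{nt}(X)/\Aut_{ct}(X)| \le 2$, so a priori $|\Aut_{nt}(X)| \le 4$. To get the sharp statement $\Aut_{nt}(X) \cong \bbZ/2\bbZ$ I would argue that the translation $g_1$ by the $2$-torsion section is numerically trivial (translations by torsion sections of the Jacobian act trivially on the sublattice spanned by fiber components and the bisection $N$, and one checks they act trivially on all of $\Num(X)$ since these span a finite-index sublattice and $g_1$ preserves the ample cone rigidly — in fact $g_1$ fixes $N^+$, $N^-$ and permutes fiber components only by the $\I_2$-translation which is trivial on $\Num$), giving $\bbZ/2\bbZ \subseteq \Aut_{nt}(X)$; and that no nontrivial cohomologically trivial automorphism of order $4$ can occur, by showing $\Aut_{ct}(X)$ would have to be realized by an automorphism preserving the Weierstrass model and fixing the zero section, which Lemma 2.6 rules out in these characteristics, or more simply by noting the $\bbZ/4\bbZ$ case of Mukai–Namikawa does not match the type $\I$ configuration. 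Combining, $\Aut_{nt}(X) \cong \bbZ/2\bbZ$, so $|\Aut(X)| \le 2|A| = 8$ provided the map to $A$ lands in an index-$2$ subgroup (or is surjective onto a group of order $4$).

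Finally I would assemble the pieces: $\langle g_1, g_2 \rangle \cong D_4$ gives $|\Aut(X)| \ge 8$, the upper bound gives $|\Aut(X)| \le 8$, hence $\Aut(X) \cong D_4$; the numerically trivial subgroup is the $\bbZ/2\bbZ$ generated by $g_1$ (the translation by the $2$-torsion section, which is the center of $D_4$), and the quotient $\Aut(X)/\Aut_{nt}(X)$ acts faithfully and transitively enough on the graph to be recognized as $(\bbZ/2\bbZ)^2$ — concretely, $D_4/Z(D_4) \cong (\bbZ/2\bbZ)^2$, so this last statement is automatic once $\Aut(X) \cong D_4$ and $\Aut_{nt}(X)$ is its center.

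\medskip
\noindent\textbf{Main obstacle.} The genuinely delicate point is the upper bound, specifically showing $\Aut_{nt}(X)$ is exactly $\bbZ/2\bbZ$ and not $\bbZ/4\bbZ$, and showing the image in $\mathrm{Aut}(\Gamma)$ is not too large — i.e. that every graph automorphism that is realized by an honest automorphism, together with the numerically trivial part, yields only $8$ elements. Bounding $\Aut_{nt}(X)$ away from $\bbZ/4\bbZ$ in positive characteristic (where the Mukai–Namikawa classification is unavailable) requires an argument with the explicit elliptic fibration and Lemma 2.6 on how automorphisms extend to special fibers; this is where the characteristic-free input of the paper is really needed.
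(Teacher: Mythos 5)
The skeleton of your argument (lower bound from torsion-section translations, upper bound from the graph automorphism group times $\Aut_{nt}$) is the same as the paper's, but the decisive step has a genuine gap. The symmetry group of the type $\I$ dual graph is $(\bbZ/2\bbZ)^2$, which has exponent $2$, so the order of $g_1g_2$ ``on $\Num(X)$ (equivalently on the dual graph)'' can never be $4$: your computation cannot distinguish $D_4$ from $(\bbZ/2\bbZ)^2$. The element of order $4$ has to be produced by other means; the paper takes the translation by a $4$-torsion section of the Jacobian of the fibration $|F_{11}+F_{12}|$ (the fibration whose Jacobian is the extremal rational surface with an $\I_8$ fibre and Mordell--Weil group $\bbZ/4\bbZ$), and the square of that translation is exactly the nontrivial numerically trivial involution. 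Related to this, your identification of the numerically trivial element is wrong: the $2$-torsion section of the $(\III^*,\I_2)$ Jacobian necessarily meets the $\I_2$ and $\III^*$ fibres in non-identity components, so translation by it swaps fibre components and acts as a reflection of the dual graph (the paper: a reflection along the horizontal axis); it is neither central nor numerically trivial, so your claim that $g_1$ generates $\Aut_{nt}(X)$ and that ``$g_1$ permutes fiber components only by the $\I_2$-translation which is trivial on $\Num$'' fails.

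The second gap is the bound $|\Aut_{nt}(X)|\leq 2$. Appealing to Mukai--Namikawa is not legitimate in positive characteristic (the paper explicitly avoids it), and Dolgachev's bounds only give $|\Aut_{nt}(X)|\leq 4$, leaving the order-$4$ possibility to be excluded by an argument you do not actually supply; the reference to Lemma \ref{specialauts} is not developed into a proof. The paper's argument is short and characteristic-free: a nontrivial $g\in\Aut_{nt}(X)$ preserves every $(-2)$-curve, hence fixes $F_3$ and $F_7$ pointwise (each meets at least three other $(-2)$-curves, and an automorphism of $\bbP^1$ with three fixed points is the identity); therefore $g$ preserves the non-isotrivial fibration $\pi$ and fixes geometric points of its generic fibre, so it is the unique hyperelliptic involution of the generic fibre fixing those points. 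This gives $|\Aut_{nt}(X)|\leq 2$ directly, and together with the $4$-torsion translation above it pins down $\Aut(X)\cong D_4$.
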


\begin{proof}
Recall that the dual graph of type $\I$ is as follows:

\centerline{
\includegraphics[width=55mm]{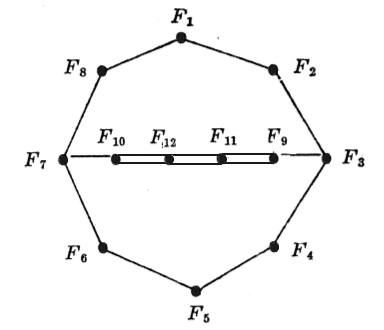}
}

As has already been explained by Kond\=o \cite[p.205]{Kondo} and Dolgachev \cite[p.175]{Dolgachev}, the symmetry group of the dual graph of $(-2)$-curves is $(\bbZ/2\bbZ)^2$ and the $2$-torsion section of the fibration $\pi$ induced by the linear system $|2(F_9+F_{11})|$ acts as a reflection along the horizontal axis, while the $2$-torsion section of the fibration induced by $|F_{11}+F_{12}|$ acts trivially on the graph. A non-trivial numerically trivial automorphism $g$ fixes $F_3$ and $F_7$ pointwise, hence $g$ fixes the fibration $\pi$ and at least one geometric point on the generic fiber of $\pi$. Since $\pi$ is non-isotrivial, $g$ is the unique hyperelliptic involution of the generic fiber of $\pi$ fixing the geometric points defined by $F_3$ and $F_7$.
%
Since $\Aut(X)$ contains a translation by a $4$-torsion section of the Jacobian of $|F_{11}+F_{12}|$, it suffices to observe that the $2$-torsion section of a fibration with $\I_4^*$ fiber acts as a reflection along the vertical axis to show that $\Aut(X) \cong D_4$. This follows from Corollary \ref{jac2}.
\end{proof}

\subsection{Degenerations and Moduli}

\begin{proposition}
Let $\beta \neq 0$ and
\begin{equation*}
y^2 + \beta (s^2+s) xy = x^3 + \beta^3 (s^2+s)^3 x
\end{equation*}
be the Weierstrass equation of an elliptic fibration $\tilde{\pi}_{\beta}$ with section on a K3 surface $\tilde{X}$. Define the involution $\sigma = t_{N^-} \circ J(\sigma)$, where $J(\sigma): s \mapsto -s-1$ and $t_{N^-}$ is translation by the section $N^- = (0,0)$. Then, the following statements are true:
\begin{enumerate}
\item $\sigma$ is fixed point free if and only if $\beta \neq -256$. If $\beta = -256$, then the fixed locus of $\sigma$ is one $(-2)$-curve.
\item Two fibrations $\tilde{\pi}_{\beta}$ and $\tilde{\pi}_{\beta'}$ are isomorphic up to automorphisms of $\bbP^1$ if and only if $\beta = \beta'$.
\end{enumerate}
\end{proposition}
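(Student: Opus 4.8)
The plan is to read the displayed equation as a base change and then invoke Proposition \ref{Kondo} for (1) and a comparison of singular fibers and $j$-invariants for (2). Observe first that the Weierstrass equation is exactly the pullback of the equation $y^2+txy=x^3+t^3x$ of $J(\pi)$ under $\varphi_\beta\colon\bbP^1\to\bbP^1$, $s\mapsto t=\beta(s^2+s)$; hence $\tilde\pi_\beta$ is a minimal smooth model of the base change of $J(\pi)$ along $\varphi_\beta$, its deck transformation is $J(\sigma)\colon s\mapsto -s-1$, and $N^-=(0,0)$ is the pullback of the non-trivial $2$-torsion section of $J(\pi)$. Recall that $J(\pi)$ has a $\III^*$ fiber over $t=0$, an $\I_2$ fiber over $t=\infty$, and — when $\Char(k)\neq 2$ — an $\I_1$ fiber over $t=64$, all other fibers being smooth; in particular no branch point of $\varphi_\beta$ lies over an additive fiber, so the framework of Definition \ref{EnriquesSection} applies. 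Since $N^-$ is an everywhere integral $2$-torsion section it satisfies conditions $(1)$ and $(2)$ of that definition automatically, so by Proposition \ref{Kondo} the involution $\sigma$ is fixed point free if and only if condition $(3)$ holds. Over the branch point $t=\infty$ the $\I_2$ of $J(\pi)$ becomes an $\I_4$ of $\tilde\pi_\beta$, and a short local computation (passing to $t=1/u$ and a minimal model, or using $h(N^-)=0$ with Table \ref{heightpairing}) shows that $N^-$ meets the non-identity component of the $\I_2$, hence a non-identity component of the $\I_4$; so $(3)$ holds there for every $\beta$. The only other branch point is, in characteristic $\neq 2$, the image $t=-\beta/4$ of $s=-\tfrac12$, and the fiber of $J(\pi)$ there is singular precisely when $-\beta/4\in\{0,64\}$, i.e. (since $\beta\neq 0$) precisely when $\beta=-256$. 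Hence for $\beta\neq -256$ condition $(3)$ is vacuous over $t=-\beta/4$, so $N^-$ is a $J(\pi)$-Enriques section and $\sigma$ is fixed point free.

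For $\beta=-256$ (so $\Char(k)\neq 2$, as $-256=0$ in characteristic $2$), the $\I_1$ fiber of $J(\pi)$ over $t=64$ becomes an $\I_2$ fiber of $\tilde\pi_\beta$ over $s=-\tfrac12$. Evaluating the Weierstrass equation at $s=-\tfrac12$ yields the nodal cubic $y^2+64xy=x^3+64^3x$, whose node is computed to lie at a point with $x\neq 0$; hence $N^-=(0,0)$ is a smooth point of this cubic, so on $\tilde\pi_\beta$ the section $N^-$ meets the identity component of the $\I_2$ fiber (the strict transform of the nodal cubic, which also meets the zero section), condition $(3)$ fails, and $\sigma$ has fixed points by Proposition \ref{Kondo}. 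To identify $\Fix(\sigma)$, I would use Remark \ref{coble}, by which $\Fix(\sigma)$ is a purely one-dimensional smooth curve contained in the two fibers of $\tilde\pi_\beta$ over the branch points. Over $s=\infty$ the involution $J(\sigma)$ fixes the $\tilde A_3$-cycle componentwise (as in the proof of Lemma \ref{universal}), while $N^-$ meets the component opposite the zero section, so $\sigma$ acts there as the shift by $2$ of the $4$-cycle and has no fixed points. Thus $\Fix(\sigma)$ lies in the $\I_2$ fiber over $s=-\tfrac12$, and a smooth curve inside an $\I_2$ fiber is exactly one of its two components — a single $(-2)$-curve.

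For (2) one direction is trivial. For the converse, suppose $\Phi\colon\tilde X_\beta\to\tilde X_{\beta'}$ is an isomorphism of elliptic surfaces covering some $\phi\in\Aut(\bbP^1)$; then $\Phi$ respects Kodaira fiber types, and for every parameter the only $\I_4$ fiber of the surface sits over $s=\infty$ while the two $\III^*$ fibers sit over $s=0$ and $s=-1$ (the preimages of $t=0$). Hence $\phi$ fixes $\infty$ and permutes $\{0,-1\}$, so $\phi=\mathrm{id}$ or $\phi(s)=-s-1$; composing $\Phi$ with $J(\sigma)$ in the latter case, I may assume $\phi=\mathrm{id}$. Then corresponding fibers of $\tilde\pi_\beta$ and $\tilde\pi_{\beta'}$ are isomorphic, so $j_{J(\pi)}(\beta(s^2+s))=j_{J(\pi)}(\beta'(s^2+s))$ identically in $s$. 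Computing $j_{J(\pi)}(t)=(t-48)^3/(t-64)$ from the Weierstrass data (with the evident degenerations when $\Char(k)\in\{2,3\}$) and setting $u=s^2+s$, this becomes the polynomial identity $(\beta u-48)^3(\beta' u-64)=(\beta' u-48)^3(\beta u-64)$; comparing leading coefficients gives $\beta^2=\beta'^2$, and evaluating at $u=48/\beta$ rules out $\beta'=-\beta$, so $\beta=\beta'$ (and the small-characteristic cases go the same way).

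I expect the genuine work to lie in the local component analysis in (1) — determining which component of a multiplicative fiber the $2$-torsion section $N^-$ meets. Concretely, this means resolving the $A_1$-singularity created when $J(\pi)$ is base-changed with ramification over its $\I_1$ fiber (the case $\beta=-256$) and checking that $N^-$ specializes to the identity component there, and dually checking over $t=\infty$ that it specializes to the component opposite the zero section of the resulting $\I_4$. The assertion that $\Fix(\sigma)$ is a \emph{single} $(-2)$-curve also rests on the input of Remark \ref{coble} that the fixed locus is purely one-dimensional; without it one would additionally have to rule out isolated fixed points.
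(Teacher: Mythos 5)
Your proof is correct and follows essentially the same route as the paper: read the equation as the base change of the extremal rational surface with fibers $(\III^*,\I_2)$, reduce part (1) via Proposition \ref{Kondo} to checking condition (3) of Definition \ref{EnriquesSection} at the branch fibers, locate the node of the fiber at $s=-\tfrac12$ when $\beta=-256$ to see that $N^-$ hits the identity component there, and settle part (2) by comparing $j$-invariants given that the $\III^*$ and $\I_4$ fibers sit at $s=-1,0,\infty$ for every $\beta$. You merely spell out two points the paper leaves terse — the explicit $j$-invariant identity in (2) and the identification of $\Fix(\sigma)$ as a single component of the $\I_2$ fiber via Remark \ref{coble} — and both are handled correctly.
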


\begin{proof}
For the first claim, by Lemma \ref{Kondo}, we have to check whether $N^-$ is a $J(\pi)$-Enriques section. First, observe that $N^-.N^+ = 0$, $J(\sigma)(N^-) = N^- = \ominus N^-$ and $N^-$ does not meet the $\I_4$ fiber in the identity component. Therefore, we are done if the second fiber fixed by $J(\sigma)$, namely $F_{-\frac{1}{2}}$, is smooth. This happens if and only if $\beta \neq -256$ and otherwise $F_{-\frac{1}{2}}$ is an $\I_2$ fiber. In the latter case, $N^-$ does not meet the singular point $(-2^9,2^{14})$ of the Weierstrass equation at $s = -\frac{1}{2}$ and therefore it meets the identity component of $F_{-\frac{1}{2}}$. Hence, $N^-$ is not a $J(\pi)$-Enriques section in this case and $\sigma$ is not fixed point free by Proposition \ref{Kondo}.

%

The second claim follows immediately from a comparison of $j$-invariants, since in any characteristic and independently of $\beta$, the locations of the $\III^*$ and $\I_4$ fibers are at $s = -1,0, \infty$.
\end{proof}

We have seen in the previous subsection that the two elliptic fibrations with singular fiber $\III^*$ on an Enriques surface of type $\I$ are isomorphic. Therefore, we can describe the moduli space of these Enriques surfaces using the previous proposition.

\begin{corollary}
Enriques surfaces of type $\I$ are parametrized by $\bbA^1 - \{0,-256\}$ in every characteristic.
\end{corollary}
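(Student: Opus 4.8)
The plan is to combine the two preceding results in this subsection. We have already established (in the Main Theorem for type $\I$ and its proof, together with Proposition \ref{Kondo}) that every Enriques surface of type $\I$ arises, via the base change construction, from the unique rational elliptic surface with fibers $\III^*, \I_2$ (and $\I_1$ in characteristic $\neq 2$), equipped with a degree $2$ cover of $\bbP^1$ branched over the $\I_2$ fiber, i.e. from an equation $y^2 + \beta(s^2+s)xy = x^3 + \beta^3(s^2+s)^3 x$ together with the involution $\sigma = t_{N^-} \circ J(\sigma)$ for $N^- = (0,0)$. So the first step is to observe that the assignment $\beta \mapsto X_\beta$ (the quotient of $\tilde{X}_\beta$ by $\sigma$) is a surjection from $k - \{0\}$ onto the set of Enriques surfaces of type $\I$ — this is exactly what the construction and Proposition \ref{Kondo} give, once one knows that both $\III^*$-fibrations on such an $X$ are isomorphic, which was recorded in the automorphism subsection.

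Next I would cut down the source set. By part (1) of the preceding proposition, $\sigma$ is fixed point free — so that the quotient is genuinely an Enriques surface rather than (the resolution of) a Coble surface — precisely when $\beta \neq -256$. Thus the parameter space of type $\I$ Enriques surfaces is a quotient of $k - \{0, -256\}$. Then I would invoke part (2) of the preceding proposition: two of the fibrations $\tilde{\pi}_\beta$, $\tilde{\pi}_{\beta'}$ are isomorphic (up to $\Aut(\bbP^1)$) if and only if $\beta = \beta'$. Since the Enriques surface $X_\beta$ determines the pair consisting of its (unique up to the graph automorphisms) $\III^*$-fibration and hence, after passing to the K3 cover, determines $\tilde{\pi}_\beta$ up to isomorphism, non-isomorphic values of $\beta$ give non-isomorphic Enriques surfaces, and conversely. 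This yields a bijection between $k - \{0,-256\}$ and isomorphism classes of type $\I$ Enriques surfaces. Finally, one should note $0$ is excluded already at the level of the construction (the degree $2$ map $t \mapsto \beta(s^2+s)$ degenerates when $\beta = 0$), so the parameter space is $\bbA^1 - \{0, -256\}$, which matches the entry in Table \ref{main}.

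The one point that deserves a little care — and which I expect to be the only real obstacle — is the passage from "the equation $\tilde{\pi}_\beta$ is determined up to isomorphism by $X_\beta$" back to "$\beta$ is determined by $X_\beta$". This requires knowing that the special elliptic fibration $\pi$ on $X$ of type $\III^*$, $\I_2$ is unique up to the (finite) automorphism group of the dual graph, that this $\pi$ reconstructs $\tilde\pi$ on the K3 cover via $|f^{-1}F|$ as in Lemma \ref{universal}, and that the graph automorphisms act on the base $\bbP^1$ only through $\Aut(\bbP^1)$ fixing the configuration of singular fibers at $s = -1, 0, \infty$ — so they do not change $\beta$. All of these facts are either contained in the computation of $\Aut(X)$ in Proposition \ref{Aut1} or follow from the rigidity of the fiber configuration already used in the proof of the previous proposition, so the argument is short: it is essentially just assembling the two parts of the preceding proposition with the uniqueness of the $\III^*$-fibration.
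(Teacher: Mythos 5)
Your argument is correct and is essentially the paper's own: the paper likewise deduces the corollary by combining the two parts of the preceding proposition with the observation (recorded just before the corollary, coming from the automorphisms subsection) that the two $\III^*$-fibrations on a type $\I$ surface are isomorphic, so that $X_\beta$ determines $\tilde{\pi}_\beta$ up to $\Aut(\bbP^1)$ and hence determines $\beta$. Your extra care about how the graph automorphisms act on the base is a reasonable elaboration of the same point, not a different route.
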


While $\beta \in \{0,\infty\}$ leads to very degenerate surfaces, we still get an involution if $\beta = -256$, while the K3 surface acquires an additional rational double point. The minimal resolution of the quotient is a Coble surface (see also Remark \ref{coble}).
%
%
\section{Enriques surfaces of type $\II$}\label{secII}

\subsection{Main theorem for type $\II$}

\begin{theorem}
Let $X$ be an Enriques surface. The following are equivalent:

\begin{enumerate}
\item $X$ is of type $\II$.
\item The dual graph of all $(-2)$-curves on $X$ contains the graph in Figure \ref{critII}.

\begin{figure}[h!]
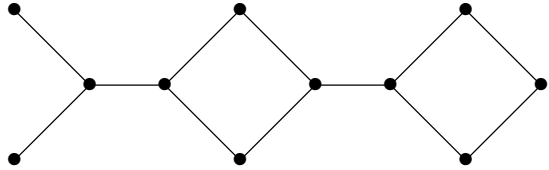

\centerline{
\xy
@={(0,20),(0,0),(10,10),(20,10),(30,20),(30,0),(40,10),(50,10),(60,20),(60,0),(70,10)}@@{*{\bullet}};
(0,20)*{};(10,10)*{}**\dir{-};
(0,0)*{};(10,10)*{}**\dir{-};
(20,10)*{};(10,10)*{}**\dir{-};
(20,10)*{};(30,20)*{}**\dir{-};
(20,10)*{};(30,0)*{}**\dir{-};
(40,10)*{};(30,20)*{}**\dir{-};
(40,10)*{};(30,0)*{}**\dir{-};
(40,10)*{};(50,10)*{}**\dir{-};
(60,20)*{};(50,10)*{}**\dir{-};
(60,0)*{};(50,10)*{}**\dir{-};
(60,20)*{};(70,10)*{}**\dir{-};
(60,0)*{};(70,10)*{}**\dir{-};
\endxy
}
\caption{Critical subgraph for type $\II$}
\label{critII}
\end{figure}
\item The canonical cover $\tilde{X}$ of $X$ admits an elliptic fibration with a Weierstrass equation of the form
\begin{equation*}
y^2 + \beta (s^2+s) xy + \beta^2 (s^2+s)^2 y  = x^3 + \beta (s^2+s) x^2
\end{equation*}
such that the covering morphism $\rho: \tilde{X} \to X$ is given as quotient by the involution
$
\sigma = t_{N^-} \circ J(\sigma),
$
where $J(\sigma): s \mapsto -s-1$ and $t_{N^-}$ is translation by $N^- = (0,0)$.
\end{enumerate}
\end{theorem}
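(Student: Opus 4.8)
The plan is to imitate the proof for type~$\I$: read a special elliptic fibration with a special bisection off the critical subgraph, pass to the canonical cover via Lemma~\ref{universal}, recover the remaining $(-2)$-curves with the height pairing as in the Example of \S\ref{example}, apply Vinberg's criterion, and then run Proposition~\ref{Kondo} backwards starting from the relevant extremal rational surface. For $(1)\Rightarrow(2)$ one simply matches the dual graph of type~$\II$ in Table~\ref{main} against Figure~\ref{critII}. The content is the interpretation of Figure~\ref{critII}: the $\tilde{D}_5$-subgraph on six of its vertices is the dual graph of an $\I_1^*$ fibre of a special elliptic fibration $\pi$; the quadrilateral not containing the remaining vertex is the dual graph of an $\I_4$ fibre of $\pi$; and the remaining vertex is a special bisection $N$ meeting the $\I_1^*$ fibre in two of its four simple components and the $\I_4$ fibre in one component. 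The $\I_4$ fibre must be a \emph{double} fibre: since $N$ meets its reduced support only once, a simple fibre would force $N.F=1$ for a general fibre $F$, which is impossible on an Enriques surface; and the $\I_1^*$ fibre, being additive, is necessarily simple by Proposition~\ref{typeofdoublefiber}.

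For $(2)\Rightarrow(1)$, by Lemma~\ref{universal} the bisection $N$ splits on the canonical cover $\tilde X$ into a zero section $N^+$ and a section $N^-$ of the induced fibration $\tilde\pi$, whose reducible fibres are two copies of $\I_1^*$ together with the $\I_8$ arising as the ramified pullback of the double $\I_4$. Drawing the configuration on $\tilde X$ as in \S\ref{example} fixes the intersection numbers of $N^-$ with $N^+$ and with every fibre component, so Proposition~\ref{heightpairingdef} and Table~\ref{heightpairing} give $h(N^-)=0$; thus $N^-$ is torsion, and a short additional computation (most transparently, directly from the Weierstrass equation in $(3)$) shows it has order $4$, with $2N^-$ the non-trivial $2$-torsion section. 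In particular $N^-$ is not $2$-torsion, so $jac_2$ is injective here (Corollary~\ref{jac2}), consistent with $\Aut_{nt}(X)=\{1\}$ for type~$\II$. The remaining $(-2)$-curves of $X$ are then produced exactly as in the Example of \S\ref{example}: each quadrilateral subgraph of Figure~\ref{critII} is the dual graph of the singular fibres of a further special fibration, which is extremal by Lemma~\ref{shiodatate}; Table~\ref{extremalrational} fixes its missing reducible fibre and the height pairing fixes all intersection numbers, and one arrives at the full dual graph of type~$\II$. Corollary~\ref{allcurves} then certifies that $X$ has no further $(-2)$-curves, so $X$ is of type~$\II$.

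For $(1)\Leftrightarrow(3)$, apply the base-change construction. By the above $J(\pi)$ is the rational extremal elliptic surface with singular fibres $(\I_1^*,\I_4,\I_1)$, respectively $(\I_1^*,\I_4)$ if $\Char(k)=2$; by Table~\ref{extremalrational} it is unique, and by \cite{Lang3} it has Weierstrass equation $y^2+txy+t^2y=x^3+tx^2$, with the $\I_1^*$ fibre at $t=0$, the $\I_4$ fibre at $t=\infty$, the $\I_1$ fibre (when present) at $t=16$, non-trivial $2$-torsion section $(-t,0)$ and $4$-torsion section $(0,0)$. Since the reducible double fibre must lie over a point of multiplicative reduction, the separable degree-$2$ base change $\varphi$ has to be branched over $t=\infty$, and, the $\I_1^*$ fibre being simple, we may take $\varphi$ unramified over $t=0$; after a coordinate change such a $\varphi$ is $t=\beta(s^2+s)$ with $\beta\in k\setminus\{0\}$ and covering involution $J(\sigma)\colon s\mapsto -s-1$. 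Pulling the Weierstrass equation back along $\varphi$ yields the equation in $(3)$ together with the section $N^-=(0,0)$ obtained by pullback; by Proposition~\ref{Kondo} and the discussion of \S\ref{basechange}, if such a K3 fibration is the canonical cover of an Enriques surface then its covering involution is $\sigma=t_{N^-}\circ J(\sigma)$, and conversely $(3)$ realises $X$ as precisely this quotient.

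The main obstacle is $(2)\Rightarrow(1)$: one must argue that the combinatorics of Figure~\ref{critII} rigidly determines the Kodaira types and the multiplicities of the fibres of $\pi$ and of the auxiliary fibrations, and then carry out the height-pairing bookkeeping of \S\ref{example} --- which here, unlike for type~$\I$, is not the simple ``$N^-$ is $2$-torsion'' situation --- to reconstruct the entire dual graph before Vinberg's criterion applies. One also has to deal with the small-characteristic behaviour: in characteristic $2$ the $\I_1$ fibre of $J(\pi)$ degenerates and $\varphi$ acquires a single wild branch point, and one must check that this branch point is not a point of good supersingular reduction, so that Proposition~\ref{Kondo} remains applicable. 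Finally, the \emph{existence} of Enriques surfaces of type~$\II$, i.e. that $N^-$ is a $J(\pi)$-Enriques section for suitable $\beta$, is --- as for type~$\I$ --- deferred to the subsequent analysis of degenerations and moduli.
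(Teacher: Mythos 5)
Your overall architecture is the paper's: read off a special fibration with a simple $\I_1^*$ fibre, a double $\I_4$ fibre and a special bisection $N$ from Figure~\ref{critII}, split $N$ on the K3 cover, use the height pairing and $jac_2$ to produce the missing twelfth $(-2)$-curve, and run the base-change construction backwards from the unique rational surface with fibres $(\I_1^*,\I_4,\I_1)$. The genuine problem is your identification of $N^-$. You correctly notice that on $y^2+txy+t^2y=x^3+tx^2$ the section $(0,0)$ has order $4$ while $(-t,0)$ is the non-trivial $2$-torsion section (the paper's labelling of $(0,0)$ as ``the $2$-torsion section'' is indeed a slip carried over from type $\I$, where $a_3=0$), but you then draw the wrong conclusion: you keep $N^-=(0,0)$, declare $N^-$ to be of order $4$ and hence not $2$-torsion, and conclude that $jac_2$ is injective. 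This cannot be right, for several independent reasons. First, Lemma~\ref{admissible} shows that $h(N^-)=0$ forces $N^-$ to be $2$-torsion. Second, the pullback of $(0,0)$ is fixed by $J(\sigma)$ but is not $2$-torsion, so $J(\sigma)(N^-)\neq\ominus N^-$ and $\sigma=t_{N^-}\circ J(\sigma)$ satisfies $\sigma^2=t_{N^-\oplus N^-}\neq \rm{id}$: it is not an involution and cannot define the Enriques quotient. Third, by Lemma~\ref{universal}(3) the critical subgraph forces $N^-$ to meet the component of the $\I_8$ fibre opposite to $N^+$ (component $4$ in $\bbZ/8\bbZ$) and, in each $\I_1^*$ fibre upstairs, the simple component adjacent to the one met by $N^+$ (the two legs met by $N$ in Figure~\ref{critII} hang off the same multiplicity-two component); the pullback of the order-$4$ section lands in components $2$ or $6$ of the $\I_8$ and in the two far legs of the $\I_1^*$ fibres, whereas the pullback of $(-t,0)$ lands exactly where required. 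Fourth, with $jac_2$ injective you would manufacture three new bisections from $\MW(J(\pi))\cong\bbZ/4\bbZ$ and overshoot the twelve-vertex dual graph of type $\II$; the $2$-to-$1$ map produces exactly the one missing curve.

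The correct statement is therefore that $N^-$ is the pullback of the genuine $2$-torsion section, i.e. $N^-=(-\beta(s^2+s),0)$, and $jac_2$ is $2$-to-$1$; your parenthetical consistency check with $\Aut_{nt}(X)=\{1\}$ is a red herring, since $t_{N^-}$ does descend to $X$ but permutes the components of the $\I_1^*$ fibre non-trivially and so is not numerically trivial. Apart from this, your write-up (the multiplicity argument for the $\I_4$ fibre, the choice of branch points of $\varphi$, the deferral of existence to the moduli discussion) matches the paper and is fine once $N^-$ is corrected.
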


\begin{proof}
First, observe that the dual graph of type $\II$ (see Table \ref{main}) contains the graph in Figure \ref{critII}. 

This subgraph can be interpreted as the dual graph of a special elliptic fibration $\pi$ with singular fibers $\I_1^*,\I_4$ and special bisection $N$ as follows, where the dotted rectangles mark the fibers:

\vspace{5mm}
\centerline{
\xy
@={(0,20),(0,0),(10,10),(20,10),(30,20),(30,0),(40,10),(50,10),(60,20),(60,0),(70,10)}@@{*{\bullet}};
(0,20)*{};(10,10)*{}**\dir{-};
(0,0)*{};(10,10)*{}**\dir{-};
(20,10)*{};(10,10)*{}**\dir{-};
(20,10)*{};(30,20)*{}**\dir{-};
(20,10)*{};(30,0)*{}**\dir{-};
(40,10)*{};(30,20)*{}**\dir{-};
(40,10)*{};(30,0)*{}**\dir{-};
(40,10)*{};(50,10)*{}**\dir{-};
(60,20)*{};(50,10)*{}**\dir{-};
(60,0)*{};(50,10)*{}**\dir{-};
(60,20)*{};(70,10)*{}**\dir{-};
(60,0)*{};(70,10)*{}**\dir{-};
(-3,23)*{};(33,23)*{}**\dir{--};
(33,-3)*{};(33,23)*{}**\dir{--};
(33,-3)*{};(-3,-3)*{}**\dir{--};
(-3,23)*{};(-3,-3)*{}**\dir{--};
(47,23)*{};(73,23)*{}**\dir{--};
(73,-3)*{};(73,23)*{}**\dir{--};
(73,-3)*{};(47,-3)*{}**\dir{--};
(47,23)*{};(47,-3)*{}**\dir{--};
(43,13)*{N};
\endxy
}
\vspace{5mm}

Note that the $\I_4$ fiber is a double fiber. Similarly to the case of type $\II$, we compute $h(N^-) = 0$ and find the last missing $(-2)$-curves via $jac_2$.

We found the following equation for the unique rational elliptic surface with singular fibers of type $\I_1^*$ and $\I_4$ in arbitrary characteristic
\begin{equation*}
y^2 + txy + t^2y = x^3 + tx^2,
\end{equation*}
where $t$ is a coordinate on $\bbP^1$. The $\I_4$ fiber is at $t = \infty$, while the $\I_1^*$ fiber is at $t = 0$. Moreover, if $\Char(k) \neq 2$, then there is an $\I_1$ fiber at $t = 16$ and all other fibers are smooth. The non-trivial $2$-torsion section is $s = (0,0)$.

In every characteristic, we can write every degree $2$ morphism $\bbP^1 \to \bbP^1$ with $t = \infty$ as branch point that is not branched over $t = 0$ in the following form
\begin{equation*}
t \mapsto \beta(s^2 +s),
\end{equation*}
where $s$ is the new parameter on $\bbP^1$ and $\beta \in k - \{0\}$. The covering involution is given by $s \mapsto -s - 1$. The second branch point of this degree $2$ cover in characteristic different from $2$ is at $t = -\frac{\beta}{4}$. Now, we get the equation
\begin{equation*}
y^2 + \beta (s^2+s) xy +\beta^2 (s^2+s)^2 y = x^3 + \beta (s^2+s) x^2
\end{equation*}
together with the $2$-torsion section $s' = (0,0)$ obtained by pulling back $s$. This equation defines an elliptic fibration $\tilde{\pi}$ on a K3 surface. As explained in Section \ref{basechange}, if $\tilde{\pi}$ is obtained as base change of a fibration of an Enriques surface, then $s' = N^-$ and $\sigma$ is the covering involution.
\end{proof}

\newpage
\subsection{Automorphisms}

\begin{proposition}\label{Aut2}
Let $X$ be an Enriques surface of type $\II$. Then, $Aut(X) \cong \mathfrak{S}_4$ and this group is generated by automorphisms induced by $2$-torsion sections of the Jacobian fibrations of elliptic fibrations of $X$. Moreover, $\Aut_{nt}(X) \cong \{1\}$.
\end{proposition}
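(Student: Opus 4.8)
The plan is to mirror the strategy of Proposition \ref{Aut1}. Write $\Gamma$ for the dual graph of type $\II$. By Corollary \ref{allcurves}, the $(-2)$-curves occurring in $\Gamma$ are \emph{all} the $(-2)$-curves on $X$, so $\Aut(X)$ acts on $\Gamma$ and we get a homomorphism $\rho\colon \Aut(X)\to\mathrm{Sym}(\Gamma)$. The first task is the purely combinatorial verification, carried out over $\bbC$ by Kond\=o, that $\mathrm{Sym}(\Gamma)\cong\mathfrak{S}_4$; this goes through verbatim since it only involves the incidence pattern of the graph. The proposition will then follow from two statements: (a) $\ker\rho=\Aut_{nt}(X)$ is trivial, so that $\rho$ embeds $\Aut(X)$ into $\mathfrak{S}_4$; and (b) the automorphisms produced from $2$-torsion sections of the Jacobians of elliptic fibrations of $X$ already generate a subgroup of $\Aut(X)$ mapping onto $\mathfrak{S}_4$.

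For (a), I would take $g\in\Aut_{nt}(X)$. Then $g$ fixes each $(-2)$-curve, hence preserves every elliptic fibration determined by an $\tilde A$-$\tilde D$-$\tilde E$ subgraph of $\Gamma$; in particular it preserves the special fibration $\pi$ with an $\I_1^*$ fiber used in the Main Theorem and fixes its special bisection $N$ together with all fiber components. As in Proposition \ref{Aut1}, since $\pi$ is non-isotrivial, $g$ is either trivial or acts on the generic fiber of $\pi$ as the unique hyperelliptic involution fixing the geometric points cut out on a general fiber by the fixed curves transverse to $\pi$. The divergence from type $\I$ — and the step I expect to be the crux — is to show that in type $\II$ these fixed curves pin down \emph{more than two} such points on a general fiber (for instance by exhibiting a second fixed bisection, or by passing to $J(\pi)$ via Lemma \ref{universal} and locating a third fixed point there), which rules out the hyperelliptic case and forces $g=\mathrm{id}$. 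Alternatively one may run the same argument simultaneously for a second special fibration visible in $\Gamma$ and derive a contradiction from the incompatibility of the two would-be fixed loci. Extracting the necessary incidence data from the explicit graph is the only delicate point here.

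For (b), I would argue exactly as at the end of the proof of Proposition \ref{Aut1}, using Corollary \ref{jac2} and the height-pairing bookkeeping of \S\ref{example}. The Jacobian $J(\pi)$ is an extremal rational elliptic surface (with fibers $\I_1^*,\I_4,\dots$), and $X$ carries further special fibrations appearing in $\Gamma$ whose Jacobians are again extremal rational elliptic surfaces with $2$-torsion in their Mordell-Weil groups; each such $2$-torsion section yields, via $jac_2$, an automorphism of $X$ whose action on $\Gamma$ is computed from Corollary \ref{jac2} and Table \ref{heightpairing}. It then remains to identify these images inside $\mathrm{Sym}(\Gamma)=\mathfrak{S}_4$ and to check that a suitable collection of them — say three realizing a generating set of transpositions, or two whose product has order divisible by $3$ together with an odd element — generates all of $\mathfrak{S}_4$. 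Combined with the injectivity from (a) and a cardinality count, this gives $\Aut(X)\cong\mathfrak{S}_4$, generated by $2$-torsion-section automorphisms, and $\Aut_{nt}(X)=\{1\}$.
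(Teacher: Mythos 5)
Your overall architecture (symmetry group of the graph is $\mathfrak{S}_4$ by Kond\=o, reduce to showing $\Aut_{nt}(X)=\{1\}$, then cite Kond\=o's realization of the generators by $2$-torsion sections) is exactly the paper's, and part (b) is fine. The gap is in step (a), at precisely the point you flag as the crux. You assert that pinning down ``more than two'' fixed geometric points on the generic fiber of $\pi$ rules out the hyperelliptic case. That is false in characteristic $\neq 2$: the hyperelliptic involution of an elliptic curve fixing a point $P$ has fixed locus $P+E[2]$, which consists of exactly \emph{four} points. Two disjoint fixed bisections give you four fixed points on the generic fiber, which is entirely consistent with $g$ being that involution, so neither ``a second fixed bisection'' nor ``a third fixed point'' closes the argument. (In characteristic $2$ four fixed points do force triviality, since there $E[2]$ has at most two geometric points; the paper treats that case separately.)

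The missing idea is a degeneration argument. If $g$ is the hyperelliptic involution, the four points cut out on the generic fiber by the two fixed separable bisections $F_1$ and $F_7$ (separability must be checked, e.g.\ from the fact that they meet distinct components of the $\I_1^*$ fiber) are pairwise $2$-torsion translates of one another. In characteristic $\neq 2$ the fibration $\pi$ has an $\I_1$ fiber, whose smooth locus is $\bbG_m$ and hence carries only \emph{two} $2$-torsion points; specializing there, the four points cannot stay distinct, so $F_1$ and $F_7$ would have to intersect over that fiber. The dual graph shows $F_1.F_7=0$, a contradiction, whence $g=\mathrm{id}$. Without this (or some equivalent way of excluding the four-fixed-point configuration), your proof of $\Aut_{nt}(X)=\{1\}$, and hence of the injectivity of $\rho$, does not go through.
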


\begin{proof}
Kond\=o's proof works in arbitrary characteristic \cite[p.208]{Kondo} once we show that the surface has no numerically trivial automorphisms. Recall that the dual graph of $(-2)$-curves for type $\II$ is as follows:

\centerline{
\includegraphics[width=60mm]{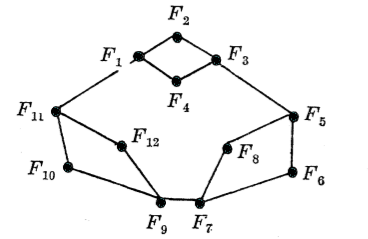}
}

A numerically trivial automorphism $g$ fixes the two bisections $F_1$ and $F_7$ of the non-isotrivial fibration $\pi$ induced by the linear system $|2(F_9+F_{10} + F_{11} + F_{12})|$ pointwise. Both $F_1$ and $F_7$ are separable (i.e. the projection to the base curve is separable) bisections of $\pi$, since they meet distinct points on the $\I_1^*$ fiber, hence $g$ fixes at least four geometric points on the generic fiber of $\pi$. If $\Char(k) = 2$, then $g$ is trivial. If $\Char(k) \neq 2$, then we may assume that $g$ is non-trivial. Then, $g$ is a hyperelliptic involution of $\pi$ and the four geometric points on the generic fiber are $2$-torsion points relative to each other. But in characteristic different from $2$, $\pi$ has an $\I_1$ fiber which has only two $2$-torsion points. Therefore, $F_1$ and $F_7$ would have to meet, but they do not. Hence, $g$ is trivial.


\end{proof}

\subsection{Degenerations and Moduli}
As in the case of type $\I$, one proves the following.

\begin{proposition}
Let $\beta \neq 0$ and
\begin{equation*}
y^2 + \beta (s^2+s) xy +\beta^2 (s^2+s)^2 y = x^3 + \beta (s^2+s) x^2
\end{equation*}
be the Weierstrass equation of an elliptic fibration $\tilde{\pi}_{\beta}$ with section on a K3 surface $\tilde{X}$. Define the involution $\sigma = t_{N^-} \circ J(\sigma)$, where $J(\sigma): s \mapsto -s-1$ and $t_{N^-}$ is translation by the section $N^- = (0,0)$. Then, the following statements are true:
\begin{enumerate}
\item $\sigma$ is fixed point free if and only if $\beta \neq -64$. If $\beta = -64$, the fixed locus of $\sigma$ is one $(-2)$-curve.
\item Two fibrations $\tilde{\pi}_{\beta}$ and $\tilde{\pi}_{\beta'}$ are isomorphic up to automorphisms of $\bbP^1$ if and only if $\beta = \beta'$.
\end{enumerate}
\end{proposition}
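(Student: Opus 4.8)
The plan is to mirror, in the type $\II$ setting, the argument given for type $\I$ in the previous section. For part (1), by Proposition~\ref{Kondo} the involution $\sigma = t_{N^-} \circ J(\sigma)$ is fixed point free if and only if $N^-$ is a $J(\pi)$-Enriques section of $\tilde{\pi}_\beta$. So I would first verify conditions (1) and (2) of Definition~\ref{EnriquesSection}: the section $N^- = (0,0)$ is everywhere integral, hence $N^-.N^+ = 0$, and it is a $2$-torsion section (being the pullback of the $2$-torsion section $(0,0)$ of the rational elliptic surface $y^2 + txy + t^2 y = x^3 + tx^2$), so $J(\sigma)(N^-) = \ominus N^-$ automatically. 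Thus the only issue is condition (3), which concerns the two fibers of $\tilde{\pi}_\beta$ lying over the branch points of $\varphi\colon s \mapsto \beta(s^2+s)$, namely $t = \infty$ (i.e. $s = 0, \infty$ glued — the branch point $s = -\tfrac12$ of the cover, wait: the branch points of $\varphi$ are $s=-\tfrac12$ and $s=\infty$, over $t=-\tfrac\beta4$ and $t=\infty$).

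The next step is to analyze these two fibers. Over $t = \infty$ the Jacobian has an $\I_4$ fiber, which becomes the double fiber; since the $\I_4$ fiber of the rational surface has four simple components and $N^-$ (coming from a $2$-torsion section) meets a non-identity component of it — this can be read off the height-pairing computation $h(N^-) = 0$ together with the local contribution at $\I_4$ forcing $N^-$ to hit the component at ``distance $2$'' — condition (3) at this fiber is fine exactly as in the type $\I$ case. The decisive point is the fiber over the other branch point $t = -\tfrac\beta4$. When $\Char(k) \neq 2$, the rational elliptic surface has an $\I_1$ fiber at $t = 16$, so the fiber of $\tilde\pi_\beta$ over $s = -\tfrac12$ is singular precisely when $-\tfrac\beta4 = 16$, i.e. $\beta = -64$, in which case it is an $\I_2$ fiber. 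For $\beta \neq -64$ (and $\beta \neq 0$) this fiber is smooth, so condition (3) is vacuous there and $N^-$ is a $J(\pi)$-Enriques section, giving $\sigma$ fixed point free. For $\beta = -64$ one must check that $N^-$ meets the \emph{identity} component of this $\I_2$ fiber — equivalently, that the section $(0,0)$ does not pass through the node of the Weierstrass model at the relevant value of $s$; this is the explicit computation analogous to ``$N^-$ does not meet the singular point $(-2^9, 2^{14})$'' in the type $\I$ proof, and it shows condition (3) fails, so $\sigma$ has fixed points. Then Remark~\ref{coble} (together with the statement that $J(\sigma)$ acts trivially on a smooth fiber and that two fixed points on a $(-2)$-curve in characteristic $2$ force the whole curve) identifies the fixed locus as a single $(-2)$-curve, after checking there are no isolated fixed points.

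For part (2), the argument is the comparison of $j$-invariants used for type $\I$: independently of $\beta$, the fibration $\tilde\pi_\beta$ has its $\I_1^*$ fiber at $s \in \{0, -1\}$ (the two preimages of $t = 0$) and its $\I_4$ fiber at $s = \infty$, so these three marked points are fixed; an isomorphism $\tilde\pi_\beta \cong \tilde\pi_{\beta'}$ over an automorphism of $\bbP^1$ must permute singular fibers respecting type, hence fixes these three points up to the symmetry $s \leftrightarrow -s-1$, and then the $j$-map — a rational function of $s$ whose coefficients depend on $\beta$ — pins down $\beta = \beta'$.

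The main obstacle I anticipate is the $\beta = -64$ borderline case: one must carry out the explicit Weierstrass-model computation to confirm that $N^- = (0,0)$ meets the identity component of the $\I_2$ fiber over $s = -\tfrac12$ (so that $\sigma$ genuinely has a nonempty fixed locus and the surface degenerates to a Coble surface rather than remaining an Enriques surface), and then verify that the fixed locus is exactly one curve with no extra isolated points. Everything else is a routine transcription of the type $\I$ arguments, but this degenerate-fiber bookkeeping is where a sign or coordinate slip would change the conclusion, so it deserves the care the type $\I$ proof gave to the point $(-2^9, 2^{14})$.
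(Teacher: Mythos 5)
Your proposal is correct and follows essentially the same route as the paper, whose proof of this proposition is simply the type $\I$ argument transcribed (the paper says only ``As in the case of type $\I$, one proves the following''): reduce part (1) to checking that $N^-=(0,0)$ is a $J(\pi)$-Enriques section, observe that only condition (3) at the fiber over the branch point $s=-\tfrac12$ can fail, locate the $\I_1$ fiber of the Jacobian at $t=16$ so that failure occurs exactly when $-\tfrac{\beta}{4}=16$, i.e. $\beta=-64$, and settle part (2) by comparing $j$-invariants with the singular fibers pinned at $s=0,-1,\infty$. Your identification of the remaining explicit check (that $(0,0)$ misses the node of the $\I_2$ fiber at $s=-\tfrac12$ when $\beta=-64$, hence meets the identity component) is precisely the computation the paper performs in the type $\I$ case at the point $(-2^9,2^{14})$.
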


%

\begin{corollary}
Enriques surfaces of type $\II$ are parametrized by $\bbA^1 - \{0,-64\}$ in every characteristic.
\end{corollary}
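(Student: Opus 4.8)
The plan is to mirror, almost verbatim, the argument already carried out for type $\I$. By the equivalence $(1)\Leftrightarrow(3)$ proved above, an Enriques surface $X$ is of type $\II$ exactly when its K3 cover carries an elliptic fibration $\tilde\pi_\beta$ with Weierstrass equation
\[
y^2 + \beta(s^2+s)xy + \beta^2(s^2+s)^2 y = x^3 + \beta(s^2+s)x^2, \qquad \beta \in k-\{0\},
\]
with $X$ the quotient of $\tilde X$ by $\sigma = t_{N^-}\circ J(\sigma)$. Thus every type $\II$ surface is an $X_\beta$ for some $\beta\neq 0$; and by the preceding Proposition together with Proposition \ref{Kondo}, the involution $\sigma$ is fixed point free — equivalently $X_\beta$ is a genuine Enriques surface and not (the resolution of) a Coble surface, cf.\ Remark \ref{coble} — precisely when $\beta\neq -64$. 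Hence $\beta\mapsto X_\beta$ gives a surjection from $\bbA^1-\{0,-64\}$ onto the isomorphism classes of type $\II$ Enriques surfaces (that each $X_\beta$ so produced really is of type $\II$, not merely some Enriques surface, is again the content of $(3)\Rightarrow(1)$ above). What remains is injectivity.

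For injectivity, suppose $X_\beta \cong X_{\beta'}$. First I would use Corollary \ref{allcurves}: the dual graph of all $(-2)$-curves on a type $\II$ surface is the type $\II$ graph, so any isomorphism $X_\beta \cong X_{\beta'}$ induces an isometry of the respective $(-2)$-curve configurations and therefore matches up elliptic fibrations of the same fiber configuration — in particular the special fibrations $\pi$ with singular fibers $(\I_1^*,\I_4)$. By the symmetry analysis of the type $\II$ graph used in the proof of Proposition \ref{Aut2} (following \cite[p.208]{Kondo}), all such fibrations lie in a single $\Aut(X)$-orbit, so it suffices to show that the fibration $\pi$ determines $\beta$.

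To see this, note that the Jacobian of such a $\pi$ is the rational elliptic surface with fibers $(\I_1^*,\I_4)$, which is unique; normalize its Weierstrass model to $y^2+txy+t^2y=x^3+tx^2$, so that the $\I_4$ fiber lies over $t=\infty$ and the $\I_1^*$ fiber over $t=0$. The $\I_4$ fiber of $\pi$ is double (it meets the special bisection $N$, a bisection, only once in each component), so by Proposition \ref{typeofdoublefiber} the point $t=\infty$ is a branch point of $\varphi\colon \bbP^1\to\bbP^1$, while $t=0$ is not (the $\I_1^*$ fiber is simple). Fixing these two points of $\bbP^1$ rigidifies $\varphi$ into the normal form $t=\beta(s^2+s)$, and then a comparison of $j$-invariants — exactly as in the type $\I$ case, the reducible fibers of $\tilde\pi_\beta$ sit at $s=0,-1,\infty$ independently of $\beta$ — shows that $\tilde\pi_\beta\cong\tilde\pi_{\beta'}$ up to $\Aut(\bbP^1)$ forces $\beta=\beta'$. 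Hence the isomorphism class of $X_\beta$ determines $\beta$, and the corollary follows.

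I expect the genuine obstacle to be the injectivity step, specifically the claim that all $(\I_1^*,\I_4)$-fibrations on a type $\II$ surface form one $\Aut(X)$-orbit, so that $\beta$ is an honest isomorphism invariant; everything else transcribes directly from type $\I$. In characteristic $2$ a little extra care is needed, since $\pi$ then has a single (wild) double fiber and $t=\beta(s^2+s)$ is wildly ramified over $t=\infty$; there one pins down $\beta$ by observing that the chosen normalization of the Jacobian's Weierstrass equation already rigidifies the coordinate $t$ up to the finite automorphism group of $J(\pi)$, so the same $j$-invariant argument applies.
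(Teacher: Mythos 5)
Your proposal is correct and follows essentially the same route as the paper: surjectivity comes from the equivalence with the explicit base-change model together with the fixed-point analysis showing $\sigma$ is free exactly for $\beta\neq 0,-64$, and injectivity comes from the rigidity of the reducible fibers at $s=0,-1,\infty$ (the $j$-invariant comparison) combined with the fact that the relevant $(\I_1^*,\I_4)$-fibrations form a single $\Aut(X)$-orbit — precisely the step the paper makes explicit for type $\I$ ("the two fibrations with a $\III^*$ fiber are isomorphic") and leaves implicit for type $\II$ via "as in the case of type $\I$". You simply spell out the details the paper omits, so there is nothing substantively different to flag.
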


As in the case of type $\I$, the cases where $\beta \in \{0,\infty\}$ are very degenerate surfaces and $\beta = -64$ leads to a Coble surface.
\section{Enriques surfaces of type $\III$}\label{secIII}

\subsection{Main theorem for type $\III$}

\begin{theorem}
Let $X$ be an Enriques surface. The following are equivalent:

\begin{enumerate}
\item $X$ is of type $\III$.
\item The dual graph of all $(-2)$-curves on $X$ contains the graph in Figure \ref{critIII}.
\begin{figure}[h!]
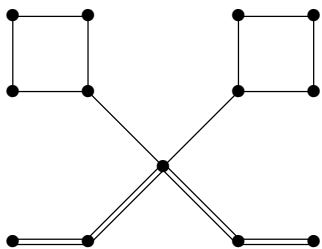

\centerline{
\xy
@={(0,40),(10,40),(30,40),(40,40),(0,30),(10,30),(30,30),(40,30),(20,20),(0,10),(10,10),(30,10),(40,10)}@@{*{\bullet}};
(0,40)*{};(10,40)*{}**\dir{-};
(30,40)*{};(40,40)*{}**\dir{-};
(0,30)*{};(10,30)*{}**\dir{-};
(30,30)*{};(40,30)*{}**\dir{-};
(0,40)*{};(0,30)*{}**\dir{-};
(10,30)*{};(10,40)*{}**\dir{-};
(30,30)*{};(30,40)*{}**\dir{-};
(40,30)*{};(40,40)*{}**\dir{-};
(10,30)*{};(20,20)*{}**\dir{-};
(30,30)*{};(20,20)*{}**\dir{-};
(10,10)*{};(20,20)*{}**\dir2{-};
(30,10)*{};(20,20)*{}**\dir2{-};
(10,10)*{};(0,10)*{}**\dir2{-};
(30,10)*{};(40,10)*{}**\dir2{-};
\endxy
}
\caption{Critical subgraph for type $\III$}
\label{critIII}
\end{figure}
\item The canonical cover $\tilde{X}$ of $X$ admits an elliptic fibration with a Weierstrass equation of the form
\begin{equation*}
y^2 +  xy  = x^3 + 4 s^4 x^2 + s^4 x 
\end{equation*}
such that the covering morphism $\rho: \tilde{X} \to X$ is given as quotient by the involution
$\sigma = t_{N^-} \circ J(\sigma)$,
where $J(\sigma): s \mapsto -s$ and $t_{N^-}$ is translation by $N^- = (0,0)$.
\end{enumerate}
Moreover, Enriques surfaces of type $\III$ do not exist in characteristic $2$.
\end{theorem}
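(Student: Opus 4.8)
The plan is to establish the non-existence in characteristic $2$ directly from the dual graph of Figure \ref{critIII}, independently of the Weierstrass model in $(3)$ (whose defining involution $s\mapsto -s$ degenerates to the identity when $\Char(k)=2$), thereby avoiding any circularity. The mechanism is that the critical subgraph pins down a special elliptic fibration $\pi$ on $X$ whose Jacobian $J(\pi)$ is an extremal rational elliptic surface carrying two $\I_4$ fibers, a configuration that Table \ref{extremalrational} excludes in characteristic $2$.

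Concretely, I would first read the fibration off Figure \ref{critIII}. The two $4$-cycles built from single edges are diagrams $\tilde{A}_3$, and among the Kodaira types only $\I_4$ has dual graph $\tilde{A}_3$, so by Proposition \ref{canonicaltype} each $4$-cycle is the support of an $\I_4$ fiber of a fibration $\pi$; the two double edges are diagrams $\tilde{A}_1$ supporting the two remaining reducible fibers, and the central vertex is a special bisection meeting these four pairwise disjoint configurations. These twelve fiber components satisfy only the three relations equating the four fiber classes to a single class $F$, hence span a lattice of rank $9$, so Lemma \ref{shiodatate} shows that $\pi$ is extremal and that the displayed fibers are \emph{all} of its reducible fibers. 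Passing to the Jacobian, $J(\pi)$ is a rational elliptic surface since $\pi$ is a fibration of an Enriques surface, and it is again extremal with the same singular fiber types; in particular it carries two $\I_4$ fibers, and by the classification it must be the configuration $(\I_4,\I_4,\I_2,\I_2)$, the unique extremal rational fibration containing two $\I_4$ fibers.

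The contradiction is then read straight from Table \ref{extremalrational}: its characteristic-$2$ column contains no configuration with two $\I_4$ fibers at all (the entry $(\I_4,\I_4,\I_2,\I_2)$ is a dash), in agreement with the remark following the table, since $2$ divides the number of simple components of an $\I_4$ fiber. As a surface of type $\III$ would force exactly this Jacobian, none can exist when $\Char(k)=2$. The main obstacle is the middle step, namely certifying from the partial data of Figure \ref{critIII} that $\pi$ really is extremal with no hidden reducible fibers; this is precisely what the rank-$9$ computation delivers, and it also lets me sidestep the usual $\I_2$-versus-$\III$ ambiguity of an $\tilde{A}_1$ diagram, because the argument only uses that the two $\tilde{A}_3$ pieces are $\I_4$ fibers together with the fact, visible in the table, that no extremal rational surface in characteristic $2$ has two such fibers.
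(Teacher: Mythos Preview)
Your argument for the non-existence in characteristic $2$ is correct and coincides with the paper's: the critical subgraph forces a special extremal elliptic fibration whose Jacobian has two $\I_4$ fibers, and Table \ref{extremalrational} contains no such extremal rational surface in characteristic $2$.

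However, the proposal addresses only the ``Moreover'' clause and says nothing about the equivalence of $(1)$, $(2)$, and $(3)$, which is the substance of the theorem. The implication $(1)\Rightarrow(2)$ is immediate, but $(2)\Rightarrow(1)$ requires real work: starting from the thirteen curves in Figure \ref{critIII} one must repeatedly apply the map $jac_2$ of Corollary \ref{jac2} (first to the $(\I_4,\I_4,\I_2,\I_2)$ fibration, then to auxiliary fibrations such as one with $(\I_0^*,\I_0^*)$ fibers read off from the enlarged graph) to manufacture the remaining $(-2)$-curves of the type $\III$ diagram, and then invoke Vinberg's criterion (Corollary \ref{allcurves}) to certify that these are \emph{all} the $(-2)$-curves. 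The equivalence with $(3)$ rests on the universal base change construction of \S\ref{basechange}: one identifies $J(\pi)$ with the unique rational surface with fibers $(\I_4,\I_4,\I_2,\I_2)$, writes its Weierstrass equation, base-changes along the degree-$2$ cover branched over the two $\I_4$ points, and singles out among the three pulled-back $2$-torsion sections the unique one meeting both $\I_8$ fibers in non-identity components, which then serves as the $J(\pi)$-Enriques section $N^-$. None of this appears in your proposal, so as a proof of the stated theorem it is incomplete.
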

\begin{proof}
Note that the dual graph of type $\III$ (see Table \ref{main}) contains the graph in Figure \ref{critIII}. 

The subgraph in Figure \ref{critIII} can be interpreted as the dual graph of a special elliptic fibration $\pi$ with singular fibers $(\I_4,\I_4,\I_2,\I_2)$ and special bisection $N$ as follows, where the dotted rectangles mark the fibers:

\vspace{2mm}

\centerline{
\xy
@={(0,40),(10,40),(30,40),(40,40),(0,30),(10,30),(30,30),(40,30),(20,20),(0,10),(10,10),(30,10),(40,10)}@@{*{\bullet}};
(0,40)*{};(10,40)*{}**\dir{-};
(30,40)*{};(40,40)*{}**\dir{-};
(0,30)*{};(10,30)*{}**\dir{-};
(30,30)*{};(40,30)*{}**\dir{-};
(0,40)*{};(0,30)*{}**\dir{-};
(10,30)*{};(10,40)*{}**\dir{-};
(30,30)*{};(30,40)*{}**\dir{-};
(40,30)*{};(40,40)*{}**\dir{-};
(10,30)*{};(20,20)*{}**\dir{-};
(30,30)*{};(20,20)*{}**\dir{-};
(10,10)*{};(20,20)*{}**\dir2{-};
(30,10)*{};(20,20)*{}**\dir2{-};
(10,10)*{};(0,10)*{}**\dir2{-};
(30,10)*{};(40,10)*{}**\dir2{-};
(-3,43)*{};(13,43)*{}**\dir{--};
(-3,27)*{};(13,27)*{}**\dir{--};
(13,27)*{};(13,43)*{}**\dir{--};
(-3,27)*{};(-3,43)*{}**\dir{--};
(27,43)*{};(43,43)*{}**\dir{--};
(27,27)*{};(43,27)*{}**\dir{--};
(43,27)*{};(43,43)*{}**\dir{--};
(27,27)*{};(27,43)*{}**\dir{--};
(-3,13)*{};(13,13)*{}**\dir{--};
(-3,7)*{};(13,7)*{}**\dir{--};
(13,7)*{};(13,13)*{}**\dir{--};
(-3,7)*{};(-3,13)*{}**\dir{--};
(27,13)*{};(43,13)*{}**\dir{--};
(27,7)*{};(43,7)*{}**\dir{--};
(43,7)*{};(43,13)*{}**\dir{--};
(27,7)*{};(27,13)*{}**\dir{--};
(20,24)*{N};
\endxy
}
\vspace{5mm}

As before, the bisection $N$ splits into two sections $N^+$ and $N^-$ of the elliptic fibration $\tilde{\pi}$ induced by $\pi$ on the K3 cover $\tilde{X}$. Fixing $N^+$ as the zero section, we compute $h(N^-) = 0$ and we see that $N^-$ is a $2$-torsion section of $\tilde{\pi}$ meeting the $\I_8$ fibers in a non-identity component. 

Note that the existence of this fibration already implies non-existence of this type of Enriques surfaces in characteristic $2$, since a fibration with singular fibers $(\I_4,\I_4,\I_2,\I_2)$ does not exist on rational surfaces in characteristic $2$, as can be seen in Table \ref{extremalrational}.

Now, Corollary \ref{jac2} gives three more $(-2)$-curves resulting in the following graph:

\vspace{-1mm}
\centerline{
\xy
@={(0,40),(10,40),(30,40),(40,40),(0,30),(10,30),(30,30),(40,30),(20,20),(0,10),(10,10),(30,10),(40,10),(0,20),(40,20),(20,0)}@@{*{\bullet}};
(0,40)*{};(10,40)*{}**\dir{-};
(30,40)*{};(40,40)*{}**\dir{-};
(0,30)*{};(10,30)*{}**\dir{-};
(30,30)*{};(40,30)*{}**\dir{-};
(0,40)*{};(0,30)*{}**\dir{-};
(10,30)*{};(10,40)*{}**\dir{-};
(30,30)*{};(30,40)*{}**\dir{-};
(40,30)*{};(40,40)*{}**\dir{-};
(10,30)*{};(20,20)*{}**\dir{-};
(30,30)*{};(20,20)*{}**\dir{-};
(10,10)*{};(20,20)*{}**\dir2{-};
(30,10)*{};(20,20)*{}**\dir2{-};
(10,10)*{};(0,10)*{}**\dir2{-};
(30,10)*{};(40,10)*{}**\dir2{-};
(0,20)*{};(0,10)*{}**\dir2{-};
(40,20)*{};(40,10)*{}**\dir2{-};
(0,20)*{};(30,10)*{}**\dir2{-};
(40,20)*{};(10,10)*{}**\dir2{-};
(0,20)*{};(0,40)*{}**\crv{(-5,30)};
(40,20)*{};(40,40)*{}**\crv{(45,30)};
(0,20)*{};(40,40)*{}**\crv{(-25,60)};
(40,20)*{};(00,40)*{}**\crv{(65,60)};
(20,0)*{};(0,10)*{}**\dir2{-};
(20,0)*{};(40,10)*{}**\dir2{-};
(20,0)*{};(30,30)*{}**\dir{-};
(20,0)*{};(10,30)*{}**\dir{-};
\endxy
}
\vspace{1mm}

We find a graph of an elliptic fibration with singular fibers $(\I_0^*,\I_0^*)$ and special bisection $N$: 

\vspace{-2mm}
\centerline{
\xy
(3,12)*{N};
@={(0,40),(10,40),(30,40),(0,30),(30,30),(40,30),(20,20),(0,10),(0,20),(40,20),(20,0)}@@{*{\bullet}};
(0,40)*{};(10,40)*{}**\dir{-};
(30,30)*{};(40,30)*{}**\dir{-};
(0,40)*{};(0,30)*{}**\dir{-};
(30,30)*{};(30,40)*{}**\dir{-};
(30,30)*{};(20,20)*{}**\dir{-};
(0,20)*{};(0,10)*{}**\dir2{-};
(0,20)*{};(0,40)*{}**\crv{(-5,30)};
(40,20)*{};(00,40)*{}**\crv{(65,60)};
(20,0)*{};(0,10)*{}**\dir2{-};
(20,0)*{};(30,30)*{}**\dir{-};
\endxy
}
\vspace{3mm}
%
%

With the usual notation, we compute $h(N^-) = 2$ and add two bisections coming from Corollary \ref{jac2}. In the following figure, we only added one of these bisections to maintain readability:

\vspace{-3mm}
\centerline{
\xy
@={(0,40),(10,40),(30,40),(40,40),(0,30),(10,30),(30,30),(40,30),(20,20),(0,10),(10,10),(30,10),(40,10),(0,20),(40,20),(20,0),(20,40)}@@{*{\bullet}};
(0,40)*{};(10,40)*{}**\dir{-};
(30,40)*{};(40,40)*{}**\dir{-};
(0,30)*{};(10,30)*{}**\dir{-};
(30,30)*{};(40,30)*{}**\dir{-};
(0,40)*{};(0,30)*{}**\dir{-};
(10,30)*{};(10,40)*{}**\dir{-};
(30,30)*{};(30,40)*{}**\dir{-};
(40,30)*{};(40,40)*{}**\dir{-};
(10,30)*{};(20,20)*{}**\dir{-};
(30,30)*{};(20,20)*{}**\dir{-};
(10,10)*{};(20,20)*{}**\dir2{-};
(30,10)*{};(20,20)*{}**\dir2{-};
(10,10)*{};(0,10)*{}**\dir2{-};
(30,10)*{};(40,10)*{}**\dir2{-};
(0,20)*{};(0,10)*{}**\dir2{-};
(40,20)*{};(40,10)*{}**\dir2{-};
(0,20)*{};(30,10)*{}**\dir2{-};
(40,20)*{};(10,10)*{}**\dir2{-};
(0,20)*{};(0,40)*{}**\crv{(-5,30)};
(40,20)*{};(40,40)*{}**\crv{(45,30)};
(0,20)*{};(40,40)*{}**\crv{(-25,60)};
(40,20)*{};(00,40)*{}**\crv{(65,60)};
(20,0)*{};(0,10)*{}**\dir2{-};
(20,0)*{};(40,10)*{}**\dir2{-};
(20,0)*{};(30,30)*{}**\dir{-};
(20,0)*{};(10,30)*{}**\dir{-};
(20,40)*{};(10,40)*{}**\dir2{-};
(20,40)*{};(30,40)*{}**\dir2{-};
(0,10)*{};(20,40)*{}**\dir2{-};
(40,10)*{};(20,40)*{}**\dir2{-};
(30,10)*{};(20,40)*{}**\dir2{-};
(10,10)*{};(20,40)*{}**\dir2{-};
\endxy
}
\vspace{3mm}

Note that one of the bisections arising via $jac_2$ has already been part of the graph to begin with. Hence, it remains to produce two more $(-2$)-curves using another fibration. We leave the details to the reader.


By \cite{Lang3}, we have the following equation for the unique rational elliptic surface with singular fibers of type $(\I_4,\I_4,\I_2,\I_2)$ in characteristic different from $2$ (the equation can be simplified over $\bbZ$)
\begin{equation*}
y^2 + xy = x^3 + 4t^2x^2 + t^2x,
\end{equation*}
where $t$ is a coordinate on $\bbP^1$. The $\I_4$ fibers are at $t = 0, \infty$, while the $\I_2$ fibers are at $t = \pm \frac{1}{4}$. The non-trivial $2$-torsion sections are $s_1 = (-4t^2,2t^2)$, $s_2 = (0,0)$ and $s_3 = (-\frac{1}{4},\frac{1}{8})$.

In characteristic different from $2$, we can write a degree $2$ morphism $\bbP^1 \to \bbP^1$ with $t = 0,\infty$ as branch points in the following form
\begin{equation*}
t \mapsto s^2,
\end{equation*}
where $s$ is the new parameter on $\bbP^1$. The covering involution $J(\sigma)$ is given by $s \mapsto -s$. Now, we get the equation
\begin{equation}\label{eqnIII}
y^2 +  xy  = x^3 + 4 s^4 x^2 + s^4 x 
\end{equation}
together with the $2$-torsion sections $s_1' = (-4s^4,2s^4)$, $s_2' = (0,0)$ and $s_3' = (-\frac{1}{4},\frac{1}{8})$ obtained by pulling back $s_1$,$s_2$ and $s_3$. All of them are $J(\sigma)$-(anti-)invariant. However, $s_1'$ (resp. $s_3'$) meets the identity component of the fiber at $s = \infty$ (resp. $s = 0$). Therefore, $s_2'$ is the section we are looking for.

\end{proof}

\begin{remark}\label{extraaut3}
Note that Equation (\ref{eqnIII}) has an automorphism $\iota: s \mapsto \sqrt{-1}s$ which commutes with $\sigma$. Therefore, $\iota$ induces an automorphism of the Enriques surface, which we will also denote by $\iota$. Moreover, $\iota$ fixes the $2$-torsion sections of (\ref{eqnIII}). Note also that this automorphism acts as $\sqrt{-1}$ on a non-zero global $2$-form of the K3 surface.
\end{remark}
\subsection{Automorphisms}

\begin{proposition}\label{Aut3}
Let $X$ be an Enriques surface of type $\III$. Then, $Aut(X) \cong (\bbZ/4\bbZ \times (\bbZ/2\bbZ)^2) \rtimes D_4$ and this group is generated by automorphisms induced by $2$-torsion sections of the Jacobian fibrations of non-isotrivial elliptic fibrations of $X$ and the automorphism exhibited in Remark \ref{extraaut3}. Moreover, $\Aut_{nt} \cong \bbZ/2\bbZ$ and $\Aut(X)/\Aut_{nt}(X) = (\bbZ/2\bbZ)^3 \rtimes D_4$.
\end{proposition}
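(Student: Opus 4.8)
The plan is to analyse the restriction homomorphism $r\colon \Aut(X)\to\Aut(\Gamma)$, where $\Gamma$ denotes the dual graph of type $\III$. By Corollary \ref{allcurves} the vertices of $\Gamma$ are \emph{all} $(-2)$-curves of $X$, so $r$ is well defined and $\ker r=\Aut_{nt}(X)$ by the very definition of a numerically trivial automorphism. I would then proceed in four steps: (i) compute $\Aut(\Gamma)$; (ii) show $r$ is surjective by writing down enough automorphisms; (iii) identify $\ker r=\Aut_{nt}(X)$; (iv) split the resulting extension to read off the group. This is the skeleton already used for Propositions \ref{Aut1} and \ref{Aut2}; the new ingredient is the order-$4$ automorphism $\iota$ of Remark \ref{extraaut3}, and the content of the proof is that everything goes through in every characteristic $\neq 2$ exactly as over $\bbC$ \cite{Kondo}.

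For step (i) I would simply inspect the labelled graph of type $\III$ and check that its automorphism group is $(\bbZ/2\bbZ)^3\rtimes D_4$; this is purely combinatorial, hence characteristic-free, and is the computation carried out by Kond\=o. For step (ii) I would use the three non-isotrivial special elliptic fibrations occurring in the proof of the Main Theorem for type $\III$ — the one with singular fibres $(\I_4,\I_4,\I_2,\I_2)$, the one with fibres $(\I_0^*,\I_0^*)$, and the third fibration — whose Jacobians are rational elliptic surfaces carrying the $2$-torsion sections listed there. For each such section $P$ the translation $t_P$ commutes with the covering involution (because $P$ is $J(\sigma)$-invariant) and therefore descends to an automorphism of $X$; Corollary \ref{jac2} describes the associated special bisections, and reading off how these automorphisms permute the $(-2)$-curves shows that they generate a copy of $D_4$ together with a subgroup $(\bbZ/2\bbZ)^2$ of $\Aut(\Gamma)$. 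The one remaining generator of $\Aut(\Gamma)$ is realised by $\iota$, whose action on $\Gamma$ one computes directly using that it fixes the $2$-torsion sections of (\ref{eqnIII}); hence $r$ is onto.

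For step (iii), given $g\neq\mathrm{id}$ numerically trivial, I would argue as in Propositions \ref{Aut1} and \ref{Aut2}: $g$ fixes pointwise two $(-2)$-curves which are separable bisections of one of the non-isotrivial fibrations $\psi$ above, so it preserves $\psi$ and fixes several geometric points of its generic fibre; since $\psi$ is non-isotrivial this forces $g$ to be the unique hyperelliptic involution of the generic fibre determined by those points, and the hypothesis $\Char(k)\neq 2$ enters exactly as in Proposition \ref{Aut2} through a fibre with only two $2$-torsion points. Hence $\Aut_{nt}(X)\cong\bbZ/2\bbZ$; its nontrivial element is $\iota^2$, which by Remark \ref{extraaut3} is nontrivial on $\tilde X$ but acts trivially on $\Num(X)$, so $\iota$ has order $4$ in $\Aut(X)$.

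For step (iv), steps (ii) and (iii) give a short exact sequence $1\to\bbZ/2\bbZ\to\Aut(X)\to(\bbZ/2\bbZ)^3\rtimes D_4\to 1$, so $|\Aut(X)|=128$. Then $\langle\iota\rangle\cong\bbZ/4\bbZ$ with $\iota^2=\Aut_{nt}(X)$, and two independent $2$-torsion translations span a $(\bbZ/2\bbZ)^2$ commuting with $\langle\iota\rangle$ and with each other; together these form a normal subgroup isomorphic to $\bbZ/4\bbZ\times(\bbZ/2\bbZ)^2$ of order $16$ which maps onto $(\bbZ/2\bbZ)^3$ under $r$, while a suitable $D_4$ among the remaining $jac_2$-automorphisms provides a complement. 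This yields $\Aut(X)\cong(\bbZ/4\bbZ\times(\bbZ/2\bbZ)^2)\rtimes D_4$ and $\Aut(X)/\Aut_{nt}(X)\cong(\bbZ/2\bbZ)^3\rtimes D_4$. I expect the main obstacle to be step (ii): verifying that the explicit $jac_2$-automorphisms and $\iota$ realise \emph{precisely} the claimed subgroups of $\Aut(\Gamma)$ and assemble into the stated semidirect product (including the commutativity relations needed in (iv)), together with confirming within step (iii) that $\iota^2$ is the numerically trivial involution rather than the identity on $X$.
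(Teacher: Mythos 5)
Your overall skeleton (restriction to the dual graph, kernel $=\Aut_{nt}$, realize generators, split the extension) matches the paper's, but two of your steps contain genuine gaps. First, step (i) is wrong as stated: the full automorphism group of the type $\III$ graph is $(\bbZ/2\bbZ)^4\rtimes D_4$ (of order $128$), not $(\bbZ/2\bbZ)^3\rtimes D_4$; the latter is only the \emph{image} of $\Aut(X)\to\Aut(\Gamma)$, which has index $2$. Consequently $r$ is \emph{not} surjective, and exhibiting $64$ realized graph automorphisms in step (ii) does not determine the image --- you still need an upper bound explaining why the remaining index-$2$ coset of $\Aut(\Gamma)$ cannot be realized. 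The paper imports Kond\=o's argument for this (and checks it is characteristic-free because the intersection behaviour of the curves is unchanged); without some such argument your count of $|\Aut(X)|$ is only a lower bound.

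Second, step (iii) fails for type $\III$: the nontrivial numerically trivial automorphism does \emph{not} fix the four bisections $E_2,E_9,E_6,E_{12}$ of $|2(E_3+E_4+E_5+E_{11})|$ pointwise --- on the contrary, the paper shows it must act nontrivially on all of them (it fixes pointwise only the fibre components $E_1,E_3,E_5,E_7$, which are the curves meeting at least three others in distinct points). So the premise ``$g$ fixes pointwise two separable bisections'' is false here, and the type $\II$ argument via a fibre with only two $2$-torsion points does not transfer (this fibration has no $\I_1$ fibre). The paper's actual bound $|\Aut_{nt}(X)|\le 2$ uses that $g$ has $2$-power order, is tame since $\Char(k)\neq 2$, and hence has smooth fixed locus by the Lefschetz fixed point formula; since $E_1,E_3,E_5,E_7$ are already fixed pointwise, $g$ must act nontrivially on each bisection, and acting there by a uniquely determined involution makes $g$ unique. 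Finally, in step (iv) your ``suitable $D_4$ provides a complement'' hides the real issue: the two reflections $r_d,r_v$ could a priori generate a dihedral group of order $16$ inside $\Aut(X)$; the paper rules this out by invoking the bound (order at most $6$) on tame semi-symplectic automorphisms to force $\ord(r_d\circ r_v)=4$, and you would need this (or an equivalent computation) to get the splitting.
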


\begin{proof}
Recall that the dual graph of $(-2)$-curves for type $\III$ is as follows:

\vspace{2mm}
\centerline{
\includegraphics[width=100mm]{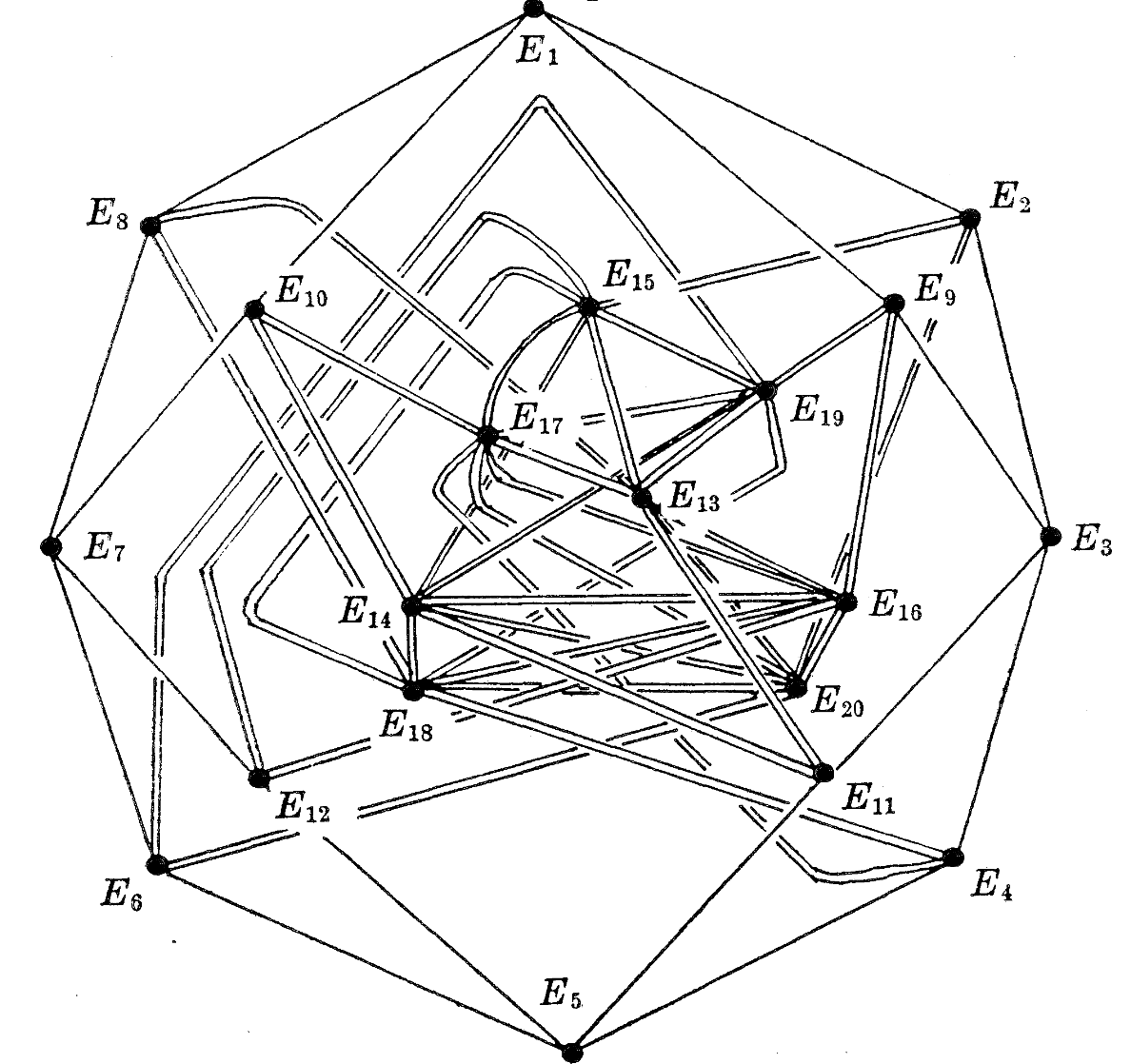}
}
\vspace{2mm}

Let us first show that $|\Aut_{nt}(X)| \leq 2$. Consider the elliptic fibration $\pi$ induced by the linear system $|2(E_3+E_4+E_5+E_{11})|$ and let $g \in \Aut_{nt}(X)$ be a non-trivial automorphism. If $g$ fixes one of the bisections $E_2,E_9,E_6$ and $E_{12}$ pointwise, then it is the hyperelliptic involution of the generic fiber of $\pi$ fixing the geometric points defined by the bisection. Moreover, $g$ induces a unique involution on such a bisection if it acts non-trivially on it. In any case, $ord(g) = 2^n$ for some $n \in \bbN$ and, since $\Char(k) \neq 2$, $g$ is tame. The fixed locus of a tame automorphism is smooth by the Lefschetz fixed point formula \cite{Iversen}. Since $g$ fixes $E_1,E_3,E_5$ and $E_7$ pointwise, it has to act non-trivially on $E_2,E_9,E_6$ and $E_{12}$. In particular, $g$ is unique.

As explained by Kond\=o \cite[p.214]{Kondo}, the automorphism group of the graph is the same as the automorphism group of the subgraph $\Sigma$ generated by $\{E_i\}_{i \in \{1,\hdots,12\}}$, which is $(\bbZ/2\bbZ)^4 \rtimes D_4 $. Moreover, since the intersection behaviour of the curves is the same in any characteristic, it is still true that only an index $2$ subgroup of $\Aut(\Sigma)$ may be realized.

As for the realization of the automorphisms, note the following:

\begin{itemize}
\item A reflection $r_d$ along a diagonal axis is realized by a $2$-torsion section of the Jacobian of $|E_2+E_9+E_6+E_{12}+2(E_1+E_7+E_8)|$.
\item A reflection $r_v$ along the vertical axis is realized by the $2$-torsion section of the Jacobian of $|E_2+E_9+E_8+E_{10}+2(E_3+E_4+E_5+E_6+E_7)|$.
\item There is a $2$-torsion section of the Jacobian of the fibration $|2(E_3+E_4+E_5+E_{11})|$ which interchanges $E_2$ and $E_9$ as well as $E_6$ and $E_{12}$ while fixing $E_4,E_{11},E_8$ and $E_{10}$. Another $2$-torsion section of the same fibration induces the numerically trivial involution.
\item After fixing $E_6$ as a special bisection $N$ of $|2(E_3+E_4+E_5+E_{11})|$, the automorphism $\iota$ of Remark \ref{extraaut3} fixes $E_6$ and $E_{12}$ and interchanges $E_2$ and $E_9$. Moreover, it acts non-trivially on exactly one of the pairs $(E_3,E_{10})$ and $(E_4,E_{11})$.
\end{itemize}

These facts are checked by using Corollary \ref{jac2} and following through the construction of $jac_2$.
Now, note that we can compute the pointwise stabilizer $G$ of the set $\{E_1,E_3,$ $E_5,E_7\}$ using Equation (\ref{eqnIII}). It is generated by $t_{s_1},t_{s_3}$ and $t_{s_2}$ as well as $\iota$ and the inversion involution. All these automorphisms commute with each other and $\iota^2 = t_{s_2}$, hence $G \cong \bbZ/4\bbZ \times (\bbZ/2\bbZ)^2$. Therefore, we have a short exact sequence
$$
\xymatrix{
0 \ar[r] & G \ar[r] & \Aut(X) \ar[r] & D_4 \ar[r] & 0.
}
$$
We claim that this sequence splits. Indeed, by \cite[Corollary 4.7 and Section 7.1]{mukaiohashi}, a tame semi-symplectic automorphism (i.e. an automorphism acting trivially on $\rm{H}^0(X,\omega_X^{\otimes 2})$) has order at most $6$. We have realized all reflections using translations by $2$-torsion sections, which are semi-symplectic, since they fix the base of an elliptic fibration and act as translation on the fibers, and tame, since we are working in characteristic different from $2$. Therefore, $r_d \circ r_v$ has order $4$ and the group generated by $r_d$ and $r_v$ is a subgroup of $\Aut(X)$ isomorphic to $D_4$. Hence, the sequence splits and the proof is finished.
\end{proof}

\begin{remark}
In particular, note that $\Aut(X)$ is not a semi-direct product $(\bbZ/2\bbZ)^4 \rtimes D_4$. This was already observed by H. Ohashi in \cite{Ohashi} and corrects a small mistake in \cite{Kondo}.
\end{remark}
\subsection{Degenerations and Moduli}
This is similar to the first two cases. However, the involution is always fixed point free, since the branch points of the degree $2$ map of $\bbP^1$s do not move.

\begin{proposition}\label{moduliIII}
Assume $\Char(k) \neq 2$. Let
\begin{equation*}
y^2 +  xy  = x^3 + 4 s^4 x^2 + s^4 x 
\end{equation*}
be the Weierstrass equation of an elliptic fibration $\tilde{\pi}$ with section on a K3 surface $\tilde{X}$. Define the involution $\sigma = t_{N^-} \circ J(\sigma)$, where $J(\sigma): s \mapsto -s$ and $t_{N^-}$ is translation by the section $N^- = (0,0)$. Then, $\sigma$ is fixed point free.
\end{proposition}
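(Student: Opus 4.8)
The plan is to reduce the statement to Proposition \ref{Kondo}. With $J(\pi)$ the rational elliptic surface $y^2 + xy = x^3 + 4t^2 x^2 + t^2 x$ and $\varphi\colon \bbP^1 \to \bbP^1$ the double cover $s \mapsto t = s^2$, Proposition \ref{Kondo} says that $\sigma = t_{N^-}\circ J(\sigma)$ is fixed point free precisely when $N^- = (0,0)$ is a $J(\pi)$-Enriques section in the sense of Definition \ref{EnriquesSection}. So it suffices to verify the three conditions of that definition. First I would record the geometric setup: since $\Char(k)\neq 2$, the map $s\mapsto s^2$ is separable with branch points $s = 0$ and $s = \infty$, lying over $t = 0$ and $t = \infty$; by the description of $J(\pi)$ these are the two $\I_4$ fibers, which are multiplicative, hence not additive, so the hypotheses in the preamble of Definition \ref{EnriquesSection} hold, $\tilde X$ is the K3 surface with the stated Weierstrass equation, and the fibers of $\tilde\pi$ over $s = 0$ and $s = \infty$ are of type $\I_8$.

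Conditions (1) and (2) are immediate. The section $N^- = (0,0)$ is the pullback along $\varphi$ of the everywhere-integral $2$-torsion section $s_2 = (0,0)$ of $J(\pi)$, so $N^-$ is everywhere integral, giving $N^+.N^- = 0$, and it is $J(\sigma)$-invariant and $2$-torsion, so $J(\sigma)(N^-) = N^- = \ominus N^-$; this also re-proves $h(N^-) = 0$ as computed in the Main Theorem. For condition (3) I would observe that the Weierstrass model of $\tilde\pi$ over $s = 0$ is obtained by setting $s = 0$ in $y^2 + xy = x^3 + 4s^4 x^2 + s^4 x$, namely the nodal cubic $y^2 + xy = x^3$ whose node is the point $(x,y) = (0,0)$; since $N^-$ reduces to this node and a section passing through the singular point of a minimal Weierstrass model cannot meet the identity component of the corresponding resolved multiplicative fiber, $N^-$ meets the $\I_8$ fiber over $s = 0$ in a non-identity component. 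The analogous computation at $s = \infty$, after passing to the local coordinate $u = 1/s$ and a minimal integral Weierstrass model, gives the same conclusion there; this is exactly the fact that $N^-$ meets the $\I_8$ fibers in a non-identity component, which was already recorded in the proof of the Main Theorem for type $\III$. Hence $N^-$ is a $J(\pi)$-Enriques section, and Proposition \ref{Kondo} finishes the proof.

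The statement is unconditional here, unlike for types $\I$ and $\II$, precisely because the branch points of $\varphi$ are pinned at $s = 0, \infty$ and therefore always lie over the multiplicative fibers of $J(\pi)$: there is no room for a branch point to collide with one of the $\I_2$ fibers, so no degenerate parameter value (no analogue of $\beta = -256$ or $\beta = -64$) arises. There is no real obstacle in the argument; the only step requiring care is the bookkeeping at $s = \infty$, where one must make the coordinate change and confirm that the reduction of $N^-$ still lands on the node of the resulting Weierstrass fiber, and this is routine.
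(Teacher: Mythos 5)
Your proposal is correct and follows the same route the paper takes (the paper leaves this proof implicit, saying only that it is "similar to the first two cases" and that the branch points do not move): reduce via Proposition \ref{Kondo} to checking that $N^-=(0,0)$ is a $J(\pi)$-Enriques section, note that conditions (1) and (2) hold because $N^-$ is an everywhere integral $2$-torsion section, and verify condition (3) by observing that $N^-$ passes through the node of the Weierstrass fibers at $s=0$ and $s=\infty$, hence meets both $\I_8$ fibers in a non-identity component. Your closing observation that the fixed branch points at $s=0,\infty$ are exactly what removes the exceptional parameter values seen for types $\I$ and $\II$ is precisely the paper's stated reason.
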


\begin{corollary}
Enriques surfaces of type $\III$ exist if and only if $\Char(k) \neq 2$. Moreover, they are unique if they exist.
\end{corollary}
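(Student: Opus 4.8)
The plan is to deduce both assertions from the Main Theorem for type $\III$ together with Proposition \ref{moduliIII}. The \emph{only if} direction requires nothing new: the last line of the Main Theorem for type $\III$ already records non-existence in characteristic $2$, the reason being that the equivalence $(2)\Leftrightarrow(3)$ forces the Jacobian $J(\pi)$ of the special fibration to be the rational elliptic surface with singular fibers $(\I_4,\I_4,\I_2,\I_2)$, which does not exist in characteristic $2$ by Table \ref{extremalrational}.

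For existence when $\Char(k)\neq 2$, I would take the K3 surface $\tilde X$ defined by Equation (\ref{eqnIII}). By Proposition \ref{moduliIII} the involution $\sigma = t_{N^-}\circ J(\sigma)$ is fixed point free, so by Proposition \ref{Kondo} (applied with the separable degree $2$ cover $\varphi\colon s\mapsto s^2$, whose branch points are the two $\I_4$ fibers and hence points of multiplicative, not additive, reduction of $J(\pi)$) the quotient $X := \tilde X/\sigma$ is an Enriques surface carrying a special elliptic fibration $\pi$ with Jacobian $J(\pi)$. It then remains to see that $X$ has the dual graph of type $\III$; but this is precisely the equivalence $(1)\Leftrightarrow(2)$ of the Main Theorem for type $\III$, whose proof exhibits all the needed $(-2)$-curves using auxiliary fibrations and the map $jac_2$ and then invokes Corollary \ref{allcurves} to rule out further ones. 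Hence a surface of type $\III$ exists in every characteristic $\neq 2$.

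For uniqueness I would use the equivalence $(1)\Leftrightarrow(3)$: any Enriques surface of type $\III$ has canonical cover isomorphic to the K3 surface of Equation (\ref{eqnIII}), with covering involution $\sigma = t_{(0,0)}\circ J(\sigma)$, $J(\sigma)\colon s\mapsto -s$. In contrast to types $\I$ and $\II$, this Weierstrass equation contains no free parameter, so the pair $(\tilde X,\sigma)$, and therefore $X$, is determined up to isomorphism. Concretely, in the base change construction there is nothing to choose: $J(\pi)$ is the unique rational elliptic surface with fibers $(\I_4,\I_4,\I_2,\I_2)$ by Table \ref{extremalrational}; the branch locus of $\varphi$ must be the two $\I_4$ fibers, since these are forced to be the double fibers of $\pi$ (from the way the special bisection $N$ meets the fiber components in Figure \ref{critIII}, each reduced $\I_4$ fiber has intersection number $1$ with $N$, while $N$ meets the general fiber with multiplicity $2$), and any separable degree $2$ cover of $\bbP^1$ branched at two points equals $s\mapsto s^2$ after an automorphism of $\bbP^1$; finally, among the nontrivial $2$-torsion sections of $J(\pi)$ only $s_2' = (0,0)$ is a $J(\pi)$-Enriques section in the sense of Definition \ref{EnriquesSection}, since $s_1'$ and $s_3'$ meet the identity component of an $\I_8$ fiber lying over a branch point.

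The only step carrying real content is the one already performed within the proof of the Main Theorem for type $\III$, namely verifying that the critical subgraph of Figure \ref{critIII} forces the whole type $\III$ graph, i.e. that every $\tilde{A}$-$\tilde{D}$-$\tilde{E}$ subgraph encountered along the way yields an extremal fibration whose fiber components are already present, so that Vinberg's criterion applies via Corollary \ref{allcurves}. Granting this and Proposition \ref{moduliIII}, the corollary follows with no further obstacle.
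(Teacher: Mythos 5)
Your proposal is correct and follows exactly the route the paper intends: non-existence in characteristic $2$ from Table \ref{extremalrational}, existence from Proposition \ref{moduliIII} combined with Proposition \ref{Kondo} and the equivalence $(1)\Leftrightarrow(2)$ of the type $\III$ theorem, and uniqueness from the fact that the base change construction leaves no choices (unique Jacobian, forced branch locus, unique $J(\pi)$-Enriques $2$-torsion section). The paper leaves this corollary as an immediate consequence of the preceding theorem and proposition, and your write-up is a faithful expansion of that same argument.
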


\begin{remark}
The equation we took from \cite{Lang3} for $J(\pi)$ makes sense in characteristic $2$, where it defines a rational elliptic surface with singular fibers $\I_4$ at $t = 0$ and $\I_1^*$ at $t = \infty$. The degree $2$ cover $t \mapsto s^2$ given in Proposition \ref{moduliIII} is the Frobenius morphism and the base change along this morphism defines a rational elliptic surface with singular fibers $(\I_8,\III)$. This surface is the minimal resolution of singularities of a surface covering a $1$-dimensional family of classical Enriques surfaces with finite automorphism group of "type $\VIII$", as is shown by T. Katsura, S. Kond\=o and the author in \cite{KatsuraKondoMartin}. 
\end{remark}

\section{Enriques surfaces of type $\IV$}\label{secIV}

\subsection{Main theorem for type $\IV$}

\begin{theorem}
Let $X$ be an Enriques surface. The following are equivalent:

\begin{enumerate}
\item $X$ is of type $\IV$.
\item The dual graph of all $(-2)$-curves on $X$ contains the graph in Figure \ref{critIV}.

\begin{figure}[h!]
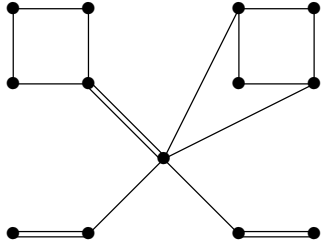

\centerline{
\xy
@={(0,40),(10,40),(30,40),(40,40),(0,30),(10,30),(30,30),(40,30),(20,20),(0,10),(10,10),(30,10),(40,10)}@@{*{\bullet}};
(0,40)*{};(10,40)*{}**\dir{-};
(30,40)*{};(40,40)*{}**\dir{-};
(0,30)*{};(10,30)*{}**\dir{-};
(30,30)*{};(40,30)*{}**\dir{-};
(0,40)*{};(0,30)*{}**\dir{-};
(10,30)*{};(10,40)*{}**\dir{-};
(30,30)*{};(30,40)*{}**\dir{-};
(40,30)*{};(40,40)*{}**\dir{-};
(10,30)*{};(20,20)*{}**\dir2{-};
(40,30)*{};(20,20)*{}**\dir{-};
(30,40)*{};(20,20)*{}**\dir{-};
(10,10)*{};(20,20)*{}**\dir{-};
(30,10)*{};(20,20)*{}**\dir{-};
(10,10)*{};(0,10)*{}**\dir2{-};
(30,10)*{};(40,10)*{}**\dir2{-};
\endxy
}
\caption{Critical subgraph for type $\IV$}
\label{critIV}
\end{figure}
\item The canonical cover $\tilde{X}$ of $X$ admits an elliptic fibration with a Weierstrass equation of the form
\begin{equation*}
y^2  = x^3 + 2 (s^4+1) x^2 + (s^4-1)^2 x
\end{equation*}
such that the covering morphism $\rho: \tilde{X} \to X$ is given as quotient by the involution
$
\sigma = t_{N^-} \circ J(\sigma),
$
where $J(\sigma): s \mapsto -s$ and $t_{N^-}$ is translation by $N^- = (-(s^2-1)^2,0)$.
\end{enumerate}
Moreover, Enriques surfaces of type $\IV$ do not exist in characteristic $2$.
\end{theorem}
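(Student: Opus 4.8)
The plan is to follow exactly the template used for types $\I$--$\III$: exhibit the critical subgraph inside the dual graph of type $\IV$, read it as a special elliptic fibration $\pi$ with a special bisection $N$, identify the Jacobian $J(\pi)$ among the extremal rational elliptic surfaces of Table \ref{extremalrational}, and then run the base change construction of Proposition \ref{Kondo} in reverse. The implication $(1)\Rightarrow(2)$ is immediate, as one sees by inspection that the dual graph of type $\IV$ in Table \ref{main} contains the graph of Figure \ref{critIV}. To interpret that subgraph: its two $4$-cycles are two copies of $\tilde{A}_3$ and its two double edges are two copies of $\tilde{A}_1$, so by Proposition \ref{canonicaltype} they are supports of fibers of an elliptic fibration $\pi$, and the remaining central vertex is a special bisection $N$. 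Since $N$ meets each $\tilde{A}_1$-configuration in a single component with multiplicity one while meeting a general fiber twice, each of those two fibers must be a double fiber, necessarily of type $\I_2$ by Proposition \ref{typeofdoublefiber} (it cannot be the additive fiber $\III$); and since there are exactly two double fibers, the two $\tilde{A}_3$-fibers are simple $\I_4$ fibers. Thus $\pi$ has fiber type $(\I_4,\I_4)$ with two double $\I_2$ fibers, and by Lemma \ref{shiodatate} its Jacobian $J(\pi)$ is the unique extremal rational elliptic surface with fibers $(\I_4,\I_4,\I_2,\I_2)$ of Table \ref{extremalrational} --- the same surface that occurred for type $\III$. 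Since this configuration is absent in characteristic $2$, no such $\pi$, hence no Enriques surface of type $\IV$, exists in characteristic $2$.

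For $(2)\Rightarrow(1)$: on the K3 cover $\tilde{X}$ the curve $N$ splits as $N^{+}\sqcup N^{-}$ by Lemma \ref{universal}, and fixing $N^{+}$ as zero section one computes with Table \ref{heightpairing} that $h(N^{-})=0$, so $N^{-}$ is a $2$-torsion section meeting the two fibers of $\tilde{\pi}$ over the branch points in non-identity components. One then applies $jac_2$ (Corollary \ref{jac2}) to this fibration and to several further extremal fibrations visible in the graph, reading off the intersection numbers of the resulting special bisections from the height pairing exactly as in the worked example of \S\ref{example}. This recovers all the $(-2)$-curves of the type $\IV$ graph, and Corollary \ref{allcurves} (Vinberg's criterion) then forces these to be all of them, so $X$ is of type $\IV$.

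For $(2)\Leftrightarrow(3)$: take the equation of $J(\pi)$ in the convenient normalization $y^2 = x^3 + 2(t^2+1)x^2 + (t^2-1)^2 x$, which has $\I_2$ fibers at $t=0,\infty$, $\I_4$ fibers at $t=\pm1$, and nontrivial $2$-torsion sections $(0,0)$, $(-(t-1)^2,0)$, $(-(t+1)^2,0)$. In characteristic $\neq 2$, the degree $2$ cover of $\bbP^1$ branched over the two $\I_2$ fibers (which are multiplicative, hence not additive, so this base change is admissible in the sense of Definition \ref{EnriquesSection}) can be written as $t\mapsto s^2$ with covering involution $J(\sigma)\colon s\mapsto -s$; pulling back gives $y^2 = x^3 + 2(s^4+1)x^2 + (s^4-1)^2 x$ on a K3 surface together with the pulled-back $2$-torsion section $N^{-} = (-(s^2-1)^2,0)$, and conversely Lemma \ref{universal} shows this is precisely the data attached to a type $\IV$ Enriques surface. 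The equivalence then follows from Proposition \ref{Kondo} once one checks that $N^{-}$ is a $J(\pi)$-Enriques section, i.e. that the section $(-(t-1)^2,0)$ of $J(\pi)$ runs through the node of the $\I_2$ fiber over $t=0$ (and, by the symmetry $t\mapsto 1/t$ of the equation, over $t=\infty$), which is a one-line substitution; this also shows $\sigma$ is always fixed point free, so type $\IV$ is unique and, unlike types $\I$ and $\II$, does not degenerate to a Coble surface.

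The main obstacle is, as in the earlier cases, the bookkeeping in the proof of $(2)\Rightarrow(1)$: one must select enough auxiliary extremal fibrations inside the comparatively large and symmetric type $\IV$ graph and carefully track, via the height pairing, the intersection behaviour of all the special bisections produced by $jac_2$, so that the hypotheses of Vinberg's criterion can be verified. The identification of $\pi$, the non-existence in characteristic $2$, and the passage between the graph and the Weierstrass equation are routine once Table \ref{extremalrational} and Proposition \ref{Kondo} are in hand.
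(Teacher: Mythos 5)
Your proposal is correct and follows the paper's argument essentially step for step: the same reading of the critical subgraph as a special fibration with singular fibers $(\I_4,\I_4,\I_2,\I_2)$ whose two $\I_2$'s are double, the same exclusion of characteristic $2$ via Table \ref{extremalrational}, the same use of $jac_2$ together with auxiliary extremal fibrations and Vinberg's criterion to recover the full type $\IV$ graph, and the same base change construction for the Weierstrass model and the section $N^-=(-(s^2-1)^2,0)$. The only deviation is cosmetic: you normalize $J(\pi)$ so that the $\I_2$ fibers sit at $t=0,\infty$, making the base change $t\mapsto s^2$ and letting the $J(\pi)$-Enriques condition be checked at both cusps via the symmetry $t\mapsto 1/t$, whereas the paper starts from the model with $\I_4$ fibers at $0,\infty$ and uses a M\"obius base change followed by rescaling.
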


\begin{proof}
First, we observe that the dual graph of type $\IV$ (see Table \ref{main}) contains the graph in Figure \ref{critIV}. 

This subgraph can be interpreted as the dual graph of a special elliptic fibration $\pi$ with singular fibers $(\I_4,\I_4,\I_2,\I_2)$ and special bisection $N$ as follows:

\vspace{3mm}

\centerline{
\xy
@={(0,40),(10,40),(30,40),(40,40),(0,30),(10,30),(30,30),(40,30),(20,20),(0,10),(10,10),(30,10),(40,10)}@@{*{\bullet}};
(0,40)*{};(10,40)*{}**\dir{-};
(30,40)*{};(40,40)*{}**\dir{-};
(0,30)*{};(10,30)*{}**\dir{-};
(30,30)*{};(40,30)*{}**\dir{-};
(0,40)*{};(0,30)*{}**\dir{-};
(10,30)*{};(10,40)*{}**\dir{-};
(30,30)*{};(30,40)*{}**\dir{-};
(40,30)*{};(40,40)*{}**\dir{-};
(10,30)*{};(20,20)*{}**\dir2{-};
(40,30)*{};(20,20)*{}**\dir{-};
(30,40)*{};(20,20)*{}**\dir{-};
(10,10)*{};(20,20)*{}**\dir{-};
(30,10)*{};(20,20)*{}**\dir{-};
(10,10)*{};(0,10)*{}**\dir2{-};
(30,10)*{};(40,10)*{}**\dir2{-};
(-3,43)*{};(13,43)*{}**\dir{--};
(-3,27)*{};(13,27)*{}**\dir{--};
(13,27)*{};(13,43)*{}**\dir{--};
(-3,27)*{};(-3,43)*{}**\dir{--};
(27,43)*{};(43,43)*{}**\dir{--};
(27,27)*{};(43,27)*{}**\dir{--};
(43,27)*{};(43,43)*{}**\dir{--};
(27,27)*{};(27,43)*{}**\dir{--};
(-3,13)*{};(13,13)*{}**\dir{--};
(-3,7)*{};(13,7)*{}**\dir{--};
(13,7)*{};(13,13)*{}**\dir{--};
(-3,7)*{};(-3,13)*{}**\dir{--};
(27,13)*{};(43,13)*{}**\dir{--};
(27,7)*{};(43,7)*{}**\dir{--};
(43,7)*{};(43,13)*{}**\dir{--};
(27,7)*{};(27,13)*{}**\dir{--};
(20,24.5)*{N};
\endxy
}
\vspace{3mm}

The bisection $N$ splits into two sections $N^+$ and $N^-$ of the elliptic fibration $\tilde{\pi}$ induced by $\pi$ on the K3 cover $\tilde{X}$. Fixing $N^+$ as the zero section, we compute $h(N^-) = 0$ and we see that $N^-$ is a $2$-torsion section of $\tilde{\pi}$ meeting the $\I_4$ fibers coming from the $\I_2$ fibers of $\pi$ in a non-identity component. 
The same argument as for type $\III$ shows that this type cannot exist in characteristic $2$.

Now, Corollary \ref{jac2} gives three more $(-2)$-curves resulting in the following graph:

\vspace{5mm}
\centerline{
\xy
@={(0,40),(10,40),(30,40),(40,40),(0,30),(10,30),(30,30),(40,30),(20,20),(0,10),(10,10),(30,10),(40,10),(50,50),(20,50),(50,20)}@@{*{\bullet}};
(0,40)*{};(10,40)*{}**\dir{-};
(30,40)*{};(40,40)*{}**\dir{-};
(0,30)*{};(10,30)*{}**\dir{-};
(30,30)*{};(40,30)*{}**\dir{-};
(0,40)*{};(0,30)*{}**\dir{-};
(10,30)*{};(10,40)*{}**\dir{-};
(30,30)*{};(30,40)*{}**\dir{-};
(40,30)*{};(40,40)*{}**\dir{-};
(10,30)*{};(20,20)*{}**\dir2{-};
(40,30)*{};(20,20)*{}**\dir{-};
(30,40)*{};(20,20)*{}**\dir{-};
(10,10)*{};(20,20)*{}**\dir{-};
(30,10)*{};(20,20)*{}**\dir{-};
(10,10)*{};(0,10)*{}**\dir2{-};
(30,10)*{};(40,10)*{}**\dir2{-};
(50,50)*{};(0,40)*{}**\dir2{-};
(50,50)*{};(30,40)*{}**\dir{-};
(50,50)*{};(40,30)*{}**\dir{-};
(50,50)*{};(30,10)*{}**\crv{(70,-10)};
(10,10)*{};(50,50)*{}**\crv{(70,-10)};
(20,50)*{};(10,40)*{}**\dir2{-};
(20,50)*{};(40,40)*{}**\dir{-};
(20,50)*{};(30,30)*{}**\dir{-};
(20,50)*{};(30,10)*{}**\dir{-};
(10,10)*{};(20,50)*{}**\dir{-};
(50,20)*{};(0,30)*{}**\dir2{-};
(50,20)*{};(40,40)*{}**\dir{-};
(50,20)*{};(30,30)*{}**\dir{-};
(50,20)*{};(30,10)*{}**\dir{-};
(10,10)*{};(50,20)*{}**\dir{-};
\endxy
}
\vspace{6mm}

Again, to produce additional $(-2)$-curves, we find a different special fibration with special bisection $N$ on this surface as follows:

\vspace{5mm}
\centerline{
\xy
(53,50)*{N};
@={(0,40),(30,40),(10,30),(30,30),(40,30),(0,10),(40,10),(50,50),(20,50),(50,20)}@@{*{\bullet}};
(30,30)*{};(40,30)*{}**\dir{-};
(30,30)*{};(30,40)*{}**\dir{-};
(50,50)*{};(0,40)*{}**\dir2{-};
(50,50)*{};(30,40)*{}**\dir{-};
(50,50)*{};(40,30)*{}**\dir{-};
(20,50)*{};(30,30)*{}**\dir{-};
(50,20)*{};(30,30)*{}**\dir{-};
\endxy
}
\vspace{3mm}

This special fibration has one $\I_0^*$ fiber and four disjoint $(-2)$-curves contained in some other fibers. Such a fibration will be extremal in any case by Lemma \ref{shiodatate}, so by Table \ref{extremalrational} the fibers are $(\I_0^*,\I_0^*)$. Hence, we obtain one more $(-2)$-curve. We leave it to the reader to find three more such diagrams and to check that the resulting graph is the one of type $\IV$.


We use the same equation as for surfaces of type $\III$
\begin{equation*}
y^2 + xy = x^3 + 4t^2x^2 + t^2x,
\end{equation*}
where $t$ is a coordinate on $\bbP^1$. Recall that the $\I_4$ fibers are at $t = 0, \infty$, while the $\I_2$ fibers are at $t = \pm \frac{1}{4}$. The non-trivial $2$-torsion sections are $s_1 = (-4t^2,2t^2)$, $s_2 = (0,0)$ and $s_3 = (-\frac{1}{4},\frac{1}{8})$.

In characteristic different from $2$, we can write a degree $2$ morphism $\bbP^1 \to \bbP^1$ with $t = \pm \frac{1}{4}$ as branch points in the following form
\begin{equation*}
t \mapsto \frac{1}{4} (\frac{s^2-1}{s^2+1}),
\end{equation*}
where $s$ is the new parameter on $\bbP^1$. The covering involution $J(\sigma)$ is given by $s \mapsto -s$. After scaling $x$ and $y$ and simplifying we get the equation
\begin{equation}\label{eqnIV}
y^2  = x^3 + 2 (s^4+1) x^2 + (s^4-1)^2 x
\end{equation}
together with the $2$-torsion sections $s_1' = (-(s^2-1)^2,0)$, $s_2' = (0,0)$ and $s_3' = (-(s^2+1)^2,0)$ obtained by pulling back $s_1$,$s_2$ and $s_3$. All of them are $J(\sigma)$-anti-invariant. However, $s_2'$ meets the identity component of the fiber at $s = 0$. Moreover, the surface defined by equation (\ref{eqnIV}) has an automorphism $\iota$ interchanging $s_1'$ and $s_3'$ given by $\iota: s \mapsto \sqrt{-1}s$.

Therefore, we can choose $s_1'$ as $N^-$.

\end{proof}

\begin{remark}\label{extrasection}
It is important to observe that the fibration $\tilde{\pi}$ defined by Equation (\ref{eqnIV}) has more torsion sections than the ones coming from the rational surface. For example, one can check that $P = (-(s-\sqrt{-1})^2(s^2-1),-2s(s-\sqrt{-1})^2(s^2-1))$ is a section satisfying $P \oplus P = N^-$. Since $t_P \circ \iota$ commutes with $\sigma$, it will induce an automorphism of the Enriques surface, which we will also denote by $t_P \circ \iota$.
 Moreover, $(t_P \circ \iota)^2 = t_Q \circ J(\sigma)$ for a $4$-torsion section $Q$ of $\tilde{\pi}$. Again, note that $t_P \circ \iota$ acts as $\sqrt{-1}$ on a non-zero global $2$-form of the K3 surface.
\end{remark}
\subsection{Automorphisms}

\begin{proposition}\label{Aut4}
Let $X$ be an Enriques surface of type $\IV$. Then, $Aut(X) \cong (\bbZ/2\bbZ)^4 \rtimes  (\bbZ/5\bbZ \rtimes \bbZ/4\bbZ)$ and this group is generated by automorphisms induced by sections of the Jacobian fibrations of elliptic fibrations of $X$ and an automorphism exhibited in Remark \ref{extrasection}. More precisely, we can choose the sections in such a way that at most one of them is not $2$-torsion and that none of them is a section of an isotrivial fibration. Moreover, $\Aut_{nt} \cong \{1\}$.
\end{proposition}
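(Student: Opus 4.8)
The plan is to run the same strategy used for types $\I$--$\III$: identify $\Aut(X)$ with the automorphism group of the dual graph $\Gamma$ of all $(-2)$-curves of type $\IV$. By Corollary \ref{allcurves} the curves drawn in the type $\IV$ graph of Table \ref{main} are \emph{all} the $(-2)$-curves on $X$, so $\Aut(X)$ acts on $\Gamma$ and we obtain a homomorphism $\rho\colon\Aut(X)\to\Aut(\Gamma)$ with kernel $\Aut_{nt}(X)$. Since the intersection numbers in $\Gamma$ are independent of the characteristic, Kond\=o's computation that $\Aut(\Gamma)\cong(\bbZ/2\bbZ)^4\rtimes(\bbZ/5\bbZ\rtimes\bbZ/4\bbZ)$ \cite{Kondo} applies verbatim; so it remains to show that $\rho$ is bijective and to record which automorphisms realize the generators.

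First I would prove $\Aut_{nt}(X)=\{1\}$. Since type $\IV$ exists only in characteristic $\neq2$, every finite-order automorphism is tame and has smooth fixed locus by the Lefschetz fixed point formula. I would pick a non-isotrivial special fibration $\pi$ coming from an $\tilde{A}$-$\tilde{D}$-$\tilde{E}$ subgraph of $\Gamma$ which has a fiber with at most two geometric $2$-torsion points --- an $\I_1$ fiber in characteristic $\neq5$, or the $\II$ fiber of the $(\I_5,\I_5,\II)$ Jacobian in characteristic $5$ --- together with two $(-2)$-curves in $\Gamma$ which are disjoint separable bisections of $\pi$. A numerically trivial $g$ fixes every vertex of $\Gamma$, hence fixes both bisections pointwise, hence fixes at least four geometric points on the generic fiber; if $g$ acted non-trivially there it would be the hyperelliptic involution and those four points would be mutual $2$-torsion translates, contradicting the chosen fiber. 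So $g$ is trivial on the generic fiber, preserves all fibers, and is the identity. (This is the analogue of the arguments in Propositions \ref{Aut2} and \ref{Aut3}.) Hence $\rho$ is injective and $\Aut(X)\leq\Aut(\Gamma)$.

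Next I would realize all of $\Aut(\Gamma)$ by automorphisms of $X$. The normal $(\bbZ/2\bbZ)^4$ should come from translations $t_P$ with $P$ ranging over the nonzero $2$-torsion sections of the Jacobians of a few non-isotrivial special fibrations of $X$ --- read off from subgraphs of $\Gamma$ of type $\tilde{A}$-$\tilde{D}$-$\tilde{E}$ whose Jacobians carry enough $2$-torsion by Table \ref{extremalrational}, e.g.\ fibrations with fiber type $(\I_4,\I_4,\I_2,\I_2)$ --- their action on $\Gamma$ being computed via $jac_2$ (Corollary \ref{jac2}). An element of order $5$ should be realized by translation by a generator of $\MW(J(\pi'))\cong\bbZ/5\bbZ$ for a special fibration $\pi'$ on $X$ with two $\I_5$ fibers; this $\pi'$ is non-isotrivial, this is the only non-$2$-torsion section needed, and in characteristic $5$ the $\bbZ/5\bbZ$ is still available from the $(\I_5,\I_5,\II)$ Jacobian of Table \ref{extremalrational}. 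An element of order $4$ should be realized by the automorphism $t_P\circ\iota$ of Remark \ref{extrasection}. Using $jac_2$, the explicit action of $\iota$ on the $2$-torsion sections of Equation (\ref{eqnIV}), and the height-pairing bookkeeping of Proposition \ref{heightpairingdef} and Table \ref{heightpairing} for the relevant intersection numbers, one checks that the images of these automorphisms generate $\Aut(\Gamma)$; combined with injectivity this yields $\Aut(X)\cong\Aut(\Gamma)\cong(\bbZ/2\bbZ)^4\rtimes(\bbZ/5\bbZ\rtimes\bbZ/4\bbZ)$, the asserted generation, and $\Aut_{nt}(X)\cong\{1\}$. Alternatively, mirroring the proof of Proposition \ref{Aut3}, I could compute the pointwise stabilizer of a suitable set of vertices directly from Equation (\ref{eqnIV}) and exhibit a splitting of $1\to(\bbZ/2\bbZ)^4\to\Aut(X)\to\bbZ/5\bbZ\rtimes\bbZ/4\bbZ\to1$.

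The hard part will be the surjectivity step: exhibiting the auxiliary special fibrations on $X$ --- above all the one with two $\I_5$ fibers that carries the order-$5$ symmetry, which has no counterpart among the fibrations used for types $\I$--$\III$ --- and pushing through the $jac_2$ computations to verify that the induced permutations of the $(-2)$-curves really exhaust $\Aut(\Gamma)$. A secondary difficulty, flagged already in the introduction, is that the list of extremal rational elliptic surfaces degenerates in small characteristic (Table \ref{extremalrational}), so each auxiliary fibration must be checked to exist with the required Mordell--Weil group in \emph{every} characteristic $\neq2$; the delicate cases are $\Char k=3$ and $\Char k=5$. By contrast, the injectivity argument and the identification of the abstract group (imported from \cite{Kondo}) are routine.
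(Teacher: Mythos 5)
Your proposal is correct and follows essentially the same route as the paper: triviality of $\Aut_{nt}(X)$ via the four bisections of a fibration with enough reducible fibers, the normal $(\bbZ/2\bbZ)^4$ from $2$-torsion sections of the Jacobians of the five fibrations $\Delta_i$, the order-$5$ element from the $5$-torsion section of the Jacobian of an $\I_5$-type pencil, the order-$4$ element from $t_P\circ\iota$ of Remark \ref{extrasection}, and a splitting of the resulting extension. The only (cosmetic) difference is that the paper bounds the image of $\Aut(X)\to\mathfrak{S}_5$ directly by excluding transpositions and $3$-cycles and using that $\bbZ/5\bbZ\rtimes\bbZ/4\bbZ$ is maximal in $\mathfrak{S}_5$, rather than quoting the full graph automorphism group, and its numerically-trivial argument is somewhat more direct than your hyperelliptic-involution variant.
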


\begin{proof}
Recall that the dual graph of $(-2)$-curves for type $\IV$ is as follows:

\vspace{0.5mm}
\centerline{
\includegraphics[width=100mm]{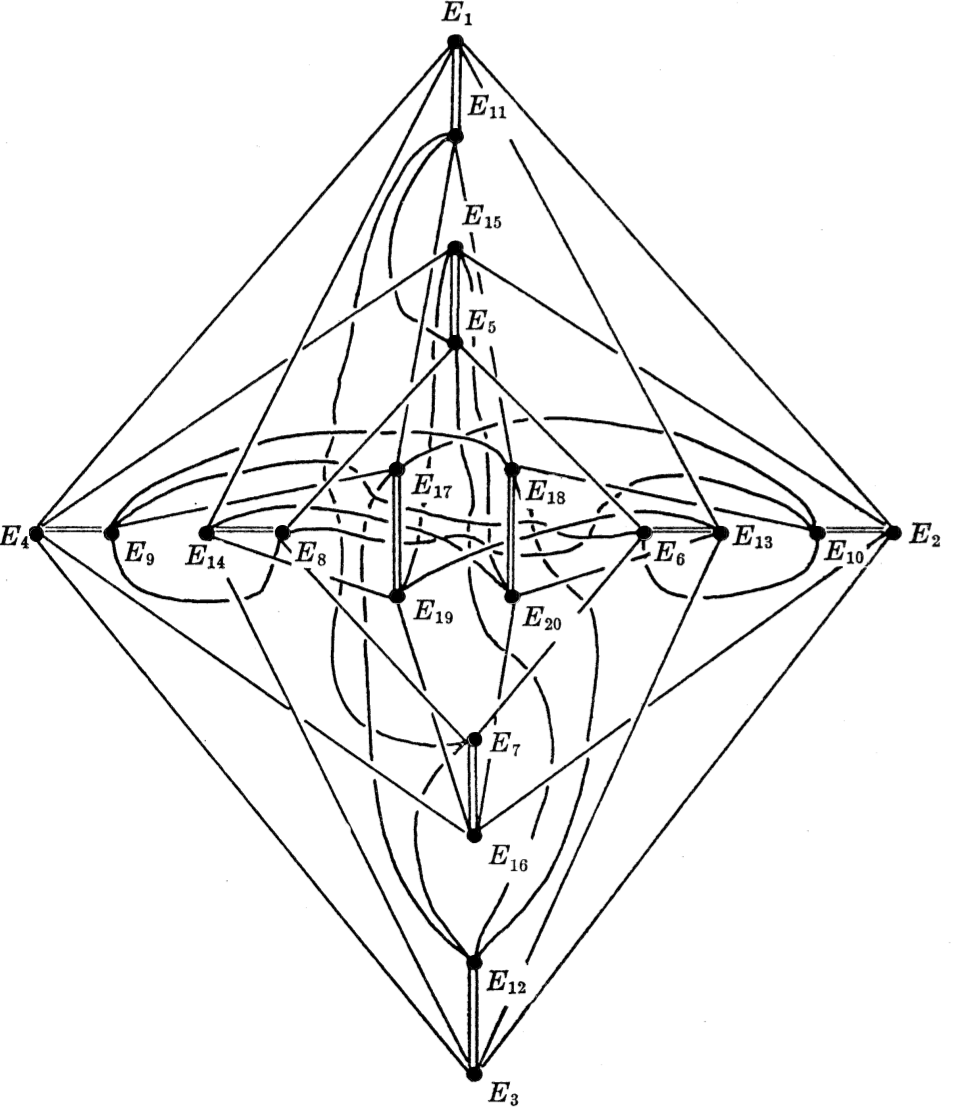}
}
\vspace{1mm}

We claim that $\Aut_{nt}(X)$ is trivial. Indeed, a numerically trivial automorphism $g$ acts trivially on the base of the fibration $|2(E_1+E_{11})|$, since this fibration has four reducible fibers and $g$ fixes the four bisections $E_2,E_4,E_{13}$ and $E_{14}$ pointwise, hence it is trivial.

Following \cite[p.217]{Kondo} we look at the action of $\Aut(X)$ on the set of five fibrations $\{ \Delta_i | i = 1,\hdots,5\}$ with $\Delta_1 = |2(E_1+E_{11})|$, $\Delta_2 = |2(E_2+E_{10})|$, $\Delta_3 = |2(E_5+E_{15})|$, $\Delta_4 = |2(E_6+E_{13})|$ and $\Delta_5 = |2(E_{17}+E_{19})|$. The kernel of the induced homomorphism $\psi: \Aut(X) \to \mathfrak{S}_5$ is isomorphic to $(\bbZ/2\bbZ)^4$ and it is generated by translations by $2$-torsion sections of the Jacobians of the $\Delta_i$ \cite[p.218]{Kondo}. From the dual graph, we see that an automorphism of $X$ cannot act as a permutation of order $3$ or as a transposition on $\{\Delta_1,\hdots,\Delta_5\}$. Now, we show that the image of $\psi$ is the group $G$ generated by
\begin{eqnarray*}
\varphi_1:& &\Delta_1 \mapsto \Delta_3 \mapsto \Delta_4 \mapsto \Delta_2 \mapsto \Delta_5 \\
\varphi_2:& &\Delta_1 \mapsto \Delta_3 \mapsto \Delta_2 \mapsto \Delta_4 .
\end{eqnarray*}
Using Corollary \ref{jac2}, these permutations are realized as follows:

\begin{itemize}
\item The Jacobian of $|E_5+E_6+E_{10}+E_{18}+E_{11}|$ has a $5$-torsion section which realizes $\varphi_1$.
\item If we fix $E_{11}$ as a special bisection of $\Delta_5$, we obtain a section $P$ by Remark \ref{extrasection} such that $\varphi_2$ is realized by the automorphism $t_P \circ \iota$. To see this, note that a $4$-torsion section of the Jacobian of $\Delta_5$ acts as $\Delta_1 \mapsto \Delta_2; \Delta_3 \mapsto \Delta_4$.
\end{itemize}

We have $G \cong \bbZ/5\bbZ \rtimes \bbZ/4\bbZ$ and, since $[\mathfrak{S}_5:G] \geq 6$, this yields the claim on the image of $\psi$. Now, note that we can compose $t_P \circ \iota$ with an involution interchanging the two $\I_2$ fibers of the $\Delta_5$ fibration to obtain an automorphism of order $4$ realizing $\varphi_2$. Hence, we obtain a splitting of
$$
\xymatrix{
0 \ar[r] & (\bbZ/2\bbZ)^4 \ar[r] & \Aut(X) \ar[r] & \bbZ/5\bbZ \rtimes \bbZ/4\bbZ \ar[r] & 0.
}
$$
This finishes the proof.
\end{proof}

\subsection{Degenerations and Moduli}
Similarly to the previous case, we obtain information about degenerations and moduli by direct calculation.

\begin{proposition}\label{moduliIV}
Assume $\Char(k) \neq 2$. Let
\begin{equation*}
y^2  = x^3 + 2 (s^4+1) x^2 + (s^4-1)^2 x
\end{equation*}
be the Weierstrass equation of an elliptic fibration $\tilde{\pi}$ with section on a K3 surface $\tilde{X}$. Define the involution $\sigma = t_{N^-} \circ J(\sigma)$, where $J(\sigma): s \mapsto -s$ and $t_{N^-}$ is translation by the section $N^- = (-(s^2-1)^2,0)$. Then, $\sigma$ is fixed point free.
\end{proposition}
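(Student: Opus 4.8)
The plan is to reduce everything to Proposition \ref{Kondo}. Since $\sigma=t_{N^-}\circ J(\sigma)$ is built from a section $N^-$ with $J(\sigma)(N^-)=\ominus N^-$ and $N^+.N^-=0$ (both of which I will verify), Proposition \ref{Kondo} says that $\sigma$ is fixed point free exactly when $N^-$ is a $J(\pi)$-Enriques section in the sense of Definition \ref{EnriquesSection}, for the degree-$2$ base change $\varphi\colon s\mapsto \frac{1}{4}\cdot\frac{s^2-1}{s^2+1}$ used to construct $\tilde{\pi}$ from $J(\pi)$ in the proof of the theorem opening this section. So first I would record the geometry of $\varphi$: it is separable of degree $2$ because $\Char(k)\neq 2$; its branch points are $t=\pm\frac{1}{4}$, which are precisely the two $\I_2$ (hence multiplicative, not additive) fibers of $J(\pi)$, so the base-change construction of Definition \ref{EnriquesSection} applies; and the points of $\tilde{\pi}$ lying over the branch points are $s=0$ (over $t=-\frac{1}{4}$) and $s=\infty$ (over $t=\frac{1}{4}$), where $\tilde{\pi}$ carries $\I_4$ fibers obtained by ramified base change of the two $\I_2$ fibers.

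\textbf{Conditions (1) and (2).} These are the quick ones. For condition (1): over $\bbA^1_s$ the section $N^-=(-(s^2-1)^2,0)$ has polynomial coordinates, hence is disjoint from the zero section there, and in the chart $u=1/s$, $x=X/u^4$, $y=Y/u^6$, under which the Weierstrass equation becomes $Y^2=X^3+2(1+u^4)X^2+(1-u^4)^2X$ (symmetric in $s\leftrightarrow 1/s$), the section specializes over $s=\infty$ to the finite point $(X,Y)=(-1,0)$; so $N^+.N^-=0$. For condition (2): $N^-$ is the pullback along $\varphi$ of the $2$-torsion section $s_1=(-4t^2,2t^2)$ of $J(\pi)$, hence is itself $2$-torsion (so $\ominus N^-=N^-$) and invariant under the deck involution $J(\sigma)$ of $\varphi$ (so $J(\sigma)(N^-)=N^-$); both facts are also immediate from the shape of the coordinates $(-(s^2-1)^2,0)$.

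\textbf{Condition (3) — the crux.} Here $N^-$ must avoid the identity component of $\tilde{\pi}$ over each ramification point of $\varphi$. Over $s=0$ the Weierstrass fiber is $y^2=x(x+1)^2$, a nodal cubic with node at $(x,y)=(-1,0)$, and $N^-$ specializes there to $(-(0-1)^2,0)=(-1,0)$; thus $N^-$ passes through the singular point of the Weierstrass model, so its strict transform meets a component lying in the exceptional locus of the resolution, i.e.\ a non-identity component of the $\I_4$ fiber. Over $s=\infty$, the symmetric equation above gives the fiber $Y^2=X(X+1)^2$ with node $(X,Y)=(-1,0)$, and $N^-$ again specializes to $(-1,0)$, so the same conclusion holds. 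Therefore $N^-$ is a $J(\pi)$-Enriques section, and Proposition \ref{Kondo} gives that $\sigma$ is fixed point free.

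\textbf{Where the difficulty lies.} The only genuinely delicate point is condition (3), and there the subtlety is bookkeeping rather than mathematics: one has to pick, among the three pulled-back $2$-torsion sections $s_1',s_2',s_3'$, the one that meets a non-identity component over \emph{both} ramification points — the section $s_2'=(0,0)$ notoriously fails, meeting the identity component over $s=0$ (since $x=0$ is a simple root of $x(x+1)^2$), which is exactly why $N^-=s_1'$ and not $s_2'$ is used — and one must be sure that passing through the singular point of a nodal Weierstrass fiber really does force meeting a non-identity component after the minimal resolution of an $\I_4$ fiber. Neither is deep, but both are where an error would hide; everything else is the formal machinery of Proposition \ref{Kondo} together with a one-line specialization of the coordinates of $N^-$.
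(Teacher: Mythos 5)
Your proof is correct and follows essentially the same route as the paper: the paper leaves this as a "direct calculation" precisely because the branch points of $\varphi$ are pinned at the two multiplicative $\I_2$ fibers $t=\pm\frac14$ (so no degenerate parameter value can arise, unlike types $\I$, $\II$, $\VI$, $\VII$), and the verification that $s_1'=(-(s^2-1)^2,0)$ passes through the node of the Weierstrass fiber over both $s=0$ and $s=\infty$ is exactly the check already made in the proof of the main theorem for type $\IV$ when selecting $s_1'$ over $s_2'$. Your reduction via Proposition \ref{Kondo} and the component bookkeeping are the intended argument.
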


\begin{corollary}
Enriques surfaces of type $\IV$ exist if and only if $\Char(k) \neq 2$. Moreover, they are unique if they exist.
\end{corollary}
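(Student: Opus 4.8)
The plan is to read the corollary off from the results just established for type $\IV$: the equivalence (1)$\Leftrightarrow$(3) in the main theorem for type $\IV$, Proposition~\ref{Kondo}, Proposition~\ref{moduliIV}, and the classification of extremal rational elliptic surfaces in Table~\ref{extremalrational}. Almost all of the work has already been done; the corollary is mostly a repackaging.

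For \emph{non-existence in characteristic $2$} I would point back to the argument already given inside the proof of the main theorem for type $\IV$. An Enriques surface of type $\IV$ carries the special elliptic fibration $\pi$ read off from Figure~\ref{critIV}, whose reducible fibres are $(\I_4,\I_4,\I_2,\I_2)$, so its Jacobian $J(\pi)$ is a rational elliptic surface with these singular fibres. By Table~\ref{extremalrational} no such surface exists when $\Char(k)=2$, hence type $\IV$ does not occur there.

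For \emph{existence in characteristic $\neq 2$} I would run the base change construction forward. Let $\tilde X$ be the K3 surface carrying the elliptic fibration with Weierstrass equation $y^2=x^3+2(s^4+1)x^2+(s^4-1)^2x$, and put $\sigma=t_{N^-}\circ J(\sigma)$ with $J(\sigma)\colon s\mapsto -s$ and $N^-=(-(s^2-1)^2,0)$. Then $N^-.N^+=0$ and $J(\sigma)(N^-)=\ominus N^-$ hold by construction, and Proposition~\ref{moduliIV} says $\sigma$ is fixed point free; as the proof of Proposition~\ref{Kondo} makes clear, fixed-point-freeness of $\sigma$ is precisely condition (3) of Definition~\ref{EnriquesSection}, so $N^-$ is a $J(\pi)$-Enriques section. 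Proposition~\ref{Kondo} then produces an Enriques surface $X=\tilde X/\sigma$ with a special elliptic fibration induced by $\tilde\pi$. This $X$ satisfies condition (3) of the main theorem for type $\IV$, so it is of type $\IV$; thus type $\IV$ exists whenever $\Char(k)\neq 2$.

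For \emph{uniqueness} the key point is that condition (3) of the main theorem for type $\IV$ carries no free parameter: it determines the canonical cover $\tilde X$ together with its fibration and the covering involution $\sigma$ up to isomorphism. Consequently, if $X_1,X_2$ are both of type $\IV$, applying (1)$\Rightarrow$(3) to each identifies their canonical covers compatibly with the two covering involutions, and passing to quotients gives $X_1\cong X_2$. I do not expect a real obstacle; the only thing to be genuinely sure of is this rigidity, which in turn amounts to the three facts already used in the proof of the main theorem for type $\IV$: that $J(\pi)$ is the unique rational elliptic surface with fibres $(\I_4,\I_4,\I_2,\I_2)$ (Table~\ref{extremalrational}); that the double cover $\varphi$ must branch over the two $\I_2$ fibres, since the special bisection $N$ in Figure~\ref{critIV} meets each of them in a single component so that those fibres are forced to be the double ones (Proposition~\ref{typeofdoublefiber}); and that of the three nontrivial $2$-torsion sections of $\tilde\pi$ only $s_1'$ and $s_3'$ can serve as $N^-$, these being interchanged by the automorphism $\iota\colon s\mapsto\sqrt{-1}s$ of $\tilde X$.
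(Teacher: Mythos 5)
Your proposal is correct and follows exactly the route the paper intends: non-existence in characteristic $2$ via the absence of a rational elliptic surface with fibres $(\I_4,\I_4,\I_2,\I_2)$, existence via Proposition \ref{Kondo} combined with Proposition \ref{moduliIV}, and uniqueness from the rigidity of the whole construction (unique Jacobian, branch points pinned to the two double $\I_2$ fibres, and $N^-$ unique up to the automorphism $\iota$). The paper leaves the corollary without a written proof precisely because it is this immediate repackaging, so there is nothing to add.
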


\section{Enriques surfaces of type $\V$}\label{secV}

\subsection{Main theorem for type $\V$}

\begin{theorem}
Let $X$ be an Enriques surface. The following are equivalent:

\begin{enumerate}
\item $X$ is of type $\V$.
\item The dual graph of all $(-2)$-curves on $X$ contains the graph in Figure \ref{critV}.

\begin{figure}[h!]
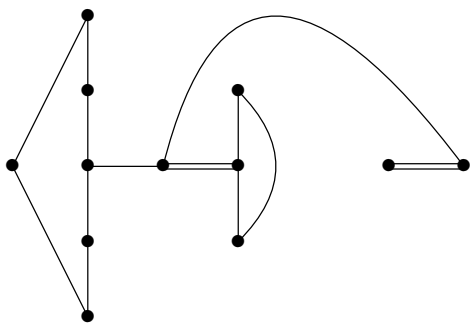

\centerline{
\xy
@={(0,20),(10,0),(10,10),(10,20),(10,30),(10,40),(20,20),(30,10),(30,20),(30,30),(50,20),(60,20)}@@{*{\bullet}};
(0,20)*{};(10,40)*{}**\dir{-};
(0,20)*{};(10,0)*{}**\dir{-};
(10,0)*{};(10,40)*{}**\dir{-};
(10,20)*{};(20,20)*{}**\dir{-};
(30,20)*{};(20,20)*{}**\dir2{-};
(30,10)*{};(30,30)*{}**\dir{-};
(30,10)*{};(30,30)*{}**\crv{(40,20)};
(50,20)*{};(60,20)*{}**\dir2{-};
(20,20)*{};(60,20)*{}**\crv{(30,60)};
\endxy
}
\caption{Critical subgraph for type $\V$}
\label{critV}
\end{figure}
\item The canonical cover $\tilde{X}$ of $X$ admits an elliptic fibration with a Weierstrass equation of the form
\begin{equation*}
y^2  + (s^2+1)xy+(s^2+1)y = x^3 + (s^2+2)x^2 + (s^2+1)x
\end{equation*}
such that the covering morphism $\rho: \tilde{X} \to X$ is given as quotient by the involution
$
\sigma = t_{N^-} \circ J(\sigma),
$
where $J(\sigma): s \mapsto -s$ and $t_{N^-}$ is translation by $N^- = (-1,0)$.
\end{enumerate}
Moreover, Enriques surfaces of type $\V$ do not exist in characteristic $2$ and $3$.
\end{theorem}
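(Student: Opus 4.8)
The plan is to run the same machine used for types $\I$--$\IV$: recognize Figure~\ref{critV} as (part of) the dual graph of a special elliptic fibration of $X$, reconstruct that fibration as a quadratic base change of its Jacobian via Proposition~\ref{Kondo}, and invoke Vinberg's criterion through Corollary~\ref{allcurves} to see that no further $(-2)$-curves occur. The implication $(1)\Rightarrow(2)$ is immediate from the type~$\V$ graph in Table~\ref{main}. For the geometric content, read off Figure~\ref{critV}: the hexagon is the extended Dynkin diagram $\tilde A_5$, so by Proposition~\ref{canonicaltype} it supports an $\I_6$-fibre of an elliptic fibration $\pi$ of $X$; since $N$ meets the $\I_3$-triangle in one component with multiplicity two it is a bisection, while it meets both the $\I_6$-hexagon and the $\tilde A_1$-component in one component with multiplicity one, so both of those must be double fibres of $\pi$ and the triangle must be a simple $\I_3$-fibre. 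Thus $J(\pi)$ is an extremal (finiteness of $\Aut(X)$, or Lemma~\ref{shiodatate}) rational elliptic surface containing an $\I_6$- and an $\I_2$-fibre, which by Table~\ref{extremalrational} forces $J(\pi)$ to be the surface with singular fibres $(\I_6,\I_3,\I_2,\I_1)$ whenever $\Char(k)\neq 2,3$.

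This already settles the two non-existence assertions. In characteristic $2$ the fibration $\pi$ would have two double fibres (the $\I_6$- and the $\tilde A_1$-fibre), contradicting Proposition~\ref{typeofdoublefiber}, which allows exactly one. In characteristic $3$ the only extremal rational elliptic surface containing an $\I_6$-fibre is the one with fibres $(\I_6,\I_3,\III)$ (Table~\ref{extremalrational}), so the $\tilde A_1$-subgraph of Figure~\ref{critV} must come from the $\III$-fibre rather than from an $\I_2$-fibre; but $N$ meets it in one component with multiplicity one, forcing it to be a double fibre, whereas a double fibre of an Enriques elliptic fibration is multiplicative or smooth (Proposition~\ref{typeofdoublefiber}) and $\III$ is additive --- a contradiction. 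Hence type~$\V$ does not exist in characteristic $2$ or $3$.

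For $(2)\Rightarrow(3)$, assume $\Char(k)\neq 2,3$ and take the Jarvis--Lang--Ricks integral model \cite{Lang3} of $J(\pi)$, normalized so that the $\I_6$- and $\I_2$-fibres lie over $t=0$ and $t=\infty$. Then $t\mapsto s^2$ is the separable degree~$2$ cover branched exactly over these two multiplicative (hence admissible) points, with deck transformation $J(\sigma)\colon s\mapsto -s$; substituting and simplifying produces the displayed Weierstrass equation, and pulling back the appropriate $2$-torsion section of $J(\pi)$ gives $N^-=(-1,0)$. One checks that $N^-$ is $J(\sigma)$-anti-invariant, disjoint from the zero section, and meets the fibres over the two branch points in non-identity components, i.e.\ it is a $J(\pi)$-Enriques section; Proposition~\ref{Kondo} then shows the quotient by $\sigma=t_{N^-}\circ J(\sigma)$ is an Enriques surface carrying the special fibration $\pi$ with Jacobian $J(\pi)$. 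That $\sigma$ is in fact fixed point free --- so type~$\V$ genuinely exists, and uniquely, for all $\Char(k)\neq 2,3$ --- is routine because the branch points do not vary, and is best recorded in the moduli subsection, as for types $\III$ and $\IV$. For $(3)\Rightarrow(1)$ one starts from the critical graph and repeatedly applies $jac_2$ (Corollary~\ref{jac2}), reading off the intersection numbers of the new special bisections from the height pairing (Proposition~\ref{heightpairingdef} and Table~\ref{heightpairing}) and passing to auxiliary special fibrations --- e.g.\ one with an $\I_0^*$-fibre, whose fibre types are pinned down by Lemma~\ref{shiodatate} and Table~\ref{extremalrational} --- exactly in the style of the example in \S\ref{example}; the extra symmetry of the Weierstrass model, in the spirit of Remarks~\ref{extraaut3} and~\ref{extrasection}, contributes the remaining curves. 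This regenerates the full type~$\V$ dual graph, and Corollary~\ref{allcurves} certifies it is complete, so $X$ is of type~$\V$.

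I expect $(3)\Rightarrow(1)$ to be the real work: the iterated $jac_2$/height-pairing bookkeeping needed to regenerate precisely the type~$\V$ graph and nothing more, which is exactly the kind of computation illustrated in \S\ref{example} and which the paper itself flags as the crucial point. Everything else --- the graph inclusion, the identification of $J(\pi)$, the base change, and the non-existence statements --- is a direct application of the general results already set up, the only genuine subtlety being the need, in the characteristic~$3$ argument, to exclude every alternative assignment of the double fibres, which the multiplicity pattern of $N$ in Figure~\ref{critV} takes care of.
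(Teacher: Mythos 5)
Your proposal is correct and follows essentially the same route as the paper: read Figure \ref{critV} as the fibers ($\I_6$ and $\I_2$ double, the triangle simple) and special bisection of a special elliptic fibration, deduce non-existence in characteristics $2$ and $3$ from Proposition \ref{typeofdoublefiber} and Table \ref{extremalrational}, base-change the extremal rational surface with fibers $(\I_6,\I_3,\I_2,\I_1)$ over the two double-fiber points with the $2$-torsion section as $N^-$, check it is a $J(\pi)$-Enriques section, and regenerate the full type $\V$ graph via $jac_2$ and auxiliary fibrations before invoking Corollary \ref{allcurves}. Only cosmetic details differ: the paper normalizes the $\I_2$ fiber to $t=1$ so the cover is $t\mapsto s^2+1$ (your $t\mapsto s^2$ normalization gives the same surface but not literally the displayed equation), and the auxiliary fibration it uses has an $\I_2^*$ fiber rather than $\I_0^*$, with no appeal to extra Weierstrass symmetry needed for this type.
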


\begin{proof}
First, we observe that the dual graph of type $\V$ (see Table \ref{main}) contains the graph in Figure \ref{critV}. 

This subgraph can be interpreted as the dual graph of a special elliptic fibration $\pi$ with singular fibers $\I_6,\I_2$ (not $\III$, since it is double) and $\I_3$ (or $\IV$) and special bisection $N$ as follows, where the dotted rectangles mark the fibers:

\vspace{4mm}

\centerline{
\xy
@={(0,20),(10,0),(10,10),(10,20),(10,30),(10,40),(20,20),(30,10),(30,20),(30,30),(50,20),(60,20)}@@{*{\bullet}};
(0,20)*{};(10,40)*{}**\dir{-};
(0,20)*{};(10,0)*{}**\dir{-};
(10,0)*{};(10,40)*{}**\dir{-};
(10,20)*{};(20,20)*{}**\dir{-};
(30,20)*{};(20,20)*{}**\dir2{-};
(30,10)*{};(30,30)*{}**\dir{-};
(30,10)*{};(30,30)*{}**\crv{(40,20)};
(50,20)*{};(60,20)*{}**\dir2{-};
(20,20)*{};(60,20)*{}**\crv{(30,60)};
(-3,43)*{};(13,43)*{}**\dir{--};
(-3,-3)*{};(13,-3)*{}**\dir{--};
(13,-3)*{};(13,43)*{}**\dir{--};
(-3,-3)*{};(-3,43)*{}**\dir{--};
(27,33)*{};(43,33)*{}**\dir{--};
(27,7)*{};(43,7)*{}**\dir{--};
(43,7)*{};(43,33)*{}**\dir{--};
(27,7)*{};(27,33)*{}**\dir{--};
(47,23)*{};(63,23)*{}**\dir{--};
(47,17)*{};(63,17)*{}**\dir{--};
(63,17)*{};(63,23)*{}**\dir{--};
(47,17)*{};(47,23)*{}**\dir{--};
(17,23)*{N};
\endxy
}
\vspace{4mm}

As before, the bisection $N$ splits into two sections $N^+$ and $N^-$ of the elliptic fibration $\tilde{\pi}$ induced by $\pi$ on the K3 cover $\tilde{X}$. Fixing $N^+$ as the zero section, we can compute $h(N^-) = 0$ and we see that $N^-$ is a $2$-torsion section of $\tilde{\pi}$ meeting the $\I_6$ and $\I_2$ fibers in a non-identity component. 

Note that the existence of this fibration already gives non-existence of this type of Enriques surfaces in characteristic $2$ and $3$, since an extremal fibration with singular fibers $\I_6$ and $ \I_2$ does not exist on rational surfaces in characteristic $3$ (see Table \ref{extremalrational}) and because a fibration with two double fibers cannot exist in characteristic $2$. Therefore, we will assume $\Char(k) \neq 2,3$ from now on.

Now, Corollary \ref{jac2} gives two more $(-2)$-curves resulting in the following graph:

\vspace{4mm}
\centerline{
\xy
@={(0,20),(10,0),(10,10),(10,20),(10,30),(10,40),(20,20),(20,0),(20,40),(30,10),(30,20),(30,30),(50,20),(60,20)}@@{*{\bullet}};
(0,20)*{};(10,40)*{}**\dir{-};
(0,20)*{};(10,0)*{}**\dir{-};
(10,0)*{};(10,40)*{}**\dir{-};
(10,20)*{};(20,20)*{}**\dir{-};
(30,20)*{};(20,20)*{}**\dir2{-};
(30,30)*{};(20,40)*{}**\dir2{-};
(30,10)*{};(20,0)*{}**\dir2{-};
(10,40)*{};(20,40)*{}**\dir{-};
(10,0)*{};(20,0)*{}**\dir{-};
(60,20)*{};(20,40)*{}**\dir{-};
(60,20)*{};(20,0)*{}**\dir{-};
(30,10)*{};(30,30)*{}**\dir{-};
(30,10)*{};(30,30)*{}**\crv{(40,20)};
(50,20)*{};(60,20)*{}**\dir2{-};
(20,20)*{};(60,20)*{}**\crv{(30,60)};
\endxy
}
\vspace{4mm}

For this example, one can use a fibration with an $\I_2^*$ fiber to produce another $(-2)$-curve:

\vspace{3mm}
\centerline{
\xy
@={(0,20),(10,0),(10,10),(10,20),(10,30),(10,40),(20,20),(20,0),(20,40),(30,10),(30,20),(30,30),(50,20),(60,20),(40,20)}@@{*{\bullet}};
(0,20)*{};(10,40)*{}**\dir{-};
(0,20)*{};(10,0)*{}**\dir{-};
(10,0)*{};(10,40)*{}**\dir{-};
(10,20)*{};(20,20)*{}**\dir{-};
(30,20)*{};(20,20)*{}**\dir2{-};
(30,30)*{};(20,40)*{}**\dir2{-};
(30,10)*{};(20,0)*{}**\dir2{-};
(10,40)*{};(20,40)*{}**\dir{-};
(10,0)*{};(20,0)*{}**\dir{-};
(60,20)*{};(20,40)*{}**\dir{-};
(60,20)*{};(20,0)*{}**\dir{-};
(30,10)*{};(30,30)*{}**\dir{-};
(30,10)*{};(30,30)*{}**\crv{(40,20)};
(50,20)*{};(60,20)*{}**\dir2{-};
(20,20)*{};(60,20)*{}**\crv{(30,60)};
(40,20)*{};(50,20)*{}**\dir2{-};
(40,20)*{};(30,20)*{}**\dir2{-};
(40,20)*{};(10,40)*{}**\crv{~**\dir2{-} (30,50)};
(40,20)*{};(30,10)*{}**\dir2{-};
\endxy
}
\vspace{4mm}

As usual, the remaining curves can be found similarly.


By \cite{Lang3}, we have, after simplifying, the following equation for the unique extremal and rational elliptic surface with singular fibers $(\I_6,\I_3,\I_2,\I_1)$
\begin{equation*}
y^2 + txy +ty = x^3 + (1+t)x^2 + tx,
\end{equation*}
where $t$ is a coordinate on $\bbP^1$. The $\I_6$ fiber is at $t = \infty$, the $\I_3$ fiber is at $t = 0$, the $\I_2$ fiber is at $t = 1$ and the $\I_1$ fiber is at $t = -8$. The non-trivial $2$-torsion section is $s = (-1,0)$.

In characteristic different from $2$, we can write a degree $2$ morphism $\bbP^1 \to \bbP^1$ with $t = 1,\infty$ as branch points in the following form
\begin{equation*}
t \mapsto s^2 + 1,
\end{equation*}
where $s$ is the new parameter on $\bbP^1$. The covering involution $J(\sigma)$ is given by $s \mapsto -s$. Now, we have the equation
\begin{equation}\label{eqnV}
y^2  + (s^2+1)xy+(s^2+1)y = x^3 + (s^2+2)x^2 + (s^2+1)x
\end{equation}
together with the $2$-torsion sections $s' = (-1,0)$ obtained by pulling back $s$. Since $s'$ is $J(\sigma)$-(anti-)invariant and meets the fibers in the correct components, it is the section we are looking for. 

\end{proof}
\subsection{Automorphisms}

\begin{proposition}\label{Aut5}
Let $X$ be an Enriques surface of type $\V$. Then, $\Aut(X) \cong \mathfrak{S}_4 \times \bbZ/2\bbZ$ and this group is generated by automorphisms induced by $2$-torsion sections of the Jacobian fibrations of elliptic fibrations of $X$. Moreover, $\Aut_{nt}(X) \cong \bbZ/2\bbZ$ and $\Aut(X)/\Aut_{nt}(X) \cong \mathfrak{S}_4$.
\end{proposition}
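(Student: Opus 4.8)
The proof follows Kondō's argument \cite[p.\,220]{Kondo}, which extends essentially verbatim to characteristic $\neq 2,3$ once $\Aut_{nt}(X)$ is pinned down. I would begin by reproducing the labelled dual graph of type $\V$ from Table \ref{main}, say with vertices $E_1,\dots,E_{12}$. By Corollary \ref{allcurves} these are all the $(-2)$-curves on $X$, so the natural representation $\rho\colon\Aut(X)\to\Aut(\Gamma)$ on the dual graph $\Gamma$ has kernel exactly $\Aut_{nt}(X)$: an automorphism fixing every $(-2)$-curve class is the identity on a finite-index sublattice of the torsion-free group $\Num(X)$, hence on $\Num(X)$. Thus it suffices to (i) show $\Aut_{nt}(X)\cong\bbZ/2\bbZ$, (ii) identify the image of $\rho$ with $\mathfrak{S}_4$, and (iii) split the resulting extension.

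For (i): let $\pi$ be the special elliptic fibration of $X$ with reducible fibres $\I_6,\I_2,\I_3$ underlying the critical subgraph of Figure \ref{critV}; its Jacobian is the rational elliptic surface with fibres $(\I_6,\I_3,\I_2,\I_1)$, so $\pi$ is non-isotrivial. A numerically trivial automorphism $g$ maps every $(-2)$-curve to itself, hence fixes each intersection point of two $(-2)$-curves; any $(-2)$-curve meeting at least three others in $\Gamma$ is then fixed pointwise, and among these one finds a special bisection of $\pi$ fixed pointwise. Therefore $g$ fixes $\pi$, acts trivially on the base $\bbP^1$, and fixes at least two geometric points of the generic fibre, so — $\pi$ being non-isotrivial — $g$ is either trivial or the unique hyperelliptic involution of the generic fibre with that fixed locus. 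This gives $|\Aut_{nt}(X)|\le 2$. The non-trivial element is then produced, via Corollary \ref{jac2}, by a $2$-torsion section of the Jacobian of a suitably chosen elliptic fibration of $X$ that one checks acts trivially on $\Gamma$ (one such fibration already occurs among those used for (ii)); hence $\Aut_{nt}(X)\cong\bbZ/2\bbZ$.

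For (ii) and (iii): Kondō's analysis of the graph gives $\Aut(\Gamma)\cong\mathfrak{S}_4$, so $\rho$ yields an extension
\[
1\longrightarrow \bbZ/2\bbZ \longrightarrow \Aut(X)\longrightarrow \mathfrak{S}_4\longrightarrow 1 .
\]
I would realize a generating set of $\mathfrak{S}_4$ (for instance a $3$-cycle and a transposition of the four natural blocks of $\Gamma$) by automorphisms of $X$ coming from $2$-torsion sections of the Jacobians of special elliptic fibrations of $X$, using $jac_2$ together with the list of extremal rational fibrations in Table \ref{extremalrational} to locate the relevant fibrations — for type $\V$ the fibrations with an $\I_6$ fibre and with an $\I_2^*$ fibre supply enough such sections. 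Since translations by $2$-torsion sections have order $2$, the subgroup they generate maps isomorphically onto $\mathfrak{S}_4$, splitting the extension; as $\Aut(\bbZ/2\bbZ)$ is trivial the kernel is central and splits off as a direct factor, giving $\Aut(X)\cong\mathfrak{S}_4\times\bbZ/2\bbZ$ and $\Aut(X)/\Aut_{nt}(X)\cong\mathfrak{S}_4$; all automorphisms used are induced by $2$-torsion sections of Jacobian fibrations, as claimed.

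The genuinely laborious step is (ii): for each required permutation one must exhibit a concrete special elliptic fibration of $X$, read off its Jacobian from Table \ref{extremalrational}, single out the correct $2$-torsion section, and then trace the $jac_2$-construction of \S\ref{basechange} (with the attendant height-pairing computations) far enough to compute its action on the labelled graph, and one must verify that the subgroup so obtained is all of $\mathfrak{S}_4$ rather than a proper subgroup. A secondary point, needed for the product to be direct rather than merely an extension, is to ensure the complement realizing $\mathfrak{S}_4$ is chosen to consist of graph-faithful automorphisms, so that it meets $\Aut_{nt}(X)$ trivially.
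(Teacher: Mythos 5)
Your overall architecture (kernel of the graph representation equals $\Aut_{nt}(X)$, pin down $\Aut_{nt}(X)\cong\bbZ/2\bbZ$ via a non-isotrivial fibration, realize $\Aut(\Gamma)\cong\mathfrak{S}_4$ by $2$-torsion translations, split the central extension) is the same as the paper's, and part (i) is essentially the argument given there. The genuine gap is in your splitting step: from ``translations by $2$-torsion sections have order $2$'' you conclude that ``the subgroup they generate maps isomorphically onto $\mathfrak{S}_4$.'' This does not follow. In a central extension $1\to\bbZ/2\bbZ\to G\to\mathfrak{S}_4\to 1$, a collection of order-$2$ lifts of transpositions that generates a subgroup surjecting onto $\mathfrak{S}_4$ may perfectly well generate all of $G$ (this already happens inside the split extension $\mathfrak{S}_4\times\bbZ/2\bbZ$ if one inadvertently mixes lifts $(t,0)$ and $(t',1)$), and $H^2(\mathfrak{S}_4,\bbZ/2\bbZ)\neq 0$, so non-split extensions exist a priori. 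To get a complement one must verify the Coxeter relations, i.e.\ that the relevant \emph{products} of your involutions have orders $2$ and $3$ rather than $4$ and $6$ after lifting. This is exactly the extra input the paper supplies: it computes the stabilizer of $E_1$ directly from the Weierstrass equation (\ref{eqnV}) to be $D_6$ and invokes the Mukai--Ohashi bound that a tame semi-symplectic automorphism has order at most $6$; neither ingredient appears in your proposal, and without something playing their role the isomorphism $\Aut(X)\cong\mathfrak{S}_4\times\bbZ/2\bbZ$ (as opposed to some central extension of $\mathfrak{S}_4$ by $\bbZ/2\bbZ$ of order $48$) is not established.

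A secondary inaccuracy: the fibrations you name are not quite the right ones. The $2$-torsion section of the Jacobian of the $\I_6$-fibration $|2(E_1+\cdots+E_6)|$ acts \emph{trivially} on the dual graph --- it is precisely the generator of $\Aut_{nt}(X)$ --- so it contributes nothing to the image in $\mathfrak{S}_4$. In the paper the transpositions moving $E_5$ come from fibrations with an $\I_2^*$ fibre, but the remaining transpositions come from fibrations with a $\III^*$ fibre, which your list omits. Since you defer the ``laborious step'' anyway this is repairable, but as written the generating set you describe would not surject onto $\mathfrak{S}_4$.
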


\begin{proof}
Recall that the dual graph of $(-2)$-curves for type $\V$ is as follows:

\vspace{-1mm}
\centerline{
\includegraphics[width=110mm]{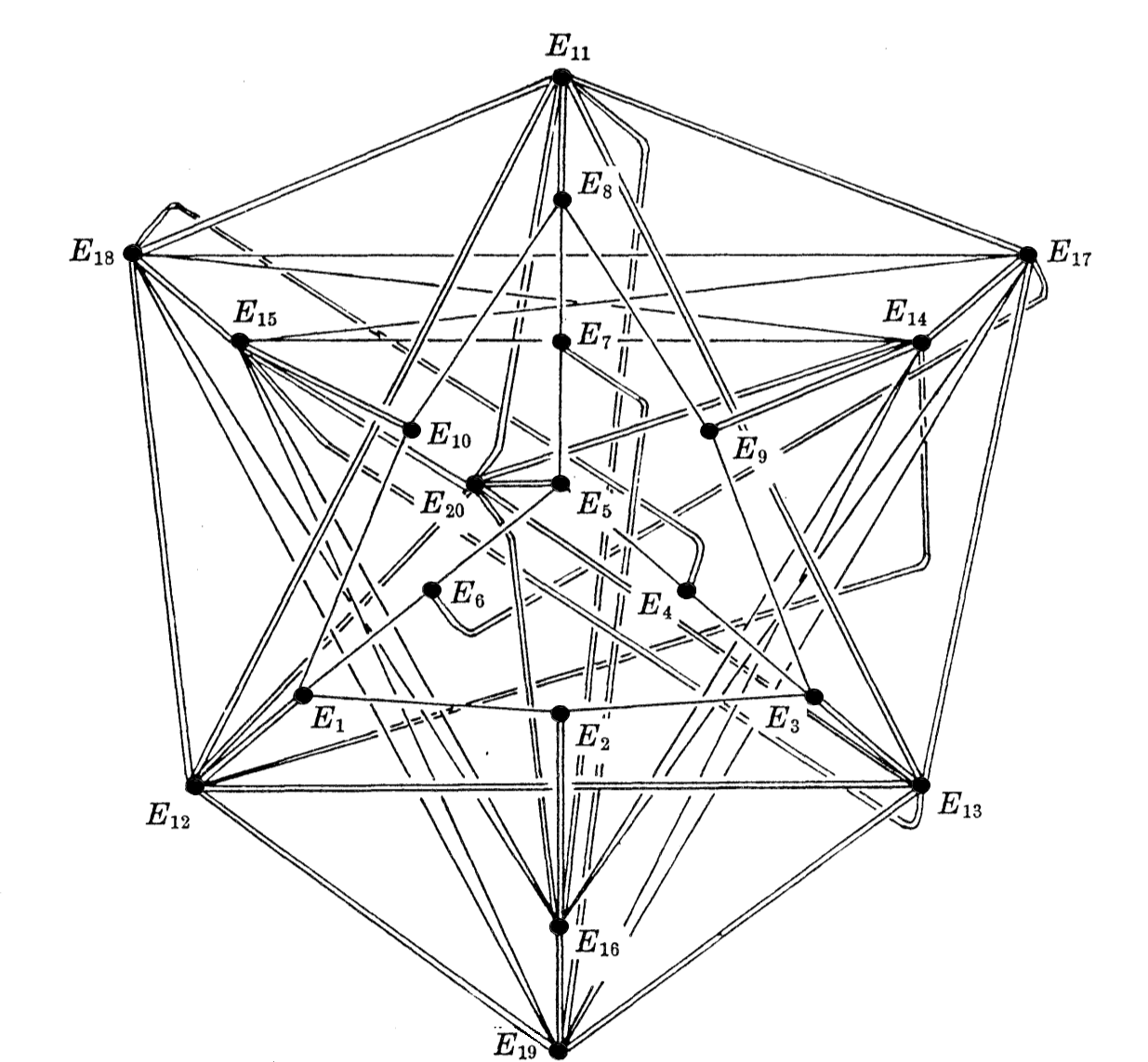}
}
\vspace{2mm}

We claim that $|\Aut_{nt}(X)| = 2$. Indeed, a numerically trivial automorphism $g$ acts trivially on the base of the fibration $|2(E_1+E_2+E_3+E_4+E_5+E_6)|$, since this fibration has at least three singular fibers and $g$ acts trivially or induces a unique involution on the three bisections $E_{10}$,$E_7$ and $E_9$. By the same argument as for type $\III$, there is at most one such $g$. Now, note that the $2$-torsion section of the Jacobian of this fibration acts identically on the graph of $(-2)$-curves.

The automorphism group of the graph is $\mathfrak{S}_4$ \cite[p.223]{Kondo}. It suffices to look at the action of $\Aut(X)$ on the set $\{E_1,E_3,E_5,E_8\}$.

\begin{itemize}
\item Transpositions of $E_5$ with another curve of the set are induced by $2$-torsion sections of fibrations with a singular fiber of type $\I_2^*$. For example, there is a $2$-torsion section of the Jacobian of $|E_2+E_6+E_7+E_9+2(E_1+E_8+E_{10})|$ which interchanges $E_3$ and $E_5$ by Corollary \ref{jac2}.
\item All transpositions of two curves different from $E_5$ are induced by $2$-torsion sections of fibrations with a singular fiber of type $\III^*$, e.g. the $2$-torsion section of the Jacobian of $|E_{10}+E_9+2E_1+2E_3+2E_7+3E_6+3E_4+4E_5|$ interchanges $E_{10}$ and $E_9$.
\end{itemize}

Finally, we claim that these transpositions generate a subgroup of $\Aut(X)$, which is isomorphic to $\mathfrak{S}_4$. Indeed, this can be checked by using Equation (\ref{eqnV}) to compute the stabilizer $G$ of $E_1$ (which is $D_6$) and by using the fact that the maximal order of a tame semi-symplectic automorphism is $6$ (see \cite{mukaiohashi}). This finishes the proof.
\end{proof}

\subsection{Degenerations and Moduli}
As in the previous cases, we prove the existence of this type by explicit calculation.
\begin{proposition}\label{moduliV}
Assume $\Char(k) \neq 2,3$. Let
\begin{equation*}
y^2  + (s^2+1)xy+(s^2+1)y = x^3 + (s^2+2)x^2 + (s^2+1)x
\end{equation*}
be the Weierstrass equation of an elliptic fibration $\tilde{\pi}$ with section on a K3 surface $\tilde{X}$. Define the involution $\sigma = t_{N^-} \circ J(\sigma)$, where $J(\sigma): s \mapsto -s$ and $t_{N^-}$ is translation by the section $N^- = (-1,0)$. Then, $\sigma$ is fixed point free.
\end{proposition}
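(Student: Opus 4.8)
The plan is to read off the statement from Proposition~\ref{Kondo}. Since $\sigma = t_{N^-}\circ J(\sigma)$ with $J(\sigma)(N^-) = \ominus N^-$ and $N^+.N^- = 0$ (which we will recheck below), that proposition says precisely that $\sigma$ is fixed point free if and only if $N^-$ is a $J(\pi)$-Enriques section in the sense of Definition~\ref{EnriquesSection}. So the entire task reduces to verifying conditions $(1)$--$(3)$ of that definition for the section $N^- = (-1,0)$ on the K3 surface $\tilde X$ defined by Equation~(\ref{eqnV}), with respect to the degree $2$ base change $\varphi\colon s\mapsto s^2+1$ of $J(\pi)$. This is exactly how the analogous Propositions~\ref{moduliIII} and~\ref{moduliIV} were proved, so I would follow the same pattern.

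First I would dispose of conditions $(1)$ and $(2)$, which are essentially already in hand from the computation that produced Equation~(\ref{eqnV}). The section $N^-$ is the pull-back along $\varphi$ of the everywhere-integral $2$-torsion section $(-1,0)$ of $J(\pi)$; hence $N^-$ is again $2$-torsion (equivalently $h(N^-)=0$, as noted above) and everywhere integral, which gives $N^+.N^-=0$ and $\ominus N^- = N^-$. Moreover Equation~(\ref{eqnV}) involves the base parameter only through $s^2$, so $J(\sigma)\colon s\mapsto -s$ fixes the Weierstrass model and in particular fixes both $N^+ = O$ and $N^- = (-1,0)$; therefore $J(\sigma)(N^-) = N^- = \ominus N^-$. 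This settles $(1)$ and $(2)$.

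The one substantive point is condition $(3)$, and here is where the situation is \emph{better} than for types $\I$ and $\II$: the branch locus of $\varphi$ does not move. The branch points of $\varphi\colon s\mapsto s^2+1$ are $s=0$ and $s=\infty$, lying over $t=1$ and $t=\infty$. Since $\Char(k)\neq 2,3$, the fibres $\I_6,\I_3,\I_2,\I_1$ of $J(\pi)$ sit over the four distinct points $t=\infty,0,1,-8$, so the two branch points meet exactly the $\I_2$ fibre (over $t=1$) and the $\I_6$ fibre (over $t=\infty$) and never the $\I_1$ at $t=-8$ nor any additive fibre (there are none, consistent with Proposition~\ref{typeofdoublefiber}); consequently the fibres of $\tilde\pi$ over $s=0$ and $s=\infty$ are of type $\I_4$ and $\I_{12}$, and no degeneration can occur for any choice of parameters. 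What remains is to check that $N^-$ meets neither of these in its identity component; because $N^-$ is the pull-back of the $2$-torsion section $(-1,0)$ of $J(\pi)$ and the component of a reducible fibre met by a section is compatible with base change, this reduces to the local statement that $(-1,0)$ meets the $\I_2$ fibre at $t=1$ and the $\I_6$ fibre at $t=\infty$ away from the identity component — a finite check via Tate's algorithm which was already recorded when the dual graph of type $\V$ was drawn and Corollary~\ref{jac2} applied. This local verification is the step I expect to be the main, if entirely routine, obstacle. Granting it, $N^-$ satisfies $(1)$--$(3)$, hence is a $J(\pi)$-Enriques section, and Proposition~\ref{Kondo} yields that $\sigma$ is fixed point free; unlike in types $\I$ and $\II$, there is no mobile branch point that can collide with an $\I_1$ fibre, which is why the conclusion holds for the (unique) surface of type $\V$ without any exceptional locus.
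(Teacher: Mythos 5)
Your proposal is correct and follows exactly the argument the paper intends: the paper omits the proof of this proposition (referring to "the previous cases"), and the printed proofs for types $\I$, $\VI$ and $\VII$ do precisely what you describe — reduce to Proposition \ref{Kondo}, note that the only possible failure of condition $(3)$ is a branch point of $\varphi$ colliding with a nodal fiber, and observe that here the branch points $t=1,\infty$ are pinned to the $\I_2$ and $\I_6$ fibers and so never meet the $\I_1$ fiber at $t=-8$ when $\Char(k)\neq 2,3$. The remaining local check that $(-1,0)$ hits non-identity components over $t=1$ and $t=\infty$ is indeed routine (e.g. $(-1,0)$ is the singular point of the Weierstrass fiber at $t=1$) and is already implicit in the height computation $h(N^-)=0$ recorded earlier in the section.
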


\begin{corollary}
Enriques surfaces of type $\V$ exist if and only if $\Char(k) \neq 2,3$. Moreover, they are unique if they exist.
\end{corollary}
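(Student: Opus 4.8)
The plan is to reduce the statement to Proposition \ref{Kondo}. Since $\sigma = t_{N^-}\circ J(\sigma)$ and we will have $N^+.N^- = 0$ together with $J(\sigma)(N^-) = \ominus N^-$, that proposition says precisely that $\sigma$ is fixed-point free if and only if $N^-$ is a $J(\pi)$-Enriques section, i.e. if and only if condition $(3)$ of Definition \ref{EnriquesSection} holds at the two branch points of $\varphi\colon s\mapsto s^2+1$. So the whole statement comes down to a local computation of which fiber component $N^-$ meets over those two points. Note that the hypotheses of Definition \ref{EnriquesSection} are satisfied, because $\Char(k)\neq 2$ makes $\varphi$ separable and the branch points $t=1,\infty$ carry the multiplicative (hence non-additive) fibers $\I_2$ and $\I_6$ of $J(\pi)$.

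First I would recall what the proof of the Main Theorem for type $\V$ already supplies: the section $N^- = (-1,0)$ is the pullback along $\varphi$ of the $2$-torsion section $(-1,0)$ of $J(\pi)$, hence it is again $2$-torsion on $\tilde\pi$, in particular everywhere integral (so $N^+.N^- = 0$) and fixed by $J(\sigma)$ (so $J(\sigma)(N^-) = N^- = \ominus N^-$). Thus conditions $(1)$ and $(2)$ of Definition \ref{EnriquesSection} hold automatically, and only $(3)$ remains.

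For $(3)$, let $s=0$ and $s=\infty$ be the ramification points of $\varphi$ lying over $t=1$ and $t=\infty$. Running Tate's algorithm on the Weierstrass equation $y^2+txy+ty = x^3+(1+t)x^2+tx$ of $J(\pi)$ at $t=1$ and $t=\infty$, one sees that the $2$-torsion section $(-1,0)$ meets a non-identity component of each of these two multiplicative fibers — this is the same component computation already used to identify the critical subgraph. Since under a base change ramified at such a point a non-identity component of a multiplicative fiber maps to a non-identity component of the resulting fiber of $\tilde\pi$, the section $N^-$ meets a non-identity component of $\tilde\pi^{-1}(0)$ and of $\tilde\pi^{-1}(\infty)$. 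Hence $N^-$ is a $J(\pi)$-Enriques section, and $\sigma$ is fixed-point free by Proposition \ref{Kondo}.

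I do not expect a real obstacle. The only contentful point is the component check at $t=1$ and $t=\infty$, a routine application of Tate's algorithm, identical in spirit to the corresponding steps for types $\III$ and $\IV$ (Propositions \ref{moduliIII} and \ref{moduliIV}). Unlike types $\I$ and $\II$, there is no moduli parameter here, so there is no risk of a smooth fiber over a branch point degenerating to an $\I_2$; both branch points lie over multiplicative fibers of $J(\pi)$ from the outset, which is exactly why $\sigma$ stays fixed-point free.
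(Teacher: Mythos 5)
Your proposal is correct and takes the paper's own route: the corollary reduces to Proposition \ref{moduliV}, i.e.\ to checking that $N^-=(-1,0)$ is a $J(\pi)$-Enriques section, and your verification — conditions (1) and (2) are automatic for the pulled-back $2$-torsion section, while condition (3) is a component check at the two branch points, which always lie over the multiplicative $\I_2$ and $\I_6$ fibers of $J(\pi)$ (the $\I_1$ fiber at $t=-8$ could only collide with $t=1$ in characteristic $3$), so no degeneration to a Coble surface can occur — is exactly the argument the paper leaves implicit. Just be sure to state explicitly that the non-existence in characteristic $2,3$ and the uniqueness are inherited from the Main Theorem for type $\V$ and from the rigidity of the construction (unique extremal rational fibration with fibers $(\I_6,\I_3,\I_2,\I_1)$, branch points pinned at $t=1,\infty$, unique admissible $2$-torsion section), since the corollary asserts these as well and your write-up only gestures at them via ``no moduli parameter''.
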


\begin{remark}
Again, the equation makes sense in characteristic $2$, where it defines a $K3$ surface covering a $1$-dimensional family of classical and supersingular Enriques surfaces of type $\VII$ (see \cite{KatsuraKondo}).
\end{remark}

\section{Enriques surfaces of type $\VI$}\label{secVI}

\subsection{Main theorem for type $\VI$}

\begin{remark}
In the first five cases, every base change with the correct ramification points produced an elliptic fibration of a K3 surface with $J(\pi)$-Enriques section $N^-$. This happened because the section $N^-$ was a $2$-torsion section. In the last two cases, however, we do not get this section for free.
\end{remark}

\begin{lemma}\label{lemVI}
Let $\Char(k) \neq 3$, $J(\sigma): s \mapsto -s - \beta$, and
\begin{equation*}
y^2 - 3 ( 3(s^2 + \beta s) + 1) xy+ (3(s^2+ \beta s)+1)^2y = x^3
\end{equation*}
with $\beta \in k - \{ \pm \frac{2}{\sqrt{3}} \}$ be the Weierstrass equation of an elliptic fibration of a K3 surface. Then, an everywhere integral, $J(\sigma)$-anti-invariant section $N^-$ meeting the fiber at $s = \infty$ in a non-identity component exists if and only if $\beta = \pm 1$. Moreover, it is unique up to sign if it exists. Both cases are isomorphic and if $\beta = 1$, the section is given by $N^- = (s+s^2,s^3)$.
\end{lemma}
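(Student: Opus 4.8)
The plan is to turn the existence question into a finite, explicit computation: the geometry of $\tilde{X}$ is used only to bound the degree of a candidate section, the symmetry $J(\sigma)$ to cut the number of free parameters roughly in half, and the Weierstrass equation itself to pin everything down.

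First I would record the fibre structure of $\tilde{\pi}$. Setting $t = s^2+\beta s$ exhibits the given equation as the pullback along $\varphi: s\mapsto t$ of $J(\pi): y^2 - 3(3t+1)xy + (3t+1)^2 y = x^3$, which for $\Char(k)\neq 3$ is the (unique) extremal rational elliptic surface with singular fibres $\IV^*$ at $t=-\frac{1}{3}$, $\I_3$ at $t=\infty$ and $\I_1$ at $t=-\frac{2}{3}$, with $\MW(J(\pi))\cong\bbZ/3\bbZ$ (cf.\ Table \ref{extremalrational}, the non-trivial torsion sections being $(0,0)$ and $(0,-(3t+1)^2)$). The degree-$2$ map $\varphi$ is branched exactly over $t=\infty$ and $t=-\frac{\beta^2}{4}$, and the hypothesis $\beta\neq\pm\frac{2}{\sqrt{3}}$ guarantees that the finite branch point $s=-\frac{\beta}{2}$ does not lie over the additive fibre; hence $\tilde{\pi}$ has two $\IV^*$ fibres, an $\I_6$ fibre over $s=\infty$, and only multiplicative fibres otherwise, so $\tilde{X}$ is a K3 surface. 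The points that matter below are that $s=\infty$ is a branch point carrying an $\I_6$ fibre and that $J(\sigma)$ interchanges the two $\IV^*$ fibres.

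Next I would set up an ansatz. An everywhere integral section has the shape $N^-=(a(s),b(s))$ with $a,b\in k[s]$, and since $\tilde{X}$ is a K3 surface the degrees of $a$ and $b$ are bounded a priori; a local analysis in the chart at $s=\infty$ of the condition that $N^-$ meet the $\I_6$ fibre in a non-identity component sharpens this (I expect it to force $\deg a\leq 2$). Writing $u(s)=3(s^2+\beta s)+1$, one has $u(-s-\beta)=u(s)$, and from the Weierstrass coefficients $a_1=-3u$, $a_3=u^2$ the inversion is $\ominus(x,y)=(x,\,-y+3ux-u^2)$. Therefore $J(\sigma)(N^-)=\ominus N^-$ is equivalent to the pair of conditions $a(-s-\beta)=a(s)$ and $b(s)+b(-s-\beta)=u(s)\bigl(3a(s)-u(s)\bigr)$: the first forces $a(s)=c(s^2+\beta s)+d$ with $c,d\in k$, and the second determines the part of $b$ invariant under $s\mapsto -s-\beta$ completely, leaving an anti-invariant part of the form $(s+\frac{\beta}{2})q(s)$ with $q$ again invariant. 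A candidate $N^-$ is thus described by finitely many scalars together with $\beta$.

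Finally I would substitute this ansatz into $b^2-3uab+u^2 b = a^3$ and equate coefficients. This is the heart of the argument: the resulting polynomial system in the scalars and in $\beta$ should be consistent precisely when $\beta=\pm 1$, and in that case should determine $N^-$ up to the single interchange $N^-\leftrightarrow\ominus N^-$ (which replaces $b$ by $-b+3ua-u^2$ and preserves every imposed condition, hence is exactly the ``up to sign'' in the statement). For $\beta=1$ one verifies $N^-=(s+s^2,s^3)$ directly, the verification reducing to the identity $(1+s)^3=u(s)+s^3$; and the involution $s\mapsto -s$ of $\bbP^1$ carries the datum with parameter $\beta$ to the one with parameter $-\beta$ (conjugating $s\mapsto -s-\beta$ to $s\mapsto -s+\beta$ and leaving the Weierstrass equation unchanged), so the two cases are isomorphic. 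I expect the elimination in this last step to be the main obstacle: it is elementary but lengthy, and it relies on the degree bounds — in particular on the improvement coming from the $\I_6$ fibre at $s=\infty$ — so some care is needed to be sure the parameter space is genuinely finite-dimensional.
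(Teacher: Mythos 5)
Your proposal is correct and follows essentially the same route as the paper: bound the degrees of an everywhere integral section (the paper cites Shioda's Lemma 1.2 for $\deg x\le 4$, $\deg y\le 6$, which your fibre-component condition at $s=\infty$ together with the invariance $a(-s-\beta)=a(s)$ sharpens consistently), reduce the anti-invariance $J(\sigma)(N^-)=\ominus N^-$ to explicit polynomial identities, and finish by comparing coefficients in the Weierstrass equation, with $s\mapsto -s$ exchanging $\beta=\pm 1$. Your write-up merely makes explicit the parametrization of candidates that the paper compresses into ``a lengthy, but straightforward calculation.''
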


\begin{proof}
By \cite[Lemma 1.2]{Shioda}, an everywhere integral section $N^-$ is given by $(x(s),y(s))$, where $x(s)$ and $y(s)$ are polynomials in $s$ with $\deg_s(x) \leq 4$ and $\deg_s(y) \leq 6$. Now, a lengthy, but straightforward calculation comparing coefficients gives the result.
Finally, note that the automorphism $s \mapsto -s$ exchanges both cases.
%
%
\end{proof}

\begin{theorem}
Let $X$ be an Enriques surface. The following are equivalent:

\begin{enumerate}
\item $X$ is of type $\VI$.
\item The dual graph of all $(-2)$-curves on $X$ contains the graph in Figure \ref{critVI}.

\begin{figure}[h!]
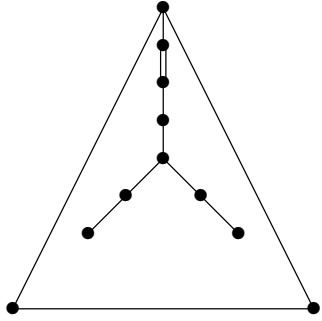

\centerline{
\xy
@={(20,20),(20,25),(20,30),(20,35),(20,40),(15,15),(10,10),(0,0),(25,15),(30,10),(40,0)}@@{*{\bullet}};
(20,20)*{};(20,30)*{}**\dir{-};
(20,20)*{};(30,10)*{}**\dir{-};
(20,20)*{};(10,10)*{}**\dir{-};
(20,35)*{};(20,30)*{}**\dir2{-};
(20,35)*{};(20,40)*{}**\dir{-};
(0,0)*{};(20,40)*{}**\dir{-};
(40,0)*{};(20,40)*{}**\dir{-};
(40,0)*{};(0,0)*{}**\dir{-};
\endxy
}
\caption{Critical subgraph for type $\VI$}
\label{critVI}
\end{figure}
\item The canonical cover $\tilde{X}$ of $X$ admits an elliptic fibration with a Weierstrass equation of the form
\begin{equation*}
y^2 - 3 ( 3s^2 + 3s + 1) xy+ (3s^2+ 3s+1)^2y = x^3
\end{equation*}
such that the covering morphism $\rho: \tilde{X} \to X$ is given as quotient by the involution
$
\sigma = t_{N^-} \circ J(\sigma),
$
where $J(\sigma): s \mapsto -s-1$ and $t_{N^-}$ is translation by $N^- = (s+s^2,s^3)$.
\end{enumerate}
Moreover, Enriques surfaces of type $\VI$ do not exist in characteristic $3$.
\end{theorem}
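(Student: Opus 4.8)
The plan is to prove the equivalences through the cycle $(1)\Rightarrow(2)\Rightarrow(3)\Rightarrow(1)$, following the template used for types $\I$--$\V$. The one genuinely new feature is that the anti-invariant section $N^-$ is here \emph{not} $2$-torsion: the Jacobian $J(\pi)$ will turn out to be the extremal rational elliptic surface $(\IV^*,\I_3,\I_1)$, whose Mordell-Weil torsion is $\bbZ/3\bbZ$ and hence contains no $2$-torsion section to pull back, so the step ``$h(N^-)=0$, hence $N^-$ torsion'' of the earlier cases is replaced by the coefficient-comparison of Lemma~\ref{lemVI}.

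For $(1)\Rightarrow(2)$ one reads off from Table~\ref{main} that the dual graph of type $\VI$ contains the graph of Figure~\ref{critVI}. For $(2)\Rightarrow(3)$ I would interpret Figure~\ref{critVI} as the dual graph of the reducible fibers of a special elliptic fibration $\pi$ together with a special bisection $N$: the star-shaped subgraph on seven vertices is $\tilde{E}_6$, hence an $\IV^*$ fiber, the triangle is $\tilde{A}_2$, and the top vertex is $N$, joined by the double edge to a component of the $\IV^*$ fiber and once to a component of the triangle. Since $N$ is a bisection meeting the triangle in a single component, the fiber on the triangle must be a double fiber, hence of type $\I_3$ (it is multiplicative, so not $\IV$); the $\IV^*$ fiber, being additive, is simple. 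By Lemma~\ref{shiodatate} the fibration $\pi$ is extremal, and since $J(\pi)$ is then a rational elliptic surface carrying an $\IV^*$ and an $\I_3$ fiber, Table~\ref{extremalrational} forces $J(\pi)=(\IV^*,\I_3,\I_1)$ once $\Char(k)\neq 2,3$. I would then take a Weierstrass model of $J(\pi)$ from \cite{Lang3}, parametrise the separable degree $2$ covers of the base branched over the $\I_3$-point and over one further point, neither being the additive $\IV^*$-point, and apply Lemma~\ref{universal} to split $N=N^++N^-$, the section $N^-$ being everywhere integral, $J(\sigma)$-anti-invariant, and meeting the $\I_6$-fiber of $\tilde{\pi}$ over the ramified $\I_3$-point off its identity component. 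Lemma~\ref{lemVI} then shows such an $N^-$ exists exactly for base-change parameter $\pm1$, that the two choices give isomorphic surfaces, and that $N^-=(s+s^2,s^3)$; substituting the parameter value $1$ yields exactly the equation, the involution $J(\sigma):s\mapsto-s-1$, and $\sigma=t_{N^-}\circ J(\sigma)$ of (3).

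For $(3)\Rightarrow(1)$ I would apply Proposition~\ref{Kondo} to the data of (3): granting that $N^-$ is a $J(\pi)$-Enriques section --- which is verified, as for the earlier types, by a direct check that $\sigma$ is fixed point free --- the quotient of $\tilde{X}$ by $\sigma$ is an Enriques surface $X$ carrying $\pi$ and $N$, so its dual graph contains Figure~\ref{critVI}. One then enlarges this configuration to the whole type $\VI$ graph by repeatedly applying $jac_2$ (Corollary~\ref{jac2}) to further special fibrations visible in the growing graph and computing the new intersection numbers with the height pairing (Proposition~\ref{heightpairingdef}), exactly as in the worked Example of \S\ref{example}; the bookkeeping is heavier than for types $\I$--$\V$ precisely because, $N^-$ not being $2$-torsion, the bisections produced by $jac_2$ are no longer mutually disjoint. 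Vinberg's criterion (Corollary~\ref{allcurves}) then shows the graph so obtained already contains all $(-2)$-curves on $X$, so $X$ is of type $\VI$.

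The main obstacle is the remaining assertion that type $\VI$ does not exist in characteristic $3$. In contrast with types $\III$, $\IV$, $\V$, the obstruction is \emph{not} that the relevant rational elliptic surface is missing --- $(\IV^*,\I_3)$ does exist in characteristic $3$. Instead I would rerun the $(2)\Rightarrow(3)$ analysis: a hypothetical type $\VI$ surface in characteristic $3$ still forces the special fibration with an $\IV^*$ and a double $\I_3$ fiber, hence $J(\pi)=(\IV^*,\I_3)$, and the construction would demand on a quadratic base change of $J(\pi)$ an everywhere integral, $J(\sigma)$-anti-invariant, non-torsion section meeting the $\I_6$-fiber off its identity component. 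I expect that the coefficient-comparison of Lemma~\ref{lemVI}, redone over a field of characteristic $3$ --- where the coefficient $3(s^2+\beta s)+1$ of the relevant model degenerates to the constant $1$, collapsing it to the isotrivial curve $y^2+y=x^3$ --- shows that no such section exists, and this contradiction is the heart of the non-existence statement.
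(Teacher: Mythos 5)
Your treatment of the three equivalences follows the paper's route: interpret Figure \ref{critVI} as an $\IV^*$ fiber plus a double $\I_3$ fiber with special bisection $N$, observe that $h(N^-)\neq 0$ so that the $2$-torsion shortcut of types $\I$--$\V$ is unavailable, fall back on the coefficient comparison of Lemma \ref{lemVI} to pin down $\beta=\pm 1$ and $N^-=(s+s^2,s^3)$, and recover the full type $\VI$ graph by iterating $jac_2$ through auxiliary fibrations and closing with Vinberg's criterion. Two small slips: restricting the identification of $J(\pi)$ to $\Char(k)\neq 2,3$ is unnecessary and would leave characteristic $2$ (where type $\VI$ exists) uncovered --- Table \ref{extremalrational} gives $(\IV^*,\I_3,\I_1)$ in characteristic $2$ as well; and the graph-growing step in the paper proceeds not through further $\tilde A_2$ fibrations but through three $\tilde A_1$ subgraphs whose complements force fibrations of type $(\I_6,\I_3,\I_2,\I_1)$ with a double $\I_2$ fiber.

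The genuine gap is the characteristic $3$ non-existence. Your proposed argument is not carried out --- you only ``expect'' that the coefficient comparison fails --- and the heuristic you offer is unsound: the model $y^2-3(3t+1)xy+(3t+1)^2y=x^3$ is obtained from $y^2+txy+t^2y=x^3$ by a coordinate change that is explicitly invalid in characteristic $3$, so its degeneration to the isotrivial curve $y^2+y=x^3$ says nothing about the characteristic $3$ situation (indeed the correct model $y^2+txy+t^2y=x^3$ has non-constant $j$-invariant $-t^3$ in characteristic $3$, so the fibration is not isotrivial there). The paper's actual argument is different and purely combinatorial: after adding the two bisections from $jac_2$, the enlarged graph contains $\tilde A_1$ subgraphs whose associated special elliptic fibrations must have singular fibers containing $A_5$ and $A_2$ together with a \emph{double} $\I_2$ fiber; the only extremal rational candidate is $(\I_6,\I_3,\I_2,\I_1)$, which in characteristic $3$ degenerates to $(\I_6,\I_3,\III)$, and the additive $\III$ fiber cannot be a double fiber by Proposition \ref{typeofdoublefiber}. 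If you want to keep your computational route you must actually perform the integral-section search on the honest characteristic $3$ model; as written, the non-existence claim is unproved.
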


\begin{proof}
First, observe that the dual graph of type $\VI$ (see Table \ref{main}) contains the graph in Figure \ref{critVI}. 

This subgraph can be interpreted as the dual graph of a special elliptic fibration $\pi$ with singular fibers $\IV^*,\I_3$ (not $\III$, since it is double) and special $2$-section $N$. With the same notation as in the previous cases, we can compute $h(N^-) \neq 0$ and from Corollary \ref{jac2} we obtain two more $(-2)$-curves as follows:

\vspace{1mm}
\centerline{
\xy
@={(20,20),(20,25),(20,30),(20,35),(20,40),(15,15),(10,10),(0,0),(25,15),(30,10),(40,0),(35,5),(5,5)}@@{*{\bullet}};
(20,20)*{};(20,30)*{}**\dir{-};
(20,20)*{};(30,10)*{}**\dir{-};
(20,20)*{};(10,10)*{}**\dir{-};
(20,35)*{};(20,30)*{}**\dir2{-};
(20,35)*{};(20,40)*{}**\dir{-};
(0,0)*{};(20,40)*{}**\dir{-};
(40,0)*{};(20,40)*{}**\dir{-};
(40,0)*{};(0,0)*{}**\dir{-};
(30,10)*{};(35,5)*{}**\dir2{-};
(10,10)*{};(5,5)*{}**\dir2{-};
(40,0)*{};(35,5)*{}**\dir{-};
(0,0)*{};(5,5)*{}**\dir{-};
(5,5)*{};(35,5)*{}**\dir{-};
(20,35)*{};(5,5)*{}**\dir{-};
(20,35)*{};(35,5)*{}**\dir{-};
\endxy
}
\vspace{3mm}

There are three subgraphs of type $\tilde{A}_1$ such that the graph of $(-2)$-curves disjoint from this diagram together with a special bisection has the following form:
\vspace{5mm}

\centerline{
\xy
@={(20,20),(20,25),(20,30),(20,35),(15,15),(10,10),(0,0),(25,15),(30,10),(40,0)}@@{*{\bullet}};
(20,20)*{};(20,30)*{}**\dir{-};
(20,20)*{};(30,10)*{}**\dir{-};
(20,20)*{};(10,10)*{}**\dir{-};
(20,35)*{};(20,30)*{}**\dir2{-};
(40,0)*{};(0,0)*{}**\dir{-};
\endxy
}
\vspace{3mm}
The only rational elliptic fibration with a singular fiber of type $\I_2$ and some other singular fibers whose dual graphs contain an $A_5$ and an $A_2$ diagram is the one with singular fibers $(\I_6,\I_3,\I_2,\I_1)$ (resp. $(\I_6,\IV,\I_2)$ in characteristic $2$). Using the other $(-2)$-curves in the graph, one deduces that the $\I_6$ and $\I_3$ (resp. $\IV$) fibers are simple. These fibrations give the seven remaining $(-2)$-curves for the dual graph of type $\VI$. Observe that the existence of such a fibration excludes this case in characteristic $3$, since the $\I_2$ fiber is double.

We have found the following equation for the unique rational and extremal elliptic surface with singular fibers $\IV^*$ and $\I_3$ in any characteristic
\begin{equation*}
y^2 + txy +t^2y = x^3,
\end{equation*}
where $t$ is a coordinate on $\bbP^1$. By a change of coordinates (valid away from characteristic $3$) we obtain
\begin{equation*}
y^2 - 3(3t+1)xy + (3t+1)^2y = x^3.
\end{equation*}
 The $\IV^*$ fiber is at $t = -\frac{1}{3}$, the $\I_3$ fiber is at $t = \infty$ and there is an $\I_1$ fiber at $t = -\frac{2}{3}$. 

In characteristic $\neq 3$, we can write a degree $2$ morphism $\bbP^1 \to \bbP^1$ with $t = \infty$ as branch point and which is not branched over $-\frac{1}{3}$ as
\begin{equation*}
t \mapsto s^2+\beta s,
\end{equation*}
where $s$ is the new parameter on $\bbP^1$ and $\beta \neq \pm \frac{2}{\sqrt{3}}$. The covering involution $J(\sigma)$ is given by $s \mapsto -s-\beta$. We obtain the equation
\begin{equation*}\label{eqnVI}
y^2 - 3(3(s^2+\beta s)+1)xy + (3(s^2+\beta s)+1)^2y = x^3.
\end{equation*}
By Lemma \ref{lemVI}, if a suitable section $N^-$ exists, we can assume $\beta = 1$ and $N^- = (s+s^2,s^3)$.
\end{proof}

\subsection{Automorphisms}

\begin{proposition}\label{Aut6}
Let $X$ be an Enriques surface of type $\VI$. Then, $Aut(X) \cong \mathfrak{S}_5$ and this group is generated by automorphisms induced by $2$-torsion sections of the Jacobian fibrations of elliptic fibrations of $X$. Moreover, $\Aut_{nt}(X) \cong \{1\}$.
\end{proposition}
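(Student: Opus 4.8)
The plan is to follow the same three-step strategy used for types $\I$ through $\V$. First, by Corollary \ref{allcurves} the dual graph $\Gamma$ of all $(-2)$-curves on $X$ is the graph labelled $\VI$ in Table \ref{main}, and since these curves span $\Num(X)\otimes\bbQ$ (an extremal elliptic fibration lives inside $\Gamma$), an automorphism acting trivially on $\Gamma$ lies in $\Aut_{nt}(X)$. So the first task is to prove $\Aut_{nt}(X)=\{1\}$; this done, the action on $\Gamma$ gives an injection $\Aut(X)\hookrightarrow\Aut(\Gamma)$. The second task is to identify $\Aut(\Gamma)$, which by Kond\=o \cite{Kondo} is isomorphic to $\mathfrak{S}_5$. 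The third task is to realize a generating set of $\mathfrak{S}_5$ inside $\Aut(X)$ by automorphisms coming from $2$-torsion sections of Jacobians via Corollary \ref{jac2}; then the injection is forced to be an isomorphism and the generation statement follows by construction. Recall throughout that type $\VI$ exists only in characteristic $\neq 3,5$, so all extremal fibrations invoked below do occur, by Table \ref{extremalrational}.

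For $\Aut_{nt}(X)=\{1\}$ I would argue as for types $\II$ and $\V$. Choose a non-isotrivial special elliptic fibration $\pi$ of $X$ whose singular-fibre configuration contains several reducible fibres — for instance the $(\I_6,\I_3)$-type fibration (with an $\I_2$ or, in characteristic $2$, a $\III$ fibre in addition) used above to complete the dual graph, whose Jacobian has an $\I_1$ fibre. A numerically trivial $g$ fixes pointwise the components of the reducible fibres and hence acts trivially on the base $\bbP^1$, and it either fixes pointwise or induces a unique involution on each of several disjoint special bisections of $\pi$. As in the proof for type $\II$, if $g$ were non-trivial it would be the hyperelliptic involution of the generic fibre through the points cut out by two disjoint \emph{separable} bisections; these would then be $2$-torsion relative to each other, but the $\I_1$ fibre of the Jacobian has only two $2$-torsion points, forcing the two bisections to meet — a contradiction. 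At most one numerically trivial $g$ exists by the tameness/Lefschetz argument of type $\III$, so $\Aut_{nt}(X)=\{1\}$. (In characteristic $2$ one replaces the relevant $\I_2$, $\I_3$ fibres by their degenerations $\III$, $\IV$ exactly as in the earlier sections, without affecting the counting.)

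It then remains to realize all of $\Aut(\Gamma)\cong\mathfrak{S}_5$. I would recall Kond\=o's description of this symmetry group and realize its transpositions: each transposition should be induced, via $jac_2$ and Corollary \ref{jac2}, by a $2$-torsion section of the Jacobian of an extremal elliptic fibration of $X$ whose reducible fibres include one of type $\I_2^*$, $\III^*$, or $\IV^*$ — precisely in the manner in which the transpositions of $\mathfrak{S}_4$ were produced for type $\V$ — where extremality is checked by Lemma \ref{shiodatate} and the fibre types by Table \ref{extremalrational}, all valid in characteristic $\neq 3,5$. Since transpositions generate $\mathfrak{S}_5$, the image of $\Aut(X)$ in $\Aut(\Gamma)$ is everything, whence $\Aut(X)\cong\mathfrak{S}_5$ and this group is generated by automorphisms induced by $2$-torsion sections. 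The main obstacle is this last step: one must locate the right extremal subfibrations inside $\Gamma$, verify that they are extremal and not excluded in the small characteristics that still occur, and trace carefully through the $jac_2$ construction (as in the example of \S \ref{example}) to confirm that the chosen $2$-torsion sections induce the claimed transpositions rather than some other graph symmetry — the bookkeeping with the height pairing being where errors are easiest to make.
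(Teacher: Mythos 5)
Your overall skeleton is the paper's: kill $\Aut_{nt}(X)$, inject $\Aut(X)$ into $\Aut(\Gamma)\cong\mathfrak{S}_5$, and realize enough elements via $jac_2$. The numerically-trivial step is fine — the paper does it even more quickly by observing that the three separable bisections $E_7,E_9,E_{10}$ of the fibration $|E_1+\cdots+E_6|$ are each fixed pointwise (they meet at least three other curves), giving six fixed geometric points on the generic fibre, which is already too many for a non-trivial automorphism in any characteristic; your two-bisection-plus-$\I_1$ variant also works, with the caveat that in characteristic $2$ the $\I_1$ fibre disappears and one must instead use (as in the type $\II$ proof) that four fixed points on an ordinary fibre force triviality.

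The genuine soft spot is your plan for realizing $\mathfrak{S}_5$. You propose to realize all transpositions by $2$-torsion sections of fibrations with a fibre of type $\I_2^*$, $\III^*$ or $\IV^*$, by analogy with type $\V$. On the type $\VI$ surface this is misdirected: the $\IV^*$ fibrations there (e.g.\ the one underlying the critical subgraph, with fibres $\IV^*$ and $\I_3$) have Jacobian Mordell--Weil group $\bbZ/3\bbZ$, so they carry no $2$-torsion sections at all, and the fibrations the paper actually exploits are of type $\I_1^*$ (such as $|E_6+E_7+E_3+E_{10}+2(E_4+E_5)|$) and fibrations $|2(E_i+E_j)|$ with a double fibre of type $\I_2$. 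Moreover, the paper does not try to identify each realized involution as a transposition of the underlying $5$-element set — which is delicate, since the natural combinatorial object is a $10$-element set $\Sigma=\{E_1,\dots,E_{10}\}$ on which $\mathfrak{S}_5$ acts — but instead runs an orbit--stabilizer count: it realizes the full order-$12$ stabilizer $\mathfrak{S}_3\times\bbZ/2\bbZ$ of $E_5$ and then enough involutions to show transitivity on $\Sigma$, forcing the image to have order at least $120$. Your plan can be repaired, but as written the choice of fibration types would fail, and the ``generate by transpositions'' bookkeeping is harder than the counting argument you would be better off imitating.
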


\begin{proof}
Recall that the dual graph of $(-2)$-curves for type $\VI$ is as follows:

\vspace{1mm}
\centerline{
\includegraphics[width=110mm]{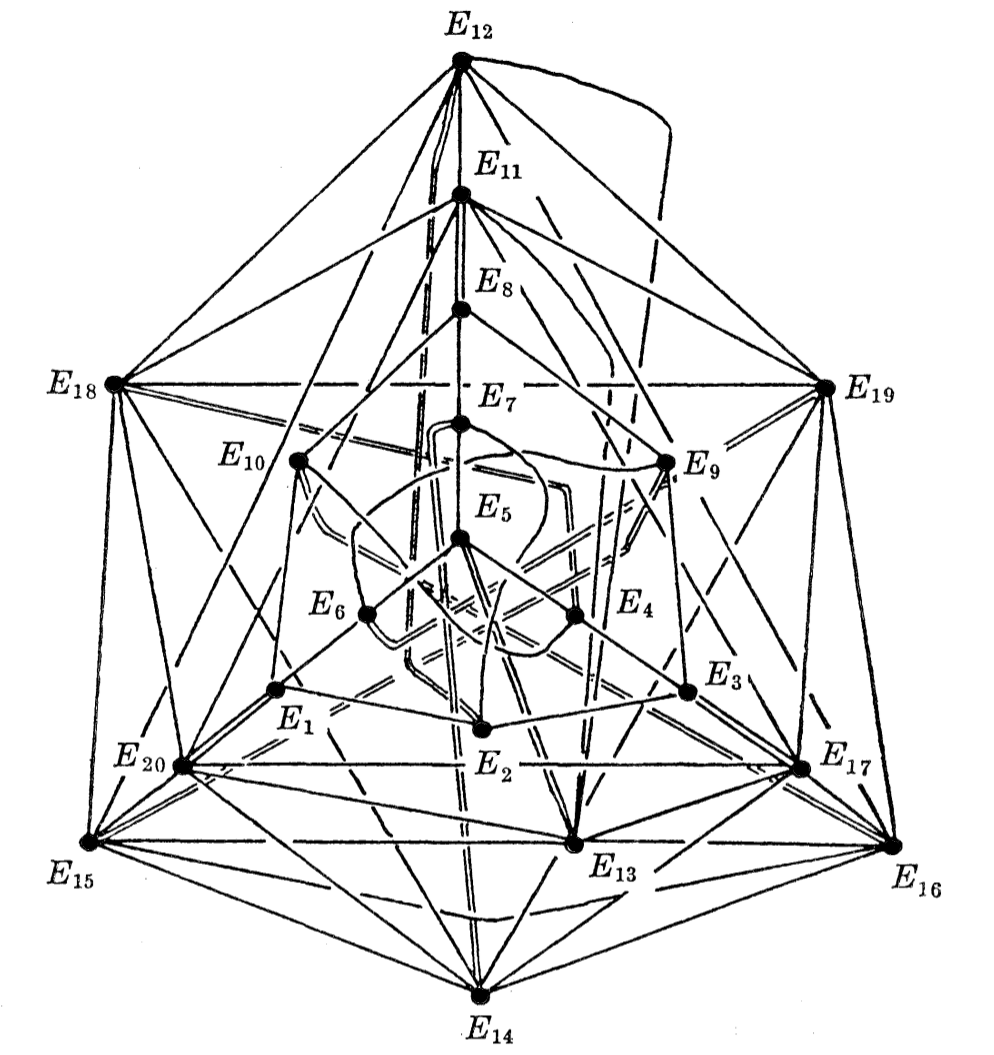}
}
\vspace{1mm}

Let us first show that $\Aut_{nt}(X)$ is trivial. Indeed, the three separable bisections $E_7,E_9$ and $E_{10}$ of $|E_1+E_2+E_3+E_4+E_5+E_6|$ are fixed pointwise by any numerically trivial automorphism, which therefore has to be the identity.

The automorphism group of the graph is $\mathfrak{S}_5$ \cite[p.223]{Kondo}. 
We look at the induced action of $\Aut(X)$ on the set $\Sigma = \{E_1,\hdots,E_{10}\}$ and note the following points:

\begin{itemize}
\item The pointwise stabilizer of the set $\Gamma_1 = \{E_4,E_5,E_6,E_7\}$ is $\bbZ/2\bbZ$. It is realized by the $2$-torsion section of the Jacobian of $|2(E_5+E_{13})|$.
\item The stabilizer of $E_5$ under the action of the automorphism group of the graph is $\mathfrak{S}_3 \times \bbZ/2\bbZ$. It is realized by the stabilizer of $\Gamma_1$ and the $2$-torsion sections of the Jacobian fibrations of fibrations with a fiber of type $\I_1^*$. For example the Jacobian of $|E_6+E_7+E_3+E_{10}+2(E_4+E_5)|$ has a $2$-torsion section which interchanges $E_6$ and $E_7$.
\item Since the stabilizer of $E_5$ has order $12$, it suffices to show that the group generated by $2$-torsion sections acts transitively on $\Sigma$. We show that we can map $E_5$ to $E_{10},E_3$ and $E_6$. The rest can be done similarly.
\item Indeed, the $2$-torsion sections of the Jacobians of $|2(E_3+E_{17})|$, $|2(E_{10}+E_{16})|$ and $|2(E_8+E_{11})|$ interchange $E_5$ and $E_{10}$, $E_5$ and $E_{3}$ and $E_3$ and $E_6$, respectively.
\end{itemize}
\end{proof}

\subsection{Degenerations and Moduli}

\begin{proposition}\label{moduliVI}
Assume $\Char(k) \neq 3$. Let
\begin{equation*}
y^2 - 3(3(s^2+ s)+1)xy + (3(s^2+ s)+1)^2y = x^3
\end{equation*}
be the Weierstrass equation of an elliptic fibration $\tilde{\pi}$ with section on a K3 surface $\tilde{X}$. Define the involution $\sigma = t_{N^-} \circ J(\sigma)$, where $J(\sigma): s \mapsto -s-1$ and $t_{N^-}$ is translation by the section $N^- = (s+s^2,s^3)$. Then, $\sigma$ is fixed point free if and only if $\Char(k) \neq 5$. If $\Char(k) = 5$, $\sigma$ has exactly one $(-2)$-curve as fixed locus.
\end{proposition}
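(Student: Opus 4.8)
The plan is to apply Proposition \ref{Kondo}. First I would record that, by Lemma \ref{lemVI}, the section $N^-=(s+s^2,s^3)$ is everywhere integral (so $N^+.N^-=0$) and $J(\sigma)$-anti-invariant (so $J(\sigma)(N^-)=\ominus N^-$); hence $\sigma$ is an involution and, by Proposition \ref{Kondo}, $\sigma$ is fixed point free if and only if $N^-$ is a $J(\pi)$-Enriques section, i.e.\ if and only if condition $(3)$ of Definition \ref{EnriquesSection} holds. Condition $(3)$ only involves the branch points of $\varphi\colon s\mapsto s^2+s$ over which $J(\pi)$ is singular. The branch points of $\varphi$ are $s=\infty$ and, if $\Char(k)\neq 2$, also $s=-\tfrac12$; they lie over $t=\infty$ and $t=-\tfrac14$ respectively (in characteristic $2$, $\varphi$ is an Artin--Schreier map, branched only over $t=\infty$). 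Over $s=\infty$ the $\I_3$ fiber of $J(\pi)$ becomes an $\I_6$ fiber of $\tilde\pi$, and $N^-$ meets a non-identity component of it by Lemma \ref{lemVI}, so $(3)$ holds there.

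For the first direction I would note that $J(\pi)$ has singular fibers only over $t\in\{-\tfrac13,-\tfrac23,\infty\}$, that $-\tfrac14=-\tfrac13$ is impossible, and that $-\tfrac14=-\tfrac23$ (equivalently $3=8$) holds exactly when $\Char(k)=5$. Thus for $\Char(k)\neq 5$ the fiber of $J(\pi)$ over $t=-\tfrac14$ is smooth, condition $(3)$ is vacuous at $s=-\tfrac12$, so $N^-$ is a $J(\pi)$-Enriques section and $\sigma$ is fixed point free.

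Now suppose $\Char(k)=5$. Then $t=-\tfrac14=-\tfrac23$ is the $\I_1$ point of $J(\pi)$, and since $\varphi$ is ramified over it, $\tilde\pi$ acquires an $\I_2$ fiber $F_1$ over $s=-\tfrac12$ (multiplicative, as $j$ has a pole there). I would then simply evaluate $N^-=(s+s^2,s^3)$ at $s=-\tfrac12$: over $\bbF_5$ the fiber is the nodal cubic $y^2+3xy+y=x^3$ and $N^-$ specializes to $(1,3)$, which is a smooth point of it ($\partial_x=3y-3x^2=1\neq 0$ there), hence lies on the identity component of $F_1$. Therefore $(3)$ fails at $s=-\tfrac12$, $N^-$ is not a $J(\pi)$-Enriques section, and by Proposition \ref{Kondo} the involution $\sigma$ has fixed points. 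To pin down $\Fix(\sigma)$: $J(\sigma)\colon s\mapsto -s-1$ fixes only $s=\infty$ and $s=-\tfrac12$, so $\Fix(\sigma)\subseteq F_0\cup F_1$ with $F_0=\tilde\pi^{-1}(\infty)$ the $\I_6$ fiber and $F_1$ the $\I_2$ fiber; since $N^-$ meets a non-identity component of $F_0$, the argument in the proof of Proposition \ref{Kondo} shows $\sigma$ is fixed point free on $F_0$, so $\Fix(\sigma)\subseteq F_1$. Writing $F_1=C_0\cup C_1$ with $C_0$ the identity component, a short local analysis of the $\I_1\rightsquigarrow\I_2$ degeneration shows that $J(\sigma)$ fixes $C_0$ pointwise and acts on $C_1\cong\bbP^1$ as the involution fixing exactly the two nodes $C_0\cap C_1$; since $N^-|_{F_1}\in C_0$ is $2$-torsion there (indeed $-(1,3)=(1,3)$ over $\bbF_5$), the translation $t_{N^-}$ acts on $C_1$ as the same involution, whence $\sigma|_{C_1}=\mathrm{id}$ while $\sigma|_{C_0}$ fixes only the two nodes. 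Thus $\Fix(\sigma)=C_1$ is a single $(-2)$-curve. (Alternatively, Remark \ref{coble} gives that $\Fix(\sigma)$ is purely $1$-dimensional, after which one only has to rule out the case $C_0\cup C_1$.)

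The main obstacle will be the characteristic-$5$ analysis: one must control the fiber of $\tilde\pi$ over the ramification point $s=-\tfrac12$ --- the $\I_2$ fiber produced from the $\I_1$ fiber of $J(\pi)$ by the ramified base change --- precisely enough both to check that $N^-$ meets its identity component and to compute the action of $\sigma=t_{N^-}\circ J(\sigma)$ on its two components. Everything else is bookkeeping with Proposition \ref{Kondo}, Definition \ref{EnriquesSection} and Lemma \ref{lemVI}.
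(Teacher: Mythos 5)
Your proposal is correct and follows essentially the same route as the paper: reduce via Proposition \ref{Kondo} to checking whether $N^-$ is a $J(\pi)$-Enriques section, observe that the only obstruction is the branch point $t=-\tfrac14$ colliding with the nodal fiber at $t=-\tfrac23$ (which forces $\Char(k)=5$), and in characteristic $5$ verify that $N^-$ passes through the smooth point $(1,3)$ of the nodal cubic over $s=-\tfrac12$ rather than its singular point, so it meets the identity component. Your additional local analysis pinning down $\Fix(\sigma)$ as exactly the exceptional component $C_1$ is a correct elaboration of what the paper delegates to Remark \ref{coble} and the proof of Proposition \ref{Kondo}.
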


\begin{proof}
The only possibility for $\sigma$ to have fixed points is the case where $\varphi: t \mapsto s^2 + s$ is branched over the point lying under the nodal fiber. Hence, we may assume that $\Char(k) \neq 2$. The branch points of $\varphi$ are $t = \infty$ and $t = -\frac{1}{4}$, while the $\I_1$ fiber of $\pi$ lies over $t = -\frac{2}{3}$. Hence, $\varphi$ is branched over the point lying under the nodal fiber if and only if $-\frac{2}{3} = -\frac{1}{4}$, i.e. if and only if $5 = 0$.

Now if $\Char(k) = 5$, the location of the $\I_2$ fiber of $\tilde{\pi}$ is $s = -\frac{1}{2} = 2$. The singular point of the Weierstrass equation at $s = 2$ is $(-1,1)$, while $N^-$ passes through $(1,3)$. Hence, $N^-$ meets the identity component of the $\I_2$ fiber and therefore it is not a $J(\pi)$-Enriques section and $\sigma$ fixes a $(-2)$-curve.
\end{proof}
\begin{corollary}
Enriques surfaces of type $\VI$ exist if and only if $\Char(k) \neq 3,5$. Moreover, they are unique if they exist.
\end{corollary}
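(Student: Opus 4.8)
The plan is to obtain this corollary as a formal consequence of the equivalence $(1)\Leftrightarrow(3)$ established in the theorem of this section, together with Lemma~\ref{lemVI}, Proposition~\ref{moduliVI} and Proposition~\ref{Kondo}. Essentially no new computation should be required: all the delicate work has already been done in those statements, and what remains is to assemble the logical ``if and only if'' and the uniqueness claim correctly.

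For the existence part (the implication ``$\Char(k)\neq 3,5\ \Rightarrow$ type $\VI$ exists''), I would proceed as follows. Assume $\Char(k)\neq 3$; then automatically $5\neq 0$ is a separate hypothesis, and Lemma~\ref{lemVI} applies with $\beta=1$ (note $1\notin\{\pm\frac{2}{\sqrt 3}\}$), producing the Weierstrass equation $y^2-3(3s^2+3s+1)xy+(3s^2+3s+1)^2y=x^3$ of an elliptic fibration $\tilde\pi$ of a K3 surface $\tilde X$ together with the everywhere integral, $J(\sigma)$-anti-invariant section $N^-=(s+s^2,s^3)$ meeting the fibre at $s=\infty$ in a non-identity component. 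By Proposition~\ref{moduliVI}, when in addition $\Char(k)\neq 5$ the involution $\sigma=t_{N^-}\circ J(\sigma)$ is fixed point free, so $N^-$ is a $J(\pi)$-Enriques section in the sense of Definition~\ref{EnriquesSection}, and Proposition~\ref{Kondo} then shows that the quotient $X=\tilde X/\sigma$ is an Enriques surface carrying a special elliptic fibration $\pi$ with Jacobian $J(\pi)$. Finally, the computation in the proof of the theorem above shows that the dual graph of all $(-2)$-curves on $X$ contains the critical subgraph of Figure~\ref{critVI}, whence $X$ is of type $\VI$. Uniqueness in these characteristics comes along for free: by that same proof, a type $\VI$ surface forces $J(\pi)$ to be the unique rational extremal surface with singular fibres $(\IV^*,\I_3,\I_1)$ and forces the base change to have the form $t\mapsto s^2+\beta s$; Lemma~\ref{lemVI} then pins down $\beta=\pm 1$ and $N^-$ up to sign, and the two sign choices and the two values of $\beta$ are interchanged by $s\mapsto -s$, so the resulting Enriques surface is unique up to isomorphism.

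For the non-existence part I would split into the two bad characteristics. The case $\Char(k)=3$ is already dealt with inside the theorem above, since the seven additional $(-2)$-curves of a type $\VI$ surface must arise from a rational elliptic fibration with a double $\I_2$ fibre, which does not exist in characteristic $3$ by Table~\ref{extremalrational}. For $\Char(k)=5$, suppose for contradiction a type $\VI$ Enriques surface $X$ existed; by $(1)\Rightarrow(3)$ of the theorem, the canonical cover of $X$ would be exactly the K3 surface with the displayed Weierstrass equation and $\beta=1$, and the covering involution would be $\sigma=t_{N^-}\circ J(\sigma)$, which in particular must be fixed point free. But Proposition~\ref{moduliVI} asserts precisely that in characteristic $5$ this $\sigma$ has a $(-2)$-curve as fixed locus (the contracted quotient being a Coble surface, cf.\ Remark~\ref{coble}), a contradiction. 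Hence type $\VI$ does not exist in characteristic $5$ either. The only point that needs care --- and the closest thing to an obstacle --- is the logical shape of the ``only if'' direction: one must invoke the \emph{full} equivalence $(1)\Leftrightarrow(3)$ so that every hypothetical type $\VI$ surface is known to come from the specific construction whose failure Proposition~\ref{moduliVI} detects, and one must note that in characteristic $5$ Lemma~\ref{lemVI} still supplies the section $N^-$, so the obstruction is genuinely the failure of condition $(3)$ of Definition~\ref{EnriquesSection}. All the substantive content sits in those earlier results rather than in the corollary itself.
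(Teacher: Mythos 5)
Your proposal is correct and assembles exactly the ingredients the paper itself relies on: the corollary is left implicit there, following immediately from the equivalence $(1)\Leftrightarrow(3)$ of the main theorem for type $\VI$ (which also disposes of characteristic $3$ via the $(\I_6,\I_3,\I_2,\I_1)$ fibration with double $\I_2$ fiber), Lemma \ref{lemVI} pinning down $\beta=\pm1$ and $N^-$ up to the symmetry $s\mapsto -s$, and Proposition \ref{moduliVI} deciding fixed-point-freeness of $\sigma$ according to whether $\Char(k)=5$. Your care in routing the characteristic-$5$ non-existence through the full equivalence, so that every hypothetical type $\VI$ surface is forced into the one construction that Proposition \ref{moduliVI} obstructs, is precisely the intended logic.
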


Similarly to the cases of type $\I$ and $\II$, one obtains a Coble surface if $\sigma$ has a fixed curve, i.e. if $\Char(k) = 5$.
\section{Enriques surfaces of type $\VII$}\label{secVII}

\subsection{Main theorem for type $\VII$}

\begin{lemma}\label{lemVII}
Let $\Char(k) \neq 2$, $J(\sigma): s \mapsto -s$, and
\begin{equation*}
y^2 = x^3 - (s_\beta^2+s_\beta)x^2+(2s_\beta^3-3s_\beta^2+4s_\beta-2)x+(-s_\beta^3+2s_\beta^2-2s_\beta+1),
\end{equation*}
where $s_\beta = s^2 + \beta$ with $\beta \in k - \{1\}$, be the Weierstrass equation of an elliptic fibration of a K3 surface. Then, an everywhere integral, $J(\sigma)$-anti-invariant section $N^-$ meeting the fibers at $s = \infty$ and $s = \pm \sqrt{1-\beta}$ in a non-identity component exists if and only if $\beta \in \{0,2\}$. Moreover, it is unique up to sign if it exists. Both cases are isomorphic and if $\beta = 0$, the section is $N^- = (1,s-s^3)$.
\end{lemma}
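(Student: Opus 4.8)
The plan is to run, for this K3 Weierstrass model, the same argument as in the proof of Lemma \ref{lemVI}: turn the requirements on $N^-$ into a finite polynomial system and solve it. Since $N^-$ is everywhere integral, i.e. $N^-.N^+ = 0$, Shioda's bound \cite[Lemma 1.2]{Shioda} applies and gives $N^- = (x(s),y(s))$ with $x,y \in k[s]$, $\deg_s x \leq 4$, $\deg_s y \leq 6$. The covering involution $J(\sigma)$ acts on $\tilde X$ by $s \mapsto -s$ on the base, fiber-wise as the identity, and fixes $N^+$, so $J(\sigma)(x(s),y(s)) = (x(-s),y(-s))$; since the equation has no $xy$- or $y$-term, $\ominus(x,y) = (x,-y)$, and hence $J(\sigma)(N^-) = \ominus N^-$ is equivalent to $x$ being an even and $y$ an odd polynomial:
\[
x(s) = c_0 + c_2 s^2 + c_4 s^4, \qquad y(s) = d_1 s + d_3 s^3 + d_5 s^5 .
\]

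Substituting this into the Weierstrass equation and writing $u = s^2$ and $s_\beta = u+\beta$, both sides become polynomials of degree $\leq 6$ in $u$, so equating the seven coefficients gives a system in the six unknowns $c_0,c_2,c_4,d_1,d_3,d_5$ and the parameter $\beta$. The coefficient of $u^6$ forces $c_4^2(c_4 - 1) = 0$, so $c_4 \in \{0,1\}$, and the coefficient of $u^5$ forces $d_5 = 0$ (when $c_4 = 0$) or $d_5^2 = c_2 + 1 - 2\beta$ (when $c_4 = 1$). One then treats the two cases separately and solves the remaining equations over $k[\beta]$; to this one adjoins the conditions that $(x(s),y(s))$ pass through the singular point of the Weierstrass model in the charts around $s = \infty$ and $s = \pm\sqrt{1-\beta}$, which is exactly what ``meeting these fibers in a non-identity component'' means and amounts to a few more polynomial conditions. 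Carrying out the elimination should show the system is consistent precisely for $\beta \in \{0,2\}$, with the solution unique up to the sign change $y \mapsto -y$ (which is $\ominus$), and for $\beta = 0$ one verifies directly that $N^- = (1,s-s^3)$ is a solution.

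It remains to see that the two cases give the same surface. The fibrations for $\beta = 0$ and $\beta = 2$ are the base changes of the same rational elliptic surface $J(\pi)$ (in the coordinate $t$) along $t = s^2$ and $t = s^2+2$, respectively; since $-1$ is a square in the algebraically closed field $k$, the substitution $s \mapsto \sqrt{-1}\,s$ carries $t = s^2+2$ to $t = 2 - s^2$, and composing with the automorphism $t \mapsto 2-t$ of $J(\pi)$ (which fixes the branch point $t = \infty$ and interchanges the two fibers over $t = 0$ and $t = 2$) identifies the two elliptic fibrations together with their involutions $\sigma = t_{N^-}\circ J(\sigma)$.

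The main obstacle is the elimination in the second paragraph. In contrast to the earlier cases ($\I$--$\V$), where $N^-$ is $2$-torsion and is produced automatically by the base change, here $N^-$ is non-torsion (indeed $h(N^-) \neq 0$), so one genuinely has to solve the coupled cubic system in the $c_i,d_j$ with $\beta$ as a parameter and extract the constraint $\beta \in \{0,2\}$; some care is also needed in small characteristics to make sure none of the quantities one divides by vanish. A secondary point is to identify the precise automorphism of $J(\pi)$ used in the last step, which relies on the explicit singular-fiber configuration of $J(\pi)$ recorded when that surface is written down.
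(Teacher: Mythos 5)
Your proposal is correct and follows essentially the same route as the paper: reduce to polynomials $x(s),y(s)$ of bounded degree via Shioda's integrality bound, impose evenness/oddness from $J(\sigma)$-anti-invariance and the component conditions at the singular fibers, and solve the resulting system by comparing coefficients, then identify the $\beta=0$ and $\beta=2$ cases via the automorphism $t\mapsto 2-t$ of the rational elliptic surface (the paper records it explicitly as $t\mapsto 2-t$, $x\mapsto x-2+2t$) combined with $s\mapsto\sqrt{-1}\,s$. Both you and the paper leave the final elimination as a direct calculation, and your verification that $(1,s-s^3)$ solves the $\beta=0$ case matches the stated answer.
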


\begin{proof}
Similarly to the previous case, one obtains conditions on $\beta$ by direct calculation.
Let us show the existence of the automorphism. The Weierstrass equation for the rational elliptic fibration
\begin{equation*}
y^2 = x^3 - (t^2+t)x^2+(2t^3-3t^2+4t-2)x+(-t^3+2t^2-2t+1)
\end{equation*}
has an automorphism
\begin{equation*}
t \mapsto 2-t; \quad x \mapsto x-2+2t.
\end{equation*}
This automorphism induces the desired isomorphism.
\end{proof}
\begin{theorem}
Let $X$ be an Enriques surface. The following are equivalent:

\begin{enumerate}
\item $X$ is of type $\VII$.
\item The dual graph of all $(-2)$-curves on $X$ contains the graph in Figure \ref{critVII}.

\begin{figure}[h!]
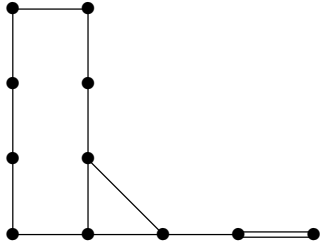

\centerline{
\xy
@={(0,0),(0,10),(0,20),(0,30),(10,0),(10,10),(10,20),(10,30),(20,0),(30,0),(40,0)}@@{*{\bullet}};
(0,0)*{};(0,30)*{}**\dir{-};
(10,0)*{};(10,30)*{}**\dir{-};
(0,0)*{};(10,0)*{}**\dir{-};
(10,30)*{};(0,30)*{}**\dir{-};
(10,0)*{};(20,0)*{}**\dir{-};
(10,10)*{};(20,0)*{}**\dir{-};
(30,0)*{};(20,0)*{}**\dir{-};
(30,0)*{};(40,0)*{}**\dir2{-};
\endxy
}
\caption{Critical subgraph for type $\VII$}
\label{critVII}
\end{figure}

\item The canonical cover $\tilde{X}$ of $X$ admits an elliptic fibration with a Weierstrass equation of the form
\begin{equation*}
y^2 = x^3 - (s^4+s^2)x^2+(2s^6-3s^4+4s^2-2)x+(-s^6+2s^4-2s^2+1)
\end{equation*}
such that the covering morphism $\rho: \tilde{X} \to X$ is given as quotient by the involution
$
\sigma = t_{N^-} \circ J(\sigma),
$
where $J(\sigma): s \mapsto -s$ and $t_{N^-}$ is translation by $N^- = (1,s-s^3)$.
\end{enumerate}
Moreover, singular Enriques surfaces of type $\VII$ do not exist in characteristic $2$.
\end{theorem}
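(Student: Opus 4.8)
\emph{The plan.}  I would follow the template of the arguments for types $\I$ through $\VI$.  The implication $(1)\Rightarrow(2)$ is immediate from Table~\ref{main}: the dual graph of type $\VII$ contains the graph of Figure~\ref{critVII}.  For $(2)\Rightarrow(1)$ I would use that Figure~\ref{critVII} is exactly the configuration treated in the Example of \S\ref{example}.  There one reads it as the dual graph of a special elliptic fibration $\pi$ having an $\I_8$ fibre (the $\tilde{A}_7$ part), a special bisection $N$, and a fibre with dual graph $\tilde{A}_1$; since $N$ meets only one component of the latter, that fibre is a double fibre, hence of multiplicative type by Proposition~\ref{typeofdoublefiber}, i.e.\ an $\I_2$ fibre.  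By Lemma~\ref{shiodatate} the fibration $\pi$ is extremal, so by Table~\ref{extremalrational} its Jacobian is the extremal rational elliptic surface with singular fibres $(\I_8,\I_2,\I_1,\I_1)$, and one then produces all the $(-2)$-curves of type $\VII$ via $jac_2$ (Corollary~\ref{jac2}) together with further extremal fibrations having a double $\I_3$ fibre (Proposition~\ref{canonicaltype}, Lemma~\ref{shiodatate}, Table~\ref{extremalrational}); Corollary~\ref{allcurves} shows there are no other $(-2)$-curves, so $X$ is of type $\VII$.  The ``moreover'' is then a by-product: in characteristic $2$ the double $\tilde{A}_1$-fibre is non-additive, hence an $\I_2$ fibre, so the Jacobian of $\pi$ would be an extremal rational elliptic surface carrying both an $\I_8$ and an $\I_2$ fibre; but Table~\ref{extremalrational} shows the only extremal rational elliptic surface with an $\I_8$ fibre in characteristic $2$ has fibres $(\I_8,\III)$.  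Hence no Enriques surface in characteristic $2$ contains Figure~\ref{critVII} in its dual graph, so by $(1)\Leftrightarrow(2)$ there is no singular Enriques surface of type $\VII$ in characteristic $2$.

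\emph{Passing to $(3)$.}  Assume $\Char(k)\neq 2$ and $X$ of type $\VII$.  I would choose the special fibration $\pi'$ of $X$ whose Jacobian $J(\pi')$ is the extremal rational elliptic surface that, by \cite{Lang3} and the change of coordinates used in the proof of Lemma~\ref{lemVII}, has Weierstrass equation $y^2=x^3-(t^2+t)x^2+(2t^3-3t^2+4t-2)x+(-t^3+2t^2-2t+1)$.  By Lemma~\ref{universal} and Proposition~\ref{Kondo}, $\tilde{X}$ is the minimal proper smooth model of a separable degree $2$ base change of $J(\pi')$, branched over non-additive fibres, carrying a $J(\pi')$-Enriques section $N^-$; choosing the coordinate $t$ so that a reducible fibre of $J(\pi')$ supporting a double fibre of $\pi'$ sits over $t=\infty$, I would write the base change as $t=s_\beta:=s^2+\beta$ with covering involution $J(\sigma)\colon s\mapsto-s$, obtaining the family of Weierstrass equations of Lemma~\ref{lemVII}.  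The height-pairing computation of Proposition~\ref{heightpairingdef}, carried out as in \S\ref{example}, then pins down the intersection behaviour $N^-$ must have with the fibres — in particular that it meets the fibres over $s=\infty$ and $s=\pm\sqrt{1-\beta}$ in non-identity components — and Lemma~\ref{lemVII} forces $\beta\in\{0,2\}$, shows the two cases are isomorphic, and gives $N^-=(1,s-s^3)$ for $\beta=0$; substituting $\beta=0$ gives the equation of $(3)$.  Conversely, given $(3)$, the fibration is $\sigma$-invariant and descends to a special fibration on the Enriques quotient whose special bisection is the image of $N^++N^-$; because $\beta=0$ is exactly the value for which $N^-$ is a $J(\pi')$-Enriques section (Lemma~\ref{lemVII}, Proposition~\ref{Kondo}), the same chain of $jac_2$-constructions and auxiliary fibrations as above recovers the dual graph of type $\VII$ on $X$, so $(3)\Rightarrow(1)$ via $(2)\Rightarrow(1)$.

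\emph{The main obstacle.}  The genuinely new difficulty, absent for types $\I$--$\V$, is the construction of the $J(\pi')$-Enriques section $N^-$.  For types $\I$--$\V$ one could take $N^-$ to be a $2$-torsion section, which is automatically the base change of a torsion section of the Jacobian and satisfies conditions $(1)$ and $(2)$ of Definition~\ref{EnriquesSection} for free; here $h(N^-)\neq 0$, so $N^-$ is non-torsion and must be exhibited directly.  Concretely one searches among the everywhere integral sections — which by \cite{Shioda} are the pairs $(x(s),y(s))$ with $\deg x\le 4$ and $\deg y\le 6$ — solves the resulting system of coefficient equations to determine for which $\beta$ a section with the prescribed fibre-component behaviour exists, and verifies conditions $(1)$--$(3)$ of Definition~\ref{EnriquesSection}.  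This is precisely Lemma~\ref{lemVII}, and since it pins $\beta$ down up to isomorphism it simultaneously yields the rigidity (uniqueness) of type $\VII$.  A secondary, routine point is tracking which fibres of the relevant Jacobians become additive in small characteristic, which is what excludes characteristic $2$ here (and, via the moduli discussion, characteristic $5$).
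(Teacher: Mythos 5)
Your proposal is correct and follows essentially the same route as the paper: $(1)\Rightarrow(2)$ from the dual graph, $(2)\Rightarrow(1)$ via the worked Example of \S\ref{example}, the characteristic-$2$ exclusion from the absence of an extremal rational fibration with fibers $\I_8$ and $\I_2$ in Table \ref{extremalrational}, and $(3)$ via the quadratic base change of the $(\I_8,\I_2,\I_1,\I_1)$ surface with Lemma \ref{lemVII} pinning down $\beta\in\{0,2\}$ and $N^-=(1,s-s^3)$. You also correctly single out the genuinely new point compared to types $\I$--$\V$, namely that $N^-$ is non-torsion and must be found by the explicit search among everywhere integral sections.
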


\begin{proof}
First, observe that the dual graph of type $\VII$ (see Table \ref{main}) contains the graph in Figure \ref{critVII}. 

Conversely, we have shown in Example \ref{example} that we recover type $\VII$ from the critical subgraph and, since an elliptic fibration with singular fibers $\I_8$ and $\I_2$ (not $\III$, since it is a double fiber) does not exist in characteristic $2$, this type cannot exist in characteristic $2$.
%
%
%
%
%

We have found the following equation for the unique rational and extremal elliptic surface with singular fibers $(\I_8,\I_2,\I_1,\I_1)$ in characteristic different from $2$
\begin{equation*}
y^2 = x^3 - (t^2+t)x^2+(2t^3-3t^2+4t-2)x+(-t^3+2t^2-2t+1),
\end{equation*}
where $t$ is a coordinate on $\bbP^1$. The $\I_8$ fiber is at $t = 1$, the $\I_2$ fiber is at $t = \infty$ and there are two $\I_1$ fibers at $t = 1 \pm \frac{\sqrt{-1}}{2}$. 

In characteristic different from $2$, we can write a degree $2$ morphism $\bbP^1 \to \bbP^1$ with $t = \infty$ as branch point and which is not branched over $t = 0$ as
\begin{equation*}
t \mapsto s^2 + \beta,
\end{equation*}
where $s$ is the new parameter on $\bbP^1$ and $\beta \neq 0$. The covering involution $J(\sigma)$ is given by $s \mapsto -s$. Now, note that we are looking for a section $N^-$ which meets the $\I_4$ and $\I_8$ fibers in non-identity components. By Lemma \ref{lemVII}, if a suitable section $N^-$ exists, we can assume $\beta = 0$ and $N^- = (1,s-s^3)$. Moreover, one can check that $N^-$ has the correct intersection behaviour with the $\I_8$ fibers.
\end{proof}
\subsection{Automorphisms}

\begin{proposition}\label{Aut7}
Let $X$ be an Enriques surface of type $\VII$. Then, $Aut(X) \cong \mathfrak{S}_5$ and this group is generated by automorphisms induced by $2$-torsion sections of the Jacobian fibrations of elliptic fibrations of $X$. Moreover, $\Aut_{nt}(X) \cong \{1\}$.
\end{proposition}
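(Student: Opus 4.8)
The plan is to follow the template established in the proofs of Propositions \ref{Aut1}--\ref{Aut6}, adapting Kond\=o's complex-analytic argument \cite[p.223]{Kondo} to arbitrary characteristic $\neq 2$. The two things that require genuine work over an arbitrary field are: (i) showing $\Aut_{nt}(X)$ is trivial, and (ii) exhibiting enough automorphisms coming from $2$-torsion sections of Jacobians (via Corollary \ref{jac2}) to realize the full symmetry group $\mathfrak{S}_5$ of the dual graph.

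First I would recall the labelled dual graph of type $\VII$ and identify an elliptic fibration, say the one induced by a linear system $|2(E_1 + \cdots + E_6)|$ of type $\tilde A$, which has several reducible fibers, hence at least three separable $(-2)$-bisections. Any numerically trivial automorphism $g$ fixes these bisections pointwise; since they are separable bisections of a non-isotrivial fibration, $g$ acts trivially on the base $\bbP^1$ and fixes more than two geometric points on the generic fiber, forcing $g = \mathrm{id}$ by the usual argument (as in the proofs for types $\IV$ and $\VI$). This gives $\Aut_{nt}(X) \cong \{1\}$. Then I would invoke Corollary \ref{allcurves} so that $\Aut(X)$ embeds into the automorphism group of the dual graph, which is $\mathfrak{S}_5$ by \cite[p.223]{Kondo}; the intersection numbers are the same in every characteristic, so this part of Kond\=o's analysis is characteristic-free.

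For the surjectivity onto $\mathfrak{S}_5$, the plan is to identify a convenient set $\Sigma$ of ten $(-2)$-curves on which $\mathfrak{S}_5$ acts (the curves playing the role of the edges of $K_5$, or the relevant orbit in Kond\=o's description), compute the stabilizer of one curve $E$ in this action (of order $12$, i.e. $\mathfrak{S}_3 \times \bbZ/2\bbZ$ or similar), realize its generators by $2$-torsion sections of Jacobians of fibrations with fibers of type $\I_0^*$, $\I_1^*$ or $\III^*$ through Corollary \ref{jac2}, and then show that the group generated by $2$-torsion-section automorphisms acts transitively on $\Sigma$ by exhibiting, for each target curve in $\Sigma$, an explicit fibration whose $2$-torsion section sends $E$ there. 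Transitivity plus a point-stabilizer of order $12 = |\mathfrak{S}_5|/10$ forces the image to be all of $\mathfrak{S}_5$. Each such realization is checked mechanically by following the $jac_2$ construction as in Example \ref{example}, using Proposition \ref{heightpairingdef} and Table \ref{heightpairing} to compute the relevant intersection numbers.

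The main obstacle is the bookkeeping in the transitivity step: one must locate enough sub-$\tilde A$-$\tilde D$-$\tilde E$ diagrams in the dual graph of type $\VII$, recognize the resulting extremal fibrations via Lemma \ref{shiodatate} and Table \ref{extremalrational}, identify which reducible fiber is a double fiber, and trace through $jac_2$ to see exactly how the $2$-torsion section permutes the curves of $\Sigma$; since $\Char(k) = 2$ is excluded for this type anyway, no new phenomena appear and the characteristic-zero computations of Kond\=o carry over verbatim, so the only real content is reproducing his combinatorial argument and citing the appropriate universal facts (Corollary \ref{jac2}, Proposition \ref{Kondo}) in place of the complex-analytic input.
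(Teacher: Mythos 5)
Your proposal is correct and takes essentially the same approach as the paper: kill $\Aut_{nt}(X)$ by exhibiting separable bisections of an elliptic fibration that any numerically trivial automorphism must fix pointwise, bound $\Aut(X)$ by the graph automorphism group $\mathfrak{S}_5$ via Corollary \ref{allcurves}, and realize generators by $2$-torsion sections through Corollary \ref{jac2}. The only divergence is in the bookkeeping of the generation step: the paper acts directly on the five curves $K_1,\hdots,K_5$, producing $(2,2)$-cycles from the fibrations $|2(K_i+E_j)|$ and a transposition from $|K_4+K_5|$, instead of your transitivity-plus-order-$12$-stabilizer count on a ten-element orbit (which is the template the paper uses for type $\VI$); both are valid.
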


\begin{proof}
Recall that the dual graph of $(-2)$-curves for type $\VII$ is as follows:

\centerline{
\includegraphics[width=110mm]{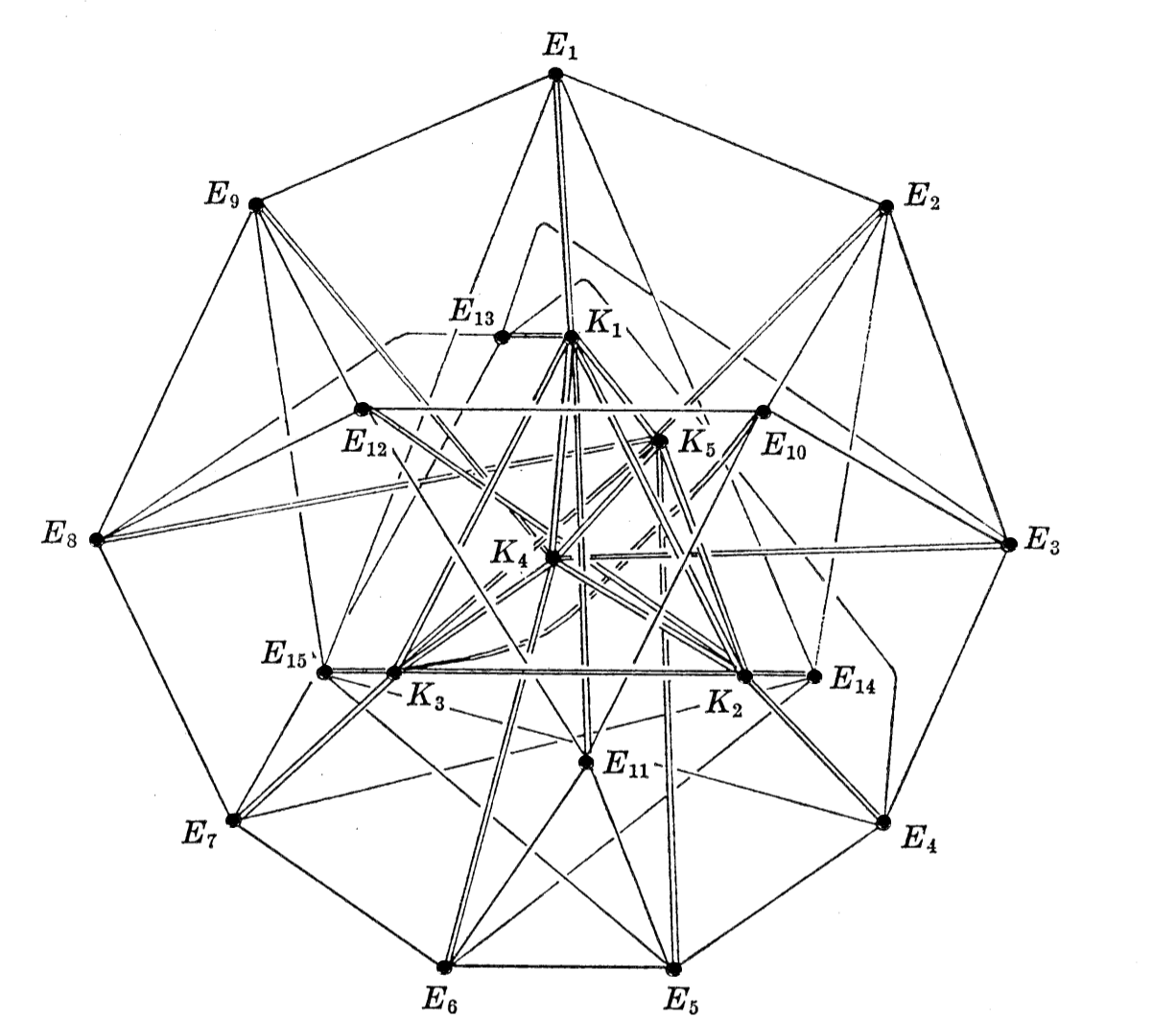}
}
\vspace{2mm}

We claim that $\Aut_{nt}(X)$ is trivial. Indeed, one can check that the bisections $E_2$, $E_3$, $E_5$, $E_6$, $E_8$ and $E_9$ of $|K_4+K_5|$ are fixed pointwise by any numerically trivial automorphism, which therefore has to be trivial.

The automorphism group of the graph is $\mathfrak{S}_5$. 
Following \cite[p.232]{Kondo}, we look at the induced action on the set $\Sigma = \{K_1,\hdots,K_5\}$ and observe that the pointwise stabilizer of $\Sigma$ is trivial.
Now, each $K_i$, $i \in \{1,\hdots,5\}$, meets exactly three $E_j$, $j \in \{1,\hdots,15\}$, twice. The $2$-torsion sections of the Jacobians of the elliptic fibrations $|2(K_i + E_j)|$ act as permutations of cycle type $(2,2)$ on $\Sigma - K_i$. Note that the $2$-torsion section of the Jacobian of $|K_4+K_5|$ interchanges $K_4$ and $K_5$ while fixing $K_1,K_2$ and $K_3$. Together, these involutions generate the full automorphism group.
\end{proof}

\subsection{Degenerations and Moduli}

\begin{proposition}\label{moduliVII}
Assume $\Char(k) \neq 2$. Let
\begin{equation*}
y^2 = x^3 - (s^4+s^2)x^2+(2s^6-3s^4+4s^2-2)x+(-s^6+2s^4-2s^2+1)
\end{equation*}
be the Weierstrass equation of an elliptic fibration $\tilde{\pi}$ with section on a K3 surface $\tilde{X}$. Define the involution $\sigma = t_{N^-} \circ J(\sigma)$, where $J(\sigma): s \mapsto -s$ and $t_{N^-}$ is translation by the section $N^- = (1,s-s^3)$. Then, $\sigma$ is fixed point free if and only if $\Char(k) \neq 5$. If $\Char(k) = 5$, $\sigma$ has exactly one $(-2)$-curve as fixed locus.
\end{proposition}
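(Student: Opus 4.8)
The plan is to run the same strategy as in the proof of Proposition~\ref{moduliVI}. By Proposition~\ref{Kondo}, the involution $\sigma = t_{N^-}\circ J(\sigma)$ is fixed point free if and only if $N^- = (1,s-s^3)$ is a $J(\pi)$-Enriques section of $\tilde{\pi}$, where $\varphi\colon t\mapsto s^2$ is the degree $2$ base change used in the theorem (the case $\beta=0$ of Lemma~\ref{lemVII}). Conditions $(1)$ and $(2)$ of Definition~\ref{EnriquesSection} hold automatically: by Lemma~\ref{lemVII} the section $N^-$ is everywhere integral and $J(\sigma)$-anti-invariant, so $N^-.N^+ = 0$ and $J(\sigma)(N^-) = \ominus N^-$. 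Hence the whole question reduces to condition $(3)$, namely whether $N^-$ meets the identity component of the fiber of $\tilde{\pi}$ over $\varphi^{-1}(x)$ for a branch point $x$ of $\varphi$ at which $J(\pi)$ has a singular fiber.

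The branch points of $\varphi$ are $t=0$ and $t=\infty$, and $J(\pi)$ has singular fibers $\I_8$ at $t=1$, $\I_2$ at $t=\infty$, and two $\I_1$ fibers at $t = 1 \pm \tfrac{\sqrt{-1}}{2}$ (as recalled in the proof of the main theorem for type $\VII$). Over the branch point $t=\infty$ the fibration $\tilde{\pi}$ has an $\I_4$ fiber, the ramified base change of the $\I_2$, and $N^-$ meets it in a non-identity component by Lemma~\ref{lemVII}; so condition $(3)$ is satisfied there in every characteristic $\neq 2$. Over the other branch point $t=0$ the fiber of $J(\pi)$ is smooth unless one of the $\I_1$ fibers sits there, i.e.\ unless $1 \pm \tfrac{\sqrt{-1}}{2} = 0$, which forces $-1 = 4$, that is $\Char(k)=5$. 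Thus, if $\Char(k)\neq 2,5$, condition $(3)$ is vacuous at $t=0$ as well, so $N^-$ is a $J(\pi)$-Enriques section and $\sigma$ is fixed point free by Proposition~\ref{Kondo}.

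It remains to treat $\Char(k)=5$. Then $t=0$ carries an $\I_1$ fiber of $J(\pi)$, so $\tilde{\pi}$ has an $\I_2$ fiber over $s=0$. Specializing the Weierstrass equation to $s=0$ gives $y^2 = x^3 - 2x + 1 = (x-2)^2(x-1)$, a nodal cubic with node $(2,0)$, whereas $N^-$ passes through $(1,0)\neq(2,0)$, a smooth point of this cubic; hence $N^-$ meets the identity component of the $\I_2$ fiber over $s=0$ and is \emph{not} a $J(\pi)$-Enriques section. By Proposition~\ref{Kondo}, $\sigma$ has fixed points, and by Remark~\ref{coble} its fixed locus is $1$-dimensional. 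As in the proof of Proposition~\ref{Kondo}, $\Fix(\sigma)$ is contained in the union of the two fibers fixed (setwise) by $J(\sigma)\colon s\mapsto -s$, namely the fibers over $s=0$ and $s=\infty$; the fiber over $s=\infty$ contributes no fixed points since $N^-$ lies off its identity component, so $\Fix(\sigma)$ lies inside the $\I_2$ fiber over $s=0$. Since $\sigma$ has order $2$ and $\Char(k)=5$, the fixed locus is smooth by the Lefschetz fixed point formula; because the two components of an $\I_2$ fiber intersect, $\Fix(\sigma)$ can only consist of one of them, hence it is exactly one $(-2)$-curve.

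I do not expect a serious obstacle here: the statement is essentially a verification built on the machinery already in place. The two points requiring care are (i) checking that among the branch points of $\varphi$ only $t=0$ in characteristic $5$ can violate condition $(3)$ — in particular that the unavoidable $\I_2$ fiber over $t=\infty$ is always harmless thanks to Lemma~\ref{lemVII}, and that the $\I_1$ fibers never hit a branch point outside characteristic $5$ — and (ii) the explicit specialization of the Weierstrass equation at $s=0$ in characteristic $5$ together with the final identification of $\Fix(\sigma)$ with a single fiber component. These are the steps I would be most careful to get exactly right.
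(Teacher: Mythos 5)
Your proof is correct and follows essentially the same route as the paper: reduce to condition (3) of being a $J(\pi)$-Enriques section, observe that the branch point $t=\infty$ is harmless and that $t=0$ only hits an $\I_1$ fiber when $1\pm\tfrac{\sqrt{-1}}{2}=0$, i.e.\ in characteristic $5$, and then check via the specialization $y^2=x^3-2x+1=(x-2)^2(x-1)$ that $N^-=(1,0)$ misses the node $(2,0)$ and hence meets the identity component. Your closing argument identifying the fixed locus as exactly one component of the $\I_2$ fiber (via smoothness of the tame fixed locus) supplies a detail the paper leaves implicit, but the approach is the same.
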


\begin{proof}
The branch points of $\varphi$ are $t = \infty$ and $t = 0$, while the $\I_1$ fibers of $\pi$ lie over $t = 1 \pm \frac{\sqrt{-1}}{2}$. Hence, $\varphi$ is branched over a point lying under a nodal fiber if and only if $1 \pm \frac{\sqrt{-1}}{2} = 0$, i.e. if and only if $5 = 0$.

Now, if $\Char(k) = 5$, the location of the $\I_2$ fiber of $\tilde{\pi}$ is $s = 0$. The singular point of the Weierstrass equation at $s = 0$ is $(2,0)$, while $N^-$ passes through $(1,0)$. Hence, $N^-$ meets the identity component of the $\I_2$ fiber and therefore it is not a $J(\pi)$-Enriques section and $\sigma$ fixes a $(-2)$-curve.
\end{proof}
\begin{corollary}
Enriques surfaces of type $\VII$ with smooth K3 cover exist if and only if $\Char(k) \neq 2,5$. Moreover, they are unique if they exist.
\end{corollary}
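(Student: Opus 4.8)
The plan is to combine the Main Theorem for type $\VII$ with Proposition \ref{moduliVII}; together these reduce the corollary to the rigidity of the Weierstrass model appearing in that theorem and to the fixed-point analysis of the involution $\sigma$ already carried out in Proposition \ref{moduliVII}.

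For the non-existence statements I would argue as follows. In characteristic $2$ there is nothing left to do: the Main Theorem for type $\VII$ already asserts that singular Enriques surfaces of type $\VII$ do not exist, and by our standing convention an Enriques surface with smooth K3 cover in characteristic $2$ is precisely a singular one. Suppose now $\Char(k) = 5$ and let $X$ be an Enriques surface of type $\VII$. By the implication $(1)\Rightarrow(3)$ of the Main Theorem, its canonical cover $\tilde{X}$ carries an elliptic fibration with the displayed Weierstrass equation and the covering involution is $\sigma = t_{N^-}\circ J(\sigma)$ with $J(\sigma):s\mapsto -s$ and $N^- = (1,s-s^3)$. But the covering involution of an Enriques surface is fixed point free by definition, whereas Proposition \ref{moduliVII} shows this very $\sigma$ fixes a $(-2)$-curve when $\Char(k)=5$ --- a contradiction. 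Hence no Enriques surface of type $\VII$ exists in characteristic $5$.

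For existence when $\Char(k)\neq 2,5$, I would start from the K3 surface $\tilde{X}$ defined by the Weierstrass equation of the Main Theorem. Proposition \ref{moduliVII} tells us $\sigma$ is fixed point free, so $X:=\tilde{X}/\sigma$ is an Enriques surface whose smooth K3 cover is $\tilde{X}$; equivalently $N^-$ is a $J(\pi)$-Enriques section, so by Proposition \ref{Kondo} the fibration descends to a special elliptic fibration $\pi$ on $X$ with Jacobian $J(\pi)$, and condition $(3)$ of the Main Theorem holds for $X$. The implication $(3)\Rightarrow(1)$ then shows $X$ is of type $\VII$. For uniqueness I would again invoke $(1)\Rightarrow(3)$: any Enriques surface of type $\VII$ has as canonical cover the K3 surface defined by the \emph{same} parameter-free Weierstrass equation, together with the \emph{same} covering involution $\sigma$; hence $(\tilde{X},\sigma)$, and therefore $X$, is determined up to isomorphism. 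Here the normalization of the base change to $\beta = 0$ and of $N^-$ to $(1,s-s^3)$ is exactly what Lemma \ref{lemVII} supplies --- it forces $\beta\in\{0,2\}$, shows the two cases are isomorphic, and pins down $N^-$ up to sign, the sign change being absorbed by conjugating $\sigma$ with the inversion automorphism of $\tilde{\pi}$.

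The step I expect to be the real obstacle --- and the only one requiring genuine work rather than bookkeeping --- is the rigidity input behind $(1)\Rightarrow(3)$: one must check that the critical subgraph of Figure \ref{critVII} forces the Jacobian fibration $J(\pi)$ to be the unique extremal rational elliptic surface with fibres $(\I_8,\I_2,\I_1,\I_1)$, that the double fibre forces $t=\infty$ to be a branch point of $\varphi$, and then that Lemma \ref{lemVII} leaves no further freedom in the choice of $\varphi$ and of $N^-$, so that two a priori non-isomorphic type $\VII$ surfaces cannot produce genuinely different Weierstrass data. Everything else in the corollary is a formal consequence of Proposition \ref{moduliVII} and the definition of an Enriques surface.
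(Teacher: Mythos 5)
Your proposal is correct and follows the same route the paper intends: the corollary is a direct combination of the equivalence $(1)\Leftrightarrow(3)$ in the main theorem for type $\VII$ (with Lemma \ref{lemVII} supplying the rigidity of $\beta$ and $N^-$), Proposition \ref{Kondo}, and the fixed-point analysis of Proposition \ref{moduliVII}. Nothing is missing.
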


\begin{remark}
Here, it is important to recall our convention on Enriques surfaces in characteristic $2$. In fact, by \cite{KatsuraKondo}, there is a $1$-dimensional family of classical and supersingular Enriques surfaces of type $\VII$ in characteristic $2$. Note also that the involution $\sigma$ produces a Coble surface in characteristic $5$.
\end{remark}
\section{The classification-theorem}\label{graphs}

Now that we have completed the construction of the seven types of Enriques surfaces with finite automorphism group, it remains to show that these seven types are indeed all possible Enriques surfaces with finite automorphism group. Hence, the goal of this chapter is to prove the following classification-theorem, finishing the proof of the Main Theorem. Recall that all our Enriques surfaces are assumed to have a smooth canonical cover.

\begin{theorem}\label{graphtheorem}
Let $X$ be an Enriques surface. The following are equivalent:
\begin{enumerate}
\item $X$ has finite automorphism group.
\item Every elliptic fibration of $X$ is extremal.
\item Every special elliptic fibration of $X$ is extremal and $X$ contains a $(-2)$-curve.
\item The dual graph of all $(-2)$-curves on $X$ contains a critical subgraph for one of the types $\I,\hdots,\VII$.
\item The dual graph of all $(-2)$-curves on $X$ is one of the seven types $\I,\hdots,\VII$.
\item $X$ contains only finitely many, but at least one, $(-2)$-curves.
\end{enumerate}
\end{theorem}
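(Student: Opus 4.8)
The plan is to establish the cycle $(1)\Rightarrow(2)\Rightarrow(3)\Rightarrow(4)\Rightarrow(5)\Rightarrow(1)$ together with $(5)\Rightarrow(6)\Rightarrow(3)$; the first cycle makes $(1)$ through $(5)$ equivalent, and the second chain then adds $(6)$ to the list and finishes the theorem, and with it the Main Theorem. Three of these links are already available from the earlier sections. The implication $(4)\Rightarrow(5)$ is precisely assertion "$(2)\Rightarrow(1)$" in the "Main theorem for type $K$" of each of Sections \ref{secI}--\ref{secVII}: a surface whose dual graph contains the critical subgraph of type $K$ is of type $K$, and then by Corollary \ref{allcurves} its dual graph of all $(-2)$-curves is the full type-$K$ graph. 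The implication $(5)\Rightarrow(1)$ is the content of Propositions \ref{Aut1}--\ref{Aut7}, which compute $\Aut(X)$ to be a finite group in each of the seven cases. Finally $(5)\Rightarrow(6)$ is trivial, since each of the seven graphs is finite and has at least one vertex.

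For the remaining easy links I argue as follows. $(1)\Rightarrow(2)$: contrapositively, if some elliptic fibration $\pi$ of $X$ is not extremal then $\MW(J(\pi))$ is infinite, and since this group acts on $X$ (Dolgachev \cite[\S 4]{Dolgachev}), $\Aut(X)$ is infinite. $(2)\Rightarrow(3)$: the extremality of all special fibrations is a special case of $(2)$, so it is enough to exhibit a $(-2)$-curve on $X$; but if $X$ had none, then no fiber of any elliptic fibration of $X$ would be reducible, so by Lemma \ref{shiodatate} no elliptic fibration of $X$ would be extremal — contradicting $(2)$, since an elliptic fibration exists by Proposition \ref{ellipticpencil}. $(6)\Rightarrow(3)$: the existence of a $(-2)$-curve gives a special elliptic fibration $\pi$ by Proposition \ref{nodalisspecial}, and if $\pi$ were not extremal then $\MW(J(\pi))$ would be infinite, so the map $jac_2$ of Corollary \ref{jac2} would produce infinitely many distinct special bisections of $\pi$, contradicting the finiteness in $(6)$.

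The heart of the proof, and the step I expect to be by far the most laborious, is $(3)\Rightarrow(4)$. A $(-2)$-curve on $X$ gives, by Proposition \ref{nodalisspecial}, a special elliptic fibration $\pi$ with special bisection $N$, and $\pi$ is extremal by hypothesis, so $J(\pi)$ is one of the extremal rational elliptic surfaces of Table \ref{extremalrational}; moreover, by Proposition \ref{typeofdoublefiber} the double fibers of $\pi$ are multiplicative or smooth, so $\pi$ arises from $J(\pi)$ by a separable degree-$2$ base change branched away from the additive fibers, as in Proposition \ref{Kondo}. The plan is then to go through Table \ref{extremalrational} — paying attention to the small-characteristic exceptions — and, for each extremal rational surface and each admissible choice of branch locus, to read off the configuration formed by the reducible fibers of $\pi$ together with $N$. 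If this configuration already contains one of the seven critical subgraphs we are done; otherwise one enlarges the dual graph, applying $jac_2$ to sections of $J(\pi)$ and computing the resulting intersection numbers with the height pairing (Proposition \ref{heightpairingdef}, Table \ref{heightpairing}) exactly as in the example of \S\ref{example}, and passing to auxiliary elliptic fibrations recognised via Proposition \ref{canonicaltype}. Each branch of this analysis must terminate in a critical subgraph, or else produce a special elliptic fibration of $X$ whose reducible fibers span a lattice of rank $<9$, which by Lemma \ref{shiodatate} is non-extremal and contradicts $(3)$. The genuine obstacle here is not any single computation but the organization of the (finite but substantial) case analysis, together with the verification that the seven critical subgraphs are exactly the minimal configurations that cannot be enlarged indefinitely. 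Once $(3)\Rightarrow(4)$ is in place, the two chains above close, proving the equivalence of $(1)$--$(6)$.
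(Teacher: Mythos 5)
Your proposal follows the same route as the paper: the paper proves exactly the cycle $(1)\Rightarrow(2)\Rightarrow(3)\Rightarrow(4)\Rightarrow(5)\Rightarrow(1)$ together with $(5)\Rightarrow(6)$ and $(6)\Rightarrow(3)$, using the same references for each easy link (Dolgachev's observation for $(1)\Rightarrow(2)$, Proposition \ref{ellipticpencil} and Lemma \ref{shiodatate} for $(2)\Rightarrow(3)$, the type-by-type theorems plus Corollary \ref{allcurves} for $(4)\Rightarrow(5)$, Propositions \ref{Aut1}--\ref{Aut7} for $(5)\Rightarrow(1)$, and Corollary \ref{jac2} for $(6)\Rightarrow(3)$), and isolates $(3)\Rightarrow(4)$ as a separate lemma (Lemma \ref{graphlemma}) proved by precisely the case analysis over admissible fiber-bisection configurations that you outline. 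The only part you have not executed is that case analysis itself, which occupies the bulk of \S\ref{graphs} (organized via Lemmas \ref{reductionstep1}, \ref{admissibletypes} and \ref{noHesse}), but your description of how it is set up and why it terminates matches the paper's.
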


Before giving the proof of Theorem \ref{graphtheorem}, we need to introduce the tools for the classification of dual graphs. 

\subsection{Preparations for the proof of the classification-theorem}
Corollary \ref{jac2} and the height pairing of sections of elliptic fibrations of the K3 cover will play an important role. More precisely, we have the following lemma.

\begin{lemma}\label{admissible}
Let $\pi: X \to \bbP^1$ be a special and extremal elliptic fibration of an Enriques surface $X$ with special bisection $N$. Let $\tilde{\pi}$ be the corresponding elliptic fibration of the K3 cover $\tilde{X}$ of $X$. Denote the irreducible curves on $\tilde{X}$ mapping surjectively onto $N$ by $N^+$ and $N^-$. Let $J(\pi)$ be the Jacobian of $\pi$. We choose $N^+$ as the zero section of $\tilde{\pi}$.

Then,
\begin{itemize}
\item either $h(N^-) = 0$ and $N^-$ is a $2$-torsion section in $\MW(J(\pi)) \subseteq \MW(\tilde{\pi})$
\item or $N^-$ satisfies
\begin{equation*}
 \sum_\nu contr_\nu (N^-) < 4 \quad \text{ and } \quad
 \sum_\nu contr_\nu (N^-,P) \in \{0,1,2\}
\end{equation*}
for all $P \in \MW(J(\pi)) \subseteq \MW(\tilde{\pi})$ with $P \neq N^-$.
\end{itemize}
\end{lemma}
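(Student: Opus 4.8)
The plan is to balance the non-negativity of the Mordell--Weil lattice against the numerics of the correction terms. Recall from Lemma~\ref{universal}(3) that $N^-.N^+ = 0$ and $J(\sigma)(N^-) = \ominus N^-$. First I would feed $N^-$ into the height pairing of Proposition~\ref{heightpairingdef}: since $N^-$ is a section, $(N^-)^2 = -2$, and together with $N^-.N^+ = 0$ this gives
\[
h(N^-) = 4 - \sum_\nu contr_\nu(N^-).
\]
Because $\tilde X$ is a K3 surface, the lattice $\MW(\tilde\pi)/\MW(\tilde\pi)_{tors}$ is positive definite (Proposition~\ref{heightpairingdef}), so $h(N^-) \ge 0$ and hence $\sum_\nu contr_\nu(N^-) \le 4$, with equality exactly when $N^- \in \MW(\tilde\pi)_{tors}$ (Remark~\ref{heightpairingobservation}). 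This is already the dichotomy in the statement, and it remains to analyse the two alternatives.

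Suppose first $\sum_\nu contr_\nu(N^-) < 4$, so that $N^-$ is non-torsion. Since $\pi$, hence $J(\pi)$, is extremal, every $P \in \MW(J(\pi)) \subseteq \MW(\tilde\pi)$ is torsion, and in particular $P \ne N^-$; by Remark~\ref{heightpairingobservation}, $\langle N^-, P \rangle = 0$. Expanding the pairing of Proposition~\ref{heightpairingdef} and using $N^-.N^+ = 0$ together with $N^+.P = 0$ --- the latter because torsion sections of an extremal rational elliptic surface are disjoint from the zero section (classical, and visible from Table~\ref{extremalrational}), a disjointness preserved by the separable base change $\varphi$, where in characteristic $2$ one invokes the reduction hypotheses built into Definition~\ref{EnriquesSection} --- I get $\sum_\nu contr_\nu(N^-,P) = 2 - N^-.P$. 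Now $N^-.P \ge 0$ since $N^- \ne P$ are distinct irreducible curves, while every $contr_\nu(N^-,P) \ge 0$ by inspection of Table~\ref{heightpairing}; the latter forces $N^-.P \le 2$. Hence $\sum_\nu contr_\nu(N^-,P) = 2 - N^-.P \in \{0,1,2\}$, as desired.

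There remains the case $\sum_\nu contr_\nu(N^-) = 4$, equivalently $N^- \in \MW(\tilde\pi)_{tors}$, where I must upgrade ``torsion'' to ``$2$-torsion lying in $\MW(J(\pi))$''. The abstract input is: $J(\sigma)$ acts on the finite group $\MW(\tilde\pi)_{tors}$, its invariants are exactly $\MW(J(\pi))$ (a $J(\sigma)$-invariant section of $\tilde\pi$ descends to a section of $J$, using that $N^-$ avoids the exceptional locus of the resolution), and $\MW(J(\pi))$ is \emph{all} invariant torsion once $J(\pi)$ is extremal; meanwhile $J(\sigma)(N^-) = \ominus N^-$. So if $N^-$ had order $>2$ it would supply odd-order torsion in $\MW(\tilde\pi)$ not coming from $J(\pi)$ on which $J(\sigma)$ acts by $-1$. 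Ruling this out is, I expect, the main obstacle: it does not seem to follow purely formally, and I would dispose of it by running through the finitely many extremal rational elliptic fibrations of Table~\ref{extremalrational} together with the admissible degree-$2$ covers $\varphi$, checking in each case that an everywhere-integral, $J(\sigma)$-anti-invariant torsion section meeting the fibres over the branch points in non-identity components must be $2$-torsion (the same kind of coefficient comparison already carried out in Lemmas~\ref{lemVI} and~\ref{lemVII}). Once $N^-$ is $2$-torsion, $J(\sigma)(N^-) = \ominus N^- = N^-$, so $N^-$ is $J(\sigma)$-invariant and therefore descends to a $2$-torsion section of $J(\pi)$, finishing the proof.
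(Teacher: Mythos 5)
Your setup and your treatment of the second alternative coincide with the paper's: the identity $h(N^-) = 4 - \sum_\nu contr_\nu(N^-)$, positive semi-definiteness of the height pairing, and the expansion of $0 = \langle P, N^-\rangle$ for torsion $P$ (using $P.N^+ = N^-.N^+ = 0$) to get $\sum_\nu contr_\nu(N^-,P) = 2 - P.N^- \in \{0,1,2\}$ are exactly the steps the paper takes, and they are correct.

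The genuine gap is the step you yourself flag as ``the main obstacle'': upgrading ``$N^-$ is torsion'' to ``$N^-$ is $2$-torsion.'' You propose to dispose of it by enumerating extremal rational fibrations and admissible quadratic covers and doing a coefficient comparison in each case, but you do not carry this out, and as stated it is not even a finite check (the branch points of $\varphi$ vary in families, so each line of Table \ref{extremalrational} contributes a one-parameter family of covers). The paper avoids all of this with a short conceptual argument that you are missing: by Lemma \ref{universal}, $J(\sigma)(N^-) = \ominus N^-$, and $J(\sigma)$ acts trivially on (the smooth locus of) the fiber $F$ of $\tilde{\pi}$ lying over a double fiber of $\pi$; hence the restriction of $N^-$ to $F$ is a $2$-torsion point of $F^{sm}$. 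If $N^-$ were torsion but not $2$-torsion, then $N^- \oplus N^-$ would be a nonzero torsion section meeting the zero section along $F$; by \cite[Proposition 2.4]{ItoLiedtke} its order would be divisible by $p = \Char(k)$, and \cite[Proposition 2.1]{ItoLiedtke} would force $F$ to be additive or supersingular, contradicting Proposition \ref{typeofdoublefiber}, which says the fiber over a double fiber is multiplicative or ordinary. (Once $N^-$ is $2$-torsion, $J(\sigma)(N^-) = \ominus N^- = N^-$, so it descends to $\MW(J(\pi))$, as you say.) Relatedly, your parenthetical that an $N^-$ of order $>2$ would ``supply odd-order torsion on which $J(\sigma)$ acts by $-1$'' is not correct as stated --- order $4$ or $8$ is not odd --- so the abstract group-theoretic route you sketch does not close the gap either; some geometric input about the fiber over the double fiber is needed, and that is precisely what the paper supplies.
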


\begin{proof}
Since 
\begin{equation*}
0 \leq h(N^-) = 4 + 2N^-.N^+ -  \sum_\nu contr_\nu (N^-) = 4 -  \sum_\nu contr_\nu (N^-)
\end{equation*}
and $N^-$ restricts to a $2$-torsion section on a fiber $F$ of $\tilde{\pi}$ lying over a double fiber of $\pi$, we either have $h(N^-) = 0$ and we claim that $N^-$ is $2$-torsion or $h(N^-) > 0$ and therefore $\sum_\nu contr_\nu (N^-) < 4$. 

Indeed, suppose $h(N^-) = 0$ and $N^-$ is not $2$-torsion. Then, $N^- \oplus N^-$ meets the zero section in $F$, hence its order is divisible by $\Char(k) = 2$ by \cite[Proposition 2.4]{ItoLiedtke}. But if $\Char(k) = 2$, the fiber $F$ is either multiplicative or ordinary by Proposition \ref{typeofdoublefiber}, contradicting \cite[Proposition 2.1]{ItoLiedtke}.


Since every $P \in \MW(J(\pi)) \subseteq \MW(\tilde{\pi})$ is disjoint from $N^+$, we have
\begin{eqnarray*}
0 = \langle P,N^- \rangle  &=& 2 + P.N^+ + N^-.N^+ - P.N^- - \sum_\nu contr_\nu (N^-,P) = \\ &=& 2 - P.N^- - \sum_\nu contr_\nu (N^-,P),
\end{eqnarray*}
which yields the second claim.
\end{proof}

\begin{remark}\label{careful}
By Table \ref{heightpairing}, the local contributions to the height pairing can be read off almost completely from the dual graph of singular fibers. However, a remark about the cases where $\pi$ has a double fiber of type $\I_1$ is in order. Since sections $P \in \MW(J(\pi))$ meet the corresponding $\I_2$ fiber of $\tilde{\pi}$ in the identity component, $N^-$ cannot be $2$-torsion. Moreover, $\sum_\nu contr_\nu (N^-)$ will decrease by $\frac{1}{2}$, while  $\sum_\nu contr_\nu (N^-,P)$ stays the same, hence $N^-$ can only satisfy the conditions of the lemma if it does so, when we ignore the double $\I_1$ fiber. We will do this from now on.
\end{remark}

\begin{definition}
Let $\Gamma_1$ be the dual graph of singular fibers of a rational and extremal elliptic fibration. A graph $\Gamma \supseteq \Gamma_1$ is called a fiber-bisection configuration for $\Gamma_1$ if the following two conditions hold:
\begin{enumerate}
\item $\Gamma - \Gamma_1$ consists of one vertex $N$ called the special bisection.
\item $N$ meets every connected component of $\Gamma_1$ of type $\tilde{D}$ and $\tilde{E}$ exactly twice and every component of type $\tilde{A}$ at least once and at most twice. Moreover, $N$ meets at most two connected components of $\Gamma_1$ exactly once.
\end{enumerate}
\end{definition}

Given such a fiber-bisection configuration $\Gamma$, we can check whether it could be the dual graph of a special elliptic fibration $\pi$ on an Enriques surface as follows:
Suppose it is the dual graph of $\pi$. Then, we can pass to the canonical cover, add the sections coming from the Jacobian $J(\pi)$ of $\pi$ and check the conditions of Lemma \ref{admissible}. By Remark \ref{careful}, it makes sense to say that a fiber-bisection configuration satisfies the conditions of Lemma \ref{admissible}.

\begin{definition}
A fiber-bisection configuration is called \emph{admissible} if it satisfies the conditions of Lemma \ref{admissible}
\end{definition}

\subsection{Outline of proof}
In this section, we outline the proof of the following lemma, which is the main ingredient in the proof of Theorem \ref{graphtheorem}.

\begin{lemma}\label{graphlemma}
Let $X$ be an Enriques surface such that every special elliptic fibration of $X$ is extremal and $X$ contains a $(-2)$-curve. Then, the dual graph of $(-2)$-curves on $X$ contains a critical subgraph (see Figures \ref{critI}$,\hdots,$ \ref{critVII}) for one of the types $\I,\hdots,\VII$.
\end{lemma}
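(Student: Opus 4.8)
The strategy is to start from a single $(-2)$-curve on $X$, use Proposition \ref{nodalisspecial} to obtain a special elliptic fibration $\pi$, which is extremal by hypothesis, and then bootstrap more and more $(-2)$-curves via the map $jac_2$ from Corollary \ref{jac2} until the dual graph is forced to contain one of the seven critical subgraphs. The organising principle is that every special extremal fibration $\pi$ together with a special bisection $N$ gives a \emph{fiber-bisection configuration} $\Gamma$ sitting inside the dual graph of all $(-2)$-curves, and $\Gamma$ must be \emph{admissible} in the sense of Lemma \ref{admissible} (with the caveat of Remark \ref{careful} concerning double $\I_1$ fibers). So the core of the proof is a finite combinatorial enumeration: run through the list of rational extremal elliptic fibrations in Table \ref{extremalrational}, for each one list the ways a bisection $N$ can meet the singular fibers subject to the definition of a fiber-bisection configuration, and discard those that fail the numerical inequalities $\sum_\nu contr_\nu(N^-) < 4$ and $\sum_\nu contr_\nu(N^-,P) \in \{0,1,2\}$ for all sections $P$ of $J(\pi)$ (or, in the $2$-torsion case, check $h(N^-)=0$ with $N^-$ genuinely $2$-torsion in $\MW(J(\pi))$).

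\textbf{Key steps, in order.} First, fix a $(-2)$-curve $E_0$ on $X$ and a special extremal fibration $\pi_0$ having $E_0$ as special bisection (Proposition \ref{nodalisspecial}, plus Lemma \ref{shiodatate} to see extremality is equivalent to the fiber lattice having rank $9$). Second, tabulate, for each rational extremal elliptic fibration $\Gamma_1$ in Table \ref{extremalrational}, all admissible fiber-bisection configurations $\Gamma \supseteq \Gamma_1$; this is a bounded computation because the number of fiber components is $\le 10$ and $N$ meets each $\tilde D$/$\tilde E$ component exactly twice and each $\tilde A$ component once or twice. Third, for each surviving $\Gamma$, pass to the K3 cover, add the translates $jac_2(P)$ for $P \in \MW(J(\pi))$ using the height-pairing bookkeeping of Table \ref{heightpairing} (exactly as in the worked Example \ref{example}), producing a larger dual subgraph $\Gamma'$. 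Fourth, inside $\Gamma'$ identify a new $\tilde A$-$\tilde D$-$\tilde E$ subdiagram, hence by Proposition \ref{canonicaltype} a new (again extremal, by hypothesis) special fibration, pick a special bisection among the curves already produced, and iterate. The claim is that after finitely many such rounds one of the seven critical subgraphs (Figures \ref{critI}--\ref{critVII}) necessarily appears; equivalently, every admissible configuration lies on a finite "search tree" all of whose leaves contain a critical subgraph. Throughout, the characteristic enters only through Table \ref{extremalrational} (some fibrations are missing in small $p$) and through the possibility of inseparable phenomena, which are controlled by Lemma \ref{specialauts} and the remarks following Proposition \ref{typeofdoublefiber}; in particular in characteristic $2$ one must be careful that $N^-$ may fail to be $2$-torsion even when $h(N^-)=0$, but Lemma \ref{admissible} already accounts for this.

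\textbf{Where the difficulty lies.} The genuine obstacle is the sheer size and characteristic-dependence of the case analysis in Steps 2--4: one must verify that the enumeration of admissible fiber-bisection configurations is complete and that in \emph{every} branch the iteration terminates at a critical subgraph rather than running forever or hitting a non-extremal fibration (which would contradict the hypothesis and so can be used to prune, but one must check that such pruning is always available when needed). A secondary subtlety is bookkeeping the double $\I_1$ fibers as in Remark \ref{careful}, and matching fiber \emph{types} to dual graphs, since Proposition \ref{canonicaltype} only recovers the graph and e.g. $\III$ versus $\I_2$ or $\IV$ versus $\I_3$ must be disambiguated using that a multiple fiber cannot be $\III$ or $\IV$ and using Table \ref{extremalrational}. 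I would organise the write-up by first reducing, via the starting fibration, to a short list of possible $\Gamma_1$'s that can carry an admissible bisection at all, then treat these case by case, in each case exhibiting the explicit sequence of auxiliary fibrations that manufactures the relevant critical subgraph — mirroring the type-$\VII$ computation already spelled out in Example \ref{example}. Checking that no admissible configuration escapes this list is the step I expect to consume most of the work.
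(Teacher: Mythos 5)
Your plan coincides with the paper's proof in all essentials: obtain a special fibration from Proposition \ref{nodalisspecial}, note it is extremal, enumerate admissible fiber-bisection configurations via Lemma \ref{admissible} and Table \ref{heightpairing}, enlarge the graph with $jac_2$, and iterate through auxiliary fibrations until a critical subgraph appears. So the architecture is right; but two concrete ingredients that the paper needs are absent from your outline, and without them the plan as stated does not close.

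First, you have no mechanism guaranteeing that the iteration terminates rather than wandering through the full (large) list of admissible configurations. The paper solves this with Lemma \ref{reductionstep1} and its corollary: every Enriques surface with a special extremal fibration either carries the critical subgraph of type $\II$ or admits a special fibration with a \emph{double fiber of type} $\I_2$. This cuts the case analysis down to the nine families of configurations containing a $2\tilde{A}_1$ component, and the cases are then processed in a fixed order so that each branch either produces a critical subgraph or lands in a configuration already treated. Your phrase ``reducing to a short list of possible $\Gamma_1$'s'' gestures at this, but the specific reduction to a double $\I_2$ fiber is the pivot that makes the search tree finite and well-founded, and it has to be proved (it uses the explicit table of admissible configurations, Lemma \ref{admissibletypes}).

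Second, the purely numerical sieve you describe (the inequalities on $\sum_\nu contr_\nu$) is not sufficient to kill every dead branch: two configurations survive it — the fibration $(\I_3,\I_3,\I_3,\I_3)$ with two double $\I_3$ fibers, and $(\I_6,\I_3,\I_2,\I_1)$ with the $\I_3$ and $\I_2$ both double — yet do not occur on any Enriques surface. The paper excludes these in Lemma \ref{noHesse} by writing down the Weierstrass equation of the Jacobian, imposing the shape of a $J(\pi)$-Enriques section, and deriving $144=0$. This is an explicitly non-combinatorial step: if you only prune by admissibility of the dual graph, the branches of the iteration that reach these two configurations (they arise in cases $\tilde{A}_3\oplus\tilde{A}_3\oplus 2\tilde{A}_1\oplus\tilde{A}_1$ and $\tilde{A}_5\oplus\tilde{A}_2\oplus 2\tilde{A}_1$) cannot be closed. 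You should also be aware that your step of ``picking a special bisection among the curves already produced'' for the new fibration must be checked to yield a \emph{new} admissible configuration each time, which is exactly where the paper's ordering of cases does real work.
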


\begin{proof}[Proof of Theorem \ref{graphtheorem} (assuming Lemma \ref{graphlemma})]
As observed by Dolgachev \cite[\S 4]{Dolgachev}, if $X$ has finite automorphism group, then every elliptic fibration $\pi$ on $X$ is extremal, since the Mordell-Weil group of $J(\pi)$ acts faithfully on $X$. In particular, since $X$ admits an elliptic fibration by Proposition \ref{ellipticpencil}, $X$ contains a $(-2)$-curve by Lemma \ref{shiodatate} and every special elliptic fibration of $X$ is extremal. From Lemma \ref{graphlemma}, we deduce that $X$ contains a critical subgraph, which, by the earlier chapters, implies that the dual graph of $(-2)$-curves on $X$ is one of the types $\I,\hdots,\VII$.

The seven dual graphs in Table \ref{main} consist of $12$ (resp. $20$) vertices, hence $X$ contains finitely many and at least one $(-2)$-curve. Moreover, we have computed the automorphism groups of these surfaces. They are finite. Finally, by Corollary \ref{jac2}, the only special elliptic fibrations of Enriques surfaces with finitely many, but at least one, $(-2)$-curves are the extremal ones.
\end{proof}

Since we have constructed all seven types in the previous chapters, Theorem \ref{graphtheorem} will finish the classification.
The strategy for the proof of Lemma \ref{graphlemma} can be summarized as follows:


\begin{enumerate}
\item Let $X$ be an Enriques surface with a $(-2)$-curve such that every special elliptic fibration of $X$ is extremal. By Proposition \ref{nodalisspecial}, $X$ admits such a special elliptic fibration $\pi$. 
\item Pick a dual graph $\Gamma_1$ of singular fibers of a rational and extremal elliptic fibration and some admissible fiber-bisection configuration $\Gamma \supseteq \Gamma_1$.
Suppose that $\Gamma$ is the dual graph of fibers and special bisection of $\pi$.
\item Apply Corollary \ref{jac2} to find additional $(-2)$-curves and obtain a bigger graph $\Gamma_2$.
\item If $\Gamma_2$ contains one of the critical subgraphs, we have shown in the previous chapters that $X$ is of one of the seven types.
\item If not, find a different subgraph $\Gamma_3$ of $\Gamma_2$ of type $\tilde{A}_n$ together with a vertex $N$ meeting $\Gamma_3$ exactly once. By Proposition \ref{canonicaltype}, $\Gamma_3$ is the dual graph of a singular fiber of a special elliptic fibration $\pi_1$ and $N$ is a special bisection of $\pi_1$.
By the assumption on $X$, $\pi_1$ is extremal, i.e. we can extend $\Gamma_3$ to a dual graph $\Gamma_4$ of singular fibers of an extremal elliptic fibration such that $\Gamma_4 \cup N$ is an admissible fiber-bisection configuration for $\Gamma_4$. Now, go back to step $(3)$.

\end{enumerate}

We will show that the above process will terminate at some point for every choice of $\Gamma_1$, either with a contradiction or with step $(4)$. 

\subsection{Proof of the classification-theorem}
The following lemma shows that the number of admissible fiber-bisection configurations we have to check is "not too big".

\begin{lemma}\label{reductionstep1}
Let $X$ be an Enriques surface with a special and extremal elliptic fibration $\pi$. Then, $X$ admits a special elliptic fibration with a double fiber of type ${\rm \I_n}$ with $n \geq 2$.
Moreover, if $\pi$ has double fibers of type $\rm{\I_{n_1}}$ and $\rm{\I_{n_2}}$, then $n_1 + n_2 \leq 8$.
\end{lemma}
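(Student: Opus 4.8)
The plan is to treat the two assertions separately. For the first assertion I would reduce to the case that \emph{neither} double fibre of $\pi$ is reducible and then manufacture a new fibration from the given data. A double fibre of $\pi$ is, by Proposition \ref{typeofdoublefiber}, of multiplicative type $\I_m$ or smooth; if one of them is $\I_m$ with $m\geq 2$ there is nothing to prove, so I may assume both double fibres are irreducible. Since $\pi$ is extremal, Lemma \ref{shiodatate} forces it to have reducible fibres, and under the standing assumption every such fibre is simple. Fix a reducible fibre $F_v$; the special bisection $N$ satisfies $N.F_v=2$. The construction is to use $N$ to close up part of $F_v$ into an extended Dynkin diagram of type $\tilde{A}_k$: if $N$ meets a single component $E$ of $F_v$ with multiplicity $2$, then $N+E$ is connected, has square $0$ and arithmetic genus one, and has dual graph $\tilde{A}_1$; if $N$ meets two components $E_a,E_b$ of $F_v$ once each, then $N$ together with a minimal connecting path inside $F_v$ is a cycle, i.e. an $\tilde{A}_k$-configuration. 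By Proposition \ref{canonicaltype} this configuration $C$ is the support of a fibre of a new elliptic fibration $\pi'$ of $X$.

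It then remains to see that $C$ is a \emph{double} fibre of $\pi'$ of type $\I_n$ with $n\geq 2$ and that $\pi'$ is special. For this I would produce a component $E''$ of $F_v$ with $E''\not\subseteq C$ and $E''.C$ odd, in fact $E''.C=1$: since the intersection number of any curve with a full fibre of an Enriques elliptic fibration is even, $C$ is then forced to be a half-fibre, and $E''$ becomes a special bisection of $\pi'$; and since a double fibre is never additive by Proposition \ref{typeofdoublefiber}, $C$ is of multiplicative type, hence of type $\I_n$, and $C$ contains at least the two components $N$ and $E_a$, so $n\geq 2$. \textbf{This is where the main obstacle lies:} the existence of a suitable $E''$ fails for a short list of degenerate configurations (for instance $F_v$ a small $\I_2$ or $\I_3$ met by $N$ unfavourably, or $F_v$ additive so that no clean $\tilde{A}_k$-cycle appears), and these must be disposed of by hand — either by replacing $F_v$ with another reducible fibre of $\pi$ (one exists, and the possibilities for the whole fibre configuration are limited by the explicit list in Table \ref{extremalrational} together with an Euler-characteristic count, so they cannot all be of the bad shape), or by iterating the construction with $\pi'$ in place of $\pi$. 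I expect the bookkeeping of these exceptional cases to be the bulk of the argument.

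For the second assertion I would argue via the height pairing on the K3 cover. By Lemma \ref{universal} the covering involution realizes $\tilde{\pi}$ as the minimal model of the base change of $J(\pi)$ along a degree-$2$ morphism $\varphi$ branched over the two points carrying the double fibres of $\pi$, over each of which the fibre of $\tilde{\pi}$ is of type $\I_{2n_i}$; moreover (Lemma \ref{universal}(3) and its proof) the Enriques section $N^-$ meets this $\I_{2n_i}$-fibre in the component antipodal to the one met by the zero section $N^+$. By Table \ref{heightpairing} the contribution of such a fibre to $\sum_\nu \mathrm{contr}_\nu(N^-)$ is $\tfrac{n_i(2n_i-n_i)}{2n_i}=\tfrac{n_i}{2}$. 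Since $N^-.N^+=0$, exactly as in the proof of Lemma \ref{admissible} we get
\[
0\;\leq\; h(N^-)\;=\;4-\sum_\nu \mathrm{contr}_\nu(N^-)\;\leq\;4-\frac{n_1}{2}-\frac{n_2}{2},
\]
because all local contributions are non-negative, and rearranging yields $n_1+n_2\leq 8$. This half of the lemma is essentially a one-line computation once Lemma \ref{universal} is available, so the weight of the proof rests entirely on the case analysis for the first assertion.
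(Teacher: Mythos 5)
Your proposal follows essentially the same route as the paper on both halves. For the second assertion your computation is exactly the paper's: the double $\I_{n_i}$ fibres pull back to $\I_{2n_i}$ fibres met by $N^-$ in the antipodal component, each contributing $n_i/2$, and $0\le h(N^-)=4-\sum_\nu \mathrm{contr}_\nu(N^-)$ forces $n_1+n_2\le 8$. For the first assertion the mechanism (close up $N$ with a chain of fibre components into an $\tilde{A}_k$-cycle, then use a leftover component meeting the cycle with odd intersection number to conclude that the cycle is a multiplicative half-fibre of a new special fibration) is also the paper's; the difference is purely in how the case analysis is organized, and here the paper's choice is worth adopting. Rather than starting from an arbitrary reducible fibre and then confronting your ``short list of degenerate configurations,'' the paper reads off from Table \ref{extremalrational} that every extremal rational fibration except $(\I_4,\I_4,\I_2,\I_2)$ and $(\I_3,\I_3,\I_3,\I_3)$ has a fibre of type $\II^*,\III^*,\IV^*,\I_n^*$, or $\I_n$ with $n\ge 5$, and for such a large fibre the construction never fails: the minimal connecting path is short enough relative to the fibre that a component outside the cycle meeting it exactly once always exists. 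This collapses your exceptional cases to just two fibre configurations, handled by using more than one fibre at a time. Two further points would tighten your writeup: your worry about additive fibres is unfounded, since $N$ splits on the K3 cover into two sections, each of which meets every fibre in a simple component, so $N$ only ever meets multiplicity-one components of $F_v$ and a cycle always closes up; and your fallback of ``iterating with $\pi'$'' is circular as stated, since $\pi'$ is exactly what you are trying to produce, so the burden really does fall on showing that some fibre of $\pi$ is usable, which is what the appeal to the big fibre accomplishes.
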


\begin{proof}
For the first claim, let $\pi$ be a special and extremal elliptic fibration of $X$ and let $N$ be a special bisection of $\pi$. If $\pi$ has a fiber of type $\IV^*,\III^*,\II^*,\I_n^*$, or $\I_n$ with $n \geq 5$, then $N$ and fiber components form a fiber of type $\I_n$ and a component of the fiber takes the role of a special bisection.
The remaining possibilities for $\pi$ are the one with fibers $(\I_4,\I_4,\I_2,\I_2)$ and the one with fibers $(\I_3,\I_3,\I_3,\I_3)$. These are checked similarly, using more than one fiber.


For the second claim, let $\pi$ be a special elliptic fibration of $X$ with double fibers of type $\I_{n_1}$ and $\I_{n_2}$. Denote a special bisection by $N$ and the corresponding curves on the K3 cover by $N^+$ and $N^-$ as usual. Then, we compute $\sum_\nu contr_\nu(N^-) \geq (n_1+n_2)/2$ using Table \ref{heightpairing}. Since $\sum_\nu contr_\nu(N^-) \leq 4$, this gives the second claim.
\end{proof}

It is straightforward to give a complete list of admissible fiber-bisection configurations for dual graphs of singular fibers of extremal elliptic fibrations. We leave the details to the reader. Note that it follows immediately from the classification of extremal and rational elliptic surfaces (see Table \ref{extremalrational}) that we do not have to take special care of small characteristics with this method.

\begin{lemma}
Let $Adm_p$ be the set of admissible fiber-bisection configurations for dual graphs of extremal elliptic fibrations over an algebraically closed field of characteristic $p$. Then $Adm_p \subseteq Adm_0$.
\end{lemma}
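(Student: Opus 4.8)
The plan is to observe that \emph{admissibility} of a fiber-bisection configuration is a purely combinatorial condition which depends only on the configuration of singular fibers of the relevant rational and extremal elliptic fibration $J(\pi)$, and not at all on the characteristic of the base field, once one knows that this configuration is realized over $k$. So I would argue in two steps. First: every configuration of singular fibers of a rational extremal elliptic surface in characteristic $p$ has the same dual graph $\Gamma_1$ of reducible fibers, and the same (necessarily finite) Mordell--Weil group, as some configuration occurring in characteristic $0$. Second: the quantities $\sum_\nu contr_\nu(N^-)$ and $\sum_\nu contr_\nu(N^-,P)$ appearing in Lemma \ref{admissible}, together with the dichotomy between ``$h(N^-)=0$ with $N^-$ a $2$-torsion section'' and ``$h(N^-)>0$'', are computed purely from $\Gamma_1$, the intersection pattern of the special bisection $N$ with $\Gamma_1$, and the components of the fibers of $J(\pi)$ met by the sections $P\in\MW(J(\pi))$ --- all of which is the same combinatorial data in characteristic $p$ as in characteristic $0$. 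Combining the two steps yields $Adm_p \subseteq Adm_0$.

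For the first step I would simply run through Table \ref{extremalrational} and the Remark following it. A characteristic-$p$ entry differs from the characteristic-$0$ list only when a $\II^*$-fiber is involved, or when $p$ divides the number of simple components of some fiber; in the latter case a cluster of simple fibers may merge into a single more degenerate one ($\I_1,\I_1 \rightsquigarrow \II$, $\I_2,\I_1 \rightsquigarrow \III$, $\I_3,\I_1 \rightsquigarrow \IV$), or the reverse. In the $\II^*$ case the dual graph of reducible fibers is $\tilde{E}_8$ and the Mordell--Weil group is trivial in every characteristic; in the merging case only irreducible (or newly irreducible) fibers change, and $\III$, $\IV$ have the same dual graphs as $\I_2$, $\I_3$, so the dual graph $\Gamma_1$ of reducible fibers is unchanged. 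Then by Lemma \ref{shiodatate} the Mordell--Weil rank is $0$ and the torsion is pinned down by $\Gamma_1$ via the classification of extremal rational elliptic surfaces. Hence each $\Gamma_1$ occurring in characteristic $p$ occurs in characteristic $0$ with the identical Mordell--Weil group and the identical way its sections meet the reducible fibers.

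For the second step I would use the description of the K3 cover from Lemma \ref{universal} and the example in \S\ref{example}: on $\tilde{X}$ the fibration $\tilde{\pi}$ carries the two $\tilde{A}_{2n_i-1}$-fibers produced by the double fibers $\I_{n_i}$ of $\pi$ together with doubled copies of the reducible fibers of $J(\pi)$; the special bisection $N$ meets a reducible fiber of $\pi$ exactly once precisely when that fiber is double; and each section $P\in\MW(J(\pi))\subseteq\MW(\tilde{\pi})$ meets the fibers of $\tilde{\pi}$ in components determined by how $P$ meets the fibers of $J(\pi)$. Plugging this into Table \ref{heightpairing} shows that every local contribution $contr_\nu(N^-)$ and $contr_\nu(N^-,P)$, hence the entire content of Lemma \ref{admissible}, is a function of the abstract graph $\Gamma\supseteq\Gamma_1$ alone. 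Together with the first step this gives the claim.

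The step I expect to be the main obstacle is the bookkeeping in the first step: one must confirm, configuration by configuration, that the Mordell--Weil groups of extremal rational elliptic surfaces are genuinely constant in the characteristic along a fixed $\Gamma_1$, and one must handle the mild subtlety of Remark \ref{careful} (for special fibrations with a double fiber of type $\I_1$ the $contr$-sums are read off after discarding that fiber). Both are finite checks, and --- as the discussion preceding the lemma already indicates --- they introduce no characteristic-$p$ phenomenon beyond the ones visible in Table \ref{extremalrational}.
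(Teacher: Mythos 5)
Your proof is correct and follows exactly the route the paper intends: the lemma is stated there without proof, with only the remark that it "follows immediately from the classification of extremal and rational elliptic surfaces," and your two steps --- (i) every reducible-fiber dual graph $\Gamma_1$ arising in characteristic $p$ arises in characteristic $0$ with the same Mordell--Weil group and the same component data for its sections, and (ii) the admissibility conditions of Lemma \ref{admissible} are a purely combinatorial function of $\Gamma\supseteq\Gamma_1$ and that data --- are precisely the content of that remark combined with Remark \ref{careful}. Your handling of the only potentially delicate points (the $\II^*$ cases, the merging $\I_1,\I_1\rightsquigarrow\II$, $\I_2,\I_1\rightsquigarrow\III$, $\I_3,\I_1\rightsquigarrow\IV$, which leave the reducible-fiber graph unchanged, and the constancy of $\MW(J(\pi))$, which follows from Shioda's isomorphism $\MW\cong\NS/T'$ in any characteristic) is sound.
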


\begin{lemma}\label{admissibletypes}
Table \ref{admissiblefiberbisection} shows the list of all admissible fiber-bisection configurations for dual graphs of singular fibers of extremal elliptic fibrations, where the special bisection meets at least one $\tilde{A}$ subgraph (marked with a $2$ in front) only once. 

\vspace{-3cm}
\begin{table}[!htb]
\begin{tabular}{|>{\centering\arraybackslash}m{3.5cm}|>{\centering\arraybackslash}m{10cm}|} \hline \vspace{1mm}
\rm{Dual graph of fibers} & \vspace{1mm} \rm{Admissible fiber-bisection configurations} \\ \hline \hline
$\tilde{E}_7 \oplus 2\tilde{A}_1$ & \vspace{1mm}
\resizebox{3cm}{!}{
\xy
@={(0,10),(10,10),(20,10),(30,10),(40,10),(50,10),(60,10),(30,0),(30,20),(30,30),(30,40)}@@{*{\bullet}};
(0,10)*{};(60,10)*{}**\dir{-};
(30,10)*{};(30,0)*{}**\dir{-};
(0,10)*{};(30,20)*{}**\dir{-};
(60,10)*{};(30,20)*{}**\dir{-};
(30,20)*{};(30,30)*{}**\dir{-};
(30,40)*{};(30,30)*{}**\dir2{-};
\endxy} \hspace{4mm}
\resizebox{3cm}{!}{
\xy
@={(0,10),(10,10),(20,10),(30,10),(40,10),(50,10),(60,10),(30,0),(30,20),(30,30),(30,40)}@@{*{\bullet}};
(0,10)*{};(60,10)*{}**\dir{-};
(30,10)*{};(30,0)*{}**\dir{-};
(60,10)*{};(30,20)*{}**\dir2{-};
(30,20)*{};(30,30)*{}**\dir{-};
(30,40)*{};(30,30)*{}**\dir2{-};
\endxy} \\ \hline
$\tilde{E}_6 \oplus 2\tilde{A}_2$ &  \vspace{1mm}
\resizebox{4cm}{!}{
\xy
@={(0,0),(10,0),(20,0),(30,0),(40,0),(50,0),(60,0),(70,0),(65,10),(20,10),(20,20)}@@{*{\bullet}};
(0,0)*{};(40,0)*{}**\dir{-};
(20,0)*{};(20,20)*{}**\dir{-};
(50,0)*{};(40,0)*{}**\dir2{-};
(50,0)*{};(70,0)*{}**\dir{-};
(65,10)*{};(70,0)*{}**\dir{-};
(60,0)*{};(65,10)*{}**\dir{-};
\endxy
} \\ \hline
$\tilde{D}_5 \oplus 2\tilde{A}_3$ & \vspace{1mm}
\resizebox{4cm}{!}{
\xy
@={(0,20),(0,0),(10,10),(20,10),(30,20),(30,0),(40,10),(50,10),(60,20),(60,0),(70,10)}@@{*{\bullet}};
(0,20)*{};(10,10)*{}**\dir{-};
(0,0)*{};(10,10)*{}**\dir{-};
(20,10)*{};(10,10)*{}**\dir{-};
(20,10)*{};(30,20)*{}**\dir{-};
(20,10)*{};(30,0)*{}**\dir{-};
(40,10)*{};(30,20)*{}**\dir2{-};
(40,10)*{};(50,10)*{}**\dir{-};
(60,20)*{};(50,10)*{}**\dir{-};
(60,0)*{};(50,10)*{}**\dir{-};
(60,20)*{};(70,10)*{}**\dir{-};
(60,0)*{};(70,10)*{}**\dir{-};
\endxy
} \hspace{4mm}
\resizebox{4cm}{!}{
\xy
@={(0,20),(0,0),(10,10),(20,10),(30,20),(30,0),(40,10),(50,10),(60,20),(60,0),(70,10)}@@{*{\bullet}};
(0,20)*{};(10,10)*{}**\dir{-};
(0,0)*{};(10,10)*{}**\dir{-};
(20,10)*{};(10,10)*{}**\dir{-};
(20,10)*{};(30,20)*{}**\dir{-};
(20,10)*{};(30,0)*{}**\dir{-};
(40,10)*{};(30,20)*{}**\dir{-};
(40,10)*{};(30,0)*{}**\dir{-};
(40,10)*{};(50,10)*{}**\dir{-};
(60,20)*{};(50,10)*{}**\dir{-};
(60,0)*{};(50,10)*{}**\dir{-};
(60,20)*{};(70,10)*{}**\dir{-};
(60,0)*{};(70,10)*{}**\dir{-};
\endxy} \\ \hline
$\tilde{D}_6 \oplus 2\tilde{A}_1 \oplus 2\tilde{A}_1$ & \vspace{1mm}
\resizebox{4cm}{!}{
\xy
@={(0,0),(10,10),(0,20),(20,10),(30,10),(40,0),(40,20),(60,20),(60,0),(50,10),(70,20),(70,0)}@@{*{\bullet}};
(0,0)*{};(10,10)*{}**\dir{-};
(0,20)*{};(10,10)*{}**\dir{-};
(30,10)*{};(10,10)*{}**\dir{-};
(30,10)*{};(40,20)*{}**\dir{-};
(30,10)*{};(40,0)*{}**\dir{-};
(50,10)*{};(40,20)*{}**\dir2{-};
(50,10)*{};(60,20)*{}**\dir{-};
(50,10)*{};(60,0)*{}**\dir{-};
(70,20)*{};(60,20)*{}**\dir2{-};
(70,0)*{};(60,0)*{}**\dir2{-};
\endxy} \hspace{4mm}
\resizebox{4cm}{!}{
\xy
@={(0,0),(10,10),(0,20),(20,10),(30,10),(40,0),(40,20),(60,20),(60,0),(50,10),(70,20),(70,0)}@@{*{\bullet}};
(0,0)*{};(10,10)*{}**\dir{-};
(0,20)*{};(10,10)*{}**\dir{-};
(30,10)*{};(10,10)*{}**\dir{-};
(30,10)*{};(40,20)*{}**\dir{-};
(30,10)*{};(40,0)*{}**\dir{-};
(50,10)*{};(40,20)*{}**\dir{-};
(50,10)*{};(40,0)*{}**\dir{-};
(50,10)*{};(60,20)*{}**\dir{-};
(50,10)*{};(60,0)*{}**\dir{-};
(70,20)*{};(60,20)*{}**\dir2{-};
(70,0)*{};(60,0)*{}**\dir2{-};
\endxy} \\ \hline
$\tilde{D}_6 \oplus 2\tilde{A}_1 \oplus \tilde{A}_1$ &
\resizebox{4cm}{!}{
\xy
@={(0,0),(10,10),(0,20),(20,10),(30,10),(40,0),(40,20),(60,20),(60,0),(50,10),(70,20),(70,0)}@@{*{\bullet}};
(0,0)*{};(10,10)*{}**\dir{-};
(0,20)*{};(10,10)*{}**\dir{-};
(30,10)*{};(10,10)*{}**\dir{-};
(30,10)*{};(40,20)*{}**\dir{-};
(30,10)*{};(40,0)*{}**\dir{-};
(50,10)*{};(40,20)*{}**\dir{-};
(50,10)*{};(0,20)*{}**\crv{(50,35)};
(50,10)*{};(60,20)*{}**\dir{-};
(50,10)*{};(60,0)*{}**\dir2{-};
(70,20)*{};(60,20)*{}**\dir2{-};
(70,0)*{};(60,0)*{}**\dir2{-};
\endxy} \hspace{4mm}
\resizebox{4cm}{!}{
\xy
@={(0,0),(10,10),(0,20),(20,10),(30,10),(40,0),(40,20),(60,20),(60,0),(50,10),(70,20),(70,0)}@@{*{\bullet}};
(0,0)*{};(10,10)*{}**\dir{-};
(0,20)*{};(10,10)*{}**\dir{-};
(30,10)*{};(10,10)*{}**\dir{-};
(30,10)*{};(40,20)*{}**\dir{-};
(30,10)*{};(40,0)*{}**\dir{-};
(50,10)*{};(40,20)*{}**\dir{-};
(50,10)*{};(40,0)*{}**\dir{-};
(50,10)*{};(60,20)*{}**\dir{-};
(50,10)*{};(60,0)*{}**\dir{-};
(50,10)*{};(70,0)*{}**\dir{-};
(70,20)*{};(60,20)*{}**\dir2{-};
(70,0)*{};(60,0)*{}**\dir2{-};
\endxy}

\vspace{3mm}
\resizebox{4cm}{!}{
\xy
@={(0,0),(10,10),(0,20),(20,10),(30,10),(40,0),(40,20),(60,20),(60,0),(50,10),(70,20),(70,0)}@@{*{\bullet}};
(0,0)*{};(10,10)*{}**\dir{-};
(0,20)*{};(10,10)*{}**\dir{-};
(30,10)*{};(10,10)*{}**\dir{-};
(30,10)*{};(40,20)*{}**\dir{-};
(30,10)*{};(40,0)*{}**\dir{-};
(50,10)*{};(40,20)*{}**\dir2{-};
(50,10)*{};(60,20)*{}**\dir{-};
(50,10)*{};(60,0)*{}**\dir{-};
(50,10)*{};(70,0)*{}**\dir{-};
(70,20)*{};(60,20)*{}**\dir2{-};
(70,0)*{};(60,0)*{}**\dir2{-};
\endxy} \hspace{4mm}
\resizebox{4cm}{!}{
\xy
@={(0,0),(10,10),(0,20),(20,10),(30,10),(40,0),(40,20),(60,20),(60,0),(50,10),(70,20),(70,0)}@@{*{\bullet}};
(0,0)*{};(10,10)*{}**\dir{-};
(0,20)*{};(10,10)*{}**\dir{-};
(30,10)*{};(10,10)*{}**\dir{-};
(30,10)*{};(40,20)*{}**\dir{-};
(30,10)*{};(40,0)*{}**\dir{-};
(50,10)*{};(40,20)*{}**\dir2{-};
(50,10)*{};(60,20)*{}**\dir{-};
(50,10)*{};(60,0)*{}**\dir2{-};
(70,20)*{};(60,20)*{}**\dir2{-};
(70,0)*{};(60,0)*{}**\dir2{-};
\endxy} \\ \hline
$2\tilde{A}_7 \oplus \tilde{A}_1$ & \vspace{1mm}
\resizebox{!}{1.5cm}{
\xy
@={(0,0),(0,10),(0,20),(0,30),(10,0),(10,10),(10,20),(10,30),(20,0),(30,0),(40,0)}@@{*{\bullet}};
(0,0)*{};(0,30)*{}**\dir{-};
(10,0)*{};(10,30)*{}**\dir{-};
(0,0)*{};(10,0)*{}**\dir{-};
(10,30)*{};(0,30)*{}**\dir{-};
(10,0)*{};(20,0)*{}**\dir{-};
(30,0)*{};(20,0)*{}**\dir2{-};
(30,0)*{};(40,0)*{}**\dir2{-};
\endxy}\\ \hline
$\tilde{A}_7 \oplus 2\tilde{A}_1$ & \vspace{1mm}
\resizebox{!}{1.5cm}{
\xy
@={(0,0),(0,10),(0,20),(0,30),(10,0),(10,10),(10,20),(10,30),(20,0),(30,0),(40,0)}@@{*{\bullet}};
(0,0)*{};(0,30)*{}**\dir{-};
(10,0)*{};(10,30)*{}**\dir{-};
(0,0)*{};(10,0)*{}**\dir{-};
(10,30)*{};(0,30)*{}**\dir{-};
(10,0)*{};(20,0)*{}**\dir{-};
(10,10)*{};(20,0)*{}**\dir{-};
(30,0)*{};(20,0)*{}**\dir{-};
(30,0)*{};(40,0)*{}**\dir2{-};
\endxy} \hspace{4mm}
\resizebox{!}{1.5cm}{
\xy
@={(0,0),(0,10),(0,20),(0,30),(10,0),(10,10),(10,20),(10,30),(20,0),(30,0),(40,0)}@@{*{\bullet}};
(0,0)*{};(0,30)*{}**\dir{-};
(10,0)*{};(10,30)*{}**\dir{-};
(0,0)*{};(10,0)*{}**\dir{-};
(10,30)*{};(0,30)*{}**\dir{-};
(10,0)*{};(20,0)*{}**\dir2{-};
(30,0)*{};(20,0)*{}**\dir{-};
(30,0)*{};(40,0)*{}**\dir2{-};
\endxy} \\ \hline
$2\tilde{A}_4 \oplus \tilde{A}_4$ & \vspace{1mm}
\resizebox{!}{1.2cm}{
\xy
@={(0,0),(0,10),(10,20),(20,10),(20,0),(40,0),(40,10),(50,20),(60,10),(60,0),(30,0)}@@{*{\bullet}};
(0,0)*{};(30,0)*{}**\dir{-};
(0,0)*{};(0,10)*{}**\dir{-};
(20,10)*{};(20,0)*{}**\dir{-};
(10,20)*{};(0,10)*{}**\dir{-};
(20,10)*{};(10,20)*{}**\dir{-};
(40,0)*{};(30,0)*{}**\dir2{-};
(40,0)*{};(60,0)*{}**\dir{-};
(60,0)*{};(60,10)*{}**\dir{-};
(40,0)*{};(40,10)*{}**\dir{-};
(50,20)*{};(60,10)*{}**\dir{-};
(50,20)*{};(40,10)*{}**\dir{-};
\endxy} \\ \hline
$2\tilde{A}_5 \oplus \tilde{A}_2 \oplus 2\tilde{A}_1$ & \vspace{1mm}
\resizebox{!}{1.2cm}{
\xy
@={(0,0),(0,10),(0,20),(10,0),(10,10),(10,20),(20,0),(30,0),(40,0),(20,10),(20,20),(30,20)}@@{*{\bullet}};
(0,0)*{};(0,20)*{}**\dir{-};
(10,0)*{};(10,20)*{}**\dir{-};
(0,0)*{};(10,0)*{}**\dir{-};
(0,20)*{};(10,20)*{}**\dir{-};
(0,0)*{};(30,0)*{}**\dir{-};
(20,10)*{};(20,20)*{}**\dir{-};
(20,0)*{};(20,10)*{}**\dir2{-};
(20,20)*{};(30,20)*{}**\dir{-};
(30,20)*{};(20,10)*{}**\dir{-};
(30,0)*{};(40,0)*{}**\dir2{-};
\endxy
} \\ \hline
$\tilde{A}_5 \oplus 2\tilde{A}_2 \oplus 2\tilde{A}_1$ & \vspace{1mm}
\resizebox{!}{1.2cm}{
\xy
@={(0,0),(0,10),(0,20),(10,0),(10,10),(10,20),(20,0),(30,0),(40,0),(20,10),(20,20),(30,20)}@@{*{\bullet}};
(0,0)*{};(0,20)*{}**\dir{-};
(10,0)*{};(10,20)*{}**\dir{-};
(0,0)*{};(10,0)*{}**\dir{-};
(0,20)*{};(10,20)*{}**\dir{-};
(0,0)*{};(10,0)*{}**\dir{-};
(20,0)*{};(30,0)*{}**\dir{-};
(20,0)*{};(10,0)*{}**\dir2{-};
(20,10)*{};(20,20)*{}**\dir{-};
(20,0)*{};(20,10)*{}**\dir{-};
(20,20)*{};(30,20)*{}**\dir{-};
(30,20)*{};(20,10)*{}**\dir{-};
(30,0)*{};(40,0)*{}**\dir2{-};
\endxy
}  \\ \hline
$2\tilde{A}_5 \oplus \tilde{A}_2 \oplus \tilde{A}_1$ & \vspace{1mm}
\resizebox{!}{1.2cm}{
\xy
@={(0,0),(0,10),(0,20),(10,0),(10,10),(10,20),(20,0),(30,0),(40,0),(20,10),(20,20),(30,20)}@@{*{\bullet}};
(0,0)*{};(0,20)*{}**\dir{-};
(10,0)*{};(10,20)*{}**\dir{-};
(0,0)*{};(10,0)*{}**\dir{-};
(0,20)*{};(10,20)*{}**\dir{-};
(0,0)*{};(30,0)*{}**\dir{-};
(40,0)*{};(20,0)*{}**\crv{(30,10)};
(20,10)*{};(20,20)*{}**\dir{-};
(20,0)*{};(20,10)*{}**\dir2{-};
(20,20)*{};(30,20)*{}**\dir{-};
(30,20)*{};(20,10)*{}**\dir{-};
(30,0)*{};(40,0)*{}**\dir2{-};
\endxy
} \\ \hline
\end{tabular}
\end{table}
\clearpage
\vspace{-2cm}

\begin{table}[!htbp]
\begin{tabular}{|>{\centering\arraybackslash}m{3.5cm}|>{\centering\arraybackslash}m{10cm}|} \hline
$\tilde{A}_5 \oplus 2\tilde{A}_2 \oplus \tilde{A}_1$ & \vspace{1mm}
\resizebox{!}{1.2cm}{
\xy
@={(0,0),(0,10),(0,20),(10,0),(10,10),(10,20),(20,0),(30,0),(40,0),(20,10),(20,20),(30,20)}@@{*{\bullet}};
(0,0)*{};(0,20)*{}**\dir{-};
(10,0)*{};(10,20)*{}**\dir{-};
(0,0)*{};(10,0)*{}**\dir{-};
(0,20)*{};(10,20)*{}**\dir{-};
(0,0)*{};(10,0)*{}**\dir{-};
(20,0)*{};(10,0)*{}**\dir2{-};
(30,0)*{};(20,0)*{}**\dir2{-};
(20,10)*{};(20,20)*{}**\dir{-};
(20,0)*{};(20,10)*{}**\dir{-};
(20,20)*{};(30,20)*{}**\dir{-};
(30,20)*{};(20,10)*{}**\dir{-};
(30,0)*{};(40,0)*{}**\dir2{-};
\endxy} \hspace{4mm}
\resizebox{!}{1.2cm}{
\xy
(-3,0)*{};
@={(0,0),(0,10),(0,20),(10,0),(10,10),(10,20),(20,0),(30,0),(40,0),(20,10),(20,20),(30,20)}@@{*{\bullet}};
(0,0)*{};(0,20)*{}**\dir{-};
(10,0)*{};(10,20)*{}**\dir{-};
(0,0)*{};(10,0)*{}**\dir{-};
(0,20)*{};(10,20)*{}**\dir{-};
(0,0)*{};(10,0)*{}**\dir{-};
(20,0)*{};(10,0)*{}**\dir2{-};
(30,0)*{};(20,0)*{}**\dir{-};
(40,0)*{};(20,0)*{}**\crv{(30,10)};
(20,10)*{};(20,20)*{}**\dir{-};
(20,0)*{};(20,10)*{}**\dir{-};
(20,20)*{};(30,20)*{}**\dir{-};
(30,20)*{};(20,10)*{}**\dir{-};
(30,0)*{};(40,0)*{}**\dir2{-};
\endxy} \hspace{4mm}
\resizebox{!}{1.2cm}{
\xy
@={(0,0),(0,10),(0,20),(10,0),(10,10),(10,20),(20,0),(30,0),(40,0),(20,10),(20,20),(30,20)}@@{*{\bullet}};
(0,0)*{};(0,20)*{}**\dir{-};
(10,0)*{};(10,20)*{}**\dir{-};
(0,0)*{};(10,0)*{}**\dir{-};
(0,20)*{};(10,20)*{}**\dir{-};
(0,0)*{};(10,0)*{}**\dir{-};
(20,0)*{};(10,0)*{}**\dir{-};
(20,0)*{};(10,10)*{}**\dir{-};
(30,0)*{};(20,0)*{}**\dir2{-};
(20,10)*{};(20,20)*{}**\dir{-};
(20,0)*{};(20,10)*{}**\dir{-};
(20,20)*{};(30,20)*{}**\dir{-};
(30,20)*{};(20,10)*{}**\dir{-};
(30,0)*{};(40,0)*{}**\dir2{-};
\endxy} \\ \hline
$\tilde{A}_5 \oplus \tilde{A}_2 \oplus 2\tilde{A}_1$ & \vspace{1mm}
\resizebox{!}{1.4cm}{
\xy
(-2,0)*{};
@={(0,0),(0,10),(0,20),(10,0),(10,10),(10,20),(20,0),(30,0),(40,0),(20,10),(20,20),(30,20)}@@{*{\bullet}};
(0,0)*{};(0,20)*{}**\dir{-};
(10,0)*{};(10,20)*{}**\dir{-};
(0,0)*{};(10,0)*{}**\dir{-};
(0,20)*{};(10,20)*{}**\dir{-};
(0,0)*{};(10,0)*{}**\dir{-};
(20,0)*{};(10,0)*{}**\dir{-};
(30,0)*{};(20,0)*{}**\dir{-};
(20,10)*{};(20,20)*{}**\dir{-};
(20,0)*{};(20,10)*{}**\dir2{-};
(20,20)*{};(30,20)*{}**\dir{-};
(30,20)*{};(20,10)*{}**\dir{-};
(30,0)*{};(40,0)*{}**\dir2{-};
(20,0)*{};(0,20)*{}**\crv{(15,30)};
\endxy
} \hspace{4mm}
\resizebox{!}{1.2cm}{
\xy
@={(0,0),(0,10),(0,20),(10,0),(10,10),(10,20),(20,0),(30,0),(40,0),(20,10),(20,20),(30,20)}@@{*{\bullet}};
(0,0)*{};(0,20)*{}**\dir{-};
(10,0)*{};(10,20)*{}**\dir{-};
(0,0)*{};(10,0)*{}**\dir{-};
(0,20)*{};(10,20)*{}**\dir{-};
(0,0)*{};(10,0)*{}**\dir{-};
(20,0)*{};(10,0)*{}**\dir{-};
(20,0)*{};(10,10)*{}**\dir{-};
(30,0)*{};(20,0)*{}**\dir{-};
(20,10)*{};(20,20)*{}**\dir{-};
(20,0)*{};(20,10)*{}**\dir2{-};
(20,20)*{};(30,20)*{}**\dir{-};
(30,20)*{};(20,10)*{}**\dir{-};
(30,0)*{};(40,0)*{}**\dir2{-};
\endxy} \vspace{1mm}

\resizebox{!}{1.2cm}{
\xy
@={(0,0),(0,10),(0,20),(10,0),(10,10),(10,20),(20,0),(30,0),(40,0),(20,10),(20,20),(30,20)}@@{*{\bullet}};
(0,0)*{};(0,20)*{}**\dir{-};
(10,0)*{};(10,20)*{}**\dir{-};
(0,0)*{};(10,0)*{}**\dir{-};
(0,20)*{};(10,20)*{}**\dir{-};
(0,0)*{};(10,0)*{}**\dir{-};
(20,0)*{};(10,0)*{}**\dir2{-};
(20,0)*{};(30,20)*{}**\dir{-};
(30,0)*{};(20,0)*{}**\dir{-};
(20,10)*{};(20,20)*{}**\dir{-};
(20,0)*{};(20,10)*{}**\dir{-};
(20,20)*{};(30,20)*{}**\dir{-};
(30,20)*{};(20,10)*{}**\dir{-};
(30,0)*{};(40,0)*{}**\dir2{-};
\endxy
} \hspace{4mm}
\resizebox{!}{1.2cm}{
\xy
@={(0,0),(0,10),(0,20),(10,0),(10,10),(10,20),(20,0),(30,0),(40,0),(20,10),(20,20),(30,20)}@@{*{\bullet}};
(0,0)*{};(0,20)*{}**\dir{-};
(10,0)*{};(10,20)*{}**\dir{-};
(0,0)*{};(10,0)*{}**\dir{-};
(0,20)*{};(10,20)*{}**\dir{-};
(0,0)*{};(10,0)*{}**\dir{-};
(20,0)*{};(10,0)*{}**\dir2{-};
(30,0)*{};(20,0)*{}**\dir{-};
(20,10)*{};(20,20)*{}**\dir{-};
(20,0)*{};(20,10)*{}**\dir2{-};
(20,20)*{};(30,20)*{}**\dir{-};
(30,20)*{};(20,10)*{}**\dir{-};
(30,0)*{};(40,0)*{}**\dir2{-};
\endxy}
 \\ \hline
$ 2\tilde{A}_3 \oplus 2\tilde{A}_3 \oplus \tilde{A}_1 \oplus \tilde{A}_1$ & \vspace{1mm}
\resizebox{!}{1.6cm}{
\xy
@={(0,40),(10,40),(30,40),(40,40),(0,30),(10,30),(30,30),(40,30),(20,20),(0,10),(10,10),(30,10),(40,10)}@@{*{\bullet}};
(0,40)*{};(10,40)*{}**\dir{-};
(30,40)*{};(40,40)*{}**\dir{-};
(0,30)*{};(10,30)*{}**\dir{-};
(30,30)*{};(40,30)*{}**\dir{-};
(0,40)*{};(0,30)*{}**\dir{-};
(10,30)*{};(10,40)*{}**\dir{-};
(30,30)*{};(30,40)*{}**\dir{-};
(40,30)*{};(40,40)*{}**\dir{-};
(10,30)*{};(20,20)*{}**\dir{-};
(30,30)*{};(20,20)*{}**\dir{-};
(10,10)*{};(20,20)*{}**\dir2{-};
(30,10)*{};(20,20)*{}**\dir2{-};
(10,10)*{};(0,10)*{}**\dir2{-};
(30,10)*{};(40,10)*{}**\dir2{-};
\endxy
} \\ \hline
$\tilde{A}_3 \oplus \tilde{A}_3 \oplus 2\tilde{A}_1 \oplus 2\tilde{A}_1$ & \vspace{1mm}
\resizebox{!}{1.6cm}{
\xy
(-7,10)*{,};
@={(0,40),(10,40),(30,40),(40,40),(0,30),(10,30),(30,30),(40,30),(20,20),(0,10),(10,10),(30,10),(40,10)}@@{*{\bullet}};
(0,40)*{};(10,40)*{}**\dir{-};
(30,40)*{};(40,40)*{}**\dir{-};
(0,30)*{};(10,30)*{}**\dir{-};
(30,30)*{};(40,30)*{}**\dir{-};
(0,40)*{};(0,30)*{}**\dir{-};
(10,30)*{};(10,40)*{}**\dir{-};
(30,30)*{};(30,40)*{}**\dir{-};
(40,30)*{};(40,40)*{}**\dir{-};
(10,30)*{};(20,20)*{}**\dir2{-};
(40,30)*{};(20,20)*{}**\dir{-};
(30,40)*{};(20,20)*{}**\dir{-};
(10,10)*{};(20,20)*{}**\dir{-};
(30,10)*{};(20,20)*{}**\dir{-};
(10,10)*{};(0,10)*{}**\dir2{-};
(30,10)*{};(40,10)*{}**\dir2{-};
\endxy}  \hspace{4mm}
\resizebox{!}{1.6cm}{
\xy
@={(0,40),(10,40),(30,40),(40,40),(0,30),(10,30),(30,30),(40,30),(20,20),(0,10),(10,10),(30,10),(40,10)}@@{*{\bullet}};
(0,40)*{};(10,40)*{}**\dir{-};
(30,40)*{};(40,40)*{}**\dir{-};
(0,30)*{};(10,30)*{}**\dir{-};
(30,30)*{};(40,30)*{}**\dir{-};
(0,40)*{};(0,30)*{}**\dir{-};
(10,30)*{};(10,40)*{}**\dir{-};
(30,30)*{};(30,40)*{}**\dir{-};
(40,30)*{};(40,40)*{}**\dir{-};
(10,30)*{};(20,20)*{}**\dir2{-};
(30,30)*{};(20,20)*{}**\dir2{-};
(10,10)*{};(20,20)*{}**\dir{-};
(30,10)*{};(20,20)*{}**\dir{-};
(10,10)*{};(0,10)*{}**\dir2{-};
(30,10)*{};(40,10)*{}**\dir2{-};
\endxy}
\\ \hline
$2\tilde{A}_3 \oplus \tilde{A}_3 \oplus 2\tilde{A}_1 \oplus \tilde{A}_1$ & \vspace{1mm}
\resizebox{!}{1.6cm}{
\xy
@={(0,40),(10,40),(30,40),(40,40),(0,30),(10,30),(30,30),(40,30),(20,20),(0,10),(10,10),(30,10),(40,10)}@@{*{\bullet}};
(0,40)*{};(10,40)*{}**\dir{-};
(30,40)*{};(40,40)*{}**\dir{-};
(0,30)*{};(10,30)*{}**\dir{-};
(30,30)*{};(40,30)*{}**\dir{-};
(0,40)*{};(0,30)*{}**\dir{-};
(10,30)*{};(10,40)*{}**\dir{-};
(30,30)*{};(30,40)*{}**\dir{-};
(40,30)*{};(40,40)*{}**\dir{-};
(10,30)*{};(20,20)*{}**\dir{-};
(30,30)*{};(20,20)*{}**\dir2{-};
(10,10)*{};(20,20)*{}**\dir{-};
(30,10)*{};(20,20)*{}**\dir{-};
(40,10)*{};(20,20)*{}**\dir{-};
(10,10)*{};(0,10)*{}**\dir2{-};
(30,10)*{};(40,10)*{}**\dir2{-};
\endxy
} \\ \hline
$2\tilde{A}_3 \oplus \tilde{A}_3 \oplus \tilde{A}_1 \oplus \tilde{A}_1$ & \vspace{1mm}
\resizebox{!}{1.6cm}{
\xy
@={(0,40),(10,40),(30,40),(40,40),(0,30),(10,30),(30,30),(40,30),(20,20),(0,10),(10,10),(30,10),(40,10)}@@{*{\bullet}};
(0,40)*{};(10,40)*{}**\dir{-};
(30,40)*{};(40,40)*{}**\dir{-};
(0,30)*{};(10,30)*{}**\dir{-};
(30,30)*{};(40,30)*{}**\dir{-};
(0,40)*{};(0,30)*{}**\dir{-};
(10,30)*{};(10,40)*{}**\dir{-};
(30,30)*{};(30,40)*{}**\dir{-};
(40,30)*{};(40,40)*{}**\dir{-};
(10,30)*{};(20,20)*{}**\dir{-};
(30,30)*{};(20,20)*{}**\dir2{-};
(10,10)*{};(20,20)*{}**\dir2{-};
(30,10)*{};(20,20)*{}**\dir2{-};
(10,10)*{};(0,10)*{}**\dir2{-};
(30,10)*{};(40,10)*{}**\dir2{-};
\endxy} \hspace{4mm}
\resizebox{!}{1.6cm}{
\xy
@={(0,40),(10,40),(30,40),(40,40),(0,30),(10,30),(30,30),(40,30),(20,20),(0,10),(10,10),(30,10),(40,10)}@@{*{\bullet}};
(0,40)*{};(10,40)*{}**\dir{-};
(30,40)*{};(40,40)*{}**\dir{-};
(0,30)*{};(10,30)*{}**\dir{-};
(30,30)*{};(40,30)*{}**\dir{-};
(0,40)*{};(0,30)*{}**\dir{-};
(10,30)*{};(10,40)*{}**\dir{-};
(30,30)*{};(30,40)*{}**\dir{-};
(40,30)*{};(40,40)*{}**\dir{-};
(10,30)*{};(20,20)*{}**\dir{-};
(40,30)*{};(20,20)*{}**\dir{-};
(30,40)*{};(20,20)*{}**\dir{-};
(10,10)*{};(20,20)*{}**\dir2{-};
(30,10)*{};(20,20)*{}**\dir2{-};
(10,10)*{};(0,10)*{}**\dir2{-};
(30,10)*{};(40,10)*{}**\dir2{-};
\endxy} \hspace{4mm}
\resizebox{!}{1.6cm}{
\xy
@={(0,40),(10,40),(30,40),(40,40),(0,30),(10,30),(30,30),(40,30),(20,20),(0,10),(10,10),(30,10),(40,10)}@@{*{\bullet}};
(0,40)*{};(10,40)*{}**\dir{-};
(30,40)*{};(40,40)*{}**\dir{-};
(0,30)*{};(10,30)*{}**\dir{-};
(30,30)*{};(40,30)*{}**\dir{-};
(0,40)*{};(0,30)*{}**\dir{-};
(10,30)*{};(10,40)*{}**\dir{-};
(30,30)*{};(30,40)*{}**\dir{-};
(40,30)*{};(40,40)*{}**\dir{-};
(10,30)*{};(20,20)*{}**\dir{-};
(30,30)*{};(20,20)*{}**\dir2{-};
(40,10)*{};(20,20)*{}**\dir{-};
(0,10)*{};(20,20)*{}**\dir{-};
(10,10)*{};(20,20)*{}**\dir{-};
(30,10)*{};(20,20)*{}**\dir{-};
(10,10)*{};(0,10)*{}**\dir2{-};
(30,10)*{};(40,10)*{}**\dir2{-};
\endxy
} \\ \hline
$\tilde{A}_3 \oplus \tilde{A}_3 \oplus 2\tilde{A}_1 \oplus \tilde{A}_1$ & \vspace{1mm}
\resizebox{!}{1.6cm}{
\xy
@={(0,40),(10,40),(30,40),(40,40),(0,30),(10,30),(30,30),(40,30),(20,20),(0,10),(10,10),(30,10),(40,10)}@@{*{\bullet}};
(0,40)*{};(10,40)*{}**\dir{-};
(30,40)*{};(40,40)*{}**\dir{-};
(0,30)*{};(10,30)*{}**\dir{-};
(30,30)*{};(40,30)*{}**\dir{-};
(0,40)*{};(0,30)*{}**\dir{-};
(10,30)*{};(10,40)*{}**\dir{-};
(30,30)*{};(30,40)*{}**\dir{-};
(40,30)*{};(40,40)*{}**\dir{-};
(0,30)*{};(20,20)*{}**\dir{-};
(30,30)*{};(20,20)*{}**\dir2{-};
(10,10)*{};(20,20)*{}**\dir{-};
(30,10)*{};(20,20)*{}**\dir{-};
(40,10)*{};(20,20)*{}**\dir{-};
(10,40)*{};(20,20)*{}**\dir{-};
(10,10)*{};(0,10)*{}**\dir2{-};
(30,10)*{};(40,10)*{}**\dir2{-};
\endxy} \hspace{4mm}
\resizebox{!}{1.6cm}{
\xy
@={(0,40),(10,40),(30,40),(40,40),(0,30),(10,30),(30,30),(40,30),(20,20),(0,10),(10,10),(30,10),(40,10)}@@{*{\bullet}};
(0,40)*{};(10,40)*{}**\dir{-};
(30,40)*{};(40,40)*{}**\dir{-};
(0,30)*{};(10,30)*{}**\dir{-};
(30,30)*{};(40,30)*{}**\dir{-};
(0,40)*{};(0,30)*{}**\dir{-};
(10,30)*{};(10,40)*{}**\dir{-};
(30,30)*{};(30,40)*{}**\dir{-};
(40,30)*{};(40,40)*{}**\dir{-};
(10,30)*{};(20,20)*{}**\dir2{-};
(30,30)*{};(20,20)*{}**\dir2{-};
(10,10)*{};(20,20)*{}**\dir{-};
(30,10)*{};(20,20)*{}**\dir{-};
(40,10)*{};(20,20)*{}**\dir{-};
(10,10)*{};(0,10)*{}**\dir2{-};
(30,10)*{};(40,10)*{}**\dir2{-};
\endxy}  \vspace{2mm}

\resizebox{!}{1.6cm}{
\xy
@={(0,40),(10,40),(30,40),(40,40),(0,30),(10,30),(30,30),(40,30),(20,20),(0,10),(10,10),(30,10),(40,10)}@@{*{\bullet}};
(0,40)*{};(10,40)*{}**\dir{-};
(30,40)*{};(40,40)*{}**\dir{-};
(0,30)*{};(10,30)*{}**\dir{-};
(30,30)*{};(40,30)*{}**\dir{-};
(0,40)*{};(0,30)*{}**\dir{-};
(10,30)*{};(10,40)*{}**\dir{-};
(30,30)*{};(30,40)*{}**\dir{-};
(40,30)*{};(40,40)*{}**\dir{-};
(10,30)*{};(20,20)*{}**\dir{-};
(30,30)*{};(20,20)*{}**\dir2{-};
(10,10)*{};(20,20)*{}**\dir{-};
(30,10)*{};(20,20)*{}**\dir2{-};
(10,40)*{};(20,20)*{}**\dir{-};
(10,10)*{};(0,10)*{}**\dir2{-};
(30,10)*{};(40,10)*{}**\dir2{-};
\endxy} \hspace{4mm}
\resizebox{!}{1.6cm}{
\xy
@={(0,40),(10,40),(30,40),(40,40),(0,30),(10,30),(30,30),(40,30),(20,20),(0,10),(10,10),(30,10),(40,10)}@@{*{\bullet}};
(0,40)*{};(10,40)*{}**\dir{-};
(30,40)*{};(40,40)*{}**\dir{-};
(0,30)*{};(10,30)*{}**\dir{-};
(30,30)*{};(40,30)*{}**\dir{-};
(0,40)*{};(0,30)*{}**\dir{-};
(10,30)*{};(10,40)*{}**\dir{-};
(30,30)*{};(30,40)*{}**\dir{-};
(40,30)*{};(40,40)*{}**\dir{-};
(10,30)*{};(20,20)*{}**\dir2{-};
(30,30)*{};(20,20)*{}**\dir2{-};
(10,10)*{};(20,20)*{}**\dir{-};
(30,10)*{};(20,20)*{}**\dir2{-};
(10,10)*{};(0,10)*{}**\dir2{-};
(30,10)*{};(40,10)*{}**\dir2{-};
\endxy}
\\ \hline
$2\tilde{A}_2 \oplus 2\tilde{A}_2 \oplus \tilde{A}_2 \oplus \tilde{A}_2$ & \vspace{1mm}
\resizebox{!}{1.6cm}{
\xy
@={(5,40),(35,40),(0,30),(10,30),(30,30),(40,30),(20,20),(0,10),(10,10),(30,10),(40,10),(5,20),(35,20)}@@{*{\bullet}};
(0,30)*{};(10,30)*{}**\dir{-};
(10,30)*{};(5,40)*{}**\dir{-};
(0,30)*{};(5,40)*{}**\dir{-};
(30,30)*{};(40,30)*{}**\dir{-};
(40,30)*{};(35,40)*{}**\dir{-};
(30,30)*{};(35,40)*{}**\dir{-};
(30,10)*{};(40,10)*{}**\dir{-};
(40,10)*{};(35,20)*{}**\dir{-};
(30,10)*{};(35,20)*{}**\dir{-};
(0,10)*{};(10,10)*{}**\dir{-};
(10,10)*{};(5,20)*{}**\dir{-};
(0,10)*{};(5,20)*{}**\dir{-};
(10,10)*{};(20,20)*{}**\dir{-};
(30,10)*{};(20,20)*{}**\dir{-};
(30,30)*{};(20,20)*{}**\dir2{-};
(10,30)*{};(20,20)*{}**\dir2{-};
\endxy
} \\ \hline
$2\tilde{A}_2 \oplus \tilde{A}_2 \oplus \tilde{A}_2 \oplus \tilde{A}_2$ & \vspace{1mm}
\resizebox{!}{1.6cm}{
\xy
@={(5,40),(35,40),(0,30),(10,30),(30,30),(40,30),(20,20),(0,10),(10,10),(30,10),(40,10),(5,20),(35,20)}@@{*{\bullet}};
(0,30)*{};(10,30)*{}**\dir{-};
(10,30)*{};(5,40)*{}**\dir{-};
(0,30)*{};(5,40)*{}**\dir{-};
(30,30)*{};(40,30)*{}**\dir{-};
(40,30)*{};(35,40)*{}**\dir{-};
(30,30)*{};(35,40)*{}**\dir{-};
(30,10)*{};(40,10)*{}**\dir{-};
(40,10)*{};(35,20)*{}**\dir{-};
(30,10)*{};(35,20)*{}**\dir{-};
(0,10)*{};(10,10)*{}**\dir{-};
(10,10)*{};(5,20)*{}**\dir{-};
(0,10)*{};(5,20)*{}**\dir{-};
(10,10)*{};(20,20)*{}**\dir{-};
(30,10)*{};(20,20)*{}**\dir2{-};
(30,30)*{};(20,20)*{}**\dir2{-};
(10,30)*{};(20,20)*{}**\dir2{-};
\endxy} \hspace{4mm}
\resizebox{!}{1.6cm}{
\xy
@={(5,40),(35,40),(0,30),(10,30),(30,30),(40,30),(20,20),(0,10),(10,10),(30,10),(40,10),(5,20),(35,20)}@@{*{\bullet}};
(0,30)*{};(10,30)*{}**\dir{-};
(10,30)*{};(5,40)*{}**\dir{-};
(0,30)*{};(5,40)*{}**\dir{-};
(30,30)*{};(40,30)*{}**\dir{-};
(40,30)*{};(35,40)*{}**\dir{-};
(30,30)*{};(35,40)*{}**\dir{-};
(30,10)*{};(40,10)*{}**\dir{-};
(40,10)*{};(35,20)*{}**\dir{-};
(30,10)*{};(35,20)*{}**\dir{-};
(0,10)*{};(10,10)*{}**\dir{-};
(10,10)*{};(5,20)*{}**\dir{-};
(0,10)*{};(5,20)*{}**\dir{-};
(10,10)*{};(20,20)*{}**\dir{-};
(30,10)*{};(20,20)*{}**\dir{-};
(35,20)*{};(20,20)*{}**\dir{-};
(30,30)*{};(20,20)*{}**\dir2{-};
(10,30)*{};(20,20)*{}**\dir2{-};
\endxy} \\ \hline
\end{tabular}
\caption{Admissible fiber-bisection configurations for extremal fibrations}
\label{admissiblefiberbisection}
\end{table}
\clearpage

\end{lemma}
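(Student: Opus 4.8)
The statement is a finite, if lengthy, verification, and the plan is to run through the extremal and rational elliptic fibrations of Table~\ref{extremalrational}, attach a special bisection in every combinatorially allowed way, and test each resulting graph against Lemma~\ref{admissible}. \textbf{Step 1 (reductions).} By the previous lemma $Adm_p \subseteq Adm_0$, so it suffices to work in characteristic $0$ and to use only the first column of Table~\ref{extremalrational}. Moreover, since Lemma~\ref{reductionstep1} guarantees that every Enriques surface carrying a special extremal elliptic fibration also carries one with a reducible double fiber, we only need to enumerate those admissible fiber-bisection configurations in which the special bisection $N$ meets at least one $\tilde A$-component exactly once; such a once-met component corresponds to a reducible double fiber of the fibration $\pi$ downstairs. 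The second assertion of Lemma~\ref{reductionstep1} restricts us further to the case $n_1 + n_2 \le 8$ whenever $N$ meets two components $\tilde A_{n_1-1}$ and $\tilde A_{n_2-1}$ exactly once.

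\textbf{Step 2 (enumeration of candidates).} For each $\Gamma_1$ from Table~\ref{extremalrational} we would write down every graph $\Gamma = \Gamma_1 \cup \{N\}$ compatible with the definition of a fiber-bisection configuration: $N$ meets every $\tilde D$- and $\tilde E$-component twice, meets every $\tilde A$-component once or twice, meets at most two components exactly once, and --- by Step~1 --- meets at least one $\tilde A$-component exactly once. For a $\tilde D$- or $\tilde E$-component the possibilities are limited, since a section can pass only through the simple components of such a fiber; for a $\tilde A_{m-1}$-component one merely records to which vertex, or to which pair of vertices at a given distance, $N$ is joined. This produces a finite, though sizeable, list of candidate graphs.

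\textbf{Step 3 (passing to the K3 cover and testing admissibility).} For a candidate $\Gamma$ one forms, as in Lemma~\ref{universal} and Proposition~\ref{Kondo}, the degree-$2$ base change $\tilde\pi$ of $J(\pi)$ branched over the (at most two) double fibers: over a branch point a fiber $\I_n$ becomes $\I_{2n}$, every other reducible fiber is duplicated, $N$ splits as $N^+ + N^-$, and $N^+$ is chosen as the zero section. The finite group $\MW(J(\pi))$ and the components of the fibers of $J(\pi)$ met by its torsion sections are standard data for each entry of Table~\ref{extremalrational}; under the inclusion $\MW(J(\pi)) \hookrightarrow \MW(\tilde\pi)$ this determines $\sum_\nu contr_\nu(N^-,P)$ for each such section $P$, while the component of each fiber of $\tilde\pi$ met by $N^-$ is forced --- it is the component of $\I_{2n}$ ``opposite'' $N^+$ over a branch point, and over the other points it is dictated by how $N$ meets the corresponding fiber of $\pi$, i.e.\ by $\Gamma$. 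Reading off Table~\ref{heightpairing}, one computes $\sum_\nu contr_\nu(N^-) = 4 - h(N^-)$ and, for $P \in \MW(J(\pi))$ with $P \neq N^-$, the number $\sum_\nu contr_\nu(N^-,P)$, which by the identity $\sum_\nu contr_\nu(N^-,P) = 2 - P.N^-$ must be an integer in $\{0,1,2\}$ for $\Gamma$ to be admissible. Keeping exactly the candidates with $\sum_\nu contr_\nu(N^-) < 4$ --- or $= 4$ with $N^-$ a $2$-torsion section lying in $\MW(J(\pi))$, this borderline case to be treated with the proviso of Remark~\ref{careful} about a double $\I_1$ fiber --- and with every $\sum_\nu contr_\nu(N^-,P) \in \{0,1,2\}$, one is left precisely with the configurations in Table~\ref{admissiblefiberbisection}.

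\textbf{Main obstacle.} The difficulty is bookkeeping rather than conceptual: the number of (fibration, attachment) pairs generated in Step~2 is large, and for every surviving candidate one must carefully propagate through the base change the exact component of each fiber of $\tilde\pi$ hit by $N^-$ and by each torsion section of $J(\pi)$, since the entries of Table~\ref{heightpairing} are sensitive to this. The cases $\sum_\nu contr_\nu(N^-) = 4$, i.e.\ $h(N^-) = 0$ --- among them the critical subgraphs of types $\III$ and $\IV$ (compare Figures~\ref{critIII} and~\ref{critIV}) --- are the most delicate, because one then has to confirm directly that the forced section $N^-$ is genuinely $2$-torsion and lies in the image of $\MW(J(\pi))$. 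One must also verify that the enumeration in Step~2 is exhaustive, so that no admissible configuration is omitted from the table.
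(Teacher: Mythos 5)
Your proposal is correct and follows exactly the strategy the paper intends: the paper itself gives no written proof of this lemma ("It is straightforward... We leave the details to the reader"), and the verification it has in mind is precisely your Steps 1--3 -- reduce to characteristic $0$ via $Adm_p \subseteq Adm_0$, enumerate the combinatorially possible attachments of $N$ to each dual graph from Table \ref{extremalrational} subject to the constraints of Lemma \ref{reductionstep1}, and test each candidate against the two conditions of Lemma \ref{admissible} using the correction terms of Table \ref{heightpairing} together with the proviso of Remark \ref{careful}. Your handling of the borderline case $\sum_\nu contr_\nu(N^-)=4$ (forcing $N^-$ to be a genuine $2$-torsion section of $\MW(J(\pi))$) and of the integrality constraint $\sum_\nu contr_\nu(N^-,P)=2-P.N^-\in\{0,1,2\}$ is exactly what the table encodes.
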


\begin{remark}
In fact, many of these admissible fiber-bisection configurations are realizable over the complex numbers (see \cite{Master}).
\end{remark}

From these tables, we can deduce the following improvement of Lemma \ref{reductionstep1}.

\begin{corollary}
If an Enriques surface $X$ admits a special and extremal elliptic fibration, then $X$ is either of type $\II$ or it admits a special elliptic fibration with a double fiber of type $\I_2$.
\end{corollary}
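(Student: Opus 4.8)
The plan is to feed Lemma \ref{reductionstep1} into the list of Table \ref{admissiblefiberbisection}. Assume $X$ carries a special and extremal elliptic fibration. By Lemma \ref{reductionstep1}, $X$ admits a special elliptic fibration $\pi$ with a double fiber of type $\I_n$ for some $n \geq 2$; since the reduction in the proof of Lemma \ref{reductionstep1} only replaces a large fiber and its special bisection by an $\I_n$-configuration built from fiber components, one may moreover arrange $\pi$ to be extremal (alternatively, if the given extremal fibration already meets some $\tilde{A}$-subgraph only once, one argues with it directly). If $n = 2$ there is nothing to prove, so assume $n \geq 3$. Because the $\I_n$ fiber $F$ is a double fiber, a special bisection $N$ of $\pi$ satisfies $N.F = 1$, so $N$ meets the $\tilde{A}_{n-1}$-diagram of $F$ in a single point. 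Hence the fiber-bisection configuration of $(\pi,N)$ is admissible and contains an $\tilde{A}$-subgraph of size $n-1 \geq 2$ met by $N$ only once; by Lemma \ref{admissibletypes} it is one of the entries of Table \ref{admissiblefiberbisection} carrying a ``$2$''-marked $\tilde{A}_m$ with $m = n-1 \geq 2$, i.e. one of $\tilde{E}_6 \oplus 2\tilde{A}_2$, $\tilde{D}_5 \oplus 2\tilde{A}_3$, $2\tilde{A}_7 \oplus \tilde{A}_1$, $2\tilde{A}_4 \oplus \tilde{A}_4$, the two $2\tilde{A}_5$-entries, the $2\tilde{A}_3$-entries, or the two $2\tilde{A}_2$-entries.

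For each of these finitely many configurations I would run the procedure of Section \ref{example}: pass to the canonical cover $\tilde{X}$, adjoin the sections of $J(\pi)$ via Corollary \ref{jac2}, and compute the intersection numbers of the resulting special bisections on $X$ from the height-pairing table (Table \ref{heightpairing}) together with the constraints of Lemma \ref{admissible} (and Remark \ref{careful}). In each case this either contradicts the admissibility bounds of Lemma \ref{admissible} for some subsequently produced special fibration --- so the configuration is not realized on any Enriques surface --- or it exhibits, inside the dual graph of all $(-2)$-curves on $X$, one of the critical subgraphs of Figures \ref{critI}--\ref{critVII}. By the theorems of Sections \ref{secI}--\ref{secVII}, $X$ is then of the corresponding type $\I,\hdots,\VII$.

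It remains to observe that each of the types $\I$, $\III$, $\IV$, $\V$, $\VI$, $\VII$ carries a special elliptic fibration with a double fiber of type $\I_2$: this is visible from the dual graphs in Table \ref{main}. For instance, the $\III^*$-pencil on a surface of type $\I$, the $(\I_6,\I_3,\I_2,\I_1)$-pencil on types $\V$ and $\VI$, the $(\I_8,\I_2,\I_1,\I_1)$-pencil on type $\VII$, and the $(\I_4,\I_4,\I_2,\I_2)$-pencil underlying the critical subgraph of type $\IV$ all have their $\I_2$ fiber as one of the two multiple fibers; and one verifies the same for type $\III$ from its dual graph. Hence, unless the analysis of the previous paragraph lands on the critical subgraph of type $\II$, the surface $X$ admits a special elliptic fibration with a double fiber of type $\I_2$, which is the assertion.

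The main obstacle is the case-by-case treatment of the entries of Table \ref{admissiblefiberbisection} with a ``$2$''-marked $\tilde{A}_m$, $m \geq 2$: this is precisely the bookkeeping that underlies Lemma \ref{graphlemma}, and for each configuration it requires tracking how $N$ splits on $\tilde{X}$, which fibers acquire extra components under the base change, and which $jac_2$-images are disjoint from which curves. Two smaller points also need care: justifying that the fibration $\pi$ produced by Lemma \ref{reductionstep1} may be taken extremal (equivalently, that the reduction step preserves extremality when started from an extremal fibration), and checking explicitly that each of the six relevant types really does contain a $\tilde{A}_1$-subgraph met exactly once by a $(-2)$-curve, i.e. a special elliptic fibration with a double fiber of type $\I_2$.
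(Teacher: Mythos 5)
Your proposal assembles the right ingredients (Lemma \ref{reductionstep1} plus Table \ref{admissiblefiberbisection}), but it routes the main step through far heavier machinery than the statement needs, and that heavy step is left as a plan rather than carried out. The paper's proof is a single combinatorial observation about the table: with the exception of the critical subgraph for type $\II$, \emph{every} admissible fiber-bisection configuration listed in Lemma \ref{admissibletypes} already contains an $\tilde{A}_1$ subgraph (two vertices joined by a double line --- in several entries one of the two vertices is the special bisection $N$ itself) together with a further vertex meeting that pair with total multiplicity exactly one. By Proposition \ref{canonicaltype} such a pair supports a fiber of an elliptic fibration; since a $(-2)$-curve meets it with odd multiplicity, that fiber is a \emph{double} fiber, necessarily of type $\I_2$ (not $\III$) by Proposition \ref{typeofdoublefiber}, and the third vertex is a special bisection. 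So no determination of the type of $X$, no passage to the K3 cover, and no use of $jac_2$ or the height pairing is required. Your route --- classify $X$ from each configuration via the critical subgraphs and then verify type by type that $\I,\III,\hdots,\VII$ each carry a double-$\I_2$ fibration --- would in principle reach the same conclusion, but it amounts to re-proving the bulk of Lemma \ref{graphlemma}, which is exactly the lemma this corollary is designed to simplify; deferring that case analysis as ``bookkeeping'' leaves the actual proof unexecuted.

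On your two flagged concerns: the extremality of the fibration produced by Lemma \ref{reductionstep1} does need a word, and it follows from Lemma \ref{shiodatate} --- the new fibration's fibers are supported on the same set of $(-2)$-curves (old fiber components plus $N$), which span a lattice of rank at least $9$; alternatively, in the only place the corollary is invoked, every special fibration of $X$ is assumed extremal. Your second concern (checking a double $\I_2$ on each of the six types) evaporates once one uses the table observation above, since the double $\I_2$ fibration is produced directly from the configuration at hand rather than from the eventual dual graph of $X$.
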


\begin{proof}\label{reductionstep2}
By Lemma \ref{reductionstep1}, we know that $X$ admits an elliptic fibration with a double fiber of type $\I_n$ for some $n$. Almost every graph in Lemma \ref{admissibletypes} admits an $\tilde{A}_1$ subgraph and a vertex meeting this subgraph exactly once; the only exception is the critical subgraph for type $\II$. Hence, the claim follows.
\end{proof}

Before we start with the proof of Lemma \ref{graphlemma}, we need the following auxiliary result.

\begin{lemma}\label{noHesse}
There is no Enriques surface with a special elliptic fibration with singular fibers 
\begin{itemize}
\item $(\I_3,\I_3,\I_3,\I_3)$ such that two of the $\I_3$ fibers are multiple or
\item $(\I_6,\I_3,\I_2,\I_1)$ such that the $\I_3$ and $\I_2$ fibers are multiple.
\end{itemize}
\end{lemma}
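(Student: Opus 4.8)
The plan is to suppose that such an Enriques surface $X$ exists, pass to its smooth K3 cover, and reach a contradiction between the height pairing of Proposition \ref{heightpairingdef} and the Picard number of a K3 surface. First some setup. In characteristic $2$ an elliptic fibration of an Enriques surface has only one (wild) double fiber by Proposition \ref{typeofdoublefiber}, so the hypotheses of Lemma \ref{noHesse} cannot be met there and there is nothing to prove; hence assume $\Char(k)\neq 2$. Let $\pi$ be the special elliptic fibration in question, $N$ a special bisection, and $J(\pi)$ its Jacobian. In the first case $J(\pi)$ is the unique extremal rational elliptic surface with fibers $(\I_3,\I_3,\I_3,\I_3)$ and in the second the one with fibers $(\I_6,\I_3,\I_2,\I_1)$; using $|\MW|=\sqrt{|\det T|}$ for the trivial lattice $T=U\oplus A_2^{\oplus 4}$, resp.\ $T=U\oplus A_5\oplus A_2\oplus A_1$, one gets $\MW(J(\pi))\cong(\bbZ/3\bbZ)^2$, resp.\ $\MW(J(\pi))\cong\bbZ/6\bbZ$. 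Let $\tilde\pi$ be the fibration induced on the K3 cover $\tilde X$, let $\varphi$ be the degree $2$ base-change map, and let $N$ split as $N^+\sqcup N^-$ with $N^+$ the zero section. By Lemma \ref{universal}, $N^-$ is a $J(\pi)$-Enriques section in the sense of Definition \ref{EnriquesSection}; in particular $N^-\neq N^+$, and $N^-$ meets the fibers of $\tilde\pi$ over branch points of $\varphi$ in non-identity components.

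The heart of the argument is the dichotomy of Lemma \ref{admissible}: either $h(N^-)=0$ and $N^-$ lies in $\MW(J(\pi))$ as a $2$-torsion section, or $h(N^-)>0$. I would rule out the first alternative in both cases. In the first case $(\bbZ/3\bbZ)^2$ has no nonzero $2$-torsion, while $N^-\neq N^+=0$. In the second case the unique nonzero $2$-torsion section of $\MW(J(\pi))\cong\bbZ/6\bbZ$ has trivial image in the component group $\bbZ/3\bbZ$ of the $\I_3$-fiber (its order is prime to $3$), hence meets the identity component there; but $\pi$ has a \emph{double} fiber of type $\I_3$, so this fiber lies over a branch point of $\varphi$, and condition $(3)$ of Definition \ref{EnriquesSection} is violated. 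Therefore $h(N^-)>0$, so $N^-$ has infinite order in $\MW(\tilde\pi)$.

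It remains to contradict this. Using Lemma \ref{universal}, each double $\I_n$-fiber of $\pi$ pulls back to an $\I_{2n}$-fiber of $\tilde\pi$ while the simple singular fibers split into two copies; so $\tilde\pi$ has fibers $(\I_6,\I_6,\I_3,\I_3,\I_3,\I_3)$ in the first case and $(\I_6,\I_6,\I_6,\I_4,\I_1,\I_1)$ in the second, and in both cases the trivial lattice of $\tilde\pi$ has rank exactly $20$. By the Shioda--Tate formula, the Picard number satisfies $\rho(\tilde X)=20+\rank\MW(\tilde\pi)\geq 21$, which is impossible for a K3 surface: over $\bbC$ one has $\rho(\tilde X)\le 20$ directly, and in positive characteristic the only escape $\rho(\tilde X)=22$ would make $\tilde X$ supersingular and $\det\NS(\tilde X)$ a power of $\Char(k)$, whereas the discriminants of the trivial lattice ($A_5^{\oplus 2}\oplus A_2^{\oplus 4}$, resp.\ $A_5^{\oplus 3}\oplus A_3$) together with $|\MW(J(\pi))|$ force $\det\NS(\tilde X)$ to be divisible by two distinct primes (and in any event the fibration $(\I_3,\I_3,\I_3,\I_3)$, resp.\ one carrying a double $\I_2$-fiber, does not occur in the characteristics where supersingularity could intervene, by Table \ref{extremalrational}). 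This contradiction proves the lemma.

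I expect the main obstacle to be the middle step: one must identify precisely which fiber components the torsion sections of $J(\pi)$ pass through — in particular that the $2$-torsion section in the $\bbZ/6\bbZ$ case hits the identity component of the $\I_3$-fiber — and one must correctly transport condition $(3)$ of Definition \ref{EnriquesSection} through the base change $\varphi$. The fiber bookkeeping on $\tilde X$ and the Picard-number count are routine by comparison.
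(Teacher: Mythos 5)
Your strategy here is genuinely different from the paper's. The paper writes down the Weierstrass equation of the rational surface with fibers $(\I_3,\I_3,\I_3,\I_3)$, performs the base change $t\mapsto s^2-1$ explicitly, and shows by comparing coefficients that an everywhere integral, anti-invariant section with the prescribed component behaviour forces $144=0$. Your torsion analysis is correct and is the attractive half of your argument: $\MW(J(\pi))\cong(\bbZ/3\bbZ)^2$ has no nontrivial $2$-torsion, and in the $\bbZ/6\bbZ$ case the $2$-torsion section specializes trivially into the component group $\bbZ/3\bbZ$ of the double $\I_3$ fiber, so its pullback meets the identity component of the corresponding $\I_6$ fiber of $\tilde\pi$ and violates condition $(3)$ of Definition \ref{EnriquesSection}. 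By Lemma \ref{admissible} this forces $h(N^-)>0$, the trivial lattice of $\tilde\pi$ does have rank $20$ in both cases, and hence $\rho(\tilde X)\ge 21$.

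The gap is the final step, ``which is impossible for a K3 surface.'' The value $\rho=21$ is indeed excluded (Artin's inequality $\rho\le 22-2h$ for finite height, and an elliptic supersingular K3 surface has $\rho=22$), but $\rho=22$ is a live possibility in every characteristic $p\ge 5$, and neither of your two attempts to rule it out works. The determinant argument fails because the trivial lattice need not be primitively embedded in $\NS(\tilde X)$, and because for its primitive closure $T'$, of corank $2$, one only has $|\det T'|\cdot|\det (T')^{\perp}| = p^{2\sigma}\cdot n^2$ with $n=[\NS(\tilde X):T'\oplus (T')^{\perp}]$; the prime factors $2$ and $3$ of $\det T$ can cancel against $n$ and against the determinant of the rank-$2$ complement, so no divisibility contradiction is forced. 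The parenthetical fallback is also not correct: supersingular K3 surfaces exist in every positive characteristic, Table \ref{extremalrational} only removes $p=2,3$, and supersingular K3 surfaces in odd characteristic can admit Enriques involutions (by work of Jang), so supersingularity of the cover cannot be dismissed on general grounds. To close the argument you would need an actual obstruction to embedding the rank-$21$ lattice generated by the trivial lattice and $N^-$ into a $p$-elementary rank-$22$ lattice compatibly with the two involutions, or you should fall back on the paper's direct Weierstrass computation, which is insensitive to $\rho(\tilde X)$.
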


\begin{proof}
We will only show the first claim; the second one is similar.
The claim is true if $\Char(k) \in \{2,3\}$, since there is no rational elliptic surface with singular fibers $(\I_3,\I_3,\I_3,\I_3)$ in characteristic $3$ and an elliptic fibration of an Enriques surface in characteristic $2$ cannot have two multiplicative double fibers.

Let us assume $\Char(k) \not \in \{2,3\}$. The rational elliptic surface $J(\pi)$ with singular fibers $(\I_3,\I_3,\I_3,\I_3)$ has the Weierstrass equation
\begin{equation}\label{jacobianHesse}
y^2 = x^3 + (-3t^4 +24t)x + 2t^6+40t^3-16.
\end{equation}
If an Enriques surface with this Jacobian and two double $\I_3$ fibers exists, it is covered by the base change of (\ref{jacobianHesse}) via $t \mapsto s^2 -1$.

A $J(\pi)$-Enriques section $N^- = (x(s),y(s))$ meets the fibers of $J(\pi)$ at $s = 0$ and at $s = \infty$ in a non-identity component and is $J(\sigma)$-anti-invariant, where $J(\sigma): s \mapsto -s$.
Since the singular point of the fiber at $s = 0$ (resp. $s  = \infty$) is $(-3,0)$ (resp. $(1,0)$), $N^-$ has the form
\begin{eqnarray*}
x &= -3 + x_2 s^2 + s^4 \\
y &= y_1s +y_3 s^3 + y_5 s^5.
\end{eqnarray*}
Plugging this into the base change of equation (\ref{jacobianHesse}), we additionally obtain $y_1 = y_5 = 0$, $y_3 = \pm 8$, $x_2 = -2$ and finally $144 = 0$, which is not allowed, since $\Char(k) \neq 2,3$.
\end{proof}

\begin{proof}[Proof of Lemma \ref{graphlemma}] (For a detailed explanation of how to add $(-2)$-curves using $jac_2$, see Section \ref{example}.)
By Corollary \ref{reductionstep2}, it suffices to check the admissible fiber-bisection configurations with a $2\tilde{A}_1$ component. We will treat them in the following order:

\vspace{3mm}
\centerline{
\begin{tabular}{|>{\centering\arraybackslash}m{3.5cm}|>{\centering\arraybackslash}m{8.5cm}|}
\hline
$\Gamma_1$ & \text{\# Admissible fiber-bisection configurations } \\ \hline \hline
\vspace{0.5mm} $\tilde{D}_6 \oplus 2\tilde{A}_1 \oplus 2\tilde{A}_1$ & \vspace{1mm} $2$ \\ [0.5mm] \hline
\vspace{0.5mm} $\tilde{D}_6 \oplus 2\tilde{A}_1 \oplus \tilde{A}_1$ &  \vspace{1mm}  $4$\\ [0.5mm] \hline
\vspace{0.5mm} $\tilde{E}_7 \oplus 2\tilde{A}_1$  & \vspace{1mm} $2$\\ [0.5mm] \hline
\vspace{0.5mm} $\tilde{A}_3 \oplus \tilde{A}_3 \oplus 2\tilde{A}_1 \oplus 2\tilde{A}_1$ & \vspace{1mm} $2$\\ [0.5mm] \hline
\vspace{0.5mm} $2\tilde{A}_3 \oplus \tilde{A}_3 \oplus 2\tilde{A}_1 \oplus \tilde{A}_1$ &\vspace{1mm} $1$\\ [0.5mm] \hline
\vspace{0.5mm} $\tilde{A}_3 \oplus \tilde{A}_3 \oplus 2\tilde{A}_1 \oplus \tilde{A}_1$ &\vspace{1mm} $4$\\[0.5mm]  \hline
\vspace{0.5mm} $2\tilde{A}_5 \oplus \tilde{A}_2 \oplus 2\tilde{A}_1$ & \vspace{1mm}$1$ \\ [0.5mm]\hline
\vspace{0.5mm} $\tilde{A}_5 \oplus \tilde{A}_2 \oplus 2\tilde{A}_1$ & \vspace{1mm}$4$ \\ [0.5mm]\hline
\vspace{0.5mm} $\tilde{A}_7 \oplus 2\tilde{A}_1$ & \vspace{1mm}$2$\\ [0.5mm]\hline
\end{tabular}
}

\begin{itemize}
\item $\Gamma_1 = \tilde{D}_6 \oplus 2\tilde{A}_1 \oplus 2\tilde{A}_1$
\begin{enumerate}[label = \alph*)]
\item Fiber-bisection configuration:

\centerline{
\xy
(30,10)*{};
@={(0,0),(10,10),(0,20),(20,10),(30,10),(40,0),(40,20),(60,20),(60,0),(50,10),(70,20),(70,0)}@@{*{\bullet}};
(0,0)*{};(10,10)*{}**\dir{-};
(0,20)*{};(10,10)*{}**\dir{-};
(30,10)*{};(10,10)*{}**\dir{-};
(30,10)*{};(40,20)*{}**\dir{-};
(30,10)*{};(40,0)*{}**\dir{-};
(50,10)*{};(40,20)*{}**\dir2{-};
(50,10)*{};(60,20)*{}**\dir{-};
(50,10)*{};(60,0)*{}**\dir{-};
(70,20)*{};(60,20)*{}**\dir2{-};
(70,0)*{};(60,0)*{}**\dir2{-};
\endxy}
After adding a bisection with $jac_2$, we find another special fibration with two double $\I_2$ fibers and bisection $N$ as follows, where the dotted rectangles mark the fibers:

\centerline{
\xy
(30,10)*{};
@={(0,0),(10,10),(0,20),(20,10),(30,10),(40,0),(40,20),(60,20),(60,0),(50,20),(50,0),(70,20),(70,0)}@@{*{\bullet}};
(0,0)*{};(10,10)*{}**\dir{-};
(0,20)*{};(10,10)*{}**\dir{-};
(30,10)*{};(10,10)*{}**\dir{-};
(30,10)*{};(40,20)*{}**\dir{-};
(30,10)*{};(40,0)*{}**\dir{-};
(50,20)*{};(40,20)*{}**\dir2{-};
(50,20)*{};(60,20)*{}**\dir{-};
(50,20)*{};(60,0)*{}**\dir{-};
(50,0)*{};(40,0)*{}**\dir2{-};
(50,0)*{};(60,20)*{}**\dir{-};
(50,0)*{};(60,0)*{}**\dir{-};
(70,20)*{};(60,20)*{}**\dir2{-};
(70,0)*{};(60,0)*{}**\dir2{-};
(37,23)*{};(53,23)*{}**\dir{--};
(37,17)*{};(53,17)*{}**\dir{--};
(37,17)*{};(37,23)*{}**\dir{--};
(53,17)*{};(53,23)*{}**\dir{--};
(37,3)*{};(53,3)*{}**\dir{--};
(37,-3)*{};(53,-3)*{}**\dir{--};
(37,-3)*{};(37,3)*{}**\dir{--};
(53,-3)*{};(53,3)*{}**\dir{--};
(30,13)*{N};
\endxy}

There is a $D_4$ diagram which is disjoint from the two $\tilde{A}_1$ subgraphs. By Table \ref{extremalrational}, the only extremal fibration with two singular fibers of type $\I_2$ and one singular fiber whose dual graph contains a $D_4$ is the one with singular fibers $(\I_2^*,\I_2,\I_2)$. However, the bisection $N$ cannot meet the $\I_2^*$ fiber in an admissible way, hence this fiber-bisection configuration does not occur.
\item Fiber-bisection configuration:

\centerline{
\xy
@={(0,0),(10,10),(0,20),(20,10),(30,10),(40,0),(40,20),(60,20),(60,0),(50,10),(70,20),(70,0)}@@{*{\bullet}};
(0,0)*{};(10,10)*{}**\dir{-};
(0,20)*{};(10,10)*{}**\dir{-};
(30,10)*{};(10,10)*{}**\dir{-};
(30,10)*{};(40,20)*{}**\dir{-};
(30,10)*{};(40,0)*{}**\dir{-};
(50,10)*{};(40,20)*{}**\dir{-};
(50,10)*{};(40,0)*{}**\dir{-};
(50,10)*{};(60,20)*{}**\dir{-};
(50,10)*{};(60,0)*{}**\dir{-};
(70,20)*{};(60,20)*{}**\dir2{-};
(70,0)*{};(60,0)*{}**\dir2{-};
\endxy}

After adding a bisection with $jac_2$, we find another special fibration with two double $\I_4$ fibers and bisection $N$ as follows:

\centerline{
\xy
@={(-10,10),(0,0),(10,10),(0,20),(20,10),(30,10),(40,0),(40,20),(60,20),(60,0),(50,10),(70,20),(70,0)}@@{*{\bullet}};
(0,0)*{};(-10,10)*{}**\dir{-};
(0,20)*{};(-10,10)*{}**\dir{-};
(0,0)*{};(10,10)*{}**\dir{-};
(0,20)*{};(10,10)*{}**\dir{-};
(30,10)*{};(10,10)*{}**\dir{-};
(30,10)*{};(40,20)*{}**\dir{-};
(30,10)*{};(40,0)*{}**\dir{-};
(50,10)*{};(40,20)*{}**\dir{-};
(50,10)*{};(40,0)*{}**\dir{-};
(50,10)*{};(60,20)*{}**\dir{-};
(50,10)*{};(60,0)*{}**\dir{-};
(70,20)*{};(60,20)*{}**\dir2{-};
(70,0)*{};(60,0)*{}**\dir2{-};
(60,20)*{};(-10,10)*{}**\crv{(0,40)};
(60,0)*{};(-10,10)*{}**\crv{(0,-20)};
(-13,-3)*{};(-13,23)*{}**\dir{--};
(-13,-3)*{};(13,-3)*{}**\dir{--};
(13,23)*{};(-13,23)*{}**\dir{--};
(13,-3)*{};(13,23)*{}**\dir{--};
(27,-3)*{};(27,23)*{}**\dir{--};
(27,-3)*{};(53,-3)*{}**\dir{--};
(53,23)*{};(27,23)*{}**\dir{--};
(53,-3)*{};(53,23)*{}**\dir{--};
(23,13)*{N};
\endxy}
\vspace{5mm}

By Table \ref{extremalrational}, the only extremal fibration with two singular fibers of type $\I_4$ is the one with singular fibers $(\I_4,\I_4,\I_2,\I_2)$ and the only admissible fiber-bisection configuration with $\Gamma_1 = 2\tilde{A}_3 \oplus 2\tilde{A}_3 \oplus \tilde{A}_1 \oplus \tilde{A}_1$ is the critical subgraph for type $\III$. 
\end{enumerate}

\item $\Gamma_1 = \tilde{D}_6 \oplus 2\tilde{A}_1 \oplus \tilde{A}_1$

\begin{enumerate}[label = \alph*)]
\item
Fiber-bisection configuration:

\centerline{
\xy
@={(0,0),(10,10),(0,20),(20,10),(30,10),(40,0),(40,20),(60,20),(60,0),(50,10),(70,20),(70,0)}@@{*{\bullet}};
(0,0)*{};(10,10)*{}**\dir{-};
(0,20)*{};(10,10)*{}**\dir{-};
(30,10)*{};(10,10)*{}**\dir{-};
(30,10)*{};(40,20)*{}**\dir{-};
(30,10)*{};(40,0)*{}**\dir{-};
(50,10)*{};(40,20)*{}**\dir{-};
(50,10)*{};(0,20)*{}**\crv{(50,35)};
(50,10)*{};(60,20)*{}**\dir{-};
(50,10)*{};(60,0)*{}**\dir2{-};
(70,20)*{};(60,20)*{}**\dir2{-};
(70,0)*{};(60,0)*{}**\dir2{-};
\endxy}

After adding a bisection corresponding to a $2$-torsion section via $jac_2$, we obtain another special fibration with double singular fibers $\I_6$ and $\I_2$ and bisection $N$ as follows:

\centerline{
\xy
@={(0,0),(10,10),(0,20),(20,10),(30,10),(40,0),(40,20),(60,20),(60,0),(50,10),(70,20),(70,0),(20,-10)}@@{*{\bullet}};
(0,0)*{};(10,10)*{}**\dir{-};
(0,20)*{};(10,10)*{}**\dir{-};
(30,10)*{};(10,10)*{}**\dir{-};
(30,10)*{};(40,20)*{}**\dir{-};
(30,10)*{};(40,0)*{}**\dir{-};
(50,10)*{};(40,20)*{}**\dir{-};
(50,10)*{};(0,20)*{}**\crv{(50,35)};
(50,10)*{};(60,20)*{}**\dir{-};
(50,10)*{};(60,0)*{}**\dir2{-};
(70,20)*{};(60,20)*{}**\dir2{-};
(70,0)*{};(60,0)*{}**\dir2{-};
(20,-10)*{};(60,20)*{}**\crv{(60,0)};
(20,-10)*{};(40,0)*{}**\dir{-};
(20,-10)*{};(0,0)*{}**\dir{-};
(20,-10)*{};(70,0)*{}**\crv{~**\dir2{-} (60,-10)};
(-3,13)*{};(43,13)*{}**\dir{--};
(-3,-13)*{};(43,-13)*{}**\dir{--};
(43,13)*{};(43,-13)*{}**\dir{--};
(-3,13)*{};(-3,-13)*{}**\dir{--};
(63,13)*{};(47,13)*{}**\dir{--};
(63,-3)*{};(47,-3)*{}**\dir{--};
(47,13)*{};(47,-3)*{}**\dir{--};
(63,13)*{};(63,-3)*{}**\dir{--};
(37,22)*{N};
\endxy}
\vspace{5mm}

The only admissible fiber-bisection configuration for such a fibration is the critical subgraph for type $\V$.

\item Fiber-bisection configuration:

\centerline{
\xy
@={(0,0),(10,10),(0,20),(20,10),(30,10),(40,0),(40,20),(60,20),(60,0),(50,10),(70,20),(70,0)}@@{*{\bullet}};
(0,0)*{};(10,10)*{}**\dir{-};
(0,20)*{};(10,10)*{}**\dir{-};
(30,10)*{};(10,10)*{}**\dir{-};
(30,10)*{};(40,20)*{}**\dir{-};
(30,10)*{};(40,0)*{}**\dir{-};
(50,10)*{};(40,20)*{}**\dir{-};
(50,10)*{};(40,0)*{}**\dir{-};
(50,10)*{};(60,20)*{}**\dir{-};
(50,10)*{};(60,0)*{}**\dir{-};
(50,10)*{};(70,0)*{}**\dir{-};
(70,20)*{};(60,20)*{}**\dir2{-};
(70,0)*{};(60,0)*{}**\dir2{-};
\endxy}

Adding another bisection corresponding to a $2$-torsion section via $jac_2$, we obtain another special fibration with two singular double fibers of type $\I_4$, giving the critical subgraph for type $\III$:

\centerline{
\xy
@={(0,0),(10,10),(0,20),(20,10),(30,10),(40,0),(40,20),(60,20),(60,0),(50,10),(70,20),(70,0),(-10,10)}@@{*{\bullet}};
(0,0)*{};(10,10)*{}**\dir{-};
(0,20)*{};(10,10)*{}**\dir{-};
(30,10)*{};(10,10)*{}**\dir{-};
(30,10)*{};(40,20)*{}**\dir{-};
(30,10)*{};(40,0)*{}**\dir{-};
(50,10)*{};(40,20)*{}**\dir{-};
(50,10)*{};(40,0)*{}**\dir{-};
(50,10)*{};(60,20)*{}**\dir{-};
(50,10)*{};(60,0)*{}**\dir{-};
(50,10)*{};(70,0)*{}**\dir{-};
(70,20)*{};(60,20)*{}**\dir2{-};
(70,0)*{};(60,0)*{}**\dir2{-};
(-10,10)*{};(60,20)*{}**\crv{(-10,40)};
(-10,10)*{};(0,20)*{}**\dir{-};
(-10,10)*{};(0,0)*{}**\dir{-};
(-10,10)*{};(60,0)*{}**\crv{(0,-20)};
(-10,10)*{};(70,0)*{}**\crv{(-10,-30)};
(-13,-3)*{};(-13,23)*{}**\dir{--};
(-13,-3)*{};(13,-3)*{}**\dir{--};
(13,23)*{};(-13,23)*{}**\dir{--};
(13,-3)*{};(13,23)*{}**\dir{--};
(27,-3)*{};(27,23)*{}**\dir{--};
(27,-3)*{};(53,-3)*{}**\dir{--};
(53,23)*{};(27,23)*{}**\dir{--};
(53,-3)*{};(53,23)*{}**\dir{--};
(20,13)*{N};
\endxy}
\vspace{5mm}

\item Fiber-bisection configuration:

\centerline{
\xy
@={(0,0),(10,10),(0,20),(20,10),(30,10),(40,0),(40,20),(60,20),(60,0),(50,10),(70,20),(70,0)}@@{*{\bullet}};
(0,0)*{};(10,10)*{}**\dir{-};
(0,20)*{};(10,10)*{}**\dir{-};
(30,10)*{};(10,10)*{}**\dir{-};
(30,10)*{};(40,20)*{}**\dir{-};
(30,10)*{};(40,0)*{}**\dir{-};
(50,10)*{};(40,20)*{}**\dir2{-};
(50,10)*{};(60,20)*{}**\dir{-};
(50,10)*{};(60,0)*{}**\dir{-};
(50,10)*{};(70,0)*{}**\dir{-};
(70,20)*{};(60,20)*{}**\dir2{-};
(70,0)*{};(60,0)*{}**\dir2{-};
\endxy}

Adding another bisection corresponding to a $2$-torsion section via $jac_2$, we obtain another special fibration with two singular double fibers of type $\I_2$, bisection $N$, and some fiber whose dual graph contains a $D_4$. The only extremal fibration satisfying this is the one with fibers $(\I_2^*,\I_2,\I_2)$ and we have already treated the cases where both $\I_2$ fibers are double.

\vspace{1mm}
\centerline{
\xy
@={(0,0),(10,10),(0,20),(20,10),(30,10),(40,0),(40,20),(60,20),(60,0),(50,20),(50,0),(70,20),(70,0)}@@{*{\bullet}};
(0,0)*{};(10,10)*{}**\dir{-};
(0,20)*{};(10,10)*{}**\dir{-};
(30,10)*{};(10,10)*{}**\dir{-};
(30,10)*{};(40,20)*{}**\dir{-};
(30,10)*{};(40,0)*{}**\dir{-};
(50,20)*{};(40,20)*{}**\dir2{-};
(50,20)*{};(60,20)*{}**\dir{-};
(50,20)*{};(60,0)*{}**\dir{-};
(50,20)*{};(70,0)*{}**\dir{-};
(70,20)*{};(60,20)*{}**\dir2{-};
(70,0)*{};(60,0)*{}**\dir2{-};
(50,0)*{};(40,0)*{}**\dir2{-};
(50,0)*{};(60,20)*{}**\dir{-};
(50,0)*{};(60,0)*{}**\dir{-};
(50,0)*{};(70,0)*{}**\crv{(60,-10)};
(37,23)*{};(53,23)*{}**\dir{--};
(37,17)*{};(53,17)*{}**\dir{--};
(37,17)*{};(37,23)*{}**\dir{--};
(53,17)*{};(53,23)*{}**\dir{--};
(37,3)*{};(53,3)*{}**\dir{--};
(37,-3)*{};(53,-3)*{}**\dir{--};
(37,-3)*{};(37,3)*{}**\dir{--};
(53,-3)*{};(53,3)*{}**\dir{--};
(30,13)*{N};
\endxy}

\item Fiber-bisection configuration:

\centerline{
\xy
@={(0,0),(10,10),(0,20),(20,10),(30,10),(40,0),(40,20),(60,20),(60,0),(50,10),(70,20),(70,0)}@@{*{\bullet}};
(0,0)*{};(10,10)*{}**\dir{-};
(0,20)*{};(10,10)*{}**\dir{-};
(30,10)*{};(10,10)*{}**\dir{-};
(30,10)*{};(40,20)*{}**\dir{-};
(30,10)*{};(40,0)*{}**\dir{-};
(50,10)*{};(40,20)*{}**\dir2{-};
(50,10)*{};(60,20)*{}**\dir{-};
(50,10)*{};(60,0)*{}**\dir2{-};
(70,20)*{};(60,20)*{}**\dir2{-};
(70,0)*{};(60,0)*{}**\dir2{-};
\endxy }

There is another special elliptic fibration with double fiber of type $\I_2$ as in the following figure:

\centerline{
\xy
@={(0,0),(10,10),(0,20),(20,10),(30,10),(40,0),(40,20),(60,20),(60,0),(50,10),(70,20),(70,0)}@@{*{\bullet}};
(0,0)*{};(10,10)*{}**\dir{-};
(0,20)*{};(10,10)*{}**\dir{-};
(30,10)*{};(10,10)*{}**\dir{-};
(30,10)*{};(40,20)*{}**\dir{-};
(30,10)*{};(40,0)*{}**\dir{-};
(50,10)*{};(40,20)*{}**\dir2{-};
(50,10)*{};(60,20)*{}**\dir{-};
(50,10)*{};(60,0)*{}**\dir2{-};
(70,20)*{};(60,20)*{}**\dir2{-};
(70,0)*{};(60,0)*{}**\dir2{-};
(37,23)*{};(53,23)*{}**\dir{--};
(37,23)*{};(37,7)*{}**\dir{--};
(37,7)*{};(53,7)*{}**\dir{--};
(53,23)*{};(53,7)*{}**\dir{--};
(30,13)*{N};
\endxy }

There is a $D_4$ diagram and three disjoint vertices, which are disjoint from the marked subgraph. The only extremal fibration whose dual graph of singular fibers contains these diagrams is the one with singular fibers $(\I_2^*,\I_2,\I_2)$. But the bisection $N$ meets the fibers in such a way, that the fiber-bisection configuration will be one of the configurations we have already treated.

\end{enumerate}

\item $\Gamma_1 = \tilde{E}_7 \oplus 2\tilde{A}_1$

\begin{enumerate}[label = \alph*)]

\item Fiber-bisection configuration:

\centerline{
\xy
@={(0,10),(10,10),(20,10),(30,10),(40,10),(50,10),(60,10),(30,0),(30,20),(30,30),(30,40)}@@{*{\bullet}};
(0,10)*{};(60,10)*{}**\dir{-};
(30,10)*{};(30,0)*{}**\dir{-};
(0,10)*{};(30,20)*{}**\dir{-};
(60,10)*{};(30,20)*{}**\dir{-};
(30,20)*{};(30,30)*{}**\dir{-};
(30,40)*{};(30,30)*{}**\dir2{-};
\endxy}
\vspace{1mm}

This is the critical subgraph for type $\I$.

\item Fiber-bisection configuration:

\centerline{
\xy
@={(0,10),(10,10),(20,10),(30,10),(40,10),(50,10),(60,10),(30,0),(30,20),(30,30),(30,40)}@@{*{\bullet}};
(0,10)*{};(60,10)*{}**\dir{-};
(30,10)*{};(30,0)*{}**\dir{-};
(60,10)*{};(30,20)*{}**\dir2{-};
(30,20)*{};(30,30)*{}**\dir{-};
(30,40)*{};(30,30)*{}**\dir2{-};
\endxy}
\vspace{1mm}

There is another special elliptic fibration with a double fiber of type $\I_2$ and a bisection $N$ as follows:

\centerline{
\xy
@={(0,10),(10,10),(20,10),(30,10),(40,10),(50,10),(60,10),(30,0),(30,20),(30,30),(30,40)}@@{*{\bullet}};
(0,10)*{};(60,10)*{}**\dir{-};
(30,10)*{};(30,0)*{}**\dir{-};
(60,10)*{};(30,20)*{}**\dir2{-};
(30,20)*{};(30,30)*{}**\dir{-};
(30,40)*{};(30,30)*{}**\dir2{-};
(60,8)*{};(63,10)*{}**\dir{--};
(30,22)*{};(27,20)*{}**\dir{--};
(60,8)*{};(27,20)*{}**\dir{--};
(30,22)*{};(63,10)*{}**\dir{--};
(50,7)*{N};
\endxy}
\vspace{1mm}

There is a $D_6$ diagram and an isolated vertex which are disjoint from the marked subgraph. Moreover, from the intersection behaviour of $N$, we can exclude the case that the new fibration has a singular fiber of type $\III^*$. The only extremal fibration satisfying these conditions is the one with singular fibers $(\I_2^*,\I_2,\I_2)$. We have already treated all fiber-bisection configurations for this fibration.

\end{enumerate}

\item $\Gamma_1 = \tilde{A}_3 \oplus \tilde{A}_3 \oplus 2\tilde{A}_1 \oplus 2\tilde{A}_1$

\begin{enumerate}[label = \alph*)]
\item Fiber-bisection configuration:

\vspace{1mm}
\centerline{
\xy
@={(0,40),(10,40),(30,40),(40,40),(0,30),(10,30),(30,30),(40,30),(20,20),(0,10),(10,10),(30,10),(40,10)}@@{*{\bullet}};
(0,40)*{};(10,40)*{}**\dir{-};
(30,40)*{};(40,40)*{}**\dir{-};
(0,30)*{};(10,30)*{}**\dir{-};
(30,30)*{};(40,30)*{}**\dir{-};
(0,40)*{};(0,30)*{}**\dir{-};
(10,30)*{};(10,40)*{}**\dir{-};
(30,30)*{};(30,40)*{}**\dir{-};
(40,30)*{};(40,40)*{}**\dir{-};
(10,30)*{};(20,20)*{}**\dir2{-};
(40,30)*{};(20,20)*{}**\dir{-};
(30,40)*{};(20,20)*{}**\dir{-};
(10,10)*{};(20,20)*{}**\dir{-};
(30,10)*{};(20,20)*{}**\dir{-};
(10,10)*{};(0,10)*{}**\dir2{-};
(30,10)*{};(40,10)*{}**\dir2{-};
\endxy}

This is the critical subgraph for type $\IV$.

\item Fiber-bisection configuration:

\vspace{1mm}
\centerline{
\xy
@={(0,40),(10,40),(30,40),(40,40),(0,30),(10,30),(30,30),(40,30),(20,20),(0,10),(10,10),(30,10),(40,10)}@@{*{\bullet}};
(0,40)*{};(10,40)*{}**\dir{-};
(30,40)*{};(40,40)*{}**\dir{-};
(0,30)*{};(10,30)*{}**\dir{-};
(30,30)*{};(40,30)*{}**\dir{-};
(0,40)*{};(0,30)*{}**\dir{-};
(10,30)*{};(10,40)*{}**\dir{-};
(30,30)*{};(30,40)*{}**\dir{-};
(40,30)*{};(40,40)*{}**\dir{-};
(10,30)*{};(20,20)*{}**\dir2{-};
(30,30)*{};(20,20)*{}**\dir2{-};
(10,10)*{};(20,20)*{}**\dir{-};
(30,10)*{};(20,20)*{}**\dir{-};
(10,10)*{};(0,10)*{}**\dir2{-};
(30,10)*{};(40,10)*{}**\dir2{-};
\endxy}

After adding bisections coming from $2$-torsion sections via $jac_2$, we obtain another (maybe non-special) fibration with two double $\I_2$ fibers as follows:

\centerline{
\xy
@={(0,40),(10,40),(30,40),(40,40),(0,30),(10,30),(30,30),(40,30),(20,25),(0,10),(10,10),(30,10),(40,10),(20,50),(-10,20),(50,20)}@@{*{\bullet}};
(0,40)*{};(10,40)*{}**\dir{-};
(30,40)*{};(40,40)*{}**\dir{-};
(0,30)*{};(10,30)*{}**\dir{-};
(30,30)*{};(40,30)*{}**\dir{-};
(0,40)*{};(0,30)*{}**\dir{-};
(10,30)*{};(10,40)*{}**\dir{-};
(30,30)*{};(30,40)*{}**\dir{-};
(40,30)*{};(40,40)*{}**\dir{-};
(10,30)*{};(20,25)*{}**\dir2{-};
(30,30)*{};(20,25)*{}**\dir2{-};
(10,10)*{};(20,25)*{}**\dir{-};
(30,10)*{};(20,25)*{}**\dir{-};
(10,10)*{};(0,10)*{}**\dir2{-};
(30,10)*{};(40,10)*{}**\dir2{-};
(-10,20)*{};(50,20)*{}**\crv{~**\dir2{-} (20,15)};
(20,25)*{};(20,50)*{}**\dir2{-};
(0,40)*{};(20,50)*{}**\dir2{-};
(40,40)*{};(20,50)*{}**\dir2{-};
(10,30)*{};(50,20)*{}**\dir2{-};
(30,30)*{};(-10,20)*{}**\dir2{-};
(0,40)*{};(-10,20)*{}**\dir2{-};
(40,40)*{};(50,20)*{}**\dir2{-};
(10,10)*{};(-10,20)*{}**\dir{-};
(30,10)*{};(50,20)*{}**\dir{-};
(30,10)*{};(-10,20)*{}**\dir{-};
(10,10)*{};(50,20)*{}**\dir{-};
(10,10)*{};(20,50)*{}**\dir{-};
(30,10)*{};(20,50)*{}**\dir{-};
(-13,23)*{};(53,23)*{}**\crv{~**\dir{--} (20,15)};
(-13,17)*{};(53,17)*{}**\crv{~**\dir{--} (20,14)};
(-13,23)*{};(-13,17)*{}**\dir{--};
(53,23)*{};(53,17)*{}**\dir{--};
(23,22)*{};(23,53)*{}**\dir{--};
(17,22)*{};(17,53)*{}**\dir{--};
(23,22)*{};(17,22)*{}**\dir{--};
(23,53)*{};(17,53)*{}**\dir{--};
\endxy}

There are six disjoint vertices which are disjoint from the two $\I_2$ fibers. There is no extremal elliptic fibration whose dual graph of singular fibers contains two $\tilde{A}_1$ diagrams and six disjoint vertices.

\end{enumerate}

\item $\Gamma_1 = 2\tilde{A}_3 \oplus \tilde{A}_3 \oplus 2\tilde{A}_1 \oplus \tilde{A}_1$

Fiber-bisection configuration:

\vspace{1mm}
\centerline{
\xy
@={(0,40),(10,40),(30,40),(40,40),(0,30),(10,30),(30,30),(40,30),(20,20),(0,10),(10,10),(30,10),(40,10)}@@{*{\bullet}};
(0,40)*{};(10,40)*{}**\dir{-};
(30,40)*{};(40,40)*{}**\dir{-};
(0,30)*{};(10,30)*{}**\dir{-};
(30,30)*{};(40,30)*{}**\dir{-};
(0,40)*{};(0,30)*{}**\dir{-};
(10,30)*{};(10,40)*{}**\dir{-};
(30,30)*{};(30,40)*{}**\dir{-};
(40,30)*{};(40,40)*{}**\dir{-};
(10,30)*{};(20,20)*{}**\dir{-};
(30,30)*{};(20,20)*{}**\dir2{-};
(10,10)*{};(20,20)*{}**\dir{-};
(30,10)*{};(20,20)*{}**\dir{-};
(40,10)*{};(20,20)*{}**\dir{-};
(10,10)*{};(0,10)*{}**\dir2{-};
(30,10)*{};(40,10)*{}**\dir2{-};
\endxy
}

Adding a bisection corresponding to a $2$-torsion section via $jac_2$, we find another special fibration with two double $\I_2$ fibers and special bisection $N$.

\vspace{2mm}
\centerline{
\xy
@={(0,40),(10,40),(30,40),(40,40),(0,30),(10,30),(30,30),(40,30),(20,20),(0,10),(10,10),(30,10),(40,10),(20,40)}@@{*{\bullet}};
(0,40)*{};(10,40)*{}**\dir{-};
(30,40)*{};(40,40)*{}**\dir{-};
(0,30)*{};(10,30)*{}**\dir{-};
(30,30)*{};(40,30)*{}**\dir{-};
(0,40)*{};(0,30)*{}**\dir{-};
(10,30)*{};(10,40)*{}**\dir{-};
(30,30)*{};(30,40)*{}**\dir{-};
(40,30)*{};(40,40)*{}**\dir{-};
(10,30)*{};(20,20)*{}**\dir{-};
(30,30)*{};(20,20)*{}**\dir2{-};
(10,10)*{};(20,20)*{}**\dir{-};
(30,10)*{};(20,20)*{}**\dir{-};
(40,10)*{};(20,20)*{}**\dir{-};
(10,10)*{};(0,10)*{}**\dir2{-};
(30,10)*{};(40,10)*{}**\dir2{-};
(10,30)*{};(20,40)*{}**\dir{-};
(40,40)*{};(20,40)*{}**\crv{~**\dir2{-} (30,50)};
(43,43)*{};(17,43)*{}**\crv{~**\dir{--} (30,50)};
(43,35)*{};(17,35)*{}**\crv{~**\dir{--} (30,50)};
(17,35)*{};(17,43)*{}**\dir{--};
(43,35)*{};(43,43)*{}**\dir{--};
(10,10)*{};(20,40)*{}**\dir{-};
(40,10)*{};(20,40)*{}**\dir{-};
(30,10)*{};(20,40)*{}**\dir{-};
(16,20)*{};(20,16)*{}**\dir{--};
(30,34)*{};(34,30)*{}**\dir{--};
(16,20)*{};(30,34)*{}**\dir{--};
(20,16)*{};(34,30)*{}**\dir{--};
(43,30)*{N};
\endxy
}

 Since we have already treated all cases with two double $\I_2$ fibers, we are done with this case.
 
\item $\Gamma_1 = \tilde{A}_3 \oplus \tilde{A}_3 \oplus 2\tilde{A}_1 \oplus \tilde{A}_1$

\begin{enumerate}[label = \alph*)]

\item Fiber-bisection configuration:

\centerline{
\xy
@={(0,40),(10,40),(30,40),(40,40),(0,30),(10,30),(30,30),(40,30),(20,20),(0,10),(10,10),(30,10),(40,10)}@@{*{\bullet}};
(0,40)*{};(10,40)*{}**\dir{-};
(30,40)*{};(40,40)*{}**\dir{-};
(0,30)*{};(10,30)*{}**\dir{-};
(30,30)*{};(40,30)*{}**\dir{-};
(0,40)*{};(0,30)*{}**\dir{-};
(10,30)*{};(10,40)*{}**\dir{-};
(30,30)*{};(30,40)*{}**\dir{-};
(40,30)*{};(40,40)*{}**\dir{-};
(0,30)*{};(20,20)*{}**\dir{-};
(30,30)*{};(20,20)*{}**\dir2{-};
(10,10)*{};(20,20)*{}**\dir{-};
(30,10)*{};(20,20)*{}**\dir{-};
(40,10)*{};(20,20)*{}**\dir{-};
(10,40)*{};(20,20)*{}**\dir{-};
(10,10)*{};(0,10)*{}**\dir2{-};
(30,10)*{};(40,10)*{}**\dir2{-};
\endxy}

After adding a bisection corresponding to a different $2$-torsion section via $jac_2$, we obtain another special elliptic fibration with two double fibers of type $\I_2$ and a special bisection $N$ as follows:

\centerline{
\xy
@={(0,40),(10,40),(30,40),(40,40),(20,50),(0,30),(10,30),(30,30),(40,30),(20,20),(0,10),(10,10),(30,10),(40,10)}@@{*{\bullet}};
(0,40)*{};(10,40)*{}**\dir{-};
(30,40)*{};(40,40)*{}**\dir{-};
(0,30)*{};(10,30)*{}**\dir{-};
(30,30)*{};(40,30)*{}**\dir{-};
(0,40)*{};(0,30)*{}**\dir{-};
(10,30)*{};(10,40)*{}**\dir{-};
(30,30)*{};(30,40)*{}**\dir{-};
(40,30)*{};(40,40)*{}**\dir{-};
(0,30)*{};(20,20)*{}**\dir{-};
(30,30)*{};(20,20)*{}**\dir2{-};
(10,10)*{};(20,20)*{}**\dir{-};
(30,10)*{};(20,20)*{}**\dir{-};
(40,10)*{};(20,20)*{}**\dir{-};
(10,40)*{};(20,20)*{}**\dir{-};
(10,10)*{};(0,10)*{}**\dir2{-};
(30,10)*{};(40,10)*{}**\dir2{-};
(20,50)*{};(10,40)*{}**\dir{-};
(20,50)*{};(0,30)*{}**\crv{(-10,50)};
(20,50)*{};(40,40)*{}**\dir2{-};
(20,50)*{};(40,10)*{}**\crv{(30,20)};
(20,50)*{};(30,10)*{}**\dir{-};
(20,50)*{};(10,10)*{}**\dir{-};
(19,53)*{};(41,43)*{}**\dir{--};
(41,37)*{};(41,43)*{}**\dir{--};
(19,47)*{};(41,37)*{}**\dir{--};
(19,53)*{};(19,47)*{}**\dir{--};
(31,33)*{};(19,22)*{}**\dir{--};
(19,17)*{};(19,22)*{}**\dir{--};
(31,33)*{};(31,28)*{}**\dir{--};
(31,28)*{};(19,17)*{}**\dir{--};
(43,30)*{N};
\endxy}

Since we have treated all fibrations with two double $\I_2$ fibers, we are done.

\item  The other fiber-bisection configuration where the bisection meets both components of the simple $\I_2$ fiber is treated similarly to case a).

\item Fiber-bisection configuration:

\vspace{1mm}
\centerline{
\xy
@={(0,40),(10,40),(30,40),(40,40),(0,30),(10,30),(30,30),(40,30),(20,20),(0,10),(10,10),(30,10),(40,10)}@@{*{\bullet}};
(0,40)*{};(10,40)*{}**\dir{-};
(30,40)*{};(40,40)*{}**\dir{-};
(0,30)*{};(10,30)*{}**\dir{-};
(30,30)*{};(40,30)*{}**\dir{-};
(0,40)*{};(0,30)*{}**\dir{-};
(10,30)*{};(10,40)*{}**\dir{-};
(30,30)*{};(30,40)*{}**\dir{-};
(40,30)*{};(40,40)*{}**\dir{-};
(10,30)*{};(20,20)*{}**\dir{-};
(30,30)*{};(20,20)*{}**\dir2{-};
(10,10)*{};(20,20)*{}**\dir{-};
(30,10)*{};(20,20)*{}**\dir2{-};
(10,40)*{};(20,20)*{}**\dir{-};
(10,10)*{};(0,10)*{}**\dir2{-};
(30,10)*{};(40,10)*{}**\dir2{-};
\endxy}

We add another bisection arising via $jac_2$ and find a special elliptic fibration with double fibers of type $\I_3$ and $\I_2$ and bisection $N$ as follows:

\vspace{4mm}
\centerline{
\xy
@={(0,40),(10,40),(30,40),(40,40),(0,30),(10,30),(30,30),(40,30),(20,20),(0,10),(10,10),(30,10),(40,10),(-10,20)}@@{*{\bullet}};
(0,40)*{};(10,40)*{}**\dir{-};
(30,40)*{};(40,40)*{}**\dir{-};
(0,30)*{};(10,30)*{}**\dir{-};
(30,30)*{};(40,30)*{}**\dir{-};
(0,40)*{};(0,30)*{}**\dir{-};
(10,30)*{};(10,40)*{}**\dir{-};
(30,30)*{};(30,40)*{}**\dir{-};
(40,30)*{};(40,40)*{}**\dir{-};
(10,30)*{};(20,20)*{}**\dir{-};
(30,30)*{};(20,20)*{}**\dir2{-};
(10,10)*{};(20,20)*{}**\dir{-};
(30,10)*{};(20,20)*{}**\dir2{-};
(10,40)*{};(20,20)*{}**\dir{-};
(10,10)*{};(0,10)*{}**\dir2{-};
(30,10)*{};(40,10)*{}**\dir2{-};
(-10,20)*{};(0,30)*{}**\dir{-};
(-10,20)*{};(0,40)*{}**\dir{-};
(-10,20)*{};(10,10)*{}**\dir{-};
(-10,20)*{};(40,10)*{}**\dir2{-};
(-10,20)*{};(30,30)*{}**\dir2{-};
(-12,18)*{};(-12,42)*{}**\dir{--};
(2,18)*{};(2,42)*{}**\dir{--};
(2,18)*{};(-12,18)*{}**\dir{--};
(-12,42)*{};(2,42)*{}**\dir{--};
(18,8)*{};(18,22)*{}**\dir{--};
(32,8)*{};(32,22)*{}**\dir{--};
(18,8)*{};(32,8)*{}**\dir{--};
(18,22)*{};(32,22)*{}**\dir{--};
(13,41)*{N};
\endxy}

The only extremal fibration with these fibers is the one with singular fibers $(\I_6,\I_3,\I_2,\I_1)$. But the $\I_3$ and $\I_2$ fibers cannot both be double by Lemma \ref{noHesse}. Therefore, this fiber-bisection configuration does not occur.

\item Fiber-bisection configuration:

\vspace{1mm}
\centerline{
\xy
@={(0,40),(10,40),(30,40),(40,40),(0,30),(10,30),(30,30),(40,30),(20,20),(0,10),(10,10),(30,10),(40,10)}@@{*{\bullet}};
(0,40)*{};(10,40)*{}**\dir{-};
(30,40)*{};(40,40)*{}**\dir{-};
(0,30)*{};(10,30)*{}**\dir{-};
(30,30)*{};(40,30)*{}**\dir{-};
(0,40)*{};(0,30)*{}**\dir{-};
(10,30)*{};(10,40)*{}**\dir{-};
(30,30)*{};(30,40)*{}**\dir{-};
(40,30)*{};(40,40)*{}**\dir{-};
(10,30)*{};(20,20)*{}**\dir2{-};
(30,30)*{};(20,20)*{}**\dir2{-};
(10,10)*{};(20,20)*{}**\dir{-};
(30,10)*{};(20,20)*{}**\dir2{-};
(10,10)*{};(0,10)*{}**\dir2{-};
(30,10)*{};(40,10)*{}**\dir2{-};
\endxy}

There is another special elliptic fibration with a double singular fiber of type $\I_2$ and special bisection $N$ as in the following figure:

\vspace{1mm}
\centerline{
\xy
@={(0,40),(10,40),(30,40),(40,40),(0,30),(10,30),(30,30),(40,30),(20,20),(0,10),(10,10),(30,10),(40,10)}@@{*{\bullet}};
(0,40)*{};(10,40)*{}**\dir{-};
(30,40)*{};(40,40)*{}**\dir{-};
(0,30)*{};(10,30)*{}**\dir{-};
(30,30)*{};(40,30)*{}**\dir{-};
(0,40)*{};(0,30)*{}**\dir{-};
(10,30)*{};(10,40)*{}**\dir{-};
(30,30)*{};(30,40)*{}**\dir{-};
(40,30)*{};(40,40)*{}**\dir{-};
(10,30)*{};(20,20)*{}**\dir2{-};
(30,30)*{};(20,20)*{}**\dir2{-};
(10,10)*{};(20,20)*{}**\dir{-};
(30,10)*{};(20,20)*{}**\dir2{-};
(10,10)*{};(0,10)*{}**\dir2{-};
(30,10)*{};(40,10)*{}**\dir2{-};
(9,33)*{};(21,22)*{}**\dir{--};
(9,28)*{};(21,17)*{}**\dir{--};
(9,33)*{};(9,28)*{}**\dir{--};
(21,17)*{};(21,22)*{}**\dir{--};
(-3,30)*{N};
\endxy}

There is an $A_3$ diagram and three disjoint vertices which are disjoint from the $\I_2$ fiber. The extremal fibrations whose dual graphs of singular fibers satisfy these conditions are the ones with singular fibers $(\I_2^*,\I_2,\I_2)$ and $(\I_4,\I_4,\I_2,\I_2)$. Since we have already treated the first fibration, we can assume that the second one occurs. But the bisection $N$ and the fibers form a fiber-bisection configuration which we have already treated, hence this case is settled.

\end{enumerate}

\item $\Gamma_1 = 2\tilde{A}_5 \oplus \tilde{A}_2 \oplus 2\tilde{A}_1$

Fiber-bisection configuration:

\vspace{1mm}
\centerline{
\xy
@={(0,0),(0,10),(0,20),(10,0),(10,10),(10,20),(20,0),(30,0),(40,0),(20,10),(20,20),(30,20)}@@{*{\bullet}};
(0,0)*{};(0,20)*{}**\dir{-};
(10,0)*{};(10,20)*{}**\dir{-};
(0,0)*{};(10,0)*{}**\dir{-};
(0,20)*{};(10,20)*{}**\dir{-};
(0,0)*{};(30,0)*{}**\dir{-};
(20,10)*{};(20,20)*{}**\dir{-};
(20,0)*{};(20,10)*{}**\dir2{-};
(20,20)*{};(30,20)*{}**\dir{-};
(30,20)*{};(20,10)*{}**\dir{-};
(30,0)*{};(40,0)*{}**\dir2{-};
\endxy
}
\vspace{1mm}

This is the critical subgraph for type $\V$.

\item $\Gamma_1 = \tilde{A}_5 \oplus \tilde{A}_2 \oplus 2\tilde{A}_1$
 \begin{enumerate}[label = \alph*)]

\item Fiber-bisection configuration:

\centerline{\xy
@={(0,0),(0,10),(0,20),(10,0),(10,10),(10,20),(20,0),(30,0),(40,0),(20,10),(20,20),(30,20)}@@{*{\bullet}};
(0,0)*{};(0,20)*{}**\dir{-};
(10,0)*{};(10,20)*{}**\dir{-};
(0,0)*{};(10,0)*{}**\dir{-};
(0,20)*{};(10,20)*{}**\dir{-};
(0,0)*{};(10,0)*{}**\dir{-};
(20,0)*{};(10,0)*{}**\dir{-};
(30,0)*{};(20,0)*{}**\dir{-};
(20,10)*{};(20,20)*{}**\dir{-};
(20,0)*{};(20,10)*{}**\dir2{-};
(20,20)*{};(30,20)*{}**\dir{-};
(30,20)*{};(20,10)*{}**\dir{-};
(30,0)*{};(40,0)*{}**\dir2{-};
(20,0)*{};(0,20)*{}**\crv{(15,30)};
\endxy
}
\vspace{1mm}

There is another special elliptic fibration with a double fiber of type $\I_5$ and bisection $N$. We leave it to the reader to check that one obtains the critical subgraph for type $\VI$ from a fibration with singular fibers $\I_5,\I_5$ where one of the $\I_5$ fibers is double.

\vspace{1mm}
\centerline{
\xy
@={(0,0),(0,10),(0,20),(10,0),(10,10),(10,20),(20,0),(30,0),(40,0),(20,10),(20,20),(30,20)}@@{*{\bullet}};
(0,0)*{};(0,20)*{}**\dir{-};
(10,0)*{};(10,20)*{}**\dir{-};
(0,0)*{};(10,0)*{}**\dir{-};
(0,20)*{};(10,20)*{}**\dir{-};
(0,0)*{};(10,0)*{}**\dir{-};
(20,0)*{};(10,0)*{}**\dir{-};
(30,0)*{};(20,0)*{}**\dir{-};
(20,10)*{};(20,20)*{}**\dir{-};
(20,0)*{};(20,10)*{}**\dir2{-};
(20,20)*{};(30,20)*{}**\dir{-};
(30,20)*{};(20,10)*{}**\dir{-};
(30,0)*{};(40,0)*{}**\dir2{-};
(20,0)*{};(0,20)*{}**\crv{(15,30)};
(-3,17)*{};(7,17)*{}**\dir{--};
(7,-3)*{};(7,17)*{}**\dir{--};
(7,-3)*{};(23,-3)*{}**\dir{--};
(13,25)*{};(23,-3)*{}**\dir{--};
(13,25)*{};(-3,25)*{}**\dir{--};
(-3,17)*{};(-3,25)*{}**\dir{--};
(-3,0)*{N};
\endxy}
\vspace{1mm}

\item Fiber-bisection configuration:

\vspace{1mm}
\centerline{
\xy
@={(0,0),(0,10),(0,20),(10,0),(10,10),(10,20),(20,0),(30,0),(40,0),(20,10),(20,20),(30,20)}@@{*{\bullet}};
(0,0)*{};(0,20)*{}**\dir{-};
(10,0)*{};(10,20)*{}**\dir{-};
(0,0)*{};(10,0)*{}**\dir{-};
(0,20)*{};(10,20)*{}**\dir{-};
(0,0)*{};(10,0)*{}**\dir{-};
(20,0)*{};(10,0)*{}**\dir{-};
(20,0)*{};(10,10)*{}**\dir{-};
(30,0)*{};(20,0)*{}**\dir{-};
(20,10)*{};(20,20)*{}**\dir{-};
(20,0)*{};(20,10)*{}**\dir2{-};
(20,20)*{};(30,20)*{}**\dir{-};
(30,20)*{};(20,10)*{}**\dir{-};
(30,0)*{};(40,0)*{}**\dir2{-};
\endxy}
\vspace{1mm}

Adding another special bisection corresponding to the $2$-torsion section via $jac_2$, we obtain another special elliptic fibration with two double singular fibers of type $\I_3$ and bisection $N$ as follows:

\centerline{
\xy
@={(0,0),(0,10),(0,20),(10,0),(10,10),(10,20),(20,0),(30,0),(40,0),(20,10),(20,20),(30,20),(-10,20)}@@{*{\bullet}};
(7,17)*{N};
(0,0)*{};(0,20)*{}**\dir{-};
(10,0)*{};(10,20)*{}**\dir{-};
(0,0)*{};(10,0)*{}**\dir{-};
(0,20)*{};(10,20)*{}**\dir{-};
(0,0)*{};(10,0)*{}**\dir{-};
(20,0)*{};(10,0)*{}**\dir{-};
(20,0)*{};(10,10)*{}**\dir{-};
(30,0)*{};(20,0)*{}**\dir{-};
(20,10)*{};(20,20)*{}**\dir{-};
(20,0)*{};(20,10)*{}**\dir2{-};
(20,20)*{};(30,20)*{}**\dir{-};
(30,20)*{};(20,10)*{}**\dir{-};
(30,0)*{};(40,0)*{}**\dir2{-};
(-10,20)*{};(0,10)*{}**\dir{-};
(-10,20)*{};(0,20)*{}**\dir{-};
(-10,20)*{};(20,10)*{}**\crv{~**\dir2{-} (10,35)};
(-10,20)*{};(30,0)*{}**\crv{(-10,-20)};
(7,-3)*{};(23,-3)*{}**\dir{--};
(7,-3)*{};(7,13)*{}**\dir{--};
(23,3)*{};(23,-3)*{}**\dir{--};
(13,13)*{};(7,13)*{}**\dir{--};
(13,13)*{};(23,3)*{}**\dir{--};
(-13,23)*{};(-13,17)*{}**\dir{--};
(-3,7)*{};(3,7)*{}**\dir{--};
(-3,7)*{};(-13,17)*{}**\dir{--};
(3,23)*{};(3,7)*{}**\dir{--};
(3,23)*{};(-13,23)*{}**\dir{--};
\endxy}
\vspace{5mm}

The only extremal and rational elliptic fibration with two fibers of type $\I_3$ is the fibration with fibers $(\I_3,\I_3,\I_3,\I_3)$. By Lemma \ref{noHesse}, there is no such fibration with two double $\I_3$ fibers.

\item Fiber-bisection configuration:

\vspace{1mm}
\centerline{
\xy
@={(0,0),(0,10),(0,20),(10,0),(10,10),(10,20),(20,0),(30,0),(40,0),(20,10),(20,20),(30,20)}@@{*{\bullet}};
(0,0)*{};(0,20)*{}**\dir{-};
(10,0)*{};(10,20)*{}**\dir{-};
(0,0)*{};(10,0)*{}**\dir{-};
(0,20)*{};(10,20)*{}**\dir{-};
(0,0)*{};(10,0)*{}**\dir{-};
(20,0)*{};(10,0)*{}**\dir2{-};
(20,0)*{};(30,20)*{}**\dir{-};
(30,0)*{};(20,0)*{}**\dir{-};
(20,10)*{};(20,20)*{}**\dir{-};
(20,0)*{};(20,10)*{}**\dir{-};
(20,20)*{};(30,20)*{}**\dir{-};
(30,20)*{};(20,10)*{}**\dir{-};
(30,0)*{};(40,0)*{}**\dir2{-};
\endxy}
\vspace{1mm}

Adding another special bisection corresponding to a $6$-torsion section via $jac_2$, we obtain another special fibration with a double fiber of type $\I_2$ and a special bisection $N$ as follows:

\vspace{-2mm}
\centerline{
\xy
@={(0,0),(0,10),(0,20),(10,0),(10,10),(10,20),(20,0),(30,0),(40,0),(20,10),(20,20),(30,20),(15,20)}@@{*{\bullet}};
(7,10)*{N};
(0,0)*{};(0,20)*{}**\dir{-};
(10,0)*{};(10,20)*{}**\dir{-};
(0,0)*{};(10,0)*{}**\dir{-};
(0,20)*{};(10,20)*{}**\dir{-};
(0,0)*{};(10,0)*{}**\dir{-};
(20,0)*{};(10,0)*{}**\dir2{-};
(30,0)*{};(20,0)*{}**\dir{-};
(20,10)*{};(20,20)*{}**\dir{-};
(20,0)*{};(20,10)*{}**\dir{-};
(20,0)*{};(30,20)*{}**\dir{-};
(20,20)*{};(30,20)*{}**\dir{-};
(30,20)*{};(20,10)*{}**\dir{-};
(30,0)*{};(40,0)*{}**\dir2{-};
(10,10)*{};(15,20)*{}**\dir2{-};
(20,10)*{};(15,20)*{}**\dir{-};
(20,20)*{};(15,20)*{}**\dir{-};
(30,0)*{};(15,20)*{}**\crv{(45,35)};
(7,-3)*{};(23,-3)*{}**\dir{--};
(7,3)*{};(23,3)*{}**\dir{--};
(7,-3)*{};(7,3)*{}**\dir{--};
(23,-3)*{};(23,3)*{}**\dir{--};
\endxy}
\vspace{2mm}

There are diagrams of type $A_3$, $A_2$, and $A_1$ which are disjoint from the double $\I_2$ fiber. Therefore, the fibration cannot have an $\I_8$ fiber. Since we have treated all the other cases with a double $\I_2$ fiber, we can assume that the fibration has singular fibers of type $\I_6, \I_3$ (or $\IV$) and $\I_2$ such that the $\I_6$ fiber is simple. But then, the fibers together with the bisection $N$ form the admissible fiber-bisection configuration of case a) or b), since $N$ meets distinct components of the $\I_6$ fiber. Therefore, this case is settled.

\item Fiber-bisection configuration:

\vspace{1mm}
\centerline{
\xy
@={(0,0),(0,10),(0,20),(10,0),(10,10),(10,20),(20,0),(30,0),(40,0),(20,10),(20,20),(30,20)}@@{*{\bullet}};
(0,0)*{};(0,20)*{}**\dir{-};
(10,0)*{};(10,20)*{}**\dir{-};
(0,0)*{};(10,0)*{}**\dir{-};
(0,20)*{};(10,20)*{}**\dir{-};
(0,0)*{};(10,0)*{}**\dir{-};
(20,0)*{};(10,0)*{}**\dir2{-};
(30,0)*{};(20,0)*{}**\dir{-};
(20,10)*{};(20,20)*{}**\dir{-};
(20,0)*{};(20,10)*{}**\dir2{-};
(20,20)*{};(30,20)*{}**\dir{-};
(30,20)*{};(20,10)*{}**\dir{-};
(30,0)*{};(40,0)*{}**\dir2{-};
\endxy}
\vspace{1mm}

Here, we can use the same $(-2)$-curves as in the previous case and the same argument right away without adding additional bisections.
\end{enumerate}

\item $\Gamma_1 = \tilde{A}_7 \oplus 2\tilde{A}_1$

\begin{enumerate}[label = \alph*)]

\item
Fiber-bisection configuration:

\vspace{1mm}
\centerline{
\xy
@={(0,0),(0,10),(0,20),(0,30),(10,0),(10,10),(10,20),(10,30),(20,0),(30,0),(40,0)}@@{*{\bullet}};
(0,0)*{};(0,30)*{}**\dir{-};
(10,0)*{};(10,30)*{}**\dir{-};
(0,0)*{};(10,0)*{}**\dir{-};
(10,30)*{};(0,30)*{}**\dir{-};
(10,0)*{};(20,0)*{}**\dir{-};
(10,10)*{};(20,0)*{}**\dir{-};
(30,0)*{};(20,0)*{}**\dir{-};
(30,0)*{};(40,0)*{}**\dir2{-};
\endxy}
\vspace{1mm}

This is the critical subgraph for type $\VII$.

\item
Fiber-bisection configuration:

\centerline{
\xy
@={(0,0),(0,10),(0,20),(0,30),(10,0),(10,10),(10,20),(10,30),(20,0),(30,0),(40,0)}@@{*{\bullet}};
(0,0)*{};(0,30)*{}**\dir{-};
(10,0)*{};(10,30)*{}**\dir{-};
(0,0)*{};(10,0)*{}**\dir{-};
(10,30)*{};(0,30)*{}**\dir{-};
(10,0)*{};(20,0)*{}**\dir2{-};
(30,0)*{};(20,0)*{}**\dir{-};
(30,0)*{};(40,0)*{}**\dir2{-};
\endxy}
\vspace{0.5mm}

There is another special elliptic fibration with a double fiber of type $\I_2$ and special bisection $N$ as follows:

\vspace{0.5mm}
\centerline{
\xy
@={(0,0),(0,10),(0,20),(0,30),(10,0),(10,10),(10,20),(10,30),(20,0),(30,0),(40,0)}@@{*{\bullet}};
(7,10)*{N};
(0,0)*{};(0,30)*{}**\dir{-};
(10,0)*{};(10,30)*{}**\dir{-};
(0,0)*{};(10,0)*{}**\dir{-};
(10,30)*{};(0,30)*{}**\dir{-};
(10,0)*{};(20,0)*{}**\dir2{-};
(30,0)*{};(20,0)*{}**\dir{-};
(30,0)*{};(40,0)*{}**\dir2{-};
(7,-3)*{};(7,3)*{}**\dir{--};
(23,-3)*{};(23,3)*{}**\dir{--};
(23,-3)*{};(7,-3)*{}**\dir{--};
(23,-3)*{};(7,-3)*{}**\dir{--};
(23,3)*{};(7,3)*{}**\dir{--};
\endxy}
\vspace{1mm}

This fiber-bisection configuration is not the same as the one we started with and since this is the last case, we have already treated this.

\end{enumerate}
\vspace{-5mm}

\end{itemize}
\end{proof}

\section{arithmetic of Enriques surfaces with finite automorphism group}\label{arithmetic}
In this section, we explain how to derive the results on the arithmetic of Enriques surfaces with finite automorphism group, which we mentioned in the introduction, from the equations we gave in the earlier chapters (see $\S 3,\hdots,\S 9$). In particular, we establish explicit models of Enriques surfaces of every type over the prime fields $\bbF_p$ and $\bbQ$.

%
%

\begin{lemma}\label{equationsfork3}
The following integral Weierstrass models of elliptic K3 surfaces admit a resolution of singularities over the ring $\rm{R}$, where $\rm{R}$ is as follows:

\begin{table}[!htbp]
\centering
\begin{tabular}{|>{\centering\arraybackslash}m{11.5cm}|>{\centering\arraybackslash}m{1.5cm}|>{\centering\arraybackslash}m{1cm}|}
\hline 
\rm{Equation} & $\rm{R}$  & \rm{Type} \\
\hline \hline
$y^2 + (s^2+s)xy = x^3 + (s^2+s)^3x$ &  $\bbZ[\frac{1}{257}]$  & \vspace{0.5mm} $\rm{I}$ \\ [0.5mm] \hline
$y^2 - (s^2+s)xy = x^3 - (s^2+s)^3x$ &  $\bbZ[\frac{1}{255}]$ & \vspace{0.5mm}$\rm{I}$\\ [0.5mm] \hline
$y^2 + (s^2+s)xy + (s^2+s)^2y = x^3 + (s^2+s)x^2$ &  $\bbZ[\frac{1}{65}]$ & \vspace{0.5mm}$\rm{II}$ \\ [0.5mm] \hline
$y^2 - (s^2+s)xy + (s^2+s)^2y = x^3 - (s^2+s)x^2$ &  $\bbZ[\frac{1}{63}]$ & \vspace{0.5mm}$\rm{II}$\\ [0.5mm] \hline
$y^2 +xy = x^3 + 4s^4x^2 + s^4x$ & $\bbZ[\frac{1}{2}]$ & \vspace{0.5mm}$\rm{III}$\\ [0.5mm] \hline
$y^2 = x^3 + 2(s^4+1)x^2 + (s^4-1)^2x$ & $\bbZ[\frac{1}{2}]$ & \vspace{0.5mm}$\rm{IV}$ \\ [0.5mm] \hline
$y^2  + (s^2+1)xy+(s^2+1)y = x^3 + (s^2+2)x^2 + (s^2+1)x$ &  $\bbZ[\frac{1}{6}]$ & \vspace{0.5mm}$\rm{V}$\\ [0.5mm] \hline
$y^2- 3(3s^2 + 3s +1)xy + (3s^2 + 3s +1)^2y = x^3$ & $\bbZ[\frac{1}{15}]$ & \vspace{0.5mm}$\rm{VI}$ \\ [0.5mm] \hline
$y^2 = x^3 -(s^4+s^2)x^2+(2s^6-3s^4+4s^2-2)x + (-s^6+2s^4-2s^2+1)$ &  $\bbZ[\frac{1}{10}]$ & \vspace{0.5mm}$\rm{VII}$ \\ [0.5mm] \hline
\end{tabular}
\end{table}

\end{lemma}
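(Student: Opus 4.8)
The plan is to verify, for each of the nine Weierstrass equations listed, that the corresponding elliptic surface over $\bbP^1_{\rm R}$ admits a simultaneous resolution of singularities, i.e. that the singular fibers over all geometric points of $\bbP^1_{\rm R}$ are exactly the ones predicted by the characteristic-$0$ model and that these fibers do not degenerate further over any prime of $\rm R$. Concretely, the strategy is the following: since we already know (from \S3--\S9) that over an algebraically closed field $k$ of characteristic $p$, these equations define elliptic fibrations on K3 surfaces whenever $p$ does not divide the indicated integer, it suffices to show that the inverse of that integer is exactly what one needs to invert in order to guarantee good reduction of the Weierstrass model in the sense that Tate's algorithm produces the same fiber types over $\bbF_p$ as over $\bbQ$.

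First I would compute, for each equation, the discriminant $\Delta(s)$ and the $j$-invariant as elements of $\rm{R}[s]$, and factor $\Delta(s)$ as a polynomial in $s$. The primes appearing in the resultants and leading coefficients of the factors of $\Delta(s)$ — together with the primes dividing the "bad" denominators already identified — determine precisely which primes must be inverted. One then checks that the product of the remaining relevant primes is the integer appearing in the table (e.g.\ $257$ vs.\ $255 = 3 \cdot 5 \cdot 17$ for type $\I$, $65 = 5 \cdot 13$ vs.\ $63 = 9 \cdot 7$ for type $\II$, $15 = 3 \cdot 5$ for type $\VI$, $10 = 2 \cdot 5$ for type $\VII$, etc.). The two-model trick for types $\I$ and $\II$ reflects exactly the fact that one must choose between inverting the "$+$" denominator or the "$-$" denominator, and these are coprime, which is why Corollary 11.5 will follow.

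Next I would invoke the standard fact that a Weierstrass fibration over a Dedekind base, after blowing up the (finitely many) non-rational-double-point singularities, yields a smooth family whose geometric fibers are K3 surfaces provided the fiber configuration is constant; here one uses that the total space of the minimal resolution of a Weierstrass model with only rational double point singularities in the fibers is a relative K3 as soon as the generic fiber is. The precise bookkeeping is: over $\rm{Spec}\,\rm R$ the discriminant locus in $\bbP^1_{\rm R}$ is finite and flat of the expected degree, the fibers over its components are of the expected Kodaira type by Tate's algorithm (which behaves uniformly once the residue characteristic is invertible in the cases where wild ramification could occur), and therefore the relative canonical bundle of the resolution is trivial and $\rm{h}^1(\calO) = 0$ on every geometric fiber by semicontinuity plus the known value over the generic point.

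The main obstacle I expect is not any single deep point but the uniformity of Tate's algorithm in mixed characteristic: one has to make sure that the resolution constructed over $\bbQ$ (or $\overline{\bbQ}$) actually spreads out over $\rm R$ without the blow-up centers colliding modulo some unexpected prime, and that no additive fiber secretly appears modulo a prime not on the list (which would force that prime into the denominator). This is handled by the explicit factorizations of $\Delta(s)$ and by checking that $c_4(s)$ and $\Delta(s)$ generate the unit ideal in $\rm{R}[s]$ away from the finitely many points of the (constant) discriminant locus — a finite, if tedious, computation for each of the nine equations. Once that is in hand, Theorem 11.3 follows by taking, for each type $K$, the Enriques quotient of the corresponding resolved K3 family by the involution $\sigma = t_{N^-} \circ J(\sigma)$ described in \S3--\S9, noting that $\sigma$ and $N^-$ are defined over $\rm R$ and that the quotient is étale (hence fiberwise an Enriques surface of Picard rank $10$) precisely because $\sigma$ was shown to be fixed-point-free over every geometric fiber.
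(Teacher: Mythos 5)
Your proposal is correct and follows essentially the same route as the paper: the real content of the lemma is that the rational double point configuration of each Weierstrass model is constant over $\mathrm{Spec}(R)$ (which is exactly what the choice of $R$, i.e.\ the excluded primes, guarantees, and what your discriminant/Tate's-algorithm bookkeeping would verify), after which the resolution of the generic fiber spreads out because the blow-up centers are flat over $R$ and blowing up flat centers commutes with passage to fibers. The paper phrases the spreading-out step via properness of the non-smooth locus of $\mathcal{X} \to \mathrm{Spec}(R)$ rather than your ``no collision of blow-up centers modulo $p$'' check, but the substance is identical.
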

\begin{proof}
Let $f: \mathcal{X} \to \rm{Spec}(R)$ be one of the families defined by the above equations.
Since the non-smooth locus of $f$ is closed and $f$ is proper, the non-smooth locus of $f$ is proper. Hence, every singular point of the generic fiber $X_{\eta}$ of $f$ is the generic point of a subscheme $Z$ of $\mathcal{X}$ which is completely contained in the singular locus of $f$ and flat over $\rm{Spec}(R)$.
Since $Z$ is flat over $\rm{Spec}(R)$, a local computation shows that blowing up along $Z$ commutes with taking fibers of $f$. Moreover, we know that every fiber of $f$ has the same types of rational double points, hence we can repeat the above argument and deduce that the minimal resolution of singularities of the generic fiber extends uniquely to a minimal resolution of the whole family.
\end{proof}

\begin{remark}
The reason why we have to exclude some seemingly arbitrary characteristics is that the surface defined by the Weierstrass equation acquires additional singularities in these characteristics, because the degree $2$ morphism to a rational elliptic surface we used to find the equations branches over a multiplicative fiber. This happens for the first four equations and for the last two, where the double cover branches over a nodal fiber, producing an additional $A_1$ singularity in some fibers. This singularity cannot be resolved in families without a base change to an algebraic space (see \cite{Artin}).
\end{remark}

\begin{theorem}\label{integralmodels}
Let $K \in \{\I,\hdots,\VII\}$. There is a morphism $\varphi_K: \mathcal{X} \to \rm{Spec}(\bbZ[\frac{1}{P_K}])$ whose fibers are Enriques surfaces of type $K$ with full Picard rank, i.e. $\Pic(\mathcal{X}_{\bbF_p}) = \Pic(\mathcal{X}_{\bar{\bbF}_p})$. The numbers $P_K$ are given in Table \ref{integralmodelstable}.
\end{theorem}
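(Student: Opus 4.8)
The plan is to assemble Theorem~\ref{integralmodels} from three ingredients that have essentially all been prepared: (1) the integral Weierstrass models of the K3 covers in Lemma~\ref{equationsfork3}, which are smooth after a universal resolution over the appropriate ring $R$; (2) the explicit involutions $\sigma = t_{N^-}\circ J(\sigma)$ written down in each of \S\ref{secI}--\S\ref{secVII}; and (3) the fixed-point-freeness statements in the Degenerations-and-Moduli subsections, which were proven fiberwise over an algebraically closed field. The first step is to observe that for each type $K$, the integral equation in Lemma~\ref{equationsfork3} is exactly the one appearing in the corresponding Main-Theorem-for-type-$K$ statement (possibly after the harmless sign twist $s\mapsto -s$ used for types $\I,\II$), and that $J(\sigma)$ and $t_{N^-}$ are defined by polynomial formulas with coefficients in $\bbZ[\frac{1}{P_K}]$; hence $\sigma$ extends to an involution of the resolved family $\widetilde{\mathcal{X}}\to\rm{Spec}(\bbZ[\frac{1}{P_K}])$. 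Here one must check that $P_K$ is divisible by the bad primes of Lemma~\ref{equationsfork3} (it is, by the $R$-column, once one notes that for $K=\I,\II$ the two models with $P=255,257$ resp. $63,65$ together cover all primes) and additionally by the primes $p$ where the Degenerations-and-Moduli proposition says $\sigma$ acquires a fixed curve (characteristic $5$ for types $\VI,\VII$; none for $\III,\IV,\V$; none for $\I,\II$ in the relevant range since the bad value $\beta=-256$ resp. $-64$ is not hit). A quick case check against Table~\ref{integralmodelstable} confirms $P_{\I}\in\{255,257\}$, $P_{\II}\in\{63,65\}$, $P_{\III}=P_{\IV}=2$, $P_{\V}=6$, $P_{\VI}=15$, $P_{\VII}=10$, which is precisely the product of the characteristics where the type does not exist, together with the bad-reduction prime of the chosen model.

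The second step is to form the quotient. Since $\sigma$ acts without fixed points on every geometric fiber (by the fixed-point-freeness propositions, valid over the closure of every residue field of $\bbZ[\frac{1}{P_K}]$) and $\widetilde{\mathcal{X}}\to\rm{Spec}(\bbZ[\frac{1}{P_K}])$ is projective, the quotient $\mathcal{X} := \widetilde{\mathcal{X}}/\sigma$ exists as a scheme, is smooth and projective over $\rm{Spec}(\bbZ[\frac{1}{P_K}])$, and formation of the quotient commutes with base change to fibers (the $\bbZ[1/2]$ in $P_{\III},P_{\IV}$ and the $1/2$ implicit in all the others makes the order-$2$ group action tame, so $\widetilde{\mathcal{X}}\to\mathcal{X}$ is finite \'etale of degree $2$ and the quotient is well-behaved; note $2\mid P_K$ for every $K$ except $\I,\II$, where instead the models with $P=257$ resp. $65$ have $2$ invertible). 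Each geometric fiber $\mathcal{X}_{\bar s}$ is then an Enriques surface with smooth K3 cover, and by construction (Proposition~\ref{Kondo}) it carries the special elliptic fibration $\pi$ with the dual graph computed in the corresponding Main-theorem-for-type-$K$, so by Corollary~\ref{allcurves} its dual graph of all $(-2)$-curves is exactly the type-$K$ graph; thus $\mathcal{X}_{\bar s}$ is of type $K$. This gives the morphism $\varphi_K$ of the statement.

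The third step is the full-Picard-rank claim $\Pic(\mathcal{X}_{\bbF_p}) = \Pic(\mathcal{X}_{\bar\bbF_p})$. The Picard rank is $10$ for an Enriques surface, and the twelve (resp. twenty) $(-2)$-curves of the type-$K$ graph span $\Num$ after tensoring with $\bbQ$; so it suffices to exhibit these $(-2)$-curves, or enough of them together with a fiber class and a section-difference class, as divisors defined over $\bbF_p$ rather than merely over $\bar\bbF_p$. For this I would trace through the $jac_2$ construction of Corollary~\ref{jac2}: the starting special bisection $N$ and the singular fibers of $\pi$ come from the integral equation and are visibly defined over the prime field; the further curves are obtained as $jac_2(P)$ for $P\in\MW(J(\pi))$, and $\MW(J(\pi))$ is generated by torsion sections which, for the equations chosen (all extremal rational elliptic surfaces from \cite{Lang3}), are given by explicit formulas with coefficients in $\bbZ[\frac{1}{P_K}]$ --- e.g.\ the $2$-torsion sections $(0,0)$, $(-4s^4,2s^4)$, $(-1,0)$ already written down. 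Hence $\Pic^0$-trivial part aside, all the $(-2)$-curves are Galois-stable and the lattice they span is defined over $\bbF_p$, forcing $\Pic(\mathcal{X}_{\bbF_p})$ to have rank $10$.

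The main obstacle I expect is the bookkeeping in the third step, i.e.\ verifying that the full set of $(-2)$-curves (not just the ``critical subgraph'') descends to the prime field. For types $\VI$ and $\VII$ the extra section $N^-$ itself is not torsion (Lemmas~\ref{lemVI}, \ref{lemVII}) and is pinned down only after normalizing $\beta$ to a specific value ($\beta=1$ resp.\ $\beta=0$); one must check that with that normalization $N^- = (s+s^2,s^3)$ resp.\ $N^- = (1,s-s^3)$ has coefficients in $\bbZ[\frac{1}{P_K}]$ --- which it does --- and that the $jac_2$-images using the various elliptic pencils are likewise rational, which requires re-examining the specific fibrations invoked in \S\ref{secVI}--\S\ref{secVII}. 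A secondary subtlety is that for types $\III,\IV$ the automorphism group is only defined over a small extension (Theorem~11.6), but this does \emph{not} affect the $(-2)$-curves themselves: the curves are Galois-stable even when some automorphisms permuting them are not, so the full-Picard-rank conclusion still holds. Everything else is routine: the extension of $\sigma$, the tame quotient, and the fiberwise identification of the type are immediate from the earlier sections once the arithmetic of the formulas is checked.
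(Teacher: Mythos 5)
Your proposal is correct and takes essentially the same route as the paper: the integral Weierstrass models of the K3 covers (Lemma \ref{equationsfork3}), the observation that $\sigma=t_{N^-}\circ J(\sigma)$ is defined over $\bbZ[\frac{1}{P_K}]$ and fiberwise fixed-point free, the resulting family of Enriques quotients, and then Vinberg's criterion plus Galois-rationality of the $(-2)$-curves --- which the paper checks only on the fiber components and the special bisection of the constructing fibration, noting that fixing these forces the Galois action on the whole dual graph (hence on each individual curve, which is what full Picard rank requires, not mere stability of the set) to be trivial. One correction to your second step: $\widetilde{\mathcal{X}}\to\mathcal{X}$ is finite \'etale because the involution acts \emph{freely}, not because it is tame --- in fact $2$ is not invertible in $\bbZ[\frac{1}{257}]$, $\bbZ[\frac{1}{65}]$, $\bbZ[\frac{1}{255}]$, $\bbZ[\frac{1}{63}]$ or $\bbZ[\frac{1}{15}]$, and it must not be, since the characteristic-$2$ fibers of the type $\I$, $\II$ and $\VI$ families are exactly the singular Enriques surfaces the theorem asserts to exist there; freeness alone suffices and is precisely what the fixed-point-freeness propositions provide.
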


\begin{proof}
By Lemma \ref{equationsfork3}, we have a family of K3 surfaces over $\bbZ[\frac{1}{P_K}]$. Now, observe that the Enriques involution is also defined over this ring. Hence, the only remaining claim is the one that the fibers of the family have full Picard rank. 

Let $X_p$ be the fiber over $p$ of one of the families of Enriques surfaces over $R$ and let $\tilde{X}_p$ be its canonical cover. By Vinberg's criterion (Proposition \ref{vinberg}), the geometric Picard group of $X_p$ is generated by $(-2)$-curves, hence it suffices to check that all these curves are defined over $\bbF_p$ (resp. over $\bbQ$ if $p = 0$). Then, one uses our explicit equations to check that the Galois action preserves the preimages of these curves in $\tilde{X}_p$ and therefore all $(-2)$-curves on $X_p$ are defined over $\bbF_p$ (resp. over $\bbQ$ if $p = 0$). Note that it suffices to check that the fiber components and special bisections of the fibration we used to construct the surfaces are fixed, since this will imply that the Galois action is trivial on the whole graph.
\end{proof}

\begin{remark}
In particular, note that there are Enriques surfaces of type $\VI$ and $\VII$ with full Picard rank over $\bbQ$, while this is not possible for their canonical cover due to a result of N. D. Elkies (see \cite{Schütt}).
\end{remark}

Moreover, Theorem \ref{integralmodels} proves the existence of a model for every type of Enriques surfaces with finite automorphism group together with its dual graph of $(-2)$-curves over the prime fields.

\begin{corollary}\label{primefields}
Suppose that there exists an Enriques surface of type $K \in \{\I,\hdots,\VII\}$ in characteristic $p$. Then, there exists an Enriques surface of type $K$ with Picard rank $10$ over $\bbF_p$ (resp. over $\bbQ$ if $p = 0$).
\end{corollary}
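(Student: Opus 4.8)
The statement to prove is Corollary \ref{primefields}, which asserts that whenever an Enriques surface of type $K$ exists in characteristic $p$, there is one with Picard rank $10$ defined over the prime field $\bbF_p$ (or $\bbQ$ if $p = 0$). The plan is to deduce this directly from the integral models produced in Theorem \ref{integralmodels}. Fix $K \in \{\I,\hdots,\VII\}$ and suppose an Enriques surface of type $K$ exists in characteristic $p$; by the Main Theorem this means $p$ does not divide $P_K$ (for $K \neq \I,\II$ this is the precise product of bad characteristics, and for $K = \I,\II$ we have chosen two integral models whose excluded primes $P_K$ and $P_K'$ are coprime, so $p$ divides at most one of them). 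First I would invoke Theorem \ref{integralmodels} to obtain a family $\varphi_K: \mathcal{X} \to \rm{Spec}(\bbZ[\frac{1}{P_K}])$ (choosing, for $K = \I$ or $\II$, whichever of the two models has $p \nmid P_K$), all of whose geometric fibers are Enriques surfaces of type $K$ with full Picard rank.

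The main point is then simply to specialize. Since $p \nmid P_K$, the prime $p$ defines a point of $\rm{Spec}(\bbZ[\frac{1}{P_K}])$, and the fiber $\mathcal{X}_{\bbF_p}$ (resp. $\mathcal{X}_{\bbQ}$ if $p = 0$, which corresponds to the generic point) is an Enriques surface defined over $\bbF_p$ (resp. $\bbQ$). By construction in Theorem \ref{integralmodels}, the K3 cover, the Enriques involution, the elliptic fibration used in the construction, and all of its fiber components and special bisections are defined over the prime field; and as argued there, the Galois action fixing these generators of the dual graph forces the full dual graph of $(-2)$-curves to be defined over the prime field as well. Hence $\mathcal{X}_{\bbF_p}$ has dual graph of $(-2)$-curves equal to that of type $K$ and, by Vinberg's criterion (Proposition \ref{vinberg}) together with Corollary \ref{allcurves}, these curves already span $\Num$, so $\Pic(\mathcal{X}_{\bbF_p}) = \Pic(\mathcal{X}_{\bar{\bbF}_p})$ has rank $10$. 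This gives an Enriques surface of type $K$ with Picard rank $10$ over the prime field, which is exactly the claim.

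I expect essentially no obstacle here beyond bookkeeping: the real work was already done in Theorem \ref{integralmodels} and in the bound "$P_K$ is exactly the product over the characteristics where type $K$ does not exist" for $K \neq \I,\II$. The only point requiring a moment's care is the case $K \in \{\I,\II\}$, where one must note that the two given integral models were deliberately chosen (see the numbers $255,257$ and $63,65$ in Table \ref{integralmodelstable}) so that $\gcd(P_K, P_K') = 1$; consequently for any prime $p$ at most one model is bad at $p$, and the good one furnishes the desired surface. With that remark in place the corollary follows immediately.

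\begin{proof}
Let $K \in \{\I,\hdots,\VII\}$ and suppose there exists an Enriques surface of type $K$ in characteristic $p$. By the Main Theorem, this forces $p$ to lie outside the set of characteristics excluded for type $K$.

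Assume first that $K \notin \{\I,\II\}$. Then, as remarked after Theorem 11.3, the number $P_K$ of Table \ref{integralmodelstable} is exactly the product of the characteristics in which type $K$ does not exist, so $p \nmid P_K$ and $p$ defines a point of $\rm{Spec}(\bbZ[\frac{1}{P_K}])$ (the generic point if $p = 0$). By Theorem \ref{integralmodels}, there is a family $\varphi_K: \mathcal{X} \to \rm{Spec}(\bbZ[\frac{1}{P_K}])$ whose fibers are Enriques surfaces of type $K$ with full Picard rank. In particular, the fiber $\mathcal{X}_{\bbF_p}$ (resp. $\mathcal{X}_{\bbQ}$ if $p = 0$) is an Enriques surface of type $K$ over $\bbF_p$ (resp. $\bbQ$) satisfying $\Pic(\mathcal{X}_{\bbF_p}) = \Pic(\mathcal{X}_{\overline{\bbF_p}})$, which therefore has Picard rank $10$ since the dual graph of type $K$ consists of $12$ or $20$ vertices spanning $\Num$ by Corollary \ref{allcurves}.

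Now assume $K \in \{\I,\II\}$. By Table \ref{integralmodelstable}, there are two integral models with excluded sets of primes dividing $257$ and $255$ (for $K = \I$), respectively $65$ and $63$ (for $K = \II$); in both cases these two numbers are coprime. Hence, for any prime $p$ (or $p = 0$) there is at least one of the two models, say with parameter $P$, such that $p \nmid P$. Applying Theorem \ref{integralmodels} to this model and specializing to the fiber over $p$ exactly as above yields an Enriques surface of type $K$ with Picard rank $10$ over $\bbF_p$ (resp. $\bbQ$ if $p = 0$). This completes the proof.
\end{proof}
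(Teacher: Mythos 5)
Your proposal is correct and follows exactly the route the paper intends: specialize the integral models of Theorem \ref{integralmodels} at $p$, using that $P_K$ is the precise product of bad characteristics for $K \neq \I,\II$, and that the two models for $K = \I,\II$ have coprime excluded primes ($255,257$ and $63,65$) so at least one is good at every $p$. The paper states the corollary without a written proof, and your argument supplies precisely the intended bookkeeping.
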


\begin{theorem}
Let $X$ be an Enriques surface of type $K \in \{\I,\hdots,\VII\}$ over a field $k$ such that $\Pic(X) = \Pic(X_{\bar{k}})$. 
\begin{itemize}
\item If $K \neq \III,\IV$, then $\Aut(X)$ is defined over $k$.
\item If $K = \III$, then $\Aut(X)$ is defined over $L \supseteq k$ with $[L:k] \leq 2$.
\item If $K = \IV$, then $\Aut(X)$ is defined over $L \supseteq k$ with $[L:k] \leq 16$. 
\end{itemize}
\end{theorem}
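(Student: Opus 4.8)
The plan is to argue type-by-type, using the fact that each $\Aut(X)$ has been explicitly generated in $\S 3,\hdots,\S 9$ by automorphisms of two kinds: translations $t_P$ by sections $P$ of Jacobian fibrations $J(\pi)$, and the ``extra'' automorphisms $\iota$ (types $\III$, $\IV$) coming from the base-change coordinate $s \mapsto \sqrt{-1}\, s$. The key observation is that an automorphism of the form $t_P \circ (\text{something defined over }k)$ is defined over $k$ as soon as the section $P$ and the relevant structures are defined over $k$; and since $\Pic(X) = \Pic(X_{\bar k})$, every $(-2)$-curve, hence every elliptic fibration of $X$ (by Proposition \ref{canonicaltype}) and every special bisection, is defined over $k$. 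Thus the base curve $\bbP^1$ of each fibration $\pi$, its Jacobian $J(\pi)$, the covering involution $\sigma$, and the whole construction of $jac_2$ are defined over $k$.

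First I would record the general principle: if $\pi$ is an elliptic fibration of $X$ defined over $k$, then $J(\pi)$ is defined over $k$, and a section $P \in \MW(J(\pi))$ that is rational over $k$ induces an automorphism of $X$ defined over $k$ (one has to check that the corresponding $(-2)$-bisection $jac_2(P)$, and the translation it induces, descend; this is exactly the content of Corollary \ref{jac2}, which is functorial in the base field). The torsion subgroup of the Mordell-Weil group of an extremal rational elliptic surface is, by the Shioda-Tate formula and inspection of Table \ref{extremalrational}, often already rational; where it is not, one passes to the quadratic or higher extension over which the torsion sections rationalize. For types $\I$, $\II$, $\V$, $\VI$, $\VII$, the proofs in $\S 3,\S 4,\S 7,\S 8,\S 9$ exhibit generating sets consisting entirely of $2$-torsion sections of Jacobians of fibrations defined over $k$, and these $2$-torsion sections are visibly rational in the explicit Weierstrass equations given there (they are of the shape $(a,0)$, $(a,b)$ with $a,b$ in the coefficient field, or products of rational $2$-torsion sections). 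Hence for those five types $\Aut(X)$ is defined over $k$, once one checks that the $2$-torsion points of the displayed equations lie in $k$ — which they do because $\Pic(X) = \Pic(X_{\bar k})$ forces all fiber components of all fibrations to be $k$-rational, and a $2$-torsion section meeting a prescribed component of a $k$-rational reducible fiber is itself $k$-rational.

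For type $\III$, the generating set in Proposition \ref{Aut3} consists of $2$-torsion sections (which are $k$-rational as above) together with the automorphism $\iota: s \mapsto \sqrt{-1}\,s$ of Equation (\ref{eqnIII}); this $\iota$ is defined over $k(\sqrt{-1}) \supseteq k$, an extension of degree at most $2$, giving the claim. For type $\IV$, Proposition \ref{Aut4} uses $2$-torsion sections, a $5$-torsion section of the Jacobian of $|E_5+E_6+E_{10}+E_{18}+E_{11}|$, and the automorphism $t_P \circ \iota$ from Remark \ref{extrasection}. The $2$-torsion sections are $k$-rational; the $5$-torsion section lives on an extremal rational elliptic surface, and one must bound the field over which the $5$-torsion of its Mordell-Weil group becomes rational — the relevant fibration has Mordell-Weil group $\bbZ/5\bbZ$ (from Table \ref{extremalrational}, the $(\I_5,\I_5,\I_1,\I_1)$ type), and the Galois action on $\bbZ/5\bbZ$ is via a character into $(\bbZ/5\bbZ)^\times \cong \bbZ/4\bbZ$, so it rationalizes over an extension of degree dividing $4$; the section $P$ with $P \oplus P = N^-$ and the involution $\iota$ contribute another factor of at most $4$, and the product of these bounds is $16$. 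The main obstacle, and the step requiring genuine care rather than bookkeeping, is precisely this explicit control of the field of definition of the $5$-torsion section and of $P$ in type $\IV$: one must either work out the Galois module structure of $\MW(J(\pi))[5]$ for that specific extremal rational elliptic surface and of the pair $(P,\iota)$ directly from the equations, or else argue abstractly that the composite of a ``$\le 4$'' and a ``$\le 4$'' extension suffices; assembling the bound $[L:k] \le 16$ honestly is the part of the argument that cannot be waved away, whereas everything else reduces to the rationality of $2$-torsion sections, which follows formally from $\Pic(X) = \Pic(X_{\bar k})$ and the explicit Weierstrass models.
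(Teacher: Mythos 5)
Your overall strategy is the paper's: since $\Pic(X)=\Pic(X_{\bar k})$ every elliptic fibration and hence every Jacobian $J(\pi)$ is defined over $k$, and the problem reduces to controlling the fields of definition of the explicit generators of $\Aut(X_{\bar k})$ from Propositions \ref{Aut1}--\ref{Aut7}, namely translations by torsion sections of the $J(\pi)$ together with the extra automorphisms of Remarks \ref{extraaut3} and \ref{extrasection}. Where you genuinely diverge is the mechanism for rationality of the torsion sections. The paper notes that the relevant generic fibres have $j$-invariant $\neq 0,1728$, so each $J(\pi)$ is a quadratic twist of the standard model over the prime field, and quadratic twisting preserves rational $2$-torsion; you instead argue that a torsion section is determined by the fibre components it meets (the map $\MW(J(\pi))\to\bigoplus_v G_v$ is injective for extremal rational fibrations) and that these components are $k$-rational. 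Your route is attractive --- it ignores $j$-invariants and, applied to the $5$-torsion section in type $\IV$, would even give rationality on the nose rather than your ``degree dividing $4$'' --- but it elides one step: $\Pic(X)=\Pic(X_{\bar k})$ gives rationality of fibre components of fibrations \emph{of $X$}, and you must still transfer this to the reducible fibres of $J(\pi)$, which requires the Galois-equivariant identification of the component groups of a genus-one fibration and of its Jacobian. That is fillable, but it is not automatic from what you wrote.

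The genuine gap is the third bullet. You assert $16=4\times 4$ (degree $\le 4$ for the $5$-torsion section via the character to $(\bbZ/5\bbZ)^\times$, degree $\le 4$ for $P$ and $\iota$) and then concede that you have not assembled the bound. The paper's accounting is different and is actually carried out: the single non-$2$-torsion section costs at most a quadratic extension (it lives on a quadratic twist of a model whose full Mordell--Weil group is rational), while the automorphism $t_P\circ\iota$ of Remark \ref{extrasection} costs at most degree $8$ (a quadratic twist to reach the explicit Weierstrass model of $\tilde{\pi}$, a quadratic extension to define $P$, and $k(\sqrt{-1})$ to define $\iota$), giving $2\times 8=16$. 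Your ``$\le 4$ for $P$ and $\iota$'' in particular omits the twist needed to identify the K3 fibration with the equation on which $P$ and $\iota$ are written down, so the product you claim is not justified as stated. Since the content of the type $\IV$ bullet is precisely this bookkeeping, the case must be finished --- either by the paper's twist-by-twist count or by actually computing the Galois module structure you allude to --- before the proof is complete.
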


\begin{proof}
Let $X$ be an Enriques surface over $k$ such that $|\Aut(X_{\bar{k}})| < \infty$ and $\rm{rk}(\Pic(X)) = 10$. 
Since $\rm{rk}(\Pic(X)) = 10$, every elliptic fibration of $X$ is defined over $k$. Therefore, all Jacobian fibrations of elliptic fibrations of $X$ are defined over $k$. Now, if $X$ is of type $\I,\II,\V,\VI$ or $\VII$, the generic fiber of an elliptic fibration of $X$ whose Jacobian has non-trivial sections has $j$-invariant $\neq 0,1728$. Therefore, the Jacobian is unique up to quadratic twisting with elements in $\bar{k}$. We have shown in Propositions \ref{Aut1}, \ref{Aut2}, \ref{Aut5}, \ref{Aut6}, and \ref{Aut7} that $\Aut(X_{\bar{k}})$ is generated by the actions of $2$-torsion sections of the Jacobian fibrations of elliptic fibrations of $X$. Since quadratic twisting preserves $2$-torsion sections and all extremal and rational elliptic fibrations have a model over $k$ such that their $2$-torsion is already defined over $k$, all such sections, and hence $\Aut(X)$, are defined over $k$.

If $X$ is of type $\III$, we need to realize the additional automorphism of Remark \ref{extraaut3}. For this, a quadratic extension is sufficient.

If $X$ is of type $\IV$, we need the automorphism of Remark \ref{extrasection} and one non-$2$-torsion section (see Proposition \ref{Aut4}). As before, we need a field extension of degree at most $2$ per non-$2$-torsion section. To define the automorphism of Remark \ref{extrasection}, we need a field extension of degree at most eight, since we found a model of the corresponding fibration which acquires the required section after a quadratic extension and we need a quadratic extension to define $\iota$ (see Remark \ref{extrasection}).
\end{proof}

\begin{remark}
Over finite fields (and for our model), the proof shows that an extension of degree $4$ suffices to realize all automorphisms for type $\IV$. 
\end{remark}

\section{Semi-symplectic automorphisms}\label{semisection}
As an application of our explicit classification of Enriques surfaces with finite automorphism group, we determine the semi-symplectic automorphism groups of these surfaces.

\begin{definition}
Let $X$ be an Enriques surface. An automorphism of $X$ is called \emph{semi-symplectic} if it acts trivially on $\rm{H}^0(X,\omega_X^{\otimes 2})$. We denote the group of all semi-symplectic automorphisms of $X$ by $\Aut_{ss}(X)$.
\end{definition}

These automorphisms are studied in \cite{mukaiohashi}. There, the semi-symplectic automorphism groups of Enriques surfaces of type $\VI$ and $\VII$ have already been computed. See \cite{Ohashi} for a study of finite and non-semi-symplectic automorphisms.

\begin{theorem}\label{semisymplecticthm}
Let $X$ be an Enriques surface of type $K \in \{\I,\hdots,\VII\}$. Then, $\Aut_{ss}(X)$ is as given in the following table:
\begin{table}[!htb]
\centering
\begin{tabular}{|>{\centering\arraybackslash}m{2.5cm}|>{\centering\arraybackslash}m{6cm}|}
\hline
\rm{Type} & $\Aut_{ss}(X)$ \\
\hline \hline
\rm{I} &  \vspace{0.5mm} $D_4$ \\ [0.5mm] \hline
\rm{II} & \vspace{0.5mm} $\mathfrak{S}_4$ \\ [0.5mm] \hline
\rm{III} & \vspace{0.5mm} $(\bbZ/2\bbZ)^3 \rtimes D_4$ \\ [0.5mm] \hline
\rm{IV} &  \vspace{0.5mm} $(\bbZ/2\bbZ)^4 \rtimes (\bbZ/5\bbZ \rtimes \bbZ/2\bbZ)$ \\ [0.5mm] \hline
\rm{V} & \vspace{0.5mm} $\mathfrak{S}_4 \times \bbZ/2\bbZ$ \\ [0.5mm] \hline
\rm{VI} & \vspace{0.5mm} $\mathfrak{S}_5$ \\ [0.5mm] \hline
\rm{VII} & \vspace{0.5mm} $\mathfrak{S}_5$ \\ [0.5mm] \hline
\end{tabular}
\caption{Semi-symplectic automorphism groups}
\label{semisymplectic}
\end{table}
\end{theorem}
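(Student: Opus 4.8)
\textbf{Proof proposal for Theorem \ref{semisymplecticthm}.}

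The plan is to compute, for each of the seven types, the image of $\Aut(X)$ under the character $\chi\colon \Aut(X)\to k^\times$ giving the action on the one-dimensional space $\mathrm{H}^0(X,\omega_X^{\otimes 2})$, and then set $\Aut_{ss}(X)=\ker(\chi)$. Since $\Aut(X)$ is known explicitly by Propositions \ref{Aut1}--\ref{Aut7}, the task reduces to determining $\chi$ on a set of generators. The key observation is that all of the generators we exhibited are of two kinds: translations $t_P$ by sections of a Jacobian fibration, and the "extra" automorphisms $\iota$ (type $\III$, Remark \ref{extraaut3}) and $t_P\circ\iota$ (type $\IV$, Remark \ref{extrasection}). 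A translation by a section fixes the base $\bbP^1$ of an elliptic fibration and acts as a translation on the fibers, hence it acts trivially on $\mathrm{H}^0(X,\omega_X^{\otimes 2})$ — that is, every such translation is semi-symplectic. Thus for types $\I, \II, \V, \VI, \VII$, where we proved $\Aut(X)$ is generated by $2$-torsion-section-induced automorphisms, $\chi$ is trivial and $\Aut_{ss}(X)=\Aut(X)$, giving the first, second, fifth, sixth and seventh rows of Table \ref{semisymplectic} immediately (for $\VI$ and $\VII$ this recovers the computation in \cite{mukaiohashi}).

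For type $\III$, the group $\Aut(X)\cong(\bbZ/4\bbZ\times(\bbZ/2\bbZ)^2)\rtimes D_4$ is generated by $2$-torsion-section automorphisms together with $\iota$, and Remark \ref{extraaut3} records that $\iota$ acts as $\sqrt{-1}$ on a global $2$-form of the K3 cover $\tilde X$. One must translate this into the action on $\mathrm{H}^0(X,\omega_X^{\otimes 2})$: if $\iota$ acts by $\zeta$ on the $2$-form $\omega_{\tilde X}$, then it acts by $\zeta^2$ on $\omega_{\tilde X}^{\otimes 2}$, which descends to $\omega_X^{\otimes 2}$; here $\zeta=\sqrt{-1}$ so $\zeta^2=-1\neq 1$, hence $\iota$ is \emph{not} semi-symplectic, while $\iota^2=t_{s_2}$ is. Therefore $\chi$ is surjective onto $\{\pm 1\}$ with kernel of index $2$. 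Recalling from the proof of Proposition \ref{Aut3} that $G\cong\bbZ/4\bbZ\times(\bbZ/2\bbZ)^2$ is the pointwise stabilizer of $\{E_1,E_3,E_5,E_7\}$ generated by $t_{s_1},t_{s_3},t_{s_2},\iota$ and the inversion involution, and $\iota^2=t_{s_2}$, the kernel of $\chi|_G$ is generated by $t_{s_1},t_{s_3},\iota^2=t_{s_2}$ and inversion, i.e. $(\bbZ/2\bbZ)^3$; since $D_4$ is generated by $r_d,r_v$ realised by $2$-torsion sections, all of $D_4$ lies in $\ker\chi$, and one checks the semidirect product structure is inherited, giving $\Aut_{ss}(X)\cong(\bbZ/2\bbZ)^3\rtimes D_4$ as claimed.

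For type $\IV$, $\Aut(X)\cong(\bbZ/2\bbZ)^4\rtimes(\bbZ/5\bbZ\rtimes\bbZ/4\bbZ)$ is generated by $2$-torsion-section automorphisms, a $5$-torsion-section automorphism realising $\varphi_1$, and the order-$4$ lift of $\varphi_2$ built from $t_P\circ\iota$ (Remark \ref{extrasection}), where again $\iota\colon s\mapsto\sqrt{-1}s$ acts as $\sqrt{-1}$ on $\omega_{\tilde X}$ and $t_P$ is semi-symplectic. Hence $t_P\circ\iota$ acts by $(\sqrt{-1})^2=-1$ on $\omega_X^{\otimes 2}$, so it is non-semi-symplectic and squares to something semi-symplectic, and the order-$4$ automorphism realising $\varphi_2$ similarly has $\chi$-value $-1$. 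The $5$-torsion automorphism and all $2$-torsion automorphisms are semi-symplectic, so $\chi$ factors through the quotient $\bbZ/5\bbZ\rtimes\bbZ/4\bbZ$ and there maps onto the order-$2$ quotient $\bbZ/5\bbZ\rtimes\bbZ/2\bbZ$-cokernel, i.e. $\ker\chi$ has index $2$ and equals $(\bbZ/2\bbZ)^4\rtimes(\bbZ/5\bbZ\rtimes\bbZ/2\bbZ)$, matching Table \ref{semisymplectic}. The main obstacle in the whole argument is this last book-keeping step for types $\III$ and $\IV$: one must be careful that (a) the descent of the $2$-form action from $\tilde X$ to $\omega_X^{\otimes 2}$ is computed correctly (squaring the K3 character), and (b) the index-$2$ kernel genuinely splits as the stated semidirect product rather than some other extension with the same order — this requires identifying explicit order-$2$ (resp. order-$5$ and order-$2$) lifts inside $\ker\chi$ of the generators of the quotient, exactly as the splittings were produced in the proofs of Propositions \ref{Aut3} and \ref{Aut4}. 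Everything else is immediate once "translation by a section is semi-symplectic" and "the extra automorphism acts by $-1$ on $\omega_X^{\otimes 2}$" are in hand.
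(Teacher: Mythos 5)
Your proposal is correct and follows essentially the same route as the paper: translations by sections are semi-symplectic because they fix the base and act as translations on the fibers, which settles types $\I,\II,\V,\VI,\VII$ outright, while for types $\III$ and $\IV$ the extra automorphisms of Remarks \ref{extraaut3} and \ref{extrasection} are non-semi-symplectic and the groups in Table \ref{semisymplectic} sit with index $2$ in $\Aut(X)$. Your additional bookkeeping — deducing the action on $\mathrm{H}^0(X,\omega_X^{\otimes 2})$ by squaring the character on $\omega_{\tilde X}$ (so $\sqrt{-1}\mapsto -1$, which is also well-defined independently of the choice of lift) and identifying $\ker\chi$ explicitly inside the semidirect product decompositions of Propositions \ref{Aut3} and \ref{Aut4} — is a correct and welcome expansion of details the paper leaves implicit.
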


\vspace{-10mm}
\begin{proof}
Note that an automorphism induced by a section of the Jacobian of an elliptic fibration of $X$ is semi-symplectic, since it fixes the base of the fibration and acts as translation on the fibers. For all $K$, the group generated by such automorphisms is equal to the group given in Table \ref{semisymplectic}. If $K \neq \III,\IV$, these are all automorphisms, and if $K \in \{\III,\IV\}$, we have exhibited non-semi-symplectic automorphisms in Remarks \ref{extraaut3} and \ref{extrasection}. Since the groups in Table \ref{semisymplectic} have index $2$ in $\Aut(X)$ for $K \in \{\III,\IV\}$, this finishes the proof.
\end{proof}

\begin{remark}
The fact that surfaces of type $\III$ and $\IV$ admit non-semi-symplectic automorphisms is the reason why, in general, we need a field extension to realize all automorphisms of these surfaces. These non-semi-symplectic automorphisms act as $\sqrt{-1}$ on a non-zero global $2$-form of the K3 cover, hence it is necessary to adjoin at least $\sqrt{-1}$ to $k$ to realize all automorphisms of these surfaces. Since the K3 cover of Enriques surfaces of type $\III$ and $\IV$ is the Kummer surface associated to the self-product of an elliptic curve with $j$-invariant $1728$ \cite[p.193]{Kondo}, it is likely that this field extension always suffices.
\end{remark}
\label{Bibliography}

\bibliographystyle{alpha}  
\bibliography{Bibliography}

\end{document}